\def\R{\mathrm{I\kern-0.21emR}}
\def\N{\mathrm{I\kern-0.21emN}}
\def\Z{\mathbb{Z}}
\newcommand{\C} {\mathbb{C}}
\newcommand{\Real}{\mathrm{Re}}
\newcommand{\OCP}{{\bf (OCP)}}
\renewcommand{\geq}{\geqslant}
\renewcommand{\leq}{\leqslant}
\newtheorem{theorem}{Theorem}  
\newtheorem{proposition}{Proposition}
\newtheorem{corollary}{Corollary}
\newtheorem{definition}{Definition}
\newtheorem{lemma}{Lemma}
\theoremstyle{definition}\newtheorem{example}{Example}
\theoremstyle{definition}\newtheorem{remark}{Remark}
\renewcommand{\Re}{\mathrm{Re}\,}
\author{Emmanuel Tr\'elat, lecture notes (master course)}
\title{\huge Control in finite and infinite dimension}
\begin{document}

\maketitle

\section*{Foreword}
This short book is the result of various master and summer school courses I have taught. The objective is to introduce the readers to mathematical control theory, both in finite and infinite dimension. 
In the finite-dimensional context, we consider controlled ordinary differential equations (ODEs); in this context, existence and uniqueness issues are easily resolved thanks to the Picard-Lindel\"of (Cauchy-Lipschitz) theorem. In infinite dimension, in view of dealing with controlled partial differential equations (PDEs), the concept of well-posed system is much more difficult and requires to develop a bunch of functional analysis tools, in particular semigroup theory -- and this, just for the setting in which the control system is written and makes sense. This is why I have splitted the book into two parts, the first being devoted to finite-dimensional control systems, and the second to infinite-dimensional ones. 

In spite of this splitting, it may be nice to learn basics of control theory for finite-dimensional linear autonomous control systems (e.g., the Kalman condition) and then to see in the second part how some results are extended to infinite dimension, where matrices are replaced by operators, and exponentials of matrices are replaced by semigroups. 
For instance, the reader will see how the Gramian controllability condition is expressed in infinite dimension, and leads to the celebrated Hilbert Uniqueness Method (HUM). 

Except the very last section, in the second part I have only considered linear autonomous control systems (the theory is already quite complicated), providing anyway several references to other textbooks for the several techniques existing to treat some particular classes of nonlinear PDEs. 
In contrast, in the first part on finite-dimensional control theory, there are much less difficulties to treat general nonlinear control systems, and I give here some general results on controllability, optimal control and stabilization. 

Of course, whether in finite or infinite dimension, there exist much finer results and methods in the literature, established however for specific classes of control systems. Here, my objective is to provide the reader with an introduction to control theory and to the main tools allowing to treat general control systems. 
I hope this will serve as motivation to go deeper into the theory or numerical aspects that are not covered here.

\vspace{\baselineskip}
\begin{flushright}\noindent
Paris, \hfill {\it $ $}\\
March 2023\hfill {\it Emmanuel Tr\'elat}\\
\end{flushright}

\tableofcontents

\part{Control in finite dimension}
%\noindent Use the template \emph{part.tex} together with the Springer document class SVMono (monograph-type books) or SVMult (edited books) to style your part title page and, if desired, a short introductory text (maximum one page) on its verso page in the Springer layout.

%\noindent Introduction and motivation.
%Presentation of control: controllability, optimal control, stabilization, observability.

\chapter{Controllability}\label{chap_cont}

Let $n$ and $m$ be two positive integers.
In this chapter we consider a control system in $\R^n$
\begin{equation}\label{general_control_system}
\dot{x}(t) = f(t,x(t),u(t))
\end{equation}
where $f:\R\times\R^n\times\R^m\rightarrow\R^n$ is of class $C^1$ with respect to $(x,u)$ and locally integrable with respect to $t$, and the controls are measurable essentially bounded functions of time taking their values in some measurable subset $\Omega$ of $\R^m$ (set of control constraints).

First of all, given an arbitrary initial point $x_0\in\R^n$, and an arbitrary control $u$, we claim that there exists a unique solution $x(\cdot)$ of \eqref{general_control_system} such that $x(0)=x_0$, maximally defined on some open interval of $\R$ containing $0$. 
We use here a generalization of the usual Picard-Lindel\"of theorem (sometimes called Carath\'eodory theorem), where the dynamics here can be discontinuous (because of the control). For a general version of this existence and uniqueness theorem, we refer to \cite[Theorem 5.3]{Hale} and \cite[Appendix C3]{Sontag}. 
We stress that the differential equation \eqref{general_control_system} holds for almost every $t$ in the maximal interval.
Given a time $T>0$ and an initial point $x_0$, we say that a control $u\in L^\infty([0,T],\R^m)$ is admissible if the corresponding trajectory $x(\cdot)$, such that $x(0)=x_0$, is well defined on $[0,T]$.

We say that the control system is \emph{linear} if $f(t,x,u) = A(t)x+B(t)u+r(t)$, with $A(t)$ a $n\times n$ matrix, $B(t)$ a $n\times m$ matrix (with real coefficients), $r(t)\in\R^n$, and in that case we will assume that $t\mapsto A(t)$, $t\mapsto B(t)$ and $t\mapsto r(t)$ are of class $L^\infty$ on every compact interval (actually, $L^1$ would be enough). The linear control system is said to be \emph{autonomous} if $A(t)\equiv A$ and $B(t)\equiv B$, otherwise it is said to be \emph{instationary} or \emph{time-varying}.
Note that, for linear control systems, there is no blow-up in finite time (i.e., admissibility holds true on any interval).

\begin{definition}\label{def_Ex0T}
Let $x_0\in\R^n$ and let $T>0$ arbitrary. A control $u\in L^\infty([0,T],\Omega)$ is said to be 
\textit{admissible} on $[0,T]$ if the trajectory $x_u(\cdot)$, solution of \eqref{general_control_system}, corresponding to the control $u$, and such that $x_u(0)=x_0$, is well defined on $[0,T]$.
The \textit{end-point mapping} $E_{x_0,T}$ is then defined by $E_{x_0,T}(u) = x_u(T)$.
\end{definition}

The set of admissible controls on $[0,T]$ is denoted by $\mathcal{U}_{x_0,T,\Omega}$. 
It is the domain of definition of $E_{x_0,T}$ (indeed one has to be careful with blow-up phenomena), when considering controls taking their values in $\Omega$.

\begin{definition}
The control system \eqref{general_control_system} is said to be (globally) \textit{controllable from $x_0$ in time $T$} if $E_{x_0,T}(\mathcal{U}_{x_0,T,\Omega})=\R^n$, i.e., if $E_{x_0,T}$ is surjective. 
\end{definition}

Accordingly, defining the \textit{accessible set} from $x_0$ in time $T$ by $\mathrm{Acc}_\Omega(x_0,T) = E_{x_0,T}(\mathcal{U}_{x_0,T,\Omega})$, the control system \eqref{general_control_system} is (globally) controllable from $x_0$ in time $T$ if $\mathrm{Acc}_\Omega(x_0,T) = \R^n$.

Since such a global surjectivity property is certainly a very strong property which may not hold in general, it is relevant to define \emph{local controllability}.

\begin{definition}\label{def_loccont}
Let $x_1=E_{x_0,T}(\bar u)$ for some $\bar u\in \mathcal{U}_{x_0,T,\Omega}$. The control system \eqref{general_control_system} is said to be \textit{locally controllable from $x_0$ in time $T$} around $x_1$ if $x_1$ belongs to the interior of $\mathrm{Acc}_\Omega(x_0,T)$, i.e., if $E_{x_0,T}$ is locally surjective around $x_1$.
\end{definition}

Other variants of controllability can be defined.
A clear picture will come from the geometric representation of the accessible set.

In this chapter we will provide several tools in order to analyze the controllability properties of a control system, first for linear (autonomous, and then instationary) systems, and then for nonlinear systems.

\section{Controllability of linear systems}\label{sec_cont_linear}
Throughout this section, we consider the linear control system $\dot x(t)=A(t)x(t)+B(t)u(t)+r(t)$, with $u\in L^\infty([0,+\infty),\Omega)$.
Since there is no finite-time blow-up for linear systems, we have $\mathcal{U}_{x_0,T,\Omega} = L^\infty([0,T],\Omega)$ for every $T>0$.

\subsection{Controllability of autonomous linear systems}\label{sec_cont_autonomous}
In this section, we assume that $A(t)\equiv A$ and $B(t)\equiv B$, where $A$ is a $n\times n$ matrix and $B$ is a $n\times m$ matrix.

\subsubsection{Without control constraints: Kalman condition}\label{sec_cont_kalman}
In this section, we assume that $\Omega=\R^m$ (no control constraint).
The celebrated Kalman theorem provides a necessary and sufficient condition for autonomous linear control systems without control constraint.

\begin{theorem}\label{thm_Kalman}
We assume that $\Omega=\R^m$ (no control constraint). The control system $\dot{x}(t)=Ax(t)+Bu(t)+r(t)$ is controllable (from any initial point, in arbitrary time $T>0$) if and only if the \textit{Kalman matrix}
$$K(A,B) = ( B , AB , \ldots , A^{n-1}B ) $$
(which is of size $n\times nm$) is of maximal rank $n$.
\end{theorem}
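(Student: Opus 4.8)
The plan is to exploit the fact that, for a linear system, the end-point mapping is affine in the control, so that controllability reduces to a surjectivity question for a single linear map, which I can then dualize.

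First I would write the solution through the Duhamel (variation of constants) formula
\[
x_u(T) = e^{TA}x_0 + \int_0^T e^{(T-s)A} r(s)\,ds + \int_0^T e^{(T-s)A} B\, u(s)\,ds .
\]
The first two terms do not depend on $u$, so $E_{x_0,T}(\mathcal U_{x_0,T,\R^m}) = \R^n$ if and only if the linear map
\[
L_T : L^\infty([0,T],\R^m) \to \R^n, \qquad L_T u = \int_0^T e^{(T-s)A} B\, u(s)\,ds
\]
is surjective. In particular controllability is independent of $x_0$ and of $r$, which already explains the phrasing ``from any initial point''.

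Next I would characterize surjectivity of $L_T$ by duality: $\mathrm{Im}\, L_T = \R^n$ if and only if the only vector $\psi \in \R^n$ orthogonal to the image is $\psi = 0$. Pairing with $\psi$ and using that $u$ is arbitrary, the condition $\psi \perp \mathrm{Im}\, L_T$ is equivalent to
\[
\psi^\top e^{(T-s)A} B = 0 \quad \text{for almost every } s\in[0,T].
\]
The map $s \mapsto \psi^\top e^{(T-s)A} B$ is real-analytic (being built from the entire function $s \mapsto e^{sA}$), so vanishing almost everywhere forces it to vanish identically, hence all its derivatives at $s=T$ vanish, giving $\psi^\top A^k B = 0$ for every $k\geq 0$. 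By the Cayley--Hamilton theorem, $A^k$ for $k\geq n$ is a linear combination of $I, A,\ldots, A^{n-1}$, so this is equivalent to $\psi^\top A^k B = 0$ for $k=0,1,\ldots,n-1$, that is, $\psi^\top K(A,B) = 0$.

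Putting the two reductions together, $L_T$ is surjective if and only if the left kernel of $K(A,B)$ is trivial, i.e. $\mathrm{rank}\, K(A,B) = n$. I would stress that the resulting condition involves neither $T$ nor $x_0$, which is precisely why controllability holds in arbitrary time from any point as soon as it holds at all. The main obstacle is the analyticity step: one must justify carefully that ``$\psi^\top e^{(T-s)A}B = 0$ a.e.'' upgrades to the pointwise vanishing of all derivatives, and it is exactly here (together with Cayley--Hamilton) that the finitely many blocks $B, AB,\ldots, A^{n-1}B$ of the Kalman matrix enter. As an alternative to the duality argument one may equivalently show that $L_T$ is surjective iff the controllability Gramian $\int_0^T e^{(T-s)A}BB^\top e^{(T-s)A^\top}\,ds$ is positive definite, but the analysis of its kernel leads to the same analytic computation.
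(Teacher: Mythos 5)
Your proof is correct and follows essentially the same route as the paper: reduction via the Duhamel formula to surjectivity of $L_T$, separation by a vector $\psi$, differentiation of $s\mapsto\psi^\top e^{(T-s)A}B$ at $s=T$, and Cayley--Hamilton to truncate at $A^{n-1}$. The only cosmetic difference is that you organize the argument as a single chain of equivalences (with an explicit appeal to analyticity to upgrade almost-everywhere vanishing), whereas the paper proves the two contrapositive implications of its Lemma~\ref{lemK} separately.
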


\begin{proof}[Proof of Theorem \ref{thm_Kalman}.]
Given any $x_0\in\R^n$, $T>0$ and $u\in L^\infty([0,T],\R^m)$, the Duhamel formula gives
\begin{equation}\label{duh}
E_{x_0,T}(u) = x_u(T) = e^{TA}x_0+\int_0^T e^{(T-t)A}r(t) \, dt + L_Tu
\end{equation}
where $L_T:L^\infty([0,T],\R^m)\rightarrow\R^n$ is the linear continuous operator defined by
$
L_Tu = \int_0^T e^{(T-t)A}Bu(t) \, dt.
$
Clearly, the system is controllable in time $T$ if and only if $L_T$ is surjective.
Then to prove the theorem it suffices to prove the following lemma.

\begin{lemma}\label{lemK}
The Kalman matrix $K(A,B)$ is of rank $n$ if and only if $L_T$ is surjective.
\end{lemma}

\begin{proof}[Proof of Lemma \ref{lemK}.]
We argue by contraposition.
If $L_T$ is not surjective, then there exists $\psi\in\R^n\setminus\{0\}$ which is orthogonal to the range of $L_T$, that is,
$$
\psi^\top \int_0^T e^{(T-t)A}Bu(t)\, dt=0\qquad\forall u\in L^\infty([0,T],\R^m).
$$
This implies that $\psi^\top e^{(T-t)A}B=0$, for every $t\in[0,T]$. Taking $t=T$ yields $\psi^\top B=0$. Then, derivating first with respect to $t$, and taking $t=T$ then yields $\psi^\top AB=0$. By immediate iteration we get that $\psi^\top A^kB=0$, for every $k\in\N$. In particular $\psi^\top K(A,B)=0$ and thus the rank of $K(A,B)$ is less than $n$.

Conversely, if the rank of $K(A,B)$ is less than $n$, then there exists $\psi\in\R^n\setminus\{0\}$ such that $\psi^\top K(A,B)=0$, and therefore $\psi^\top A^kB=0$, for every $k\in\{0,1,\ldots,n-1\}$. From the Hamilton-Cayley theorem, there exist real numbers $a_0,a_1,\ldots,a_{n-1}$ such that $A^n=\sum_{k=0}^{n-1} a_k A^k$. Therefore we get easily that $\psi^\top A^nB=0$. Then, using the fact that $A^{n+1}=\sum_{k=1}^{n} a_k A^k$, we get as well that $\psi^\top A^{n+1}B=0$. By immediate recurrence, we infer that $\psi^\top A^kB=0$, for every $k\in\N$, and therefore, using the series expansion of the exponential, we get that $\psi^\top e^{(T-t)A}B=0$, for every $t\in[0,T]$. We conclude that $\psi^\top L_T u=0$ for every $u\in L^\infty([0,T],\R^m)$ and thus that $L_T$ is not surjective.
\end{proof}
Theorem \ref{thm_Kalman} is proved.
\end{proof}

\begin{remark}
Note that the Kalman condition is purely algebraic and is easily checkable.

The Kalman condition does neither depend on $T$ nor on $x_0$. Therefore, if an autonomous linear control system is controllable from $x_0$ in time $T>0$, starting at $x_0$, then it is controllable from any other $x_0'$ in any time $T'>0$, in particular, in arbitrarily small time. This is due to the fact that there are no control constraints. When there are some control constraints one cannot hope to have such a property.
\end{remark}

\begin{example}
Consider an RLC circuit with a resistor with resistance $R$, a coil with inductance $L$ and a capacitor with capacitance $C$, connected in series. We control the input voltage $u(t)$ of the electrical circuit. Denoting by $i(t)$ the intensity, by additivity of voltages we have
$$
Ri(t) + L\frac{di}{dt}(t) + \frac{1}{C}\int^t i(s)ds = u(t).
$$
Setting $x_1(t)=\int^t i(s)ds$, $x_2(t)=\dot x_1(t)=i(t)$ and $x(t)=\begin{pmatrix}x_1(t)\\ x_2(t)\end{pmatrix}$, we find the control system $\dot x(t)=Ax(t)+Bu(t)$ with 
$$
A=\begin{pmatrix}0&1\\ \frac{-1}{LC} & \frac{-R}{L}\end{pmatrix}, \qquad
B=\begin{pmatrix}0\\1\end{pmatrix} .
$$
The Kalman condition is then easily checked. 

This simple but illuminating example shows the importance of the RLC device in electricity. Note that the RLC circuit is paradigmatic for the three main operations in mathematical analysis: mutliplication operator, derivation operator and integration operator.
\end{example}

\begin{example}[Kalman condition computations]\ 
\begin{itemize}[parsep=1mm,itemsep=0mm,topsep=1mm,leftmargin=*]
\item Let $m>0$ and $d,k\geq 0$. Prove that the system consisting of the controlled spring: $m\ddot x+d\dot x+kx=u$, % (for any $m,d,k>0$).
is controllable.

\item Let $k_1,k_2\geq 0$. Prove that the system of coupled springs (two-car train) given by
$$
\ddot x_1=-k_1x_1+k_2(x_2-x_1),\qquad \ddot x_2=-k_2(x_2-x_1)+u,
$$
is controllable if and only if $k_2>0$.

\item Let $(b_1,b_2)\in\R^2\setminus\{(0,0)\}$. Prove that the control system
$$
\dot x_1 = x_2+b_1 u, \qquad
\dot x_2 = -x_1+b_2 u, 
$$
is controllable.

\item Prove that the control system
$$
\dot x_1=2x_1+(\alpha-3)x_2+u_1+u_2, \qquad \dot x_2=2x_2+\alpha(\alpha-1)u_1,
$$
is controllable if and only if $\alpha(\alpha-1)\neq 0$.

\item Let $N,m\in\N\setminus\{0\}$, let $A=(a_{ij})_{1\leq i,j\leq N}$ be a $N\times N$ real-valued matrix and $B=(b_{ij})_{1\leq i\leq N,\ 1\leq j\leq m}$ be a $N\times m$ real-valued matrix, such that the pair $(A,B)$ satisfies the Kalman condition.
Let $d\in\N\setminus\{0\}$. Prove that the control system in $(\R^d)^N$ given by
\begin{equation*}
\dot v_i(t) = \sum_{j=1}^N a_{ij} v_j(t) + \sum_{j=1}^m b_{ij} u_j(t), \qquad i=1\ldots N,
\end{equation*}
where $v_i(t)\in\R^d$ and $u_j(t)\in\R^d$, is controllable. 
%\begin{quote}
%\textit{Corrig\'e:}
%On utilise la notation du produit de Kronecker de matrices: le syst\`eme s'\'ecrit sous la forme $\dot v(t)=(A\otimes I_d)v(t)+(B\otimes I_d)u(t)$, o\`u par d\'efinition les matrices $A\otimes I_d$ et $B\otimes I_d$ sont constitu\'ees des blocs $d\times d$ respectifs $a_{ij} I_d$ et $b_{ij} I_d$. On voit facilement que, pour le produit habituel de matrices: $(A\otimes I_d)^k(B\otimes I_d)=A^kB\otimes I_d$, et la condition de Kalman s'ensuit pour le syst\`eme en dimension $dN$.
%\end{quote}
%
\end{itemize}
\end{example}

Many other examples are given in \cite{Trelat}.

\begin{remark}[Hautus test]
The following assertions are equivalent:
\begin{enumerate}
\item[(1)] The pair $(A,B)$ satisfies Kalman's condition $\mathrm{rank}(K(A,B))=n$.
\item[(2)] $\forall\lambda\in\C\quad\mathrm{rank}(\lambda I-A,B)=n$.
\item[(3)] $\forall\lambda\in\mathrm{Spec}(A)\quad\mathrm{rank}(\lambda I-A,B)=n$.
\item[(4)] $\forall z$ eigenvector of $A^\top,\quad B^\top z\neq 0$.
\item[(5)] $\exists c>0\ \mid\ \forall\lambda\in\C\quad\forall z\in\R^n\quad\Vert(\lambda I-A^\top)z\Vert^2+\Vert B^\top z\Vert^2\geq c\Vert z\Vert^2$.
\end{enumerate}
Indeed, $(2)\Leftrightarrow (3)$, $(2)\Leftrightarrow (5)$, and not (4) $\Rightarrow$ not (1), are easy. We also easily get $(3)\Leftrightarrow (4)$ by contradiction. The implication not (1) $\Rightarrow$ not (4) is proved as follows. We set $N=\{z\in\R^n\ \mid\ z^\top A^kB=0\quad\forall k\in\N\}$. It is easy to establish that $A^\top N\subset N$. Then, not (1) $\Rightarrow N\neq\{0\}$, and then to conclude it suffices to note that $A^\top$ must have an eigenvector in $N$.

The condition (5) of the Hautus test is particularly interesting because it is amenable to generalizations in the infinite-dimensional setting.
\end{remark}

\begin{remark}
Let us finally comment on how to generalize the Kalman condition in infinite dimension. 
This will be done properly in Lemma \ref{Kalman_infinitedimension} in Section \ref{sec_Kalman_infinitedimension}, but we can already anticipate and provide the reader with a flavor of the new difficulties arising in infinite dimension. 
Let us replace $\R^n$ with a Banach space $X$ and $\R^m$ with a Banach space $U$. The matrix $A$ becomes an operator $A:D(A)\rightarrow X$ (for instance, a Laplacian) and $B\in L(U,X)$ is a linear bounded operator (for instance, modelling an internal control for the heat equation). The Duhamel formula \eqref{duh} remains valid provided $e^{tA}$ is replaced with the semigroup generated by $A$ (we stress that this is not an exponential if $A$ is an unbounded operator). At this stage, the reader is advised to admit this notion and continue reading. The essential elements of semigroup theory are recalled in Chapter \ref{chap_semigroup}.

The argument of the proof of Theorem \ref{thm_Kalman} remains essentially the same, except of course the application of the Hamilton-Cayley theorem, and the Kalman matrix is replaced with a matrix of operators with an infinite number of columns ($k$ does not stop at $n-1$ but goes to $+\infty$), thus giving the statement of Lemma \ref{Kalman_infinitedimension}.
There is however a serious and deep difference: at the beginning of the proof of Lemma \ref{lemK} we have used the fact that, when the vector space $\mathrm{Ran}(L_T)$ is a proper subset of $\R^n$, there exists a nontrivial vector $\psi$ vanishing it: this is a separation argument. In infinite dimension this argument may dramatically fail, because a proper subset of $X$ might be dense in $X$: in such a case, separation is not possible.
Then, to make the argument valid, one has to consider the closure $\overline{\mathrm{Ran}(L_T)}$ of $\mathrm{Ran}(L_T)$, as done in Lemma \ref{Kalman_infinitedimension}, leading to a result of \textit{approximate} controllability.

Actually, as we will see in Part \ref{part2} of that book, in infinite dimension we must distinguish (at least) between \textit{approximate} and \textit{exact} controllability. This distinction does not exist in finite dimension. At this stage, let us just say that, for linear autonomous control systems, exact controllability means $\mathrm{Ran}(E_{T,x_0})=X$ while approximate controllability means $\overline{\mathrm{Ran}(E_{T,x_0})}=X$. For example, a heat equation settled on a domain with internal control exerted on a proper subset of this domain is approximately controllable in $X=L^2$ (see Part \ref{part2}) but can never be exactly controllable in this state space because of the smoothing property of the heat semigroup.
\end{remark}

\subsubsection{With control constraints}\label{sec_cont_constraints}
An easy adaptation of the proof of Theorem \ref{thm_Kalman} yields the following result.

\begin{proposition}\label{prop_Kalman_constraints}
We assume that $r=0$, that $0\in\overset{\circ}{\Omega}$, and that the Kalman condition holds true. 
Let $x_0\in\R^n$ be arbitrary.
For every $T>0$, the accessible set $Acc_\Omega(x_0,T)$ contains an open neighborhood of the point $e^{TA}x_0$. 
In other words, the control system is locally controllable in time $T$ from $x_0$ around $e^{TA}x_0$.

More precisely, for every $T>0$ there exists a neighborhood $V$ of $e^{TA}x_0$ such that, for every $x_1\in V$, there exists $u\in L^\infty([0,T],\Omega)$ (which is close to $0$ in $L^\infty$ topology) such that $E_{x_0,T}(u)=x_1$.
Conversely, this openness property implies the Kalman condition.
\end{proposition}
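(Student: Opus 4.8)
The plan is to read everything off the Duhamel formula \eqref{duh}, which with $r=0$ reads $E_{x_0,T}(u)=e^{TA}x_0+L_Tu$, so that $\mathrm{Acc}_\Omega(x_0,T)=e^{TA}x_0+L_T\big(L^\infty([0,T],\Omega)\big)$. Translating by the fixed vector $e^{TA}x_0$, the assertion ``$\mathrm{Acc}_\Omega(x_0,T)$ contains a neighborhood of $e^{TA}x_0$'' is \emph{exactly} the assertion ``$L_T\big(L^\infty([0,T],\Omega)\big)$ contains a neighborhood of $0$ in $\R^n$''. Moreover, by Lemma \ref{lemK} the Kalman condition is equivalent to surjectivity of $L_T$. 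Thus the whole proposition reduces to a clean finite-dimensional statement about the linear map $L_T$ together with the constraint that controls take their values in $\Omega$.

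For the direct implication I would argue as follows. Since $L_T$ is surjective onto $\R^n$, first pick controls $v_1,\dots,v_n\in L^\infty([0,T],\R^m)$ whose images $L_Tv_1,\dots,L_Tv_n$ form a basis of $\R^n$. For $\lambda=(\lambda_1,\dots,\lambda_n)\in\R^n$ set $u_\lambda=\sum_{i=1}^n\lambda_iv_i$, so that $\lambda\mapsto L_Tu_\lambda=\sum_i\lambda_iL_Tv_i$ is a linear isomorphism of $\R^n$, hence an open map sending a ball around $0$ onto a neighborhood of $0$. The only point requiring the hypotheses is that these perturbed controls be admissible with values in $\Omega$: since $0\in\overset{\circ}{\Omega}$ there is $\rho>0$ with $B(0,\rho)\subset\Omega$, and since $\|u_\lambda\|_\infty\leq(\max_i\|v_i\|_\infty)\sum_i|\lambda_i|$, for $\lambda$ small enough $u_\lambda$ takes its values in $B(0,\rho)\subset\Omega$ and so belongs to $L^\infty([0,T],\Omega)$. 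Restricting to such small $\lambda$, the images $L_Tu_\lambda$ still cover a neighborhood of $0$ (the isomorphism is open), which proves the inclusion; the bound $\|u_\lambda\|_\infty\to0$ as $\lambda\to0$ simultaneously yields the refinement that the reaching controls can be taken close to $0$ in $L^\infty$. (Alternatively one may invoke the open mapping theorem directly: $L_T$ is a continuous linear surjection onto the finite-dimensional space $\R^n$, hence open, so it maps the $L^\infty$-ball $\{\,\|u\|_\infty<\rho\,\}\subset L^\infty([0,T],\Omega)$ onto an open neighborhood of $0$.)

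For the converse, suppose the openness property holds, i.e.\ $L_T\big(L^\infty([0,T],\Omega)\big)$ contains a ball $B(0,\varepsilon)$. Since every value $L_Tu$ lies in the linear subspace $\mathrm{Ran}(L_T)\subset\R^n$, this subspace contains $B(0,\varepsilon)$; but a subspace containing a nonempty open ball must be all of $\R^n$, so $L_T$ is surjective, and by Lemma \ref{lemK} the Kalman condition holds.

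The proof is genuinely an ``easy adaptation'' of Theorem \ref{thm_Kalman}: all the analytic content is already packaged into the surjectivity of $L_T$, and the single new ingredient is the constraint $u\in\Omega$. Accordingly, the only step deserving care---what I would flag as the main (minor) obstacle---is verifying that the controls used to reach nearby targets genuinely take their values in $\Omega$. This is precisely where $0\in\overset{\circ}{\Omega}$ enters, guaranteeing that sufficiently small (in $L^\infty$) controls are admissible; it is also why one cannot expect to reach a full neighborhood when $0$ lies on the boundary of $\Omega$.
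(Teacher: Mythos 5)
Your proof is correct, and it is precisely the ``easy adaptation'' of the proof of Theorem \ref{thm_Kalman} that the paper alludes to (the paper gives no explicit proof of this proposition): via the Duhamel formula everything reduces to Lemma \ref{lemK}, i.e.\ to the surjectivity of $L_T$, with the control constraint handled by the hypothesis $0\in\overset{\circ}{\Omega}$ and the smallness of the controls. Both your finite-dimensional basis/isomorphism argument and the open mapping variant are valid, and your converse (a linear subspace containing a ball must be all of $\R^n$) matches the intended reasoning.
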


Many variants of controllability properties can be obtained under various additional assumptions. For instance we have the following easy result.

\begin{proposition}
We assume that $r=0$, that $0\in\overset{\circ}{\Omega}$, that the Kalman condition holds true, and that all eigenvalues of $A$ have negative real part. 
Let $x_0\in\R^n$ be arbitrary.
There exists a time $T>0$ and a control $u\in L^\infty([0,T],\Omega)$ such that the solution of $\dot x(t)=Ax(t)+Bu(t)$, $x(0)=x_0$, satisfies $x(T)=0$.
\end{proposition}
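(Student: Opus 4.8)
The plan is to split the trajectory into two phases: a \emph{free relaxation} phase, in which I use the Hurwitz hypothesis to let the uncontrolled dynamics bring the state close to the origin, followed by a \emph{short constrained steering} phase, in which I invoke the constrained Kalman result of Proposition \ref{prop_Kalman_constraints} to hit $0$ exactly. The crucial quantitative input is that the steering phase can reach every target in a ball around the origin whose radius does \emph{not} depend on where that phase begins.

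First I would fix any time $T_2>0$, say $T_2=1$, and extract from Proposition \ref{prop_Kalman_constraints} a uniform reachable ball. Recall the operator $L_{T_2}u=\int_0^{T_2}e^{(T_2-t)A}Bu(t)\,dt$ from \eqref{duh}, which depends only on $A$, $B$ and $T_2$, not on the initial state. Applying the ``more precise'' assertion of Proposition \ref{prop_Kalman_constraints} with initial point $0$ (so that $E_{0,T_2}(u)=L_{T_2}u$, since $r=0$), and using $0\in\overset{\circ}{\Omega}$, I obtain constants $\eta>0$ and $\rho>0$ such that $\overline{B(0,\eta)}\subset\Omega$ and $B(0,\rho)\subset L_{T_2}\big(\{v\in L^\infty([0,T_2],\Omega):\Vert v\Vert_\infty\leq\eta\}\big)$. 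Then I would use that $A$ is Hurwitz: there exist $C\geq 1$ and $\alpha>0$ with $\Vert e^{sA}\Vert\leq Ce^{-\alpha s}$ for $s\geq 0$, hence $\Vert e^{sA}x_0\Vert\to 0$ as $s\to+\infty$, so I can pick $T_1>0$ large enough that $\Vert e^{(T_1+T_2)A}x_0\Vert<\rho$.

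Finally I would concatenate the two controls. On $[0,T_1]$ take $u\equiv 0$, which is admissible because $0\in\Omega$, so the trajectory reaches $y:=e^{T_1A}x_0$ at time $T_1$. Since $\Vert e^{T_2A}y\Vert=\Vert e^{(T_1+T_2)A}x_0\Vert<\rho$, the vector $-e^{T_2A}y$ lies in $B(0,\rho)$, so the previous step furnishes a control $v\in L^\infty([0,T_2],\Omega)$ with $\Vert v\Vert_\infty\leq\eta$ and $L_{T_2}v=-e^{T_2A}y$. Placing this $v$ (suitably time-shifted) on $[T_1,T_1+T_2]$ and applying the Duhamel formula \eqref{duh} restarted from $y$ at time $T_1$ (with $r=0$) gives $x(T_1+T_2)=e^{T_2A}y+L_{T_2}v=0$. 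Setting $T=T_1+T_2$ and letting $u$ be the concatenation, which lies in $L^\infty([0,T],\Omega)$ and is admissible since linear systems never blow up, concludes the argument.

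The main obstacle is the control constraint $\Omega$: without it, the surjectivity of $L_T$ established in Lemma \ref{lemK} would settle the matter at once. The constraint forces the steering control to be small, shrinking the set of reachable targets to a ball of some finite radius $\rho$ around the free endpoint. The real content is therefore the observation that this radius stays bounded away from $0$ \emph{uniformly} in the starting point of the steering phase (because $L_{T_2}$ is independent of that point), which is precisely what lets the Hurwitz decay push the free trajectory inside the ball and reach $0$ \emph{exactly} rather than merely approximately.
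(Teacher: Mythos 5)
Your proof is correct and follows exactly the strategy the paper sketches for this proposition: take $u=0$ and let the Hurwitz dynamics bring the state close to the origin, then apply the constrained controllability result (Proposition \ref{prop_Kalman_constraints}) with small controls to reach $0$ exactly. Your explicit observation that the reachable ball $B(0,\rho)$ around the free endpoint is uniform in the starting point of the steering phase (because $L_{T_2}$ does not depend on the initial state) is precisely the detail needed to turn the paper's one-line sketch into a rigorous argument.
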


The time $T$ in the above result may be large. The strategy of proof consists of taking $u=0$ and letting the trajectory converge asymptotically to $0$, and then as soon as it is sufficiently close to $0$, we apply the controllability result with controls having a small enough norm.

\subsubsection{Similar systems}\label{sec_similar}
Let us investigate the effect of a change of basis in linear autonomous control systems.

\begin{definition}
The linear control systems $\dot{x}_1=A_1x_1+B_1u_1$ and $\dot{x}_2=A_2x_2+B_2u_2$ are said to be \textit{similar} whenever there exists $P\in GL_n(\R)$ such that $A_2=PA_1P^{-1}$
and $B_2=PB_1$. We have then $x_2=Px_1$ and $u_2=u_1$.

We also say that the pairs $(A_1,B_1)$ and $(A_2,B_2)$ are similar.
\end{definition}

\begin{remark}
The Kalman property is intrinsic, that is
$$(B_2,A_2B_2,\ldots,A_2^{n-1}B_2) =
P(B_1,A_1B_1,\ldots,A_1^{n-1}B_1),$$
In particular the rank of the Kalman matrix is invariant under a similar transform.
\end{remark}

\begin{proposition}\label{prop_controllernormalform}
Let $A$ be a matrix of size $n\times n$, and let $B$ be a matrix of size $n\times m$.
Then the pair $(A,B)$ is similar to the pair $(A',B')$, with
$$
A'=\begin{pmatrix} A_1' & A_3' \\ 0 & A_2' \end{pmatrix} \quad\textrm{and}\quad
B'=\begin{pmatrix} B_1' \\ 0 \end{pmatrix}
$$
where $A_1$ is of size $r\times r$, $B_1$ is of size $r\times m$, and $r=\mathrm{rank}\, K(A,B)=\mathrm{rank}\, K(A_1',B_1')$.
\end{proposition}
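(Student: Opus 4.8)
The plan is to exhibit the change of basis directly from the geometry of the \emph{controllable subspace}. Set $V = \mathrm{Ran}\, K(A,B) = \mathrm{span}\{A^kBv : v\in\R^m,\ k\in\{0,\ldots,n-1\}\}$, so that $\dim V = \mathrm{rank}\, K(A,B) = r$ by definition. Two elementary facts drive everything. First, $\mathrm{Ran}(B)\subseteq V$ (take $k=0$). Second, $V$ is $A$-invariant: the columns of $AB,\ldots,A^{n-1}B$ obviously lie in $V$, and for the remaining columns $A^nB$ one invokes the Hamilton-Cayley theorem, writing $A^n=\sum_{k=0}^{n-1}a_kA^k$, so that $A^nB=\sum_{k=0}^{n-1}a_kA^kB\in V$; hence $AV\subseteq V$.

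Next I would choose an adapted basis. Pick a basis $(f_1,\ldots,f_r)$ of $V$ and complete it into a basis $(f_1,\ldots,f_n)$ of $\R^n$, and let $P\in GL_n(\R)$ be the matrix sending this basis to the canonical one (equivalently, $P^{-1}$ has the $f_i$ as columns). In the new coordinates $y=Px$, the subspace $V$ becomes $\R^r\times\{0\}$. The inclusion $AV\subseteq V$ then says that $A'=PAP^{-1}$ maps $\R^r\times\{0\}$ into itself, which forces the lower-left $(n-r)\times r$ block of $A'$ to vanish, yielding the claimed form $A'=\begin{pmatrix}A_1'&A_3'\\0&A_2'\end{pmatrix}$. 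Likewise $\mathrm{Ran}(B)\subseteq V$ says that every column of $B'=PB$ lies in $\R^r\times\{0\}$, giving $B'=\begin{pmatrix}B_1'\\0\end{pmatrix}$, with $A_1'$ of size $r\times r$ and $B_1'$ of size $r\times m$.

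It remains to identify the rank of $K(A_1',B_1')$. Using the block triangular structure, an immediate induction gives $A'^kB'=\begin{pmatrix}A_1'^kB_1'\\0\end{pmatrix}$ for every $k\in\N$, so that $K(A',B')$ has top block $(B_1',A_1'B_1',\ldots,A_1'^{n-1}B_1')$ and vanishing bottom block. On one hand, the Kalman matrix being similarity-invariant (as recalled in the preceding remark), we have $\mathrm{rank}\, K(A',B')=\mathrm{rank}\, K(A,B)=r$. On the other hand, applying the Hamilton-Cayley theorem to the $r\times r$ matrix $A_1'$ shows that the columns of $A_1'^rB_1',\ldots,A_1'^{n-1}B_1'$ are linear combinations of those of $B_1',\ldots,A_1'^{r-1}B_1'$; hence that top block has the same rank as $K(A_1',B_1')=(B_1',A_1'B_1',\ldots,A_1'^{r-1}B_1')$. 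Combining the two observations gives $\mathrm{rank}\, K(A_1',B_1')=r$, as desired.

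The main obstacle, such as it is, is conceptual rather than computational: one must recognize that the right object is the controllable subspace $V=\mathrm{Ran}\,K(A,B)$, and that the Hamilton-Cayley theorem is precisely what makes $V$ be $A$-invariant. Once the adapted basis is chosen the block form is automatic, and the only genuine check is the final rank computation, where Hamilton-Cayley intervenes a second time (this time in dimension $r$) to reconcile the two different ranges of exponents appearing in $K(A',B')$ and $K(A_1',B_1')$.
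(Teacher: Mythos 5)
Your proof is correct and follows essentially the same route as the paper's: you identify the controllable subspace $\mathrm{Ran}\,K(A,B)$, prove its $A$-invariance via Hamilton-Cayley, pass to an adapted basis to obtain the block forms, and conclude on the ranks. The only difference is that you spell out the final rank identity (via the block-triangular structure and a second application of Hamilton-Cayley in dimension $r$), which the paper dismisses as "clear" — a welcome bit of extra rigor, not a different approach.
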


In other words, this result says the following. Denoting by $y=\begin{pmatrix}y_1\\ y_2\end{pmatrix}$ the new coordinates, with $y_1$ of dimension $r$ and $y_2$ of dimension $n-r$, the control system in the new coordinates is written as
\begin{equation*}
\begin{split}
\dot y_1 &= A_1' y_1 + B_1'u + A_3'y_2 \\
\dot y_2 &= A_2'y_2 
\end{split}
\end{equation*}
Since the pair $(A_1',B_1')$ satisfies the Kalman condition, it follows that the part of the system in $y_1$ is controllable: it is called the \textit{controllable part} of the system. The part in $y_2$ is uncontrolled and is called the \textit{uncontrollable part} of the system.

\begin{proof}[Proof of Proposition \ref{prop_controllernormalform}.]
We assume that the rank of $K(A,B)$ is less than $n$ (otherwise there is nothing to prove). The subspace 
$$
F=\mathrm{Ran}\, K(A,B)=\mathrm{Ran}\, B+\mathrm{Ran}\, AB+\cdots+\mathrm{Ran}\, A^{n-1}B 
$$
is of dimension $r$, and is invariant under $A$ (this can be seen by using the Hamilton-Cayley theorem). Let $G$ be a subspace of $\R^n$ such that $\R^n=F\oplus G$ and let $(f_1,\ldots,f_r)$ be a basis of $F$ and $(f_{r+1},\ldots,f_n)$ be a basis of $G$. Let $P$ be the change-of-basis matrix from the basis $(f_1,\ldots,f_n)$ to the canonical basis of $\R^n$. Since $F$ is invariant under $A$, we get
$$
A'=PAP^{-1}=\begin{pmatrix} A_1' & A_3' \\ 0 & A_2'\end{pmatrix}
$$
and since $\mathrm{Ran}\, B\subset F$, we must have
$B'=PB=\begin{pmatrix} B_1' \\ 0 \end{pmatrix}.$
Finally, it is clear that the rank of $K(A_1',B_1')$ is equal to the rank of $K(A,B)$.
\end{proof}

\begin{theorem}[Brunovski normal form]\label{thmbrunovski}
Let $A$ be a matrix of size $n\times n$ and let $B$ be a matrix of size $n\times 1$ (note that $m=1$ here) be such that $(A,B)$ satisfies the Kalman condition. Then the pair $(A,B)$ is similar to the pair $(\tilde{A},\tilde{B})$, with
$$
\tilde{A}=\begin{pmatrix}
0  & 1 & \cdots & 0 \\
\vdots & \ddots & \ddots & \vdots \\
0 & \cdots & 0 & 1 \\
-a_n & -a_{n-1} & \cdots & -a_1
\end{pmatrix} \quad\textrm{and}\quad
\tilde{B} = \begin{pmatrix}
0 \\ \vdots \\ 0 \\ 1 \end{pmatrix} 
$$
where the coefficients $a_i$ are those of the characteristic polynomial of $A$, that is $\chi_A(X) = X^n + a_1 X^{n-1} +\cdots + a_{n-1}X + a_n$.
\end{theorem}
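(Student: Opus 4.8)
The plan is to exhibit an explicit basis of $\R^n$ in which $A$ takes the companion form $\tilde A$ and $B$ becomes the last coordinate vector; the similarity matrix $P$ is then the associated change-of-basis matrix, exactly as in the proof of Proposition \ref{prop_controllernormalform}. Since $m=1$ and the Kalman condition holds, the $n$ vectors $B,AB,\ldots,A^{n-1}B$ already form a basis of $\R^n$. The naive choice of taking these in their natural order would, however, yield the \emph{transpose} companion matrix with $B$ sitting in the \emph{first} slot, so the whole point is to choose the right combinations in the right order.

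Writing $\chi_A(X)=X^n+a_1X^{n-1}+\cdots+a_n$ and setting $a_0:=1$, I would define $e_n:=B$ and, for $k=1,\ldots,n-1$, $e_{n-k}:=\sum_{i=0}^{k}a_i\,A^{k-i}B$ (equivalently, the downward recursion $e_{j-1}=Ae_j+a_{n-j+1}B$ for $j=n,\ldots,2$). Because the leading term of $e_{n-k}$ is $A^kB$, the transition matrix expressing $(e_n,e_{n-1},\ldots,e_1)$ in terms of $(B,AB,\ldots,A^{n-1}B)$ is triangular with $1$'s on the diagonal, hence invertible; therefore $(e_1,\ldots,e_n)$ is again a basis of $\R^n$. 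This is the first place where the Kalman hypothesis is genuinely used.

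It then remains to read off the matrix of $A$ in this basis. By the very definition of the $e_j$ one gets at once $Ae_j=e_{j-1}-a_{n-j+1}e_n$ for $j=2,\ldots,n$, which produces precisely the superdiagonal of $1$'s of $\tilde A$ together with the last-row entries $-a_1,\ldots,-a_{n-1}$. The only remaining relation is the first column, $Ae_1=-a_ne_n$: expanding $Ae_1=\sum_{i=0}^{n-1}a_i\,A^{n-i}B=(A^n+a_1A^{n-1}+\cdots+a_{n-1}A)B$ and invoking the Hamilton--Cayley identity $A^n+a_1A^{n-1}+\cdots+a_{n-1}A+a_nI=0$ gives $Ae_1=-a_nB=-a_ne_n$, i.e. the corner entry $-a_n$. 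Since $e_n=B$, the vector $B$ has coordinates $(0,\ldots,0,1)^\top=\tilde B$ in the new basis.

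Finally, letting $Q=(e_1\mid\cdots\mid e_n)$ and $P=Q^{-1}$, so that the new coordinates are $x_2=Px_1$, one concludes $\tilde A=PAP^{-1}$ and $\tilde B=PB$, i.e. $(A,B)$ and $(\tilde A,\tilde B)$ are similar. The two substantive steps are the basis property, which rests on the Kalman condition, and the closing relation $Ae_1=-a_ne_n$, which is exactly Hamilton--Cayley; everything else is bookkeeping. I expect the main obstacle to be purely notational: keeping the index conventions consistent (which $a_i$ multiplies which $e_j$), since a single off-by-one there silently transposes the companion form.
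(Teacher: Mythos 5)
Your proof is correct and is essentially the paper's own argument: you build the same basis $f_n=B$, $f_{j-1}=Af_j+a_{n-j+1}B$, justify that it is a basis via the Kalman condition (your triangular-transition-matrix argument is just an explicit form of the paper's nested-spans observation), and close the loop with the relation $Af_1=-a_nf_n$ via Hamilton--Cayley. No gaps; the index conventions you chose match the companion form exactly.
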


Note that the matrix $\tilde A$ is the companion matrix of characteristic polynomial $\chi_A$.
Theorem \ref{thmbrunovski} means that, in the new coordinates, the control system is equivalent to the scalar differential equation of order $n$ with scalar control
$x^{(n)}(t)+a_1x^{(n-1)}(t)+\cdots+a_nx(t)=u(t) .$

\begin{proof}[Proof of Theorem \ref{thmbrunovski}.]
First, let us note that, if there exists a basis $(f_1,\ldots,f_n)$ in which the pair $(A,B)$ takes the form $(\tilde{A},\tilde{B})$, then we must have $f_n=B$ (up to scaling) and
$$Af_n=f_{n-1}-a_1f_n,\ \ldots,\ Af_2=f_1-a_{n-1}f_n,\ Af_1=-a_nf_n.$$
Let us then define the vectors $f_1,\ldots,f_n$ by
$$f_n=B,\ f_{n-1}=Af_n+a_1f_n,\ \ldots,\ f_1=Af_2+a_{n-1}f_n.$$
The $n$-tuple $(f_1,\ldots,f_n)$ is a basis of $\R^n$, since
\begin{equation*}
\begin{split}
\mathrm{Span}\,\{ f_n \} &= \mathrm{Span}\,\{ B \}  \\
\mathrm{Span}\,\{ f_n,f_{n-1} \} &= \mathrm{Span}\,\{ B,AB \}  \\
&\vdots \\
\mathrm{Span}\,\{ f_n,\ldots,f_1 \} &= \mathrm{Span}\,\{ B,\ldots,A^{n-1}B \}=\R^n . 
\end{split}
\end{equation*}
It remains to check that $Af_1=-a_nf_n$. We have
\begin{multline*}
Af_1 = A^2f_2+a_{n-1}Af_n 
= A^2(Af_3+a_{n-2}f_n)+a_{n-1}Af_n \\
%= A^3f_3 + a_{n-2}A^2f_n + a_{n-1}Af_n 
= \ldots 
= A^nf_n + a_1A^{n-1}f_n + \cdots + a_{n-1}Af_n 
= -a_nf_n
\end{multline*}
since by the Hamilton-Cayley theorem we have $A^n=-a_1A^{n-1}-\cdots-a_nI$. In the basis $(f_1,\ldots,f_n)$, the pair $(A,B)$ takes the form $(\tilde{A},\tilde{B})$.
\end{proof}

\begin{remark}
This theorem can be generalized to the case $m>1$ but the normal form is not that simple. 
%More precisely, if the pair $(A,B)$ satisfies the Kalman condition, then it is similar to some pair $(\tilde{A},\tilde{B})$ such that
%$$
%\tilde{A}=\begin{pmatrix}
%\tilde{A}_1 & * & \cdots & * \\
%0 & \tilde{A}_2 & \ddots & \vdots \\
%\vdots & \ddots & \ddots & * \\
%0 & \cdots & 0 & \tilde{A}_s
%\end{pmatrix}
%\quad\textrm{and}\quad
%\tilde{B}G=\begin{pmatrix} \tilde{B}_1 \\ \vdots \\ \tilde{B}_s \end{pmatrix}
%$$
%where the matrices $\tilde{A}_i$ are companion matrices, $G$ is a matrix of size $m\times s$, and for every $i\in\{1,\ldots,s\}$, all coefficients of $\tilde{B}_i$ are equal to zero except the one in the last row, column $i$, which is equal to $1$.
\end{remark}

\subsection{Controllability of time-varying linear systems}\label{sec_controllability_time-varying}
%In what follows, we denote by $M(\cdot)$ the \textit{resolvent} of the linear system $\dot x(t)=A(t)x(t)$, that is, the unique solution of the Cauchy problem $\dot M(t)=A(t)M(t)$, $M(0)=I_n$. Note that, in the autonomous case $A(t)\equiv A$, we have $M(t)=e^{tA}$. But in general the resolvent cannot be computed explicitly.
In what follows, we denote by $R(t,s)$ the \textit{state-transition matrix} of the linear system $\dot x(t)=A(t)x(t)$, that is, the unique solution of 
\begin{equation}\label{defR}
\partial_t R(t,s)=A(t)R(t,s), \qquad R(s,s)=I_n, 
\end{equation}
for $t,s\in\R$. Note that, in the autonomous case $A(t)\equiv A$, we have $R(t,s)=e^{(t-s)A}$. But in general the state-transition matrix cannot be computed explicitly.
Recall that $R(t,s)R(s,\tau)=R(t,\tau)$ for all $t,s,\tau\in\R$; in particular, $R(t,s)=R(s,t)^{-1}$.

\subsubsection{Case without control constraints}

\begin{theorem}\label{controlabiliteinstationnaire}
We assume that $\Omega=\R^m$ (no constraints on the control).
The control system $\dot{x}(t)=A(t)x(t)+B(t)u(t)+r(t)$ is controllable in time $T$ (from any initial point $x_0$) if and only if the \textit{Gramian matrix}
$$
%G_T = M(T) \int_0^T M(t)^{-1}B(t) {B(t)^\top}{M(t)^{-1}}^\top dt \ M(T)^\top
G_T = \int_0^T R(T,t)B(t) {B(t)^\top} R(T,t)^\top dt 
$$
is invertible.
\end{theorem}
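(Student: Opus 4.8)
The plan is to mimic the proof of Theorem \ref{thm_Kalman}, first reducing controllability to a surjectivity property and then characterizing that property through the Gramian. The first step is to write the Duhamel (variation of constants) formula with the state-transition matrix: differentiating $t\mapsto R(T,t)x_u(t)$ and using \eqref{defR} gives, for every $x_0$, $T>0$ and $u\in L^\infty([0,T],\R^m)$,
$$
E_{x_0,T}(u)=R(T,0)x_0+\int_0^T R(T,t)r(t)\,dt+L_T u,\qquad L_T u=\int_0^T R(T,t)B(t)u(t)\,dt,
$$
where $L_T:L^\infty([0,T],\R^m)\to\R^n$ is linear and continuous. Since the first two terms form a fixed vector independent of $u$, the system is controllable in time $T$ (from any $x_0$) if and only if $L_T$ is surjective. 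It therefore suffices to prove that $L_T$ is surjective if and only if $G_T$ is invertible.

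The key computation is that $G_T$ is a symmetric positive semidefinite $n\times n$ matrix whose quadratic form is
$$
\psi^\top G_T\psi=\int_0^T \big\Vert B(t)^\top R(T,t)^\top\psi\big\Vert^2\,dt,\qquad \psi\in\R^n.
$$
Hence $\psi\in\ker G_T$ if and only if $\psi^\top R(T,t)B(t)=0$ for almost every $t\in[0,T]$. On the other hand, $\psi$ is orthogonal to $\mathrm{Ran}(L_T)$ precisely when $\int_0^T\psi^\top R(T,t)B(t)u(t)\,dt=0$ for all $u\in L^\infty$, which by the fundamental lemma of the calculus of variations is again equivalent to $\psi^\top R(T,t)B(t)=0$ a.e. Thus $\ker G_T=(\mathrm{Ran}\,L_T)^\perp$. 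Since $\mathrm{Ran}(L_T)$ is a subspace of the finite-dimensional space $\R^n$, it equals $\R^n$ if and only if its orthogonal complement is trivial; therefore $L_T$ is surjective iff $\ker G_T=\{0\}$ iff $G_T$ is invertible. This is the same separation argument used in Lemma \ref{lemK}, and it is precisely the step that would fail in infinite dimension.

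For the implication ``$G_T$ invertible $\Rightarrow$ controllable'' I would also record the explicit steering control, which makes this direction constructive and anticipates the Hilbert Uniqueness Method: to reach a target $x_1$, set
$$
u(t)=B(t)^\top R(T,t)^\top G_T^{-1}\Big(x_1-R(T,0)x_0-\int_0^T R(T,t)r(t)\,dt\Big),
$$
so that a direct substitution yields $L_T u=G_T G_T^{-1}(\cdots)=x_1-R(T,0)x_0-\int_0^T R(T,t)r(t)\,dt$, whence $E_{x_0,T}(u)=x_1$.

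The routine verifications I would not dwell on are the derivation of the Duhamel formula and the fundamental-lemma step passing from orthogonality for all $u$ to the pointwise-a.e. condition; the semigroup-type identities $R(T,t)R(t,s)=R(T,s)$ simply replace the exponential manipulations of the autonomous case. The only genuinely delicate point is the identity $\ker G_T=(\mathrm{Ran}\,L_T)^\perp$ together with the finite-dimensional separation it relies on, so this is where I would concentrate the care.
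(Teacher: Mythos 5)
Your proposal is correct and follows essentially the same route as the paper: the same Duhamel reduction to surjectivity of $L_T$, the same quadratic-form identity $\psi^\top G_T\psi=\int_0^T\Vert B(t)^\top R(T,t)^\top\psi\Vert^2\,dt$ yielding the separation argument for the converse, and the same explicit control $u(t)=B(t)^\top R(T,t)^\top G_T^{-1}(x_1-x^*)$ for the direct implication. Packaging the two directions as the single identity $\ker G_T=(\mathrm{Ran}\,L_T)^\perp$ is a cosmetic reorganization of the paper's argument, not a different proof.
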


\begin{remark}\label{remGT'_1}
The invertibility condition of $G_T$ depends on $T$ but not on the initial point. Therefore, if a linear instationary control system is controllable from $x_0$ in time $T$ then it is controllable from any other initial point (but with the same time $T$). It may fail to be controllable in time $T'<T$ (take for instance $B(t)=0$ for $0\leq t\leq T'$). 
%Actually, it follows from Remark \ref{remGT'_2} further that it is controllable in time $T'>T$.
Anyway, controllability in time $T$ implies controllability in any time $T'\geq T$: indeed, at time $T$ the range of the end-point mapping is equal to the whole $\R^n$ and it cannot decrease in larger time.
%Another argument is given in Remark \ref{remGT'_2} further. 
\end{remark}

%\begin{remark}
%In the definition of the Gramian, the fact to have multiplied the integral to the left by $M(T)$ and to the right by $M(T)^\top$ has absolutely no importance in finite dimension, but it is more convenient to prepare the generalization to autonomous infinite dimensional systems (see Remark \ref{rem_auton}).
%\end{remark}

\begin{remark}\label{remGT}
Note that $G_T=G_T^\top$ and that
$$
\psi^\top G_T\psi = \langle G_T\psi,\psi\rangle = \int_0^T \Vert B(t)^\top R(T,t)^\top \psi\Vert^2\, dt \geq 0\qquad\forall\psi\in\R^n ,
$$
i.e., $G_T$ is a symmetric nonnegative matrix of size $n\times n$. Theorem \ref{controlabiliteinstationnaire} states that the system is controllable in time $T$ if and only if $G_T$ is positive definite.
By a diagonalization argument, this is equivalent to the existence of $C_T>0$ (the lowest eigenvalue of $G_T$) such that
\begin{equation}\label{inegobsdimfinie}
\int_0^T \Vert B(t)^\top R(T,t)^\top \psi\Vert^2\, dt \geq C_T \Vert \psi\Vert^2\qquad\forall\psi\in\R^n .
\end{equation}
This is an \textit{observability inequality}.
%
%Although we have not defined the concept of observability, we underline that 
The system is controllable in time $T$ if and only if the above observability inequality holds. %: this is the well known \textit{controllability -- observability duality}. 

The important concept of observability is not developed in this book. We recall it briefly for a linear control system $\dot x(t)=A(t)x(t)+B(t)u(t)+r(t)$ and we refer to \cite{LeeMarkus, Sontag, Trelat} for more details.
Assume that, at any time $t$, we cannot observe the whole state $x(t)$ but only a part of it, $y(t)=C(t)x(t)$, where $C(t)$ is a $m\times n$ matrix. The control system, with its output $y(\cdot)$, is said to be observable in time $T$ if a given output $y(\cdot)$ observed on $[0,T]$ can be generated by only one initial condition $x_0$. While controllability corresponds to a surjectivity property, observability thus corresponds to an injectivity property. 
Actually, the system $\dot x(t)=A(t)x(t)+B(t)u(t)+r(t)$ with output $y(t)=C(t)x(t)$ is observable in time $T$ if and only if the control system $\dot x(t)=A(t)^\top x(t) + C(t)u(t)$ is controllable in time $T$: this is the so-called controllability -- observability duality (see Lemma \ref{lemgenanafonc} in Part \ref{part2}, Section \ref{sec_duality}, for a general mathematical statement establishing this duality in a general context). Hence, observability in time $T$ is characterized by the observability inequality \eqref{inegobsdimfinie} with $B(t)^\top$ replaced by $C(t)$.
\end{remark}

%\begin{remark}[continued from Remark \ref{remGT'_1}]\label{remGT'_2}
%Using that $R(T,t)=R(T,0)R(0,t)$, we have $G_T = R(T,0) \tilde G_T R(T,0)^\top$ where $\tilde G_T = \int_0^T R(0,t)B(t) {B(t)^\top} R(0,t)^\top dt$. Of course, $G_T$ is invertible if and only if $\tilde G_T$ is invertible. Sometimes, this is the matrix $\tilde G_T$ that is called the controllability Gramian of the system. We have the following property: if $T'\geq T$ then $\psi^\top\tilde G_{T'}\psi = \int_0^{T'} \Vert B(t)^\top R(0,t)^\top \psi\Vert^2\, dt  \geq \int_0^T \Vert B(t)^\top R(0,t)^\top \psi\Vert^2\, dt = \psi^\top\tilde G_T\psi$ for any $\psi\in\R^n$.
%This is why, if controllability holds in time $T$ then it holds in any time $T'\geq T$.
%\end{remark}

\begin{proof}[Proof of Theorem \ref{controlabiliteinstationnaire}.]
Let $x_0\in\R^n$ arbitrary. Any solution of the control system, associated with some control $u$ and starting at $x_0$, satisfies at time $T$
$$
x_u(T)=x^*+\int_0^T R(T,t)B(t)u(t) \, dt
$$
with $x^*=R(T,0)x_0+\int_0^T R(T,t)r(t) \, dt$.

Let us assume that $G_T$ is invertible and let us prove that the control system is controllable in time $T$. Let $x_1\in\R^n$ be any target point. We seek an appropriate control $u$ in the form $u(t)={B(t)}^\top R(t,T)^\top\psi$, with $\psi\in\R^n$ to be chosen such that $x_u(T)=x_1$. With this control, we have $x_u(T)=x^*+G_T\psi$, and since $G_T$ is invertible it suffices to take $\psi=G_T^{-1}(x_1-x^*)$.

Conversely, if $G_T$ is not invertible, then by Remark \ref{remGT} there exists $\psi\in\R^n\setminus\{0\}$ such that $\psi^\top G_T\psi=\int_0^T\Vert {B(t)}^\top R(t,T)^\top \psi\Vert^2dt=0$,
hence $\psi^\top R(T,t)B(t)=0$ for almost every $t\in [0,T]$. It follows that $\psi^\top \int_0^T R(T,t)B(t)u(t)\, dt=0$
for every $u\in L^\infty([0,T],\R^m)$, and thus $\psi^\top (x_u(T)-x^*)=0$, 
which means that $x_u(T)$ belongs to a proper affine subspace of $\R^n$ (namely, $x^*+\psi^\perp$) as $u$ varies. Hence the system is not controllable in time $T$.
\end{proof}

\begin{remark}\label{rem_hum}
This theorem can be proved in an easier and more natural way with the Pontryagin maximum principle (PMP), within an optimal control viewpoint: anticipating a bit, $p(t)=R(t,T)^\top \psi$ is the adjoint vector, solution of $\dot p(t)=-A(t)^\top p(t)$ such that $p(T)=\psi$, obtained by applying the PMP with the cost being the square of the $L^2$ norm of the control, and actually the control used in the above proof is optimal for the $L^2$ norm. The above proof also leads in the infinite-dimensional setting to the \textit{HUM method} (see Part \ref{part2}).
\end{remark}

\begin{remark}\label{rem_auton}
If the system is autonomous ($A(t)\equiv A$, $B(t)\equiv B$) then $R(t,s)= e^{(t-s)A}$ and thus
$$
G_T=\int_0^T e^{(T-t)A}B{B}^\top e^{(T-t)A^\top}dt=\int_0^T e^{tA}B{B}^\top e^{tA^\top}dt .
$$
In that case, since the controllability (Kalman) condition does not depend on the time, it follows that $G_{T_1}$ is invertible if and only if $G_{T_2}$ is invertible, which is not evident from the above integral form (this fact is not true in general in the instationary case).

In the autonomous case, the observability inequality \eqref{inegobsdimfinie} can be written as
\begin{equation}\label{inegobsdimfinie_autonomous}
\int_0^T \Vert B^\top\underbrace{ e^{(T-t)A^\top} \psi}_{p(t)}\Vert^2\, dt \geq C_T \!\!\underbrace{\Vert\psi\Vert^2}_{\Vert p(T)\Vert^2} \qquad\forall\psi\in\R^n .
\end{equation}
Note that, setting $\lambda(t)=p(T-t)$, we have $\dot\lambda(t)=A^\top\lambda(t)$, $\lambda(0)=\psi$, and \eqref{inegobsdimfinie_autonomous} can also be written as
$\int_0^T \Vert B^\top \lambda(t)\Vert^2\, dt \geq C_T \Vert\lambda(0)\Vert^2$.

%Although we will not develop here a general theory for observability, it
This observability inequality is appropriate to be generalized in the infinite-dimensional setting, replacing $e^{tA}$ with a semigroup, and will be of instrumental importance in the derivation of the so-called HUM method (see Part \ref{part2}).
\end{remark}

Let us now provide an ultimate theorem which generalizes the Kalman condition in the instationary case.

\begin{theorem}\label{controlabiliteinstationnaire2}
We assume that $\Omega=\R^m$ (no constraint on the control).
Consider the control system
$\dot{x}(t)=A(t)x(t)+B(t)u(t)+r(t)$
where $t\mapsto A(t)$ and $t\mapsto B(t)$ are of class $C^\infty$. We define the sequence of matrices
$$
B_0(t)=B(t),\quad B_{k+1}(t)=A(t)B_k(t)-\frac{dB_{k}}{dt}(t),\quad k\in\N.
$$
\begin{enumerate}
\item If there exists $t\in[0,T]$ such that
\begin{equation}\label{kalman_time}
\mathrm{Span}\,\{B_k(t)v\ \vert\ v\in\R^m,\ k\in\N \}=\R^n
\end{equation}
then the system is controllable in time $T$.
\item If $t\mapsto A(t)$ and $t\mapsto B(t)$ are moreover analytic (i.e., expandable in a convergent power series at any $t$), then the system is controllable in time $T$ if and only if \eqref{kalman_time} is satisfied for every $t\in[0,T]$.
\end{enumerate}
\end{theorem}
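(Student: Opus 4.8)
The plan is to reduce everything to the Gramian criterion already established in Theorem \ref{controlabiliteinstationnaire}. Combining that theorem with Remark \ref{remGT}, the system fails to be controllable in time $T$ if and only if there exists $\psi\in\R^n\setminus\{0\}$ with $B(t)^\top R(T,t)^\top\psi=0$ for almost every $t\in[0,T]$; since $A$ and $B$ are $C^\infty$, the transition matrix $R(T,\cdot)$ is $C^\infty$, so this vanishing holds for \emph{every} $t\in[0,T]$. Transposing, non-controllability is equivalent to the existence of $\psi\neq 0$ with $\phi(t):=\psi^\top R(T,t)B(t)\equiv 0$ on $[0,T]$. The crux is then a single differential identity linking the derivatives of $\phi$ to the $B_k$. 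Starting from $\partial_t R(T,t)=-R(T,t)A(t)$ (obtained by differentiating $R(T,t)R(t,s)=R(T,s)$ in its second slot, cf.\ \eqref{defR}), I would prove by induction on $k$ that
$$\frac{d^k}{dt^k}\big(R(T,t)B(t)\big)=(-1)^k\,R(T,t)B_k(t),\qquad k\in\N.$$
The base case $k=0$ is trivial, and the induction step is immediate: differentiating $(-1)^k R(T,t)B_k(t)$ and factoring $R(T,t)$ reproduces $-(A(t)B_k(t)-\dot B_k(t))=-B_{k+1}(t)$ up to the sign. This is precisely why the $B_k$ carry the ``$-\dot B_k$'' correction term.

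For item (1) I argue by contraposition. Assume the system is not controllable in time $T$ and pick $\psi\neq 0$ with $\phi\equiv 0$. Differentiating $k$ times and using the identity gives $\psi^\top R(T,t)B_k(t)=0$ for every $t$ and every $k$. Fix the point $t$ supplied by hypothesis \eqref{kalman_time} and set $\eta=R(T,t)^\top\psi$; since $R(T,t)$ is invertible and $\psi\neq 0$, we have $\eta\neq 0$. Then $\eta^\top B_k(t)=0$ for all $k$, so $\eta$ is orthogonal to $\mathrm{Span}\,\{B_k(t)v\mid v\in\R^m,\ k\in\N\}=\R^n$, forcing $\eta=0$, a contradiction. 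Hence \eqref{kalman_time} holding at a single point already implies controllability.

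For item (2), the implication ``\eqref{kalman_time} for every $t$'' $\Rightarrow$ controllability is a free consequence of item (1), since $T>0$ guarantees at least one such point exists. The converse is where analyticity is essential. Suppose \eqref{kalman_time} fails at some $t_1\in[0,T]$, so there is $\eta\neq 0$ with $\eta^\top B_k(t_1)=0$ for all $k$. Put $\psi=R(t_1,T)^\top\eta\neq 0$, which satisfies $\psi^\top R(T,t_1)=\eta^\top$ because $R(t_1,T)R(T,t_1)=I$. The function $\phi(t)=\psi^\top R(T,t)B(t)$ is analytic: $R(T,\cdot)$ solves a linear ODE with analytic coefficient $A$ and is therefore analytic, and $B$ is analytic. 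By the identity, $\phi^{(k)}(t_1)=(-1)^k\eta^\top B_k(t_1)=0$ for all $k$, so all derivatives of $\phi$ vanish at $t_1$; by the identity theorem on the connected interval $[0,T]$, $\phi\equiv 0$. This exhibits a nonzero $\psi$ making $G_T$ singular, so the system is not controllable, contradicting the hypothesis.

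The main obstacle I anticipate is twofold. The first is bookkeeping: the sign convention for $\partial_t R(T,t)$ and the recursion defining $B_k$ must be matched exactly, since the whole argument hinges on the telescoping identity above. The second, more conceptual, is that item (2) genuinely requires real-analyticity rather than mere smoothness: a $C^\infty$ function may have all its derivatives vanish at a point without vanishing identically, which is exactly why item (1) yields only a \emph{sufficient} one-point condition while the converse in item (2) rests on the identity theorem. The key analytic input to secure is thus the analyticity of the transition matrix $R(T,\cdot)$, deduced from the analyticity of $A$.
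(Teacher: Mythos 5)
Your proof is correct and follows essentially the route the paper indicates: the paper gives no written-out argument but appeals to the Hamiltonian characterization of singular trajectories (equivalently, the weak PMP of Section \ref{sec_weakPMP}), which is precisely your starting point that non-controllability in time $T$ means some nonzero $\psi$ satisfies $\psi^\top R(T,t)B(t)\equiv 0$ on $[0,T]$, after which iterated differentiation generates the matrices $B_k$. Your telescoping identity $\frac{d^k}{dt^k}\big(R(T,t)B(t)\big)=(-1)^k R(T,t)B_k(t)$ and the identity-theorem argument in the analytic case are exactly the standard details hiding behind that citation, so your write-up fills in the paper's sketch rather than departing from it.
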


%We do not prove this theorem now. 
The proof of Theorem \ref{controlabiliteinstationnaire2} readily follows from the Hamiltonian characterization of singular trajectories (see \cite{BonnardChyba,Trelat}, see also the proof of the weak Pontryagin Maximum Principle in Section \ref{sec_weakPMP}).

\begin{example}
Thanks to Theorem \ref{controlabiliteinstationnaire2}, it is easy to prove that the control system $\dot{x}(t)=A(t)x(t)+B(t)u(t)$, with
$$
A(t)=\begin{pmatrix}t&1&0\\ 0&t^3&0\\ 0&0&t^2\end{pmatrix},\
\textrm{and}\ B(t)=\begin{pmatrix}0\\ 1\\ 1\end{pmatrix},
$$
is controllable in any time $T>0$, while the control system
\begin{equation*}
\left\{\begin{split}
\dot{x}(t)&=-y(t)+u(t)\cos t,\\
\dot{y}(t)&=x(t)+u(t)\sin t,
\end{split}\right.
\end{equation*}
is never controllable (Theorem \ref{controlabiliteinstationnaire} can also be applied).
\end{example}

\subsubsection{Case with control constraints}
When there are some control constraints, we can easily adapt Theorems \ref{controlabiliteinstationnaire} and \ref{controlabiliteinstationnaire2}, like in Proposition \ref{prop_Kalman_constraints}, to obtain local controllability results.

\begin{proposition}
We assume that $r=0$ and that $0\in\overset{\circ}{\Omega}$. 
Let $x_0\in\R^n$ and $T>0$ be arbitrary.
\begin{itemize}[parsep=1mm,itemsep=0mm,topsep=1mm,leftmargin=*]
\item The control system $\dot x(t)=A(t)x(t)+B(t)u(t)$ is locally controllable in time $T$ around the point $R(T,0)x_0$ if and only if the Gramian matrix $G_T$ is invertible.
\item Assume that $t\mapsto A(t)$ and $t\mapsto B(t)$ are $C^\infty$. If \eqref{kalman_time} is satisfied then the control system $\dot x(t)=A(t)x(t)+B(t)u(t)$ is locally controllable in time $T$ around $R(T,0)x_0$; the converse is true if $t\mapsto A(t)$ and $t\mapsto B(t)$ are analytic.
\end{itemize}
\end{proposition}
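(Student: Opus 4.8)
The plan is to exploit the affine structure of the end-point mapping and reduce everything to a local study near the zero control. Since $r=0$, Duhamel's formula gives $E_{x_0,T}(u)=R(T,0)x_0+L_Tu$, where $L_Tu=\int_0^T R(T,t)B(t)u(t)\,dt$ is linear and continuous, and the reference point $R(T,0)x_0$ corresponds precisely to $u\equiv 0$, which lies in $\overset{\circ}{\Omega}$ by hypothesis. Local controllability around $R(T,0)x_0$ thus amounts to showing that this point is interior to $\mathrm{Acc}_\Omega(x_0,T)=R(T,0)x_0+L_T(\mathcal{U}_{x_0,T,\Omega})$, so the whole argument hinges on the behavior of $L_T$ for controls of small $L^\infty$ norm.

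For the first item, assume $G_T$ is invertible. I would reuse the explicit construction from the proof of Theorem \ref{controlabiliteinstationnaire}: to reach a target $x_1$, set $u(t)=B(t)^\top R(T,t)^\top\psi$ with $\psi=G_T^{-1}(x_1-R(T,0)x_0)$, which yields $E_{x_0,T}(u)=x_1$. The key point is quantitative: the map $x_1\mapsto\psi\mapsto u$ is linear and continuous, so $\Vert u\Vert_{L^\infty}\to 0$ as $x_1\to R(T,0)x_0$. Since $0\in\overset{\circ}{\Omega}$, there is $\eta>0$ with the ball $B(0,\eta)\subset\Omega$; hence for $x_1$ in a small enough neighborhood $V$ of $R(T,0)x_0$ the control $u$ takes its values in $\Omega$, so $u$ is admissible and $x_1\in\mathrm{Acc}_\Omega(x_0,T)$, exactly as in Proposition \ref{prop_Kalman_constraints}. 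Conversely, if $G_T$ is not invertible, then by Remark \ref{remGT} there is $\psi\neq 0$ with $\psi^\top R(T,t)B(t)=0$ for a.e.\ $t$, whence $\psi^\top L_Tu=0$ for every admissible $u$; thus $\mathrm{Acc}_\Omega(x_0,T)$ is contained in the proper affine subspace $R(T,0)x_0+\psi^\perp$, which has empty interior, so $R(T,0)x_0$ cannot be an interior point.

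The second item follows by combining the first with the equivalence, contained in Theorems \ref{controlabiliteinstationnaire} and \ref{controlabiliteinstationnaire2}, between invertibility of $G_T$ and the rank condition \eqref{kalman_time}. In the $C^\infty$ case, if \eqref{kalman_time} holds at some $t\in[0,T]$, then Theorem \ref{controlabiliteinstationnaire2}(1) gives unconstrained controllability in time $T$, which by Theorem \ref{controlabiliteinstationnaire} means $G_T$ is invertible, and the first item then yields local controllability. In the analytic case, Theorem \ref{controlabiliteinstationnaire2}(2) makes \eqref{kalman_time} for all $t\in[0,T]$ equivalent to controllability, hence (again via Theorem \ref{controlabiliteinstationnaire}) to invertibility of $G_T$, and the first item upgrades this to the stated equivalence with local controllability.

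The only genuinely delicate point is the quantitative bound on $\Vert u\Vert_{L^\infty}$ in the forward direction: one must guarantee that the control built to reach nearby targets does not leave $\Omega$. This is precisely where the hypothesis $0\in\overset{\circ}{\Omega}$ is essential, and it works because the reference control is $u\equiv 0$ and the construction $x_1\mapsto u$ is continuous and vanishes at $x_1=R(T,0)x_0$; everything else is a transcription of the unconstrained proofs.
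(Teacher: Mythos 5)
Your proof is correct and follows exactly the route the paper intends: it adapts the Gramian-based control construction of Theorem \ref{controlabiliteinstationnaire} (with the quantitative bound $\Vert u\Vert_{L^\infty}\leq C\Vert x_1-R(T,0)x_0\Vert$ ensuring the control stays in $\Omega$, just as in Proposition \ref{prop_Kalman_constraints}), uses the standard separation argument for the converse, and reduces the second item to the equivalences of Theorems \ref{controlabiliteinstationnaire} and \ref{controlabiliteinstationnaire2}. No gaps.
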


\subsection{Geometry of accessible sets}

\begin{theorem}\label{access}
Consider the control system $\dot{x}(t)=A(t)x(t)+B(t)u(t)+r(t)$ in $\R^n$
with controls $u$ taking their values in a compact subset $\Omega$ of $\R^m$.
For every $x_0\in\R^n$, for every $t\geq 0$, the accessible set $\mathrm{Acc}_\Omega(x_0,t)$ is compact, convex and depends continuously on $t$ for the Hausdorff topology.\footnote{Denoting by $d$ the Euclidean distance of $\R^n$, given any two compact subsets $K_1$ and $K_2$ of $\R^n$, the Hausdorff distance $d_H$ between $K_1$ and $K_2$ is defined by
$$
d_H(K_1,K_2)=\sup \Big( \sup_{y \in K_2} d(y,K_1), \sup_{y \in K_1}d(y,K_2) \Big) .
$$
}
\end{theorem}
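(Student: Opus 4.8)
The plan is to reduce everything to the reachable set of a single linear operator. By the variation-of-constants formula already used in the proof of Theorem \ref{controlabiliteinstationnaire}, every trajectory issued from $x_0$ satisfies $x_u(t) = x^*(t) + L_t u$, where $x^*(t) = R(t,0)x_0 + \int_0^t R(t,s)r(s)\,ds$ and $L_t u = \int_0^t R(t,s)B(s)u(s)\,ds$. Hence $\mathrm{Acc}_\Omega(x_0,t) = x^*(t) + \mathcal{K}_t$, with $\mathcal{K}_t = L_t(\mathcal{U}_{x_0,t,\Omega})$ and $\mathcal{U}_{x_0,t,\Omega} = \{u\in L^\infty([0,t],\R^m): u(s)\in\Omega \text{ a.e.}\}$. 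Since $x^*(t)$ is continuous (in fact locally Lipschitz) in $t$, and a translation is an isometry for $d_H$, it suffices to show that $\mathcal{K}_t$ is compact, convex, and continuous in $t$.

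For \emph{convexity}, the naive convex combination $\lambda u_0 + (1-\lambda)u_1$ of two controls need not take its values in $\Omega$, since $\Omega$ is not assumed convex; this is the crux of the theorem. I would instead use Lyapunov's convexity theorem on the range of a non-atomic vector measure. Given $u_0,u_1\in\mathcal{U}_{x_0,t,\Omega}$ and $\lambda\in[0,1]$, consider the $\R^n$-valued measure $\nu(E) = \int_E R(t,s)B(s)(u_0(s)-u_1(s))\,ds$, which is absolutely continuous with respect to Lebesgue measure, hence non-atomic. By Lyapunov's theorem its range is a convex compact subset of $\R^n$ containing $\nu(\emptyset)=0$ and $\nu([0,t])$, hence also the point $\lambda\,\nu([0,t])$; choose a measurable set $E$ with $\nu(E)=\lambda\,\nu([0,t])$. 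Then the switched control $u = u_0\mathbf 1_E + u_1\mathbf 1_{E^c}$ lies in $\mathcal{U}_{x_0,t,\Omega}$, and a direct computation gives $L_t u = \lambda L_t u_0 + (1-\lambda)L_t u_1$, proving convexity of $\mathcal{K}_t$.

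For \emph{compactness}, boundedness of $\mathcal{K}_t$ is immediate, since $\Omega$ is bounded and $s\mapsto R(t,s)B(s)$ is essentially bounded on $[0,t]$. For closedness, let $L_t u_k \to y$; the $u_k$ are uniformly bounded in $L^2([0,t],\R^m)$, so up to a subsequence $u_k \rightharpoonup u$ weakly, and since each component of $L_t$ is a continuous linear functional, $y = L_t u$. The weak limit $u$ a priori only satisfies $u(s)\in \mathrm{conv}(\Omega)$ a.e., so at this stage $y\in L_t(\mathcal{U}_{x_0,t,\mathrm{conv}(\Omega)})$. I would then invoke the \emph{relaxation} identity $L_t(\mathcal{U}_{x_0,t,\Omega}) = L_t(\mathcal{U}_{x_0,t,\mathrm{conv}(\Omega)})$, again a consequence of the Lyapunov--Aumann theorem on integrals of multifunctions for non-atomic measures (the same switching idea, used to match an arbitrary relaxed control by a bang-type one having the same integral), to conclude $y\in\mathcal{K}_t$. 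Thus $\mathcal{K}_t$ is closed and bounded, hence compact.

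Finally, for \emph{continuity} in $t$ I would argue directly and uniformly. Fix $T_0>t$; by Gronwall's lemma the trajectories $x_u$ are uniformly bounded on $[0,T_0]$ over all $u\in\mathcal{U}_{x_0,T_0,\Omega}$, because the controls are uniformly bounded and $A,B,r$ are essentially bounded on $[0,T_0]$. Consequently $\Vert\dot x_u\Vert_{L^\infty([0,T_0])}\le C$ for a constant $C$ independent of $u$, so $\Vert x_u(t') - x_u(t)\Vert\le C|t'-t|$ for every admissible $u$. Given $y'=x_u(t')\in\mathrm{Acc}_\Omega(x_0,t')$, restricting $u$ to $[0,t]$ exhibits the point $x_u(t)\in\mathrm{Acc}_\Omega(x_0,t)$ within $C|t'-t|$ of $y'$; conversely, extending any control realizing a point of $\mathrm{Acc}_\Omega(x_0,t)$ by a fixed value of $\Omega$ on $[t,t']$ produces a point of $\mathrm{Acc}_\Omega(x_0,t')$ within $C|t'-t|$. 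Hence $d_H(\mathrm{Acc}_\Omega(x_0,t),\mathrm{Acc}_\Omega(x_0,t'))\le C|t'-t|$, giving (local Lipschitz) continuity. The main obstacle throughout is the non-convexity of $\Omega$: both the convexity and the closedness of $\mathcal{K}_t$ genuinely require Lyapunov's convexity theorem, whereas everything else reduces to elementary linear and Gronwall estimates.
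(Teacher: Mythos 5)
Your proof is correct, and it rests on the same two pillars as the paper's proof --- weak $L^2$ compactness of bounded sets of controls, and the Lyapunov/Aumann theorem to handle non-convex $\Omega$ --- but it organizes them differently. The paper first treats the case where $\Omega$ is convex: there, convexity of the accessible set is immediate by linearity, compactness follows from weak convergence of controls together with the compact embedding of $H^1$ into $C^0$, and continuity comes from a direct integral estimate; the entire non-convex case is then delegated to the cited identity $\mathrm{Acc}_\Omega(x_0,t)=\mathrm{Acc}_{\mathrm{Conv}(\Omega)}(x_0,t)=\mathrm{Acc}_{\partial\Omega}(x_0,t)$, a consequence of the Lyapunov/Aumann theorems. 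You instead work with a general compact $\Omega$ throughout: your switching construction (choosing $E$ with $\nu(E)=\lambda\,\nu([0,t])$ by Lyapunov's theorem, so that $u_0\mathbf{1}_E+u_1\mathbf{1}_{E^c}$ exactly realizes the convex combination of endpoints) is a genuine proof of convexity that the paper never writes out, while your closedness step still cites the Aumann relaxation identity $L_t(\mathcal{U}_{x_0,t,\Omega})=L_t(\mathcal{U}_{x_0,t,\mathrm{Conv}(\Omega)})$, at exactly the same level of detail as the paper. What each route buys: yours makes the key convexity phenomenon explicit and avoids the case split; the paper's keeps the convex case entirely elementary (no Lyapunov needed at all) and confines all the measure-theoretic difficulty to a single cited statement. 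Two small points you should make explicit if you write this up: the fact that the weak $L^2$ limit of $\Omega$-valued controls is $\mathrm{Conv}(\Omega)$-valued requires the standard remark that the set of $\mathrm{Conv}(\Omega)$-valued controls is convex and strongly closed, hence weakly closed (the paper makes the analogous remark in its convex case); and your continuity argument, via the Gronwall-based uniform Lipschitz bound on trajectories plus restriction/extension of controls, actually yields local Lipschitz continuity for $d_H$, which is slightly stronger than the continuity the paper states.
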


\begin{remark}
Note that the convexity of the accessible set holds true even though $\Omega$ is not assumed to be convex. This property is not obvious and follows from a Lyapunov lemma (itself based on the Krein-Milman theorem in infinite dimension; see \cite{HermesLaSalle}). Actually this argument leads to $\mathrm{Acc}_\Omega(x_0,t)=\mathrm{Acc}_{\textrm{Conv}(\Omega)}(x_0,t)=\mathrm{Acc}_{\partial\Omega}(x_0,t)$, where $\textrm{Conv}(\Omega)$ is the convex closure of $\Omega$ and $\partial\Omega$ is the boundary of $\Omega$.
This illustrates the so-called \textit{bang-bang principle} (see Section \ref{sec241}).

In infinite dimension those questions are much more difficult (see \cite{Wang_book}).
\end{remark}

\begin{proof}[Proof of Theorem \ref{access}.]
We first assume that $\Omega$ is convex. In this case, we have
$$
\mathrm{Acc}_{\Omega}(x_0,t) = R(t,0)x_0+\int_0^tR(t,s)r(s)\, ds + L_t ( L^\infty([0,T],\Omega) ) 
$$
where the linear continuous operator $L_t:L^\infty([0,T],\R^m)\rightarrow\R^n$ is defined by
$
L_t u = \int_0^t R(t,s) B(s) u(s)\, ds .
$
The convexity of $\mathrm{Acc}_{\Omega}(x_0,t)$ follows by linearity from the convexity of the set $L^\infty([0,T],\Omega)$.

Let us now prove the compactness of $\mathrm{Acc}_{\Omega}(x_0,t)$. Let $(x_n^1)_{n\in\N}$ be a sequence of points of $\mathrm{Acc}_{\Omega}(x_0,t)$. For every $n\in\N$, let $u_n\in L^\infty([0,T],\Omega)$ be a control steering the system from $x_0$ to $x_n^1$ in time $t$, and let $x_n(\cdot)$ be the corresponding trajectory. We have
\begin{equation}\label{eqdem1}
x_n^1=x_n(t)=R(t,0)x_0+\int_0^tR(t,s)(B(s)u_n(s)+r(s))\, ds.
\end{equation}
Since $\Omega$ is compact, the sequence $(u_n)_{n\in\N}$ is bounded in $L^2([0,T],\R^m)$. Since this space is reflexive (see \cite{Brezis}), by weak compactness we infer that a subsequence of $(u_{n})_{n\in\N}$ converges weakly to some $u\in L^2([0,T],\R^m)$. Since $\Omega$ is assumed to be convex, we have moreover that $u\in L^2([0,T],\Omega)$ (note that one has also $u\in L^\infty([0,T],\Omega)$ because $\Omega$ is compact). Besides, using \eqref{eqdem1} and the control system we easily see that the sequence $(x_n(\cdot))_{n\in\N}$ is bounded in $H^1([0,t],\R^n)$. Since this Sobolev space is reflexive and is compactly imbedded in $C^0([0,t],\R^n)$, we deduce that a subsequence of $(x_n(\cdot))_{n\in\N}$ converges uniformly to some $x(\cdot)$ on $[0,t]$. Passing to the limit in \eqref{eqdem1}, we get
$$
x(t)=R(t,0)x_0+\int_0^tR(t,s)(B(s)u(s)+r(s))\,ds
$$
and in particular (a subsequence of) $x_n^1=x_n(t)$ converges to $x(t)\in \mathrm{Acc}_{\Omega}(x_0,t)$. The compactness property is proved.

Let us prove the continuity in time of $\mathrm{Acc}_{\Omega}(x_0,t)$ in Hausdorff topology, i.e., that for any $\varepsilon>0$ there exists $\delta >0$ such that, for all $t_1,t_2\in\R$ satisfying $0 \leq t_1<t_2$ and $t_2-t_1\leq\delta$, we have: %$d_H(\mathrm{Acc}_{\Omega}(x_0,t_1),\mathrm{Acc}_{\Omega}(x_0,t_2)) \leq \varepsilon$.
%It suffices to prove that 
%\begin{enumerate}
%\item $\forall y \in \mathrm{Acc}_{\Omega}(x_0,t_2) \quad d(y,\mathrm{Acc}_{\Omega}(x_0,t_1)) \leq \varepsilon,$
%\item $\forall y \in \mathrm{Acc}_{\Omega}(x_0,t_1) \quad d(y,\mathrm{Acc}_{\Omega}(x_0,t_2)) \leq \varepsilon.$
%\end{enumerate}
\begin{enumerate}[parsep=1mm,itemsep=1mm,topsep=1mm]
\item[(1)] $d(y,\mathrm{Acc}_{\Omega}(x_0,t_1))$ for every $y \in \mathrm{Acc}_{\Omega}(x_0,t_2)$;
\item[(2)] $d(y,\mathrm{Acc}_{\Omega}(x_0,t_2))$ for every $y \in \mathrm{Acc}_{\Omega}(x_0,t_1)$.
\end{enumerate}
Let us prove (1) ((2) being similar). Let $y\in \mathrm{Acc}_{\Omega}(x_0,t_2)$. It suffices to prove that there exists $z\in \mathrm{Acc}_{\Omega}(x_0,t_1)$ such that $d(y,z) \leq \varepsilon$. By definition of $\mathrm{Acc}_{\Omega}(x_0,t_2)$, there exists $u\in L^\infty([0,T],\Omega)$ such that the corresponding trajectory, starting at $x_0$, satisfies $x(t_2)=y$. Then $z=x(t_1)$ is suitable. Indeed, %using that $R(t_i,s)=R(t_i,0)R(0,s)$, we have
\begin{equation*}
\begin{split}
x(t_2)-x(t_1) &= R(t_2,0)x_0+\int_0^{t_2}R(t_2,s)(B(s)u(s)+r(s))\, ds \\
& \quad - R(t_1,0)x_0- \int_0^{t_1}R(t_1,s) (B(s)u(s)+r(s))\, ds \\
&= \int_{t_1}^{t_2} R(t_2,s)(B(s)u(s)+r(s))\, ds \\
&  \quad +\left( R(t_2,0)-R(t_1,0) \right) \left(x_0+\int_0^{t_1} R(0,s)(B(s)u(s)+r(s))\, ds \right)  
\end{split}
\end{equation*}
where we have used that $R(t_i,s)=R(t_i,0)R(0,s)$.
If $t_2-t_1$ is small then the first term of the above sum is small by continuity, and the second term is small by continuity of $t\mapsto R(t,0)$. The result follows.

\medskip

In the general case where $\Omega$ is only compact (but not necessarily convex), the proof is more difficult and uses the Lyapunov lemma in measure theory (see, e.g., \cite[Lemma 4A p. 163]{LeeMarkus}) and more generally the Aumann theorem (see, e.g., \cite{HermesLaSalle}), from which, recalling that $L_T u = \int_0^T R(T,t) B(t) u(t)\, dt$, it follows that
%\begin{equation*}
%\begin{split}
%& \bigg\{ \int_0^T R(T,t) B(t) u(t)\, dt\ \bigm|\ u\in L^\infty([0,T],\Omega) \bigg\} \\
%=& \bigg\{ \int_0^T R(T,t) B(t) u(t)\, dt\ \bigm|\ u\in L^\infty([0,T],\partial\Omega) \bigg\}\\
%=& \bigg\{ \int_0^T R(T,t) B(t) u(t)\, dt\ \bigm|\ u\in L^\infty([0,T],\mathrm{Conv}(\Omega)) \bigg\}
%\end{split}
%\end{equation*}
\begin{multline*}
\left\{ L_Tu \ \mid\ u\in L^\infty([0,T],\Omega) \right\} 
= \left\{ L_Tu\ \mid\ u\in L^\infty([0,T],\partial\Omega) \right\} \\
= \left\{ L_Tu\ \mid\ u\in L^\infty([0,T],\mathrm{Conv}(\Omega)) \right\}
\end{multline*}
and moreover that these sets are compact and convex. The result follows.
\end{proof}

\begin{remark}
%If the set of control constraints $\Omega$ is compact, then the accessible is compact (and convex), and evolves continuously in time. In such conditions obviously $\mathrm{Acc}_{\Omega}(x_0,t)$ can never be equal to $\R^n$. In other words, the system is never globally controllable in time $t$. This is natural in view of the control constraints. Actually t
Theorem \ref{access} allows one to define the concept of \textit{minimal time}: when there are some compact control constraints, due to the compactness of the accessible set, one cannot steer the control system from $x_0$ to another point $x_1$ in arbitrary small time; a minimal positive time is due. %We will come back on this issue later.

Another question of interest is to know whether the control system is controllable, in time not fixed, that is: when is the union of all sets $\mathrm{Acc}_{\Omega}(x_0,t)$, over $t\geq 0$, equal to the whole $\R^n$? This question is difficult.
\end{remark}

\section{Controllability of nonlinear systems}\label{sec_nonlinear}

\subsection{Local controllability results}
\paragraph{Preliminaries: end-point mapping.}
%The first important concept to define is the end-point mapping.

It is easy\footnote{This follows from usual finite-time blow-up arguments on ordinary differential equations, and from the usual Picard-Lindel\"of (Cauchy-Lipschitz) theorem with parameters, the parameter being here a control in a Banach set (see for instance \cite{Sontag}).} to establish that the set $\mathcal{U}_{x_0,T,\R^m}$, endowed with the standard topology of $L^\infty([0,T],\R^m)$, is open, and that the end-point mapping $E_{x_0,T}$ (see Definition \ref{def_Ex0T}) is of class $C^1$ on $\mathcal{U}_{x_0,T,\R^m}$ (it is $C^p$ whenever $f$ is $C^p$).

Note that, for every $t\geq 0$, the accessible set is $\mathrm{Acc}_{\Omega}(x_0,t) = E_{x_0,t}(\mathcal{U}_{x_0,t,\Omega})$.

In what follows we often denote by $x_u(\cdot)$ a trajectory solution of \eqref{general_control_system} corresponding to the control $u$.

\begin{proposition}\label{prop_diffFrechetE}
Let $x_0\in\R^n$ and let $u\in \mathcal{U}_{x_0,T,\R^m}$. The (Fr\'echet) differential\footnote{We recall that, given two Banach spaces $X$ and $Y$, a mapping $F:X\rightarrow Y$ is said to be Fr\'echet differentiable at $x\in X$ if there exists a linear continuous mapping $dF(x):X\rightarrow Y$ such that $F(x+h)=F(x)+dF(x).h+o(h)$ for every $h\in X$. Here, the notation $dF(x).h$ means that the linear operator $dF(x)$ is applied to $h$.}
$dE_{x_0,T}(u):L^\infty([0,T],\R^m)\rightarrow\R^n$ is given by
$$
dE_{x_0,T}(u).\delta u = \delta x(T) = \int_0^TR(T,t)B(t)\delta u(t)\,dt
$$
where $\delta x(\cdot)$ is the solution of the so-called \textit{linearized system} along $(x_u(\cdot),u(\cdot))$,
$$
\delta\dot x(t) = A(t)\delta x(t)+B(t)\delta u(t),\quad \delta x(0)=0,
$$
with
$$
A(t)=\frac{\partial f}{\partial x}(t,x_u(t),u(t)),\quad B(t)=\frac{\partial f}{\partial u}(t,x_u(t),u(t)) ,
$$
(which are respectively of size $n\times n$ and $n\times m$), 
and $R(\cdot,\cdot)$ is the state-transition matrix of the linearized system, defined by \eqref{defR}. %as the unique $n\times n$ matrix solution of $\partial_tR(t,s)=A(t)R(t,s)$, $R(s,s)=I_n$, for any $s\in\R$.
\end{proposition}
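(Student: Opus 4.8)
The plan is to verify the definition of Fr\'echet differentiability directly: for $\delta u\in L^\infty([0,T],\R^m)$ small in the sup norm, I would show that
\[
E_{x_0,T}(u+\delta u) = E_{x_0,T}(u) + \delta x(T) + o(\Vert\delta u\Vert_\infty),
\]
where $\delta x(\cdot)$ solves the linearized system, and then identify $\delta x(T)$ with the claimed integral via the Duhamel formula. Since $\mathcal{U}_{x_0,T,\R^m}$ is open (as recalled in the preliminaries), for $\Vert\delta u\Vert_\infty$ small enough the control $u+\delta u$ remains admissible, so the perturbed trajectory $x_{u+\delta u}(\cdot)$ is well defined on $[0,T]$; I then set $y(t)=x_{u+\delta u}(t)-x_u(t)$, which satisfies $y(0)=0$.

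Next I would write the differential equation satisfied by $y$. Subtracting the two control systems and using that $f$ is $C^1$ in $(x,u)$, the fundamental theorem of calculus along the segment joining $(x_u(t),u(t))$ to $(x_u(t)+y(t),u(t)+\delta u(t))$ gives
\[
\dot y(t) = \tilde A(t)\, y(t) + \tilde B(t)\,\delta u(t),
\]
where $\tilde A(t)=\int_0^1 \frac{\partial f}{\partial x}(t,x_u(t)+\theta y(t),u(t)+\theta\delta u(t))\,d\theta$ and $\tilde B(t)$ is defined analogously with $\frac{\partial f}{\partial u}$. Comparing with the linearized system, I set $z(t)=y(t)-\delta x(t)$, so that $z(0)=0$ and
\[
\dot z(t) = A(t)\, z(t) + \big(\tilde A(t)-A(t)\big) y(t) + \big(\tilde B(t)-B(t)\big)\delta u(t).
\]

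The estimates form the core of the argument. First, a Gr\"onwall argument applied to the equation for $y$ yields $\Vert y\Vert_{C^0}\leq C\,\Vert\delta u\Vert_\infty$ for some constant $C$ depending only on $u$ and $T$; in particular $y\to 0$ uniformly as $\delta u\to 0$. I would then confine all trajectories $x_{u+\delta u}$, for $\Vert\delta u\Vert_\infty$ small, to a fixed compact subset $K$ of $[0,T]\times\R^n\times\R^m$; on $K$ the derivatives $\frac{\partial f}{\partial x}$ and $\frac{\partial f}{\partial u}$ are uniformly continuous, which forces $\Vert\tilde A-A\Vert_\infty$ and $\Vert\tilde B-B\Vert_\infty$ to tend to $0$ as $\Vert\delta u\Vert_\infty\to 0$. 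Combined with $\Vert y\Vert_{C^0}=O(\Vert\delta u\Vert_\infty)$, the two forcing terms in the equation for $z$ are $o(\Vert\delta u\Vert_\infty)$ uniformly in $t$, and a second Gr\"onwall estimate gives $\Vert z\Vert_{C^0}=o(\Vert\delta u\Vert_\infty)$, whence $y(T)=\delta x(T)+o(\Vert\delta u\Vert_\infty)$, which is exactly the differentiability claim. The \emph{main obstacle} is precisely this step: upgrading the pointwise $C^1$ (Taylor) remainder into a \emph{uniform} estimate, which is where the compactness of the trajectory in state space and the uniform continuity of the derivatives are indispensable.

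Finally, the linearized system is linear in $\delta x$ with $\delta x(0)=0$, so the Duhamel (variation of constants) formula with the state-transition matrix $R(\cdot,\cdot)$ of \eqref{defR} gives directly
\[
\delta x(T)=\int_0^T R(T,t)B(t)\delta u(t)\,dt.
\]
This representation also makes transparent that $\delta u\mapsto \delta x(T)$ is linear and continuous on $L^\infty([0,T],\R^m)$, as required for it to be the Fr\'echet differential $dE_{x_0,T}(u)$.
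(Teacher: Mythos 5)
Your proposal is correct and follows essentially the same route as the paper's proof: compare $x_{u+\delta u}$ with $x_u$, identify the linearized system in the first-order expansion, control the remainder by Gronwall, and conclude with the Duhamel formula; your two-step estimate (first $\Vert y\Vert_{C^0}=O(\Vert\delta u\Vert_\infty)$ via Gronwall, then $\Vert y-\delta x\Vert_{C^0}=o(\Vert\delta u\Vert_\infty)$ via a second Gronwall applied to $z=y-\delta x$) is precisely the ``standard Gronwall argument'' that the paper invokes without detailing. The only adjustment needed under the paper's stated hypotheses ($f$ of class $C^1$ in $(x,u)$ but merely locally integrable in $t$) is that $\frac{\partial f}{\partial x}$ and $\frac{\partial f}{\partial u}$ need not be continuous, hence not uniformly continuous, on a compact subset of $[0,T]\times\R^n\times\R^m$; one instead uses continuity in $(x,u)$ for almost every fixed $t$ together with an integrable dominating function and the dominated convergence theorem, which gives smallness of $\tilde A-A$ and $\tilde B-B$ in $L^1([0,T])$ rather than in sup norm --- and this is all that the Gronwall inequality requires.
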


\begin{proof}[Proof of Proposition \ref{prop_diffFrechetE}.]
We have
$
E_{x_0,T}(u+\delta u)=E_{x_0,T}(u)+dE_{x_0,T}(u).\delta u+\mathrm{o}(\delta u)
$
by definition of the Fr\'echet differential.
In this first-order Taylor expansion, $E_{x_0,T}(u)=x_u(T)$ and $E_{x_0,T}(u+\delta u)=x_{u+\delta u}(T)$. We want to compute $dE_{x_0,T}(u).\delta u$, which is equal, at the first order, to $x_{u+\delta u}(T)-x_u(T)$. In what follows, we set $\delta x(t)=x_{u+\delta u}(t)-x_u(t)$. We have
\begin{align*}
\delta\dot x(t) &= f(t,x_{u+\delta u}(t),u(t)+\delta u(t)) - f(t,x_u(t),u(t)) \\
&= f(t,x_u(t)+\delta x(t),u(t)+\delta u(t)) - f(t,x_u(t),u(t)) \\
&= \frac{\partial f}{\partial x}(t,x_u(t),u(t)).\delta x(t) + \frac{\partial f}{\partial u}(t,x_u(t),u(t)).\delta  u(t) + \mathrm{o}(\delta x(t),\delta u(t)) 
\end{align*}
so that, at the first order, we identify the linearized system. By integration (note that the remainder terms can be rigorously handled by standard Gronwall arguments, not detailed here), we get $\delta x(T)=\int_0^TR(T,t)B(t)\delta u(t)\, dt$, as expected.
Note that this term provides a linear continuous operator and thus is indeed the Fr\'echet differential of the end-point mapping.
\end{proof}

\begin{remark}\label{remark_loccont}
This theorem says that the differential of the end-point mapping at $u$ is the end-point mapping of the linearized system along $(x_u(\cdot),u(\cdot))$. This is similar to the well known result in dynamical systems theory, stating that the differential of the flow is the flow of the linearized system. This remark has interesting consequences in terms of local controllability properties.
\end{remark}

\paragraph{Local controllability results along a trajectory.}
Let $x_0\in\R^n$ and let $\bar u\in \mathcal{U}_{x_0,T,\R^m}$ be arbitrary. According to Remark \ref{remark_loccont}, if the linearized system along $(x_{\bar u}(\cdot),\bar u(\cdot))$ is controllable in time $T$, then the end-point mapping of the linearized system is surjective, meaning that the linear continuous mapping $dE_{x_0,T}(\bar u):L^\infty([0,T],\R^m)\rightarrow\R^n$ is surjective. It follows from an implicit function argument (surjective mapping theorem) that the end-point mapping itself, $E_{x_0,T}:\mathcal{U}_{x_0,T,\R^m}\rightarrow\R^n$, %is a \textit{local submersion}, and thus, in particular, 
is locally surjective and locally open at $u$.

The above argument works because we have considered controls taking their values in the whole $\R^m$. The argument still works whenever one considers a set $\Omega$ of control constraints, provided that we have room to consider local variations of $\bar u$: this is true as soon as $\bar u$ is in the interior of $L^\infty([0,T],\Omega)$ for the topology of $L^\infty([0,T],\R^m)$ (note that this condition is stronger than requiring that $\bar u$ takes its values in the interior of $\Omega$).
%The local surjectivity of $E_{x_0,T}$ means that the general control system \eqref{general_control_system} is locally controllable at $x_u(T)$.
We have thus obtained the following result (see Definition \ref{def_loccont}).

\begin{theorem}\label{thm_localcont_traj}
Let $x_0\in\R^n$ and let $\bar u\in \mathcal{U}_{x_0,T,\Omega}$. 
We denote by $\bar x(\cdot) = x_{\bar u}(\cdot)$ the trajectory solution of \eqref{general_control_system}, corresponding to the control $\bar u(\cdot)$, such that $\bar x(0)=x_0$, and we set $\bar x_1=\bar x(T)=E_{x_0,T}(\bar u)$.
We assume that the function $\bar u(\cdot)$ is in the interior of $L^\infty([0,T],\Omega)$ for the topology of $L^\infty([0,T],\R^m)$. 

If the linearized system along $(\bar x(\cdot),\bar u(\cdot))$ is controllable in time $T$, then the nonlinear control system \eqref{general_control_system} is locally controllable from $x_0$ in time $T$ around $x_1$.
More precisely, there exists an open neighborhood $V$ of $\bar x_1$ in $\R^n$ and an open neighborhood $U$ of $\bar u(\cdot)$ in $L^\infty([0,T],\R^m)$, satisfying $U\subset \mathcal{U}_{x_0,T,\Omega}\subset L^\infty([0,T],\Omega)$, such that, for every $x_1\in V$, there exists a control $u\in U$ such that $x_u(0)=x_0$ and $x_1=x_u(T)=E_{x_0,T}(u)$.
\end{theorem}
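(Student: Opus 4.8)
The plan is to exploit the key fact, established in Proposition \ref{prop_diffFrechetE}, that the Fréchet differential $dE_{x_0,T}(\bar u)$ is precisely the end-point mapping of the linearized system along $(\bar x(\cdot),\bar u(\cdot))$, and then to invoke a quantitative form of the inverse function theorem. First I would record that, since by hypothesis the linearized system is controllable in time $T$, its end-point mapping is surjective; hence the continuous linear map $L := dE_{x_0,T}(\bar u) : L^\infty([0,T],\R^m) \to \R^n$ is onto. Because $E_{x_0,T}$ is of class $C^1$ on the open set $\mathcal{U}_{x_0,T,\R^m}$ (recalled in the preliminaries), this surjectivity of its differential is exactly the hypothesis needed to deduce local openness and local surjectivity.

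Rather than appeal to a Banach-space surjective mapping theorem directly, I would reduce to the finite-dimensional inverse function theorem. Since $L$ is onto $\R^n$, I can choose controls $v_1,\dots,v_n \in L^\infty([0,T],\R^m)$ such that $Lv_1,\dots,Lv_n$ form a basis of $\R^n$. I then define the finite-dimensional map
\[
\Phi(\lambda) = E_{x_0,T}\!\Big(\bar u + \sum_{i=1}^n \lambda_i v_i\Big), \qquad \lambda=(\lambda_1,\dots,\lambda_n)\in\R^n,
\]
which is $C^1$ near $\lambda=0$ (as the composition of the affine map $\lambda\mapsto \bar u+\sum_i\lambda_i v_i$ with $E_{x_0,T}$), satisfies $\Phi(0)=E_{x_0,T}(\bar u)=\bar x_1$, and whose differential at $0$ is $d\Phi(0).\lambda = \sum_i \lambda_i\, Lv_i$. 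By the choice of the $v_i$ this differential is invertible, so the ordinary inverse function theorem in $\R^n$ yields a radius $\rho>0$ such that $\Phi$ restricted to the ball $B(0,\rho)$ is a diffeomorphism onto an open set containing a ball $V := B(\bar x_1,\varepsilon)$. For each target $x_1\in V$ there is thus $\lambda\in B(0,\rho)$ with $\Phi(\lambda)=x_1$, i.e. the control $u=\bar u+\sum_i\lambda_i v_i$ steers $x_0$ to $x_1$ in time $T$.

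It remains to check that these controls obey the constraint $u\in L^\infty([0,T],\Omega)$ and to package the neighborhoods; this is the only step where the precise hypothesis on $\bar u$ is used, and it is the point requiring the most care. By assumption $\bar u$ lies in the interior of $L^\infty([0,T],\Omega)$ for the $L^\infty([0,T],\R^m)$ topology, and $\mathcal{U}_{x_0,T,\R^m}$ is open, so there is $\delta>0$ with $U := \{w : \Vert w-\bar u\Vert_{L^\infty}<\delta\}\subset \mathcal{U}_{x_0,T,\R^m}\cap L^\infty([0,T],\Omega)\subset\mathcal{U}_{x_0,T,\Omega}$. Since $\Vert u-\bar u\Vert_{L^\infty}\le\sum_i|\lambda_i|\,\Vert v_i\Vert_{L^\infty}$, I would shrink $\rho$ (and correspondingly $\varepsilon$) so that every $\lambda\in B(0,\rho)$ gives $\bar u+\sum_i\lambda_i v_i\in U$. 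With this choice all the controls produced above lie in $U\subset L^\infty([0,T],\Omega)$, and $(U,V)$ are the asserted neighborhoods. The main obstacle is thus not analytic but structural: ensuring that the admissible perturbations of $\bar u$ remain within the constraint set, which is exactly what the $L^\infty$-interior hypothesis guarantees; without room to move $\bar u$ in every direction, the surjectivity of $L$ could not be converted into surjectivity of the nonlinear end-point mapping.
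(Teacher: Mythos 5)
Your proof is correct. You and the paper share the same skeleton: by Proposition \ref{prop_diffFrechetE}, controllability of the linearized system in time $T$ means exactly that $L=dE_{x_0,T}(\bar u)$ is surjective onto $\R^n$, and the control constraints are absorbed in both arguments by the hypothesis that $\bar u$ lies in the $L^\infty([0,T],\R^m)$-interior of $L^\infty([0,T],\Omega)$ (together with openness of $\mathcal{U}_{x_0,T,\R^m}$). Where you genuinely diverge is in the tool that converts surjectivity of the differential into local surjectivity of the nonlinear map: the paper invokes the surjective mapping theorem (an implicit-function-type result for $C^1$ maps between Banach spaces) applied directly to $E_{x_0,T}$, whereas you avoid all infinite-dimensional machinery by restricting $E_{x_0,T}$ to the finite-dimensional affine slice $\lambda\mapsto \bar u+\sum_{i=1}^n\lambda_i v_i$, with $Lv_1,\dots,Lv_n$ a basis of $\R^n$, and then applying the classical inverse function theorem in $\R^n$ to $\Phi$. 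Your reduction buys elementariness and a constructive flavor: the steering controls come from an explicit $n$-parameter family, which also makes the final shrinking argument (keeping $\Vert u-\bar u\Vert_{L^\infty}<\delta$ so that $u$ stays in $\mathcal{U}_{x_0,T,\R^m}\cap L^\infty([0,T],\Omega)\subset\mathcal{U}_{x_0,T,\Omega}$) completely transparent. What the paper's formulation buys in exchange is generality: arguing directly with the Banach-space surjective mapping theorem is the version of the argument that survives (with suitable substitutes, as the paper remarks) in infinite-dimensional state spaces, where one cannot reduce the target to a finite-dimensional picture; it also yields local openness of $E_{x_0,T}$ at $\bar u$ as an immediate by-product, though your construction recovers that too by taking $\rho$ arbitrarily small.
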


%Note that, in the above statement, $v$ is in a $L^\infty$ neighborhood of $u$ and $x_v(\cdot)$ is in a neighborhood of $x_u(\cdot)$ in $C^0$ topology. 
%The implicit function argument even implies that, in some appropriate coordinates, $E_{x_0,T}$ is a linear projection.

\begin{remark}
The controllability in time $T$ of the linearized system $\delta\dot x(t)=A(t)\delta x(t)+B(t) \delta u(t)$ along $(\bar x(\cdot),\bar u(\cdot))$ can be characterized thanks to Theorems \ref{controlabiliteinstationnaire} and \ref{controlabiliteinstationnaire2}. We thus have explicit sufficient conditions for local controllability. Note that the conditions are not necessary (for instance, in $1D$, $\dot{x}(t)=u(t)^3$, along $\bar u=0$).
\end{remark}

\begin{example} %% exam 2023
Consider the Reeds-Shepp control system
$$
\dot x(t) = v(t)\cos\theta(t), \qquad
\dot y(t) = v(t)\sin\theta(t), \qquad
\dot\theta(t) = u(t),
$$
where the controls $u$ and $v$ are subject to the constraints $\vert u\vert\leq 1$ and $\vert v\vert\leq 1$.
We call \emph{segment} any connected piece of trajectory along which $u=\mathrm{Cst}=0$ and $v=\mathrm{Cst}=\bar v$ with $\bar v\neq 0$.
We call \emph{arc of a circle} any connected piece of trajectory along which $u=\mathrm{Cst}=\bar u$ and $v=\mathrm{Cst}=\bar v$ with $\bar u\neq 0$ and $\bar v\neq 0$.
\begin{itemize}[parsep=1mm,itemsep=1mm,topsep=1mm,leftmargin=*]
\item Prove that the control system is locally controllable along any segment such that $\vert\bar v\vert<1$ (in time equal to that of the segment).
\item Prove that the control system is locally controllable along any arc of a circle such that $\vert\bar u\vert<1$ and $\vert\bar v\vert<1$ (in time equal to that of the arc of a circle).
\item Deduce that the system is globally controllable: for all $(x_0,y_0,\theta_0)\in\R^3$ and $(x_1,y_1,\theta_1) \in\R^3$, there exist $T>0$ and controls $u$ et $v$ satisfying the constraints and generating a trajectory steering the system from $(x_0,y_0,\theta_0)$ to $(x_1,y_1,\theta_1)$ in time $T$. Describe such a strategy in a very simple way.
\end{itemize}
\end{example}

Let us next provide two important applications of Theorem \ref{thm_localcont_traj} (which are particular cases): local controllability around a point, and the return method.

\paragraph{Local controllability around an equilibrium point.}
Assume that the general control system \eqref{general_control_system} is \textit{autonomous}, i.e., that $f$ does not depend on $t$. Assume that $(\bar x,\bar u)\in\R^n\times\R^m$ is an equilibrium point of $f$, i.e., $f(\bar x,\bar u)=0$. In that case, the constant trajectory defined by $\bar x(t)=\bar x$ and $\bar u(t)=\bar u$ is a solution of \eqref{general_control_system}. The linearized system along this (constant) trajectory is given by
$$
\delta\dot x(t)=A\delta x(t)+B \delta u(t) 
$$
with $A=\frac{\partial f}{\partial x}(\bar x,\bar u)$ and $B=\frac{\partial f}{\partial u}(\bar x,\bar u)$.
It follows from Theorem \ref{general_control_system} that, if this linear autonomous control system is controllable (in time $T$) then the nonlinear control system is locally controllable in time $T$ around the point $\bar x$, i.e., $\bar x$ can be steered in time $T$ to any point in some neighborhood. By reversing the time (which is possible because we are here in finite dimension), the converse can be done.
We have thus obtained the following result.

\begin{corollary}\label{cor_loc_cont}
With the above notations, assume that $\mathrm{rank}\, K(A,B)=n$ and that $\bar u\in\mathring{\Omega}$ (interior of $\Omega$).
Then, for every $T>0$, the control system $\dot{x}(t)=f(x(t),u(t))$ is locally controllable in time $T$ around the point $\bar x$ in the following sense: 
for every $T>0$ there exist an open neighborhood $V$ of $\bar x$ in $\R^n$ and an open neighborhood $W$ of $\bar u$ in $\R^m$, satisfying $W\subset\Omega$ and $L^\infty([0,T],W)\subset \mathcal{U}_{x_0,T,\Omega}\subset L^\infty([0,T],\Omega)$, such that, for all $x_0,x_1\in V$, there exists a control $u\in L^\infty([0,T],W)$ such that $x_u(0)=x_0$ and $x_u(T)=x_1$.
\end{corollary}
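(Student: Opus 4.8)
The plan is to deduce the statement from Theorem \ref{thm_localcont_traj}, the only genuinely new work being to let the \emph{initial} point $x_0$ vary as well, not merely the target $x_1$. First I would record that the Kalman hypothesis forces the linearized system to be controllable. Along the constant trajectory $\bar x(\cdot)\equiv\bar x$, $\bar u(\cdot)\equiv\bar u$ (a genuine trajectory because $f(\bar x,\bar u)=0$), the linearized system is the autonomous one $\delta\dot x=A\delta x+B\delta u$ with $A=\partial_x f(\bar x,\bar u)$, $B=\partial_u f(\bar x,\bar u)$. By Theorem \ref{thm_Kalman}, $\mathrm{rank}\,K(A,B)=n$ makes it controllable in time $T$, hence by Proposition \ref{prop_diffFrechetE} the differential $dE_{\bar x,T}(\bar u):L^\infty([0,T],\R^m)\to\R^n$ is surjective. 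Since $\bar u\in\mathring\Omega$, the constant control $\bar u$ lies in the interior of $L^\infty([0,T],\Omega)$, so the smooth-dependence and openness facts recalled before Proposition \ref{prop_diffFrechetE} are available near $(\bar x,\bar u)$.

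The substantive point is to incorporate the initial condition as a variable. I would introduce the augmented end-point map
$$\Psi:\R^n\times L^\infty([0,T],\R^m)\to\R^n\times\R^n,\qquad \Psi(\xi,u)=\bigl(\xi,\;E_{\xi,T}(u)\bigr),$$
where $E_{\xi,T}(u)$ is the value at time $T$ of the solution of \eqref{general_control_system} issued from $x(0)=\xi$ under control $u$. By the Picard–Lindel\"of theorem with parameters (smooth dependence on both initial data and control), $\Psi$ is of class $C^1$ near $(\bar x,\bar u)$ and $\Psi(\bar x,\bar u)=(\bar x,\bar x)$. Its differential sends $(\delta\xi,\delta u)$ to $\bigl(\delta\xi,\;\partial_\xi E_{\bar x,T}(\bar u)\delta\xi+dE_{\bar x,T}(\bar u)\delta u\bigr)$: given any $(\alpha,\beta)\in\R^n\times\R^n$ one takes $\delta\xi=\alpha$ and then chooses $\delta u$ with $dE_{\bar x,T}(\bar u)\delta u=\beta-\partial_\xi E_{\bar x,T}(\bar u)\alpha$, which is solvable precisely because $dE_{\bar x,T}(\bar u)$ is onto. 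Thus $d\Psi(\bar x,\bar u)$ is surjective, and the surjective mapping (right-inverse) theorem in Banach spaces gives a continuous local right inverse of $\Psi$ near $(\bar x,\bar x)$. Concretely, for every $(x_0,x_1)$ close to $(\bar x,\bar x)$ one obtains a control $u$ close to $\bar u$ in $L^\infty$ with $\Psi(x_0',u)=(x_0,x_1)$; reading the first coordinate forces $x_0'=x_0$, so $E_{x_0,T}(u)=x_1$.

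It remains to convert ``$u$ close to $\bar u$ in $L^\infty$'' into the stated bookkeeping. Because $\bar u\in\mathring\Omega$, I would fix an open ball $W$ around $\bar u$ with $W\subset\Omega$, then shrink the neighborhoods above so that (i) the controls produced take their values in $W$, whence $u\in L^\infty([0,T],W)\subset L^\infty([0,T],\Omega)$, and (ii) $L^\infty([0,T],W)\subset\mathcal U_{x_0,T,\Omega}$ for $x_0\in V$, i.e.\ every control valued in $W$ is admissible from any $x_0\in V$. Point (ii) is the openness of the admissible set: since the nominal trajectory exists on the compact interval $[0,T]$ and $f$ is $C^1$, a Gronwall/continuous-dependence estimate shows that all trajectories starting in a small enough $V$ with controls valued in a small enough $W$ stay in a fixed compact set and are defined on all of $[0,T]$. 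Taking $V$ to be the intersection of this admissibility neighborhood with the domain of the right inverse finishes the argument.

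I do not expect any deep obstacle here; the conceptually essential step, compared with Theorem \ref{thm_localcont_traj}, is the passage from a fixed to a variable initial point, cleanly handled by the augmented map $\Psi$ and the elementary verification that surjectivity of $dE_{\bar x,T}(\bar u)$ in $u$ alone already yields joint surjectivity of $d\Psi$. The only real fuss is \emph{uniformity}: producing a \emph{single} pair $(V,W)$ that works simultaneously for all $x_0,x_1\in V$ while respecting admissibility and the constraint $W\subset\Omega$. As a remark, the same conclusion can be reached in the spirit suggested before the statement, by combining forward local controllability from $\bar x$ with its time-reversed counterpart (any $x_0$ near $\bar x$ can be steered to $\bar x$) and concatenating two maneuvers of duration $T/2$; the augmented-map route is preferable since it delivers controllability in time $T$ with one control and avoids matching the two pieces.
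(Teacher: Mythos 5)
Your proof is correct, but it takes a genuinely different route from the paper's. The paper deduces the corollary from Theorem \ref{thm_localcont_traj} used as a black box, twice: applied to the constant trajectory $(\bar x,\bar u)$ it gives forward local controllability ($\bar x$ can be steered in time $T$ to every point of a neighborhood), and then — exploiting that the system is autonomous and finite-dimensional, so time can be reversed — the same theorem applied to the time-reversed system shows that every point of a neighborhood can be steered \emph{to} $\bar x$; the two maneuvers (say of duration $T/2$ each, using that the Kalman condition gives controllability in arbitrary time) are concatenated to join $x_0$ to $x_1$. This is exactly the alternative you mention in your closing remark. Your main argument instead augments the end-point map to $\Psi(\xi,u)=(\xi,E_{\xi,T}(u))$, observes that its differential is block-triangular and hence surjective as soon as $dE_{\bar x,T}(\bar u)$ is onto, and invokes the surjective mapping theorem once. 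What your route buys: a single control on $[0,T]$ with no matching of two arcs, and the uniformity issue (one pair $(V,W)$ serving all $x_0,x_1\in V$, with $L^\infty([0,T],W)\subset\mathcal U_{x_0,T,\Omega}$) is handled transparently by the continuity of the local right inverse together with the Gronwall admissibility estimate near the equilibrium. What the paper's route buys: no new machinery beyond the already-proved theorem, and it isolates the role of time-reversibility — a property the author is careful to flag as specific to finite dimension, which is pedagogically the point of the remark "by reversing the time (which is possible because we are here in finite dimension)". Your bookkeeping steps (choosing $W\subset\Omega$ a ball around $\bar u$, and the Gronwall argument guaranteeing that all trajectories issued from $V$ with controls valued in $W$ exist on $[0,T]$) are sound and are precisely what the paper leaves implicit.
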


\begin{example}
Consider the control system
$$
\dot x(t) = x(t)(1-x(t))(x(t)-\theta(t)) , \qquad 
\dot \theta(t) = u(t) ,
$$
where the control $u$ is subject to the constraint $\vert u\vert\leq 1$. The state $x(t)$ represents the density of the population of an ecological system and the state $\theta(t)$, of which we control the derivative, is called the \emph{Allee parameter} of the system. Let $\bar\theta\in(0,1)$ and $T>0$. Prove that the control system is locally controllable in time $T$ around the equilibrium point $(x,\theta,u)=(\bar\theta,\bar\theta,0)$.
\end{example}

\begin{example}\label{ex_pendulum}
Let us consider the famous example of the inverted pendulum (see Figure \ref{figpendulum}) of mass $m$, attached to a carriage of mass $M$ whose acceleration $u(t)$ is controlled, with constraint $\vert u(t)\vert\leq 1$. 
\begin{figure}[h]
\begin{center}
\resizebox{5cm}{!}{\input 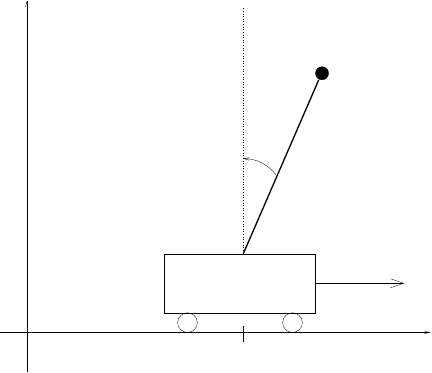_t}
\end{center}
\caption{Inverted pendulum}\label{figpendulum}
\end{figure}
The control system is
\begin{equation*}
\begin{split}
\ddot{\xi}&=\frac{ml\dot{\theta}^2\sin\theta- mg\cos\theta\sin\theta+u}{M+m\sin^2\theta},\\
\ddot{\theta}&=\frac{-ml\dot{\theta}^2\sin\theta\cos\theta +(M+m)g\sin\theta-u\cos\theta}{l(M+m\sin^2\theta)} ,
\end{split}
\end{equation*}
i.e., this is a system in dimension $4$, of state $x=(\xi,\dot\xi,\theta,\dot\theta)^\top$ (see \cite{Andrea,Trelat} for the derivation of those equations).
The linearized control system at any equilibrium point $(\bar\xi,0,0,0)^\top$ is given by the pair of matrices
$$
A=\begin{pmatrix}
0 & 1 & 0 & 0 \\
0 & 0 & -\frac{mg}{M} & 0 \\
0 & 0 & 0 & 1 \\
0 & 0 & \frac{(M+m)g}{lM} & 0
\end{pmatrix}
\qquad\textrm{and} \qquad
B=\begin{pmatrix} 0\\ \frac{1}{M}\\ 0\\
-\frac{1}{lM}\end{pmatrix}.
$$
The Kalman condition is easily verified. 
Corollary \ref{cor_loc_cont} implies that, for any $T>0$, this control system is locally controllable in time $T$ around the equilibrium.
\end{example}

\begin{example}\label{ex_Maxwell-Bloch}
Consider the control system
$$
\dot x_1(t) = x_2(t) + u_1(t) ,\quad
\dot x_2(t) = x_1(t)x_3(t) + u_2(t), \quad
\dot x_3(t) = -x_1(t)x_2(t) ,
$$
where the state is $x(t)=(x_1(t),x_2(t),x_3(t))^\top\in\R^3$ and the control is $u(t)=(u_1(t),u_2(t))\in\R^2$.
This system stands for the real-valued Maxwell-Bloch equations which model the interaction between light and matter and describe the dynamics of a real-valued two-state quantum system interacting with the electromagnetic mode of an optical resonator. 
%\begin{enumerate}[parsep=1mm,itemsep=1mm,topsep=1mm]%,leftmargin=*
%\item Prove that all equilibrium points $(\bar x,\bar u)$ of the system are given by the parametrized families
%$\mathcal{F}_1 = \{ \bar x=(0,b,c),\ \bar u=(-b,0)\ \mid\ b,c\in\R \}$ and
%$\mathcal{F}_2 = \{ \bar x=(a,0,c),\ \bar u=(0,-ac)\ \mid\ a,c\in\R \}$.
%\item Linearize the system at a point of $\mathcal{F}_1$ and derive a necessary and sufficient condition for controllability of this linearized system.
%\item Linearize the system at a point of $\mathcal{F}_2$ and derive a necessary and sufficient condition for controllability of this linearized system.
%\item Deduce that the system is locally controllable around any equilibrium point that is not of the form $\bar x=(0,0,c)$, $\bar u=(0,0)$.
%\end{enumerate}
\begin{enumerate}[parsep=1mm,itemsep=1mm,topsep=1mm,leftmargin=*]
\item[(1)] First, we see that all equilibrium points $(\bar x,\bar u)$ of the system are given by the parametrized families
$\mathcal{F}_1 = \{ \bar x=(0,b,c),\ \bar u=(-b,0)\ \mid\ b,c\in\R \}$ and
$\mathcal{F}_2 = \{ \bar x=(a,0,c),\ \bar u=(0,-ac)\ \mid\ a,c\in\R \}$.

\item[(2)] Computing the linearized system at a point of $\mathcal{F}_1$ (resp., of $\mathcal{F}_2$),
% is given by the matrices 
%$
%A=\begin{pmatrix}
%0 & 1 & 0 \\
%c & 0 & 0 \\
%-b & 0 & 0
%\end{pmatrix}$ and $B=\begin{pmatrix}
%1 & 0 \\
%0 & 1 \\
%0 & 0
%\end{pmatrix}$,
we check that the Kalman condition implies that $b\neq 0$ (resp., $a\neq 0$) is a necessary and sufficient condition for the controllability of this linearized system.

%\item[(2.b)] The linearized system at a point of $\mathcal{F}_2$ is given by the matrices
%$
%A=\begin{pmatrix}
%0 & 1 & 0 \\
%c & 0 & a \\
%0 & -a & 0
%\end{pmatrix}$ and $B=\begin{pmatrix}
%1 & 0 \\
%0 & 1 \\
%0 & 0
%\end{pmatrix}$,
%and the Kalman condition implies that $a\neq 0$ is a necessary and sufficient condition for the controllability of this linearized system.

\item[(3)] It follows from Corollary \ref{cor_loc_cont} that the Maxwell-Bloch system is locally controllable around any equilibrium point that is not of the form $\bar x=(0,0,c)$, $\bar u=(0,0)$.
\end{enumerate}
\end{example}

Many applications of Corollary \ref{cor_loc_cont} can be found in the literature. It can be noted that the proof, based on the implicit function theorem, is robust and withstands some generalizations in infinite dimension (possibly replacing the implicit function theorem with more appropriate results, such as the Kakutani fixed point theorem).
The interested reader will easily find many examples and applications. We do not have room here to list or provide some of them.

\paragraph{The return method.}
In Corollary \ref{cor_loc_cont}, the sufficient condition is that the linearized system at the equilibrium point be controllable. Assume now that we are in a situation where the linearized system at the equibrium point is not controllable, and however we would like to prove, using alternative conditions, that the nonlinear control system is locally controllable.
The idea of the so-called \textit{return method}\footnote{The return method was invented by J.-M. Coron first with the aim of stabilizing control systems with smooth instationary feedbacks. Then it was applied to many problems of control of PDEs in order to establish controllability properties. We refer the reader to \cite{Coron}.} is to assume that there exists a nontrivial loop trajectory of the control system, going from $x_0$ to $x_0$ in time $T$, along which the linearized control system is controllable. Then, Theorem \ref{thm_localcont_traj} implies that the control system is locally controllable around $x_0$.

Note that the method is not restricted to equilibrium points. We have the following corollary.

\begin{corollary}
Let $x_0\in\R^n$. Assume that there exists a trajectory $\bar x(\cdot)$ of the control system \eqref{general_control_system}, corresponding to a control $\bar u(\cdot)$ on $[0,T]$, such that $\bar x(0)=\bar x(T)=x_0$. Assume that the function $\bar u(\cdot)$ is in the interior of $L^\infty([0,T],\Omega)$ for the topology of $L^\infty([0,T],\R^m)$.
If the linearized system along $(\bar x(\cdot),\bar u(\cdot))$ is controllable in time $T$, then the nonlinear control system \eqref{general_control_system} is locally controllable in time $T$ around the point $x_0$:
there exists an open neighborhood $V$ of $x_0$ in $\R^n$ and an open neighborhood $U$ of $\bar u(\cdot)$ in $L^\infty([0,T],\R^m)$, satisfying $U\subset \mathcal{U}_{x_0,T,\Omega}\subset L^\infty([0,T],\Omega)$, such that, for every $x_1\in V$, there exists a control $u\in U$ such that $x_u(0)=x_0$ and $x_1=x_u(T)=E_{x_0,T}(u)$.
\end{corollary}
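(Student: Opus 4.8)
The plan is to recognize that this corollary is nothing more than the special case of Theorem \ref{thm_localcont_traj} in which the reference trajectory happens to be a loop. All I would do is invoke that theorem and then read off the conclusion in the loop setting. First I would set $\bar x_1 := \bar x(T) = E_{x_0,T}(\bar u)$, exactly as in Theorem \ref{thm_localcont_traj}, and observe that the additional hypothesis $\bar x(0) = \bar x(T) = x_0$ means precisely that $\bar x_1 = x_0$. The remaining hypotheses of the corollary---namely that $\bar u(\cdot)$ lies in the interior of $L^\infty([0,T],\Omega)$ for the $L^\infty([0,T],\R^m)$ topology, and that the linearized system along $(\bar x(\cdot),\bar u(\cdot))$ is controllable in time $T$---are word for word the hypotheses required by Theorem \ref{thm_localcont_traj}.

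Applying that theorem verbatim then yields an open neighborhood $V$ of $\bar x_1$ in $\R^n$ and an open neighborhood $U$ of $\bar u(\cdot)$ in $L^\infty([0,T],\R^m)$, with $U\subset\mathcal{U}_{x_0,T,\Omega}\subset L^\infty([0,T],\Omega)$, such that every $x_1\in V$ is reached as $x_1 = x_u(T) = E_{x_0,T}(u)$ for some $u\in U$ with $x_u(0)=x_0$. The only observation needed to finish is that, since $\bar x_1 = x_0$, the neighborhood $V$ of $\bar x_1$ is exactly a neighborhood of $x_0$; hence local controllability ``around $\bar x_1$'' is the same as local controllability ``around $x_0$,'' which is the claimed statement. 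This identification of target point with source point is the entire substance of the return-method reformulation.

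I do not expect any genuine obstacle here: the result is a direct specialization of Theorem \ref{thm_localcont_traj}, and the mechanism behind that theorem (the surjectivity of $dE_{x_0,T}(\bar u)$ coming from controllability of the linearized system, followed by the surjective mapping theorem) has already been established. The conceptual point worth emphasizing---rather than any technical difficulty---is that the return method relaxes the hypothesis of Corollary \ref{cor_loc_cont}: instead of demanding an equilibrium point at which the linearization is controllable, it suffices to exhibit a nontrivial closed loop based at $x_0$ along which the linearized system is controllable, which is a strictly weaker requirement and is what makes the method useful precisely when the linearization at the rest point fails to be controllable.
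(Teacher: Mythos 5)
Your proposal is correct and matches the paper exactly: the corollary is stated there as an immediate consequence of Theorem \ref{thm_localcont_traj}, obtained precisely by taking the reference trajectory to be a loop so that $\bar x_1=\bar x(T)=x_0$, which is the identification you make. No further argument is needed.
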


\begin{example}
Consider the Dubins car model
%\begin{equation*}
%\begin{split}
%\dot{x}_1(t) &= \cos\theta(t),\qquad x_1(0)=0,\\
%\dot{x}_2(t) &= \sin\theta(t),\qquad\, x_2(0)=0,\\
%\dot\theta(t) &= u(t),\quad\quad\qquad \theta(0)=0\ [2\pi].
%\end{split}
%\end{equation*}
$$
\dot{x}_1(t) = \cos\theta(t),\qquad 
\dot{x}_2(t) = \sin\theta(t),\qquad
\dot\theta(t) = u(t),
$$
with initial point $x_1(0)=x_2(0)=0$ and $\theta(0)=0\ [2\pi]$. The control is subject to the constraint $\vert u\vert\leq M$ for some fixed $M>0$.
Let us prove that this control system is locally controllable at $(0,0,0\ [2\pi])$ in any time $T>\frac{2\pi}{M}$. Consider the reference trajectory given by
$$
\bar x_1(t) = \frac{T}{2\pi}\sin\frac{2\pi t}{T},\quad \bar x_2(t) = \frac{T}{2\pi}\left( 1-\cos\frac{2\pi t}{T}\right),\quad \bar \theta(t)=\frac{2\pi t}{T},\quad \bar u(t)=\frac{2\pi}{T} 
$$
(the inequality $T>\frac{2\pi}{M}$ implies that $\vert\bar u\vert<M$).
The linearized system along this trajectory is represented by the matrices
$$
A(t) = \begin{pmatrix}
0 & 0 & -\sin\frac{2\pi t}{T} \\ 0 & 0 & \phantom{-}\cos\frac{2\pi t}{T} \\ 0 & 0 & 0 
\end{pmatrix}
, \quad
B(t) = \begin{pmatrix}
0 \\ 0 \\ 1
\end{pmatrix}
$$
and it is easy to check (using Theorem \ref{controlabiliteinstationnaire2}) that this system is controllable in any time $T>0$. The local controllability property in time $T>\frac{2\pi}{M}$ follows. 

Actually, it can be proved that the Dubins system is \emph{not} locally controllable at $(0,0,0\ [2\pi])$ in time $T\in[0,\frac{2\pi}{M}]$.
\end{example}

In the previous paragraphs, we have derived, thanks to simple implicit function arguments, \textit{local} controllability results. The local feature of these results is not a surprise, having in mind that, from a general point of view, it is expected that showing the global surjectivity of the nonlinear mapping $E_{x_0,T}$ is difficult.

In view of that, we next provide two further issues.

\subsection{Geometry of the accessible set}
In view of better understanding why it is hopeless, in general, to get global controllability, it is useful to have a clear geometric picture of the accessible set. We have the following result, similar to Theorem \ref{access}.

\begin{theorem}\label{ensacccompact}
Let $x_0\in\R^n$ and let $T>0$.
We assume that:
\begin{itemize}[parsep=1mm,itemsep=1mm,topsep=1mm]%,leftmargin=*
\item $\Omega$ is compact;
\item there exists $b>0$ such that, for every admissible control $u\in\mathcal{U}_{x_0,T,\Omega}$, one has $\Vert x_u(t)\Vert \leq b$ for every $t\in [0,T]$;
\item there exists $c>0$ such that $\Vert f(t,x,u)\Vert\leq c$ for every $t\in[0,T]$, for every $x\in\R^n$ such that $\Vert x\Vert\leq b$ and for every $u\in\Omega$;
\item the set of velocities $V(t,x)=\{f(t,x,u)\ \vert\ u\in\Omega\}$ is convex, for all $(t,x)$.
\end{itemize}
Then the set $\mathrm{Acc}_\Omega(x_0,t)$ is compact and varies continuously in time on $[0,T]$ (for the Hausdorff topology).
\end{theorem}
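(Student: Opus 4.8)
The plan is to adapt the proof of Theorem \ref{access} to this nonlinear setting, replacing the linear structure with the general dynamics \eqref{general_control_system} while exploiting the four standing assumptions. The uniform bounds (the $b$-bound on trajectories and the $c$-bound on $f$) will play the role that compactness of $\Omega$ and the explicit Duhamel representation played in the linear case, and the convexity of the velocity set $V(t,x)$ will replace the convexity used (directly or via Lyapunov's lemma) for the accessible set.

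\medskip

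For compactness, I would take a sequence $(x_k^1)_k$ in $\mathrm{Acc}_\Omega(x_0,t)$ with controls $u_k\in\mathcal{U}_{x_0,t,\Omega}$ and trajectories $x_k(\cdot)$. By the third assumption, $\dot x_k(s)=f(s,x_k(s),u_k(s))$ is bounded by $c$ uniformly in $k$ and $s$, so the family $(x_k(\cdot))_k$ is uniformly bounded (by $b$) and equi-Lipschitz; by Arzel\`a--Ascoli a subsequence converges uniformly on $[0,t]$ to some absolutely continuous $x(\cdot)$. The delicate point is to produce an admissible control $u$ with $\dot x(s)=f(s,x(s),u(s))$. Here I would \emph{not} try to pass to the limit in $u_k$ directly (no convexity of $\Omega$ is assumed); instead I would invoke a measurable-selection / closure argument for differential inclusions: since $\dot x_k(s)\in V(s,x_k(s))$ and $V(s,x)$ is convex, compact-valued and upper semicontinuous in $x$ (the latter following from continuity of $f$ and compactness of $\Omega$), the uniform limit satisfies $\dot x(s)\in V(s,x(s))=\{f(s,x(s),u)\mid u\in\Omega\}$ for a.e.\ $s$; a Filippov-type measurable selection then yields an admissible $u\in L^\infty([0,t],\Omega)$ with $x=x_u$. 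Thus $x(t)\in\mathrm{Acc}_\Omega(x_0,t)$ and the limit point $x^1_k\to x(t)$ lies in the accessible set, proving compactness.

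\medskip

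For continuity in time, I would mimic the linear argument of Theorem \ref{access}: fix $\varepsilon>0$ and $0\leq t_1<t_2$ with $t_2-t_1\leq\delta$. Given $y\in\mathrm{Acc}_\Omega(x_0,t_2)$ realized by a control $u$ and trajectory $x(\cdot)$, the point $z=x(t_1)\in\mathrm{Acc}_\Omega(x_0,t_1)$ satisfies
$$
\Vert x(t_2)-x(t_1)\Vert = \left\Vert \int_{t_1}^{t_2} f(s,x(s),u(s))\,ds\right\Vert \leq c\,(t_2-t_1)\leq c\,\delta ,
$$
using the third assumption, so choosing $\delta=\varepsilon/c$ gives (1). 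For the symmetric estimate (2), given $y=x(t_1)\in\mathrm{Acc}_\Omega(x_0,t_1)$ I would extend the control $u$ arbitrarily (say by a fixed admissible value) on $[t_1,t_2]$ to produce a point of $\mathrm{Acc}_\Omega(x_0,t_2)$ within $c\,\delta$ of $y$, using the same integral bound and the uniform $b$-bound to guarantee the extended trajectory stays admissible. Together these give the Hausdorff continuity.

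\medskip

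The main obstacle is the closedness of the velocity constraint under uniform convergence of trajectories, i.e.\ proving $\dot x(s)\in V(s,x(s))$ for the limit and recovering an admissible control. This is precisely where the convexity of $V(t,x)$ is essential: without it the weak limit of the velocities $\dot x_k$ need not lie in $V(s,x(s))$, and the limit $x(\cdot)$ could fail to be a trajectory of the system. I would therefore organize this step carefully around Mazur's lemma (to convert weak-$L^1$ convergence of $\dot x_k$ into strong convergence of convex combinations, which remain in the convex set $V$) or equivalently around a standard closure theorem for differential inclusions with convex right-hand side, citing \cite{LeeMarkus} or \cite{HermesLaSalle} as in the linear case.
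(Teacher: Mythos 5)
Your proposal is correct, and its skeleton coincides with the paper's: extract a limiting trajectory from a sequence of admissible ones, use convexity of the velocity sets to show the limit still solves the differential inclusion, recover a measurable control by a Filippov/Lee--Markus selection lemma, and get Hausdorff continuity in time from the bound $\Vert f\Vert\leq c$. Where you genuinely diverge is in the execution of the closure step. You pass to the limit directly in the inclusion $\dot x_k(s)\in V(s,x_k(s))$, via upper semicontinuity of $x\mapsto V(s,x)$ plus Mazur's lemma (or a ready-made closure theorem for convex-valued differential inclusions). Be careful here: your phrase ``convex combinations \ldots remain in the convex set $V$'' is exactly the point that needs work, because the velocities $\dot x_k(s)$ lie in the \emph{varying} sets $V(s,x_k(s))$, not in a single convex set; one must first use upper semicontinuity to place them in $V(s,x(s))+\varepsilon \bar B$ for $k$ large, apply Mazur there, and let $\varepsilon\to 0$. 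The paper sidesteps this entirely with a small trick: it introduces the auxiliary velocities $h_n(s)=f(s,x(s),u_n(s))$, i.e.\ the original controls evaluated along the \emph{limit} trajectory, which by construction lie in the fixed set $V(s,x(s))$; the set $\mathcal{V}=\{h\in L^2([0,t],\R^n)\ \mid\ h(s)\in V(s,x(s))\ \mathrm{a.e.}\}$ is convex and strongly closed, hence weakly closed, so the weak limit $h$ of $(h_n)$ lies in $\mathcal{V}$, and the Lipschitz estimate $\Vert g_n(s)-h_n(s)\Vert\leq C\Vert x_n(s)-x(s)\Vert$ together with dominated convergence identifies $h$ with the limit $g$ of the true velocities. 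So your route leans on semicontinuity of the set-valued map, while the paper trades that for a Lipschitz comparison in $x$; both are standard and both work. Two smaller remarks: the paper constructs the limit trajectory from the weak-$*$ limit of the velocities rather than by Arzel\`a--Ascoli (equivalent here), and on time-continuity you actually do more than the paper, which only writes the estimate $\Vert x(t_2)-x(t_1)\Vert\leq \mathrm{Cst}\,\vert t_2-t_1\vert$ and says it concludes ``easily''; your constant extension of the control supplies the missing symmetric direction, and its admissibility is indeed forced by the uniform $b$-bound (if the extended trajectory ever left the ball $\Vert x\Vert\leq b$, its restriction up to the exit time would be an admissible control whose trajectory violates the second assumption, so the extended solution stays in the ball and cannot blow up before $t_2$).
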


\begin{remark}
The second assumption (uniform boundedness of trajectories) is done to avoid blow-up of trajectories. It is satisfied for instance if the dynamics $f$ is sublinear at infinity. The third assumption is done for technical reasons in the proof, because at the beginning we assumed that $f$ is locally integrable, only, with respect to $t$.
The assumption of convexity of $V(t,x)$ is satisfied for instance for control-affine systems (that is, whenever $f$ is affine in $u$) and if $\Omega$ is moreover convex.
\end{remark}

\begin{proof}[Proof of Theorem \ref{ensacccompact}.]
First of all, note that $V(t,x)$ is compact, for all $(t,x)$, because $\Omega$ is compact. Let us prove that $Acc(x_0,t)$ is compact for every $t\in[0,T]$. It suffices to prove that every sequence $(x_n)$ of points of $Acc(x_0,t)$ has a converging subsequence. For every integer $n$, let $u_n\in\mathcal{U}_{x_0,t,\Omega}$ be a control steering the system from $x_0$ to $x_n$ in time $t$ and let $x_n(\cdot)$ be the corresponding trajectory. We have
$
x_n=x_n(t)=x_0+\int_0^t f(s,x_n(s),u_n(s))\,ds.
$
Setting $g_n(s)=f(s,x_n(s),u_n(s))$ for $s\in[0,t]$, using the assumptions, the sequence of functions $(g_n(\cdot))_{n\in\N}$ is bounded in $L^\infty([0,t],\R^n)$, and therefore, up to some subsequence, it converges to some function $g(\cdot)$ for the weak star topology of $L^\infty([0,t],\R^n)$ (see \cite{Brezis}). For every $\tau\in[0,t]$, we set $x(\tau)=x_0+\int_0^\tau g(s)\, ds$. Clearly, $x(\cdot)$ is absolutely continuous on $[0,t]$, and $\lim_{n\rightarrow +\infty} x_n(s)=x(s)$ for every $s\in[0,t]$, that is, the sequence $(x_n(\cdot))_{n\in\N}$ converges pointwise to $x(\cdot)$. The objective is now to prove that $x(\cdot)$ is a trajectory associated with a control $u$ taking its values in $\Omega$, that is, to prove that $g(s)=f(s,x(s),u(s))$ for almost every $s\in[0,t]$.

To this aim, for every integer $n$ and almost every $s\in[0,t]$, we set $h_n(s)=f(s,x(s),u_n(s))$, and we define the set
$${\cal V} = \{ h(\cdot)\in L^2([0,t],\R^n)\ \mid\ h(s)\in V(s,x(s))\ \textrm{for a.e.}\ s\in[0,t] \} . $$
Note that $h_n\in{\cal V}$ for every integer $n$. For all $(t,x)$, the set $V(t,x)$ is compact and convex, and, using the fact that, from any sequence converging strongly in $L^2$ we can substract a subsequence converging almost everywhere, we infer that ${\cal V}$ is convex and closed in $L^2([0,t],\R^n)$ for the strong topology. Therefore ${\cal V}$ is closed as well in $L^2([0,t],\R^n)$ for the weak topology (see \cite{Brezis}).
But like $(g_n)_{n\in\N}$, the sequence $(h_n)_{n\in\N}$ is bounded in $L^2$, hence up to some subsequence it converges to some function $h$ for the weak topology, and $h$ must belong to ${\cal V}$ since ${\cal V}$ is weakly closed.

Finally, let us prove that $g=h$ almost everywhere. We have
\begin{equation}\label{temp1}
\int_0^t\varphi(s)g_n(s)ds = \int_0^t\varphi(s)h_n(s)\,ds +\int_0^t\varphi(s)\left( g_n(s)-h_n(s)\right) \,ds
\end{equation}
for every $\varphi\in L^2([0,t],\R)$.
By assumption, $f$ is globally Lipschitz in $x$ on $[0,T]\times \bar{B}(0,b) \times \Omega$, and hence by the mean value inequality, there exists $C>0$ such that $\Vert g_n(s)-h_n(s)\Vert\leq C\Vert x_n(s)-x(s)\Vert$ for almost every $s\in[0,t]$. The sequence $(x_n)$ converge pointwise to $x(\cdot)$, hence, using the dominated convergence theorem, we infer that
$\int_0^t\varphi(s)\left( g_n(s)-h_n(s)\right) \, ds \rightarrow 0$
as $n\rightarrow +\infty$.
Passing to the limit in \eqref{temp1}, we obtain $\int_0^t\varphi(s)g(s)ds = \int_0^t\varphi(s)h(s)\,ds$ for every $\varphi\in L^2([0,t],\R)$ and therefore $g=h$ almost everywhere on $[0,t]$.

In particular, we have $g\in{\cal V}$, and hence for almost every $s\in[0,t]$ there exists $u(s)\in\Omega$ such that $g(s)=f(s,x(s),u(s))$. Applying a measurable selection lemma in measure theory (note that $g\in L^\infty([0,t],\R^n)$), $u(\cdot)$ can be chosen to be measurable on $[0,T]$ (see \cite[Lemmas 2A, 3A p. 161]{LeeMarkus}).

In conclusion, the trajectory $x(\cdot)$ is associated on $[0,t]$ with the control $u$ taking its values in $\Omega$, and $x(t)$ is the limit of the points $x_n$. This shows the compactness of $\mathrm{Acc}_\Omega(x_0,t)$.

It remains to establish the continuity of the accessible set with respect to time. Like in Theorem \ref{access}, let $t_1$ and $t_2$ be two real numbers such that $0<t_1<t_2\leq T$ and let $x_2\in \mathrm{Acc}_\Omega(x_0,t_2)$. By definition, there exists a control $u$ taking its values in $\Omega$, generating a trajectory $x(\cdot)$, such that $x_2=x(t_2)=x_0+\int_0^{t_2} f(t,x(t),u(t))\,dt$.
The point $x_1=x(t_1)=x_0+\int_0^{t_1} f(t,x(t),u(t))\,dt $ belongs to $\mathrm{Acc}_\Omega(x_0,t_1)$, and using the assumptions on $f$, we have $\Vert x_2-x_1\Vert\leq \mathrm{Cst}\vert t_2-t_1\vert $. We conclude easily.
\end{proof}

\subsection{Global controllability results}
One can find global controllability results in the existing literature, established for particular classes of control systems. Let us provide here controllability results in the important class of control-affine systems.

We say that a control system is \textit{control-affine} whenever the dynamics $f$ is affine in $u$, in other words the control system is
$$
\dot{x}(t) = f_0(x(t))+\sum_{i=1}^m u_i(t) f_i(x(t))
$$
where the mappings $f_i:\R^n\rightarrow\R^n$, $i=0,\ldots,m$ are smooth. The term $f_0(x)$ is called a \textit{drift}.
Here, there is a crucial insight coming from differential geometry. We consider the mappings $f_i$ are vector fields on $\R^n$. Such vector fields generate some flows, some integral curves, and at this point geometric considerations come into the picture.

There are many existing results in the literature, providing local or global controllability results under conditions on the Lie brackets of the vector fields.

We recall that the Lie bracket of two vector fields $X$ and $Y$ is defined either by $[X,Y](x)=dY(x).X(x)-dX(x).Y(x)$, or, recalling that a vector field is a first-order derivation on $C^\infty(\R^n,\R)$ defined by $(Xf)(x)=df(x).X(x)$ for every $f\in C^\infty(\R^n,\R)$ (Lie derivative), by $[X,Y]=XY-YX$ (it is obvious to check that it is indeed a first-order derivation).
We also mention that, denoting by $\mathrm{exp}(tX)$ and $\mathrm{exp}(tY)$ the flows generated by the vector fields $X$ and $Y$, the flows commute, i.e., $\mathrm{exp}(t_1X)\circ\mathrm{exp}(t_2Y)=\mathrm{exp}(t_2Y)\circ\mathrm{exp}(t_1X)$ for all times $t_1$ and $t_2$, if and only if $[X,Y]=0$. If the Lie bracket is nonzero then the flows do not commute, but we have the asymptotic expansion
$$
\mathrm{exp}(-tY)\circ\mathrm{exp}(-tX)\circ\mathrm{exp}(tY)\circ\mathrm{exp}(tX)(x) = x+\frac{t^2}{2}[X,Y](x)+\mathrm{o}(t^2)
$$
as $t\rightarrow 0$. The left-hand side of that equality is the point obtained by starting at $x$, following the vector field $X$ during a time $t$, then the vector field $Y$ during a time $t$, then $-X$ during a time $t$, and then $-Y$ during a time $t$. What it says is that this loop is not closed! The lack of commutation is measured through the Lie bracket $[X,Y]$.
For more details on Lie brackets, we refer the reader to any textbook of differential geometry. Without going further, we mention that the Campbell-Hausdorff formula gives a precise series expansion of $Z$, defined by $\mathrm{exp}(X)\circ\mathrm{exp}(Y)=\mathrm{exp}(Z)$, in terms of iterated Lie brackets of $X$ and $Y$. The first terms are $Z=X+Y+\frac{1}{2}[X,Y]+\cdots$.

Finally, we recall that the Lie algebra generated by a set of vector fields is the set of all possible iterated Lie brackets of these vector fields.

For control-affine systems without drift, we have the following well-known Chow-Rashevski theorem (also called H\"ormander condition, or Lie Algebra Rank Condition), whose early versions can be found in \cite{Chow,Rashevski}.

\begin{theorem}\label{thm_liebrackets}
Consider a control-affine system without drift in $\R^n$. Assume that $\Omega=\R^m$ (no constraint on the control) and that the Lie algebra generated by the vector fields $f_1,\ldots,f_m$ is equal to $\R^n$ (at any point). Then the system is globally controllable, in any time $T$.
\end{theorem}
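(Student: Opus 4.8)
The plan is to combine a local \emph{accessibility} statement, extracted from the Lie algebra rank condition, with a connectedness argument that exploits the absence of drift. Throughout, write $\exp(tf_i)$ for the flow of $f_i$, and observe that, since $\Omega=\R^m$ and there is no drift, the system is \emph{time-reversible}: following $f_i$ backward in time is the same as following $-f_i$ forward, and $-f_i$ is an admissible control direction. Consequently the relation ``$y$ is reachable from $x$'' is symmetric; being also reflexive and transitive (by concatenation of controls), it is an equivalence relation, whose classes are the orbits $O(x)$.

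First I would establish the accessibility lemma: for every $x$, the orbit $O(x)$ contains an open neighborhood of $x$. Consider all compositions of flows $\Phi(t_1,\dots,t_k)=\exp(t_k X_k)\circ\cdots\circ\exp(t_1 X_1)(x)$ with each $X_j\in\{f_1,\dots,f_m\}$, and let $p$ be the maximal rank of $d\Phi$ over all such maps and all parameters; fix $\Phi$ and $\bar t$ realizing this maximum. By the constant rank theorem the image of $\Phi$ near $\bar t$ is a $p$-dimensional embedded submanifold $S$. The crucial claim is that each $f_i$ is tangent to $S$ along $S$: otherwise, appending an extra flow $\exp(s f_i)$ as the outermost factor would produce rank $p+1$ at some point, contradicting maximality. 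Since the Lie bracket of two vector fields tangent to a submanifold is again tangent to it, the whole Lie algebra generated by $f_1,\dots,f_m$ is tangent to $S$, and the rank hypothesis forces $p=n$. Thus $\Phi$ has full rank $n$ at $\bar t$, so its image contains an open set around $q=\Phi(\bar t)$; precomposing $\Phi$ with the inverse flows $\exp(-\bar t_1 X_1)\circ\cdots\circ\exp(-\bar t_k X_k)$ (admissible by reversibility) yields a flow composition based at $x$, still of full rank, whose value at $\bar t$ is $x$ itself, so its image is a neighborhood of $x$ contained in $O(x)$.

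Granting the lemma, the global conclusion is immediate: every orbit is open, the orbits partition $\R^n$, and since $\R^n$ is connected there is a single orbit, equal to $\R^n$. Hence any $x_1$ is reachable from $x_0$ in \emph{some} time. To upgrade this to controllability in an \emph{arbitrary} time $T>0$, I would use time rescaling, available precisely because there is no drift: if a control $u$ on $[0,S]$ steers $x_0$ to $x_1$, then $\tilde u(t)=\tfrac{S}{T}\,u\!\left(\tfrac{S}{T}t\right)$ on $[0,T]$ generates the reparametrized trajectory $t\mapsto x\!\left(\tfrac{S}{T}t\right)$, which still solves the driftless system and reaches $x_1$ at time $T$.

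The hard part will be the accessibility lemma, and within it the tangency step: proving that maximality of the rank forces every $f_i$ to be tangent to $S$, then invoking stability of tangency under Lie brackets to bring the entire Lie algebra to bear and conclude $p=n$. This is the only place where the rank condition is genuinely used, and the only point requiring differential-geometric input (the constant rank theorem and the bracket--tangency fact); the ensuing topological and rescaling arguments are elementary.
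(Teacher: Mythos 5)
Your proof is correct, but it reaches the key local statement by a genuinely different route than the paper. The paper's proof is an explicit Campbell--Hausdorff computation, sketched only for $n=3$, $m=2$ under the assumption $\mathrm{rank}(f_1,f_2,[f_1,f_2])=3$: it considers the single composition $\varphi_{\lambda}(t_1,t_2,t_3) = \exp(\lambda f_1) \exp(t_3 f_2) \exp(- \lambda f_1) \exp(t_2 f_2) \exp (t_1 f_1)(x_0)$, uses the Campbell--Hausdorff expansion to compute $\partial_{t_1}\varphi_\lambda(0)=f_1(x_0)$, $\partial_{t_2}\varphi_\lambda(0)=f_2(x_0)$, $\partial_{t_3}\varphi_\lambda(0)=f_2(x_0)+\lambda[f_1,f_2](x_0)+\mathrm{o}(\lambda)$, concludes that $\varphi_\lambda$ is a local diffeomorphism at $0$ for small $\lambda\neq 0$, and then invokes ``an easy connectedness argument''. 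Your argument replaces this by the orbit-theorem device: take a composition of flows of maximal rank $p$, observe that each $f_i$ must be tangent to the resulting $p$-dimensional image $S$ (otherwise appending one more flow raises the rank), use stability of tangency under Lie brackets to conclude that the whole Lie algebra is tangent to $S$, hence $p=n$ by the rank hypothesis, and carry the resulting open set back to the base point by time reversal. What your approach buys: it proves the theorem in full generality (arbitrary $n$ and $m$, and brackets of arbitrary order needed to span $\R^n$), whereas the paper's computation as written treats only the case where first-order brackets suffice and is explicitly a sketch; it also sidesteps convergence questions for the Campbell--Hausdorff series. What the paper's approach buys: an explicit, finite family of controls that realizes the bracket direction quantitatively, which is closer in spirit to motion-planning constructions. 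You also make explicit two points the paper leaves implicit: that openness of all orbits plus connectedness of $\R^n$ yields a single orbit (the symmetry of reachability coming from reversibility of driftless unconstrained systems), and the time-rescaling argument $\tilde u(t)=\frac{S}{T}\,u\big(\frac{S}{T}t\big)$, valid precisely because there is no drift and no control constraint, which upgrades controllability in some time to controllability in any prescribed time $T$ --- a claim the statement contains but the paper's proof never addresses.
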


\begin{proof}[Proof of Theorem \ref{thm_liebrackets}.]
We sketch the proof in the case $n=3$ and $m=2$, assuming that $\mathrm{rank}(f_1,f_2,[f_1,f_2])=3$ at any point. Let $\lambda\in\R$. We define the mapping
$$
\varphi_{\lambda}(t_1,t_2,t_3) = \mathrm{exp}(\lambda f_1) \, \mathrm{exp}(t_3 f_2) \, \mathrm{exp}(- \lambda f_1)\, \mathrm{exp}(t_2 f_2)\, \mathrm{exp} (t_1 f_1)(x_0) . $$
We have $\varphi_{\lambda}(0)=x_0$. Let us prove that, for $\lambda \neq 0$ small enough, $\varphi_{\lambda}$ is a local diffeomorphism at $0$. From the Campbell-Hausdorff formula, we infer that
$
\varphi_{\lambda}(t_1,t_2,t_3)=\mathrm{exp}(t_1f_1+(t_2+t_3)f_2+\lambda t_3[f_1,f_2]+ \cdots) , 
$
hence
$\partial_{\partial t_1}\varphi_\lambda(0)=f_1(x_0)$,
$\partial_{\partial t_2}\varphi_\lambda(0)=f_2(x_0)$ and
$\partial_{\partial t_3}\varphi_\lambda(0)=f_2(x_0)+\lambda[f_1,f_2](x_0)+\textrm{o}(\lambda)$.
By assumption, it follows that $d\varphi_\lambda$ is an isomorphism, and therefore $\varphi_\lambda$ is a local diffeomorphism at $0$. We conclude by an easy connectedness argument.
\end{proof}

We approach here the \textit{geometric control theory}. The theorem above is one of the many existing results that can be obtained with Lie bracket considerations. We refer the reader to the textbook \cite{Jurdjevic} for many results which are of a geometric nature. In particular this reference contains some material in order to treat the case of control-affine systems with drift (see also \cite{Coron}).
Note that, in presence of a drift $f_0$, an easy sufficient condition ensuring global controllability is that the Lie algebra generated by the controlled vector fields $f_1,\ldots,f_m$ be equal to $\R^n$ (at any point); 
indeed, the rough idea is that, taking large controls, in some sense the drift term can be compensated and then one can apply Theorem \ref{thm_liebrackets}.

\begin{example}
The Heisenberg system in $\R^3$
$$
\dot{x}(t) = u_1(t),\quad \dot{y}(t)=u_2(t),\quad\dot{z}(t)=u_1(t)y(t)-u_2(t)x(t),
$$
is represented by the two vector fields
$f_1 = \partial x + y \partial z$ and $f_2 = \partial y-x\partial z$.
We have $[f_1,f_2]=-2\partial z$ and thus the Lie algebra condition is satisfied. Therefore this system is controllable.
\end{example}

\begin{remark}
In practice, Lie brackets are often realized thanks to oscillating functions, like the sine function taken with a sufficiently large frequency (see \cite{Coron}). One can find in the literature many results concerning the \emph{motion planning problem}, which consists of designing simple enough controls realizing controllability. We refer to \cite{Jean} for a survey on such techniques.
\end{remark}

%\begin{remark}
%The Lie condition above is also called the H\"ormander condition, due to the well-known following result due to H\"ormander (see \cite{Ho-67}). Let $L=\sum_{i=1}^m f_i^2$ be an operator, defined as a sum of squares of vector fields: this is a second-order differential operator, which can be called a \textit{sub-Laplacian}.\footnote{If $m=n$ and if the $f_i$ coincide with the canonical basis of $\R^n$ then $L$ is exactly the usual Laplacian. We speak of sub-Laplacian when $m<n$.} We say that $L$ is \textit{hypoelliptic} if $g$ is $C^\infty$ whenever $Lg$ is $C^\infty$. Note that the usual Laplacian is hypoelliptic, but the question is not obvious if $m<n$.
%The famous result by H\"ormander states that $L$ is hypoelliptic if and only if the Lie algebra generated by the vector fields $f_1,\ldots,f_m$ is equal to $\R^n$ (at any point).
%
%Here, we recover an idea of controllability because, in order to prove that if $Lg$ is $C^\infty$ then $g$ is $C^\infty$, we have to ``control" the derivatives of $g$ in any possible direction of $\R^n$.
%\end{remark}

\chapter{Optimal control}\label{chap_opt}

In Chapter \ref{chap_cont}, we have provided controllability properties for general classes of control systems. Considering some control problem of trying to reach some final configuration for the control system \eqref{general_control_system}, from some initial configuration, with an admissible control, it happens that, in general, there exists an infinite number of controls making the job (think of all possibilities of realizing a parallel parking, for instance). Among this infinite number of controls, we now would like to select (at least) one control, achieving the desired controllability problem, and moreover minimizing some cost criterion (for instance, one would like to realize the parallel parking by minimizing the time, or by minimizing the fuel consumption). This is then an optimal control problem.

\medskip

The main objective of this chapter is to formulate the \textit{Pontryagin maximum principle}, which is the milestone of optimal control theory. It provides first-order necessary conditions for optimality, which allow one to compute or at least parametrize the optimal trajectories.

\medskip

Let us give the general framework that will be used throughout the chapter.

Let $n$ and $m$ be two positive integers.
%As in Chapter \ref{chap_cont}, 
We consider a control system in $\R^n$
\begin{equation}\label{gencontsyst}
\dot{x}(t) = f(t,x(t),u(t))
\end{equation}
where $f:\R\times\R^n\times\R^m\rightarrow\R^n$ is of class $C^1$, and the controls are measurable essentially bounded functions of time taking their values in some measurable subset $\Omega$ of $\R^m$ (set of control constraints).

Let $f^0:\R\times\R^n\times\R^m\rightarrow\R$ and $g:\R\times\R^n\rightarrow\R$ be functions of class $C^1$. For every $x_0\in\R^n$, for every $t_f\geq 0$, and for every admissible control $u\in\mathcal{U}_{x_0,t_f,\Omega}$ (see Section \ref{sec_nonlinear}), the \textit{cost} of the trajectory $x(\cdot)$, solution of \eqref{gencontsyst}, corresponding to the control $u$ and such that $x(0)=x_0$, is defined by
\begin{equation}\label{def_cost}
C_{x_0,t_f}(u) = \int_0^{t_f} f^0(t,x(t),u(t))\,dt+g(t_f,x(t_f)).
\end{equation}
%Most often, in what follows, if the initial point is fixed or is clear from the context, we will simply denote the cost by $C(t_f,u)$ or by $C_{t_f}(u)$.
Many variants of a cost can be given, anyway the one above is already quite general and covers a very large class of problems. If needed, one could easily add some term penalizing the initial point. Note also that the term $g(t_f,x(t_f))$ could as well be written in integral form and thus be put in the definition of the function $f^0$; however we prefer to keep this formulation that we find convenient in many situations.

%Let us now define the optimal control problem that we will consider. 
Let $M_0$ and $M_1$ be two measurable subsets of $\R^n$.
We consider the optimal control problem (denoted in short $\OCP$ in what follows) of determining a trajectory $x(\cdot)$, defined on $[0,t_f]$ (where the final time $t_f$ can be fixed or not in \OCP), corresponding to an admissible control $u\in\mathcal{U}_{x(0),t_f,\Omega}$, solution of \eqref{gencontsyst}, such that
$x(0)\in M_0$ and $x(t_f)\in M_1$
and minimizing the cost \eqref{def_cost} over all possible trajectories steering the control system from $M_0$ to $M_1$ in time $t_f$.

\medskip

This is a general nonlinear optimal control problem, but without any state constraints. We could restrict the set of trajectories by imposing some pointwise constraints on $x(t)$ (a region of the state space may be forbidden). Such constraints are however not easily tractable in the Pontryagin maximum principle and make the analysis much more difficult (see Section \ref{sec_generalizations_PMP}). %We will however comment further on this issue, but for simplicity we will mainly not consider state constraints.

\section{Existence of an optimal control}
Although it is not very useful, let us state a general result ensuring the existence of an optimal solution of \OCP.
In the theorem below, note that we can assume that $f^0$ and $g$ are only continuous. Here, there is no additional difficulty in adding some state constraints.

\begin{theorem}\label{thmfilippov}
We consider $\OCP$ and we assume that:
\begin{itemize}[parsep=1mm,itemsep=1mm,topsep=1mm]%,leftmargin=*
\item $\Omega$ is compact, $M_0$ and $M_1$ are compact;
\item $M_1$ is reachable from $M_0$, that is, there exists a trajectory (corresponding to an admissible control) steering the system from $M_0$ to $M_1$;
\item there exist $b>0$ and $c>0$ such that $\Vert f(t,x,u)\Vert+\vert f^0(t,x,u)\vert\leq c$ for every $t\in[0,b]$, every $x\in\R^n$ satisfying $\Vert x\Vert\leq b$, and every $u\in\Omega$; moreover, for every trajectory $x(\cdot)$ defined on $[0,t_f]$ and steering the system from $M_0$ to $M_1$, one has $t_f\leq b$ and $\Vert x(t)\Vert \leq b$ for every $t\in [0,t_f]$ (no blow-up);
\item the epigraph of extended velocities
\begin{equation}\label{optconv}
\tilde{V}(t,x)=\left\{
\begin{pmatrix}
f(t,x,u)\\
f^0(t,x,u)+\gamma
\end{pmatrix}
\ \Bigm|\ u\in\Omega,\ \gamma\geq 0\right\}
\end{equation}
is convex, for all $(t,x)$.
\end{itemize}
We assume moreover in $\OCP$ that the trajectories are subject to state constraints $c_i(t,x(t))\leq 0$, where the $c_i$, $i\in\{1,\ldots,r\}$ are continuous functions defined on $\R\times\R^n$.

Then $\OCP$ has at least one solution.
\end{theorem}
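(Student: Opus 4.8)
The plan is to apply the direct method of the calculus of variations, following closely the compactness argument already used in the proof of Theorem \ref{ensacccompact}, but now also keeping track of the cost. The essential role of the convexity of the \emph{epigraph} $\tilde V(t,x)$, rather than of the bare velocity set $V(t,x)$, is to guarantee lower semicontinuity of the cost along a minimizing sequence. To make this transparent I would introduce the augmented state $\tilde x=(x^0,x)\in\R^{1+n}$, with the extra coordinate defined by $\dot x^0(t)=f^0(t,x(t),u(t))$ and $x^0(0)=0$, so that $C_{x_0,t_f}(u)=x^0(t_f)+g(t_f,x(t_f))$; the slack variable $\gamma\geq0$ in \eqref{optconv} then relaxes the equality $\dot x^0=f^0$ into the inequality $\dot x^0\geq f^0$, which is exactly the relaxation that survives weak limits while still bounding the cost from above.

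Since $M_1$ is reachable from $M_0$, the admissible set is nonempty, and by the uniform bound $\Vert f\Vert+\vert f^0\vert\leq c$ together with continuity of $g$ on the compact set $\{t_f\leq b,\ \Vert x\Vert\leq b\}$, the infimum $m^\star$ of the cost is finite. I would pick a minimizing sequence $(x_n(\cdot),u_n(\cdot))$, defined on $[0,t_{f,n}]$, with $x_n(0)\in M_0$, $x_n(t_{f,n})\in M_1$, $c_i(t,x_n(t))\leq0$, and $C(u_n)\to m^\star$. Because $t_{f,n}\in[0,b]$, up to a subsequence $t_{f,n}\to t_f^\star\in[0,b]$, and extending each velocity $g_n=(f^0,f)(\cdot,x_n,u_n)$ by zero on $[t_{f,n},b]$ lets me work on the fixed interval $[0,b]$ (the identification below being needed only on $[0,t_f^\star]$).

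Exactly as in the proof of Theorem \ref{ensacccompact}, the sequence $g_n$ is bounded in $L^\infty([0,b],\R^{1+n})$, hence (up to a subsequence) converges weakly-$\ast$ to some $g=(g^0,\bar g)$; integrating, $x_n$ converges pointwise to $x(\tau)=x_0+\int_0^\tau\bar g$, with $x_0\in M_0$ and $x(t_f^\star)\in M_1$ since $M_0,M_1$ are compact, and $c_i(t,x(t))\leq0$ by continuity and pointwise convergence. To identify the limit, I would introduce $\tilde{\mathcal V}=\{h\in L^2([0,b],\R^{1+n})\mid h(s)\in\tilde V(s,x(s))\ \text{a.e.}\}$ and show, using the convexity and closedness of the epigraph $\tilde V(s,x(s))$, that $\tilde{\mathcal V}$ is convex and strongly closed, hence weakly closed; replacing $u_n$ by the frozen-state velocities $h_n(s)\in\tilde V(s,x(s))$ and passing to the weak limit as before, the Lipschitz-in-$x$ estimate on $f,f^0$ and dominated convergence yield $g\in\tilde{\mathcal V}$. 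By definition of the epigraph this means $g(s)=(f^0(s,x(s),u(s))+\gamma(s),\,f(s,x(s),u(s)))$ for a.e.\ $s$, for some $\gamma(s)\geq0$ and some $u(s)\in\Omega$ recovered by a measurable selection lemma (see \cite[Lemmas 2A, 3A p.\ 161]{LeeMarkus}).

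Finally, $x(\cdot)$ is the trajectory associated with the admissible control $u$ on $[0,t_f^\star]$, and the slack gives $x^0(t_f^\star)=\int_0^{t_f^\star}f^0(s,x,u)\,ds+\int_0^{t_f^\star}\gamma\geq\int_0^{t_f^\star}f^0(s,x,u)\,ds$. Passing to the limit in the cost and using continuity of $g$ yields $C(u)=\int_0^{t_f^\star}f^0(s,x,u)\,ds+g(t_f^\star,x(t_f^\star))\leq\lim_n C(u_n)=m^\star$, so $u$ is optimal. The main obstacle I anticipate is precisely this identification step: a weak-$\ast$ limit of velocities need not itself be a velocity, and it is the convexity of $\tilde V$ (via the weak closedness of $\tilde{\mathcal V}$) that rescues the argument, while the passage to the \emph{epigraph} is what converts the cost equality into a relaxed inequality compatible with weak limits, thereby delivering the lower semicontinuity. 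Handling the free final time (the extension to $[0,b]$ and the behaviour of $x_n$ near the varying endpoint $t_{f,n}$) is a secondary technical nuisance dispatched by the uniform bound on the velocities.
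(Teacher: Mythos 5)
Your proposal is correct and follows essentially the same route as the paper's proof: a minimizing sequence, weak-$\ast$ limits of the zero-extended velocities on a fixed interval, the convex and strongly (hence weakly) closed set $\tilde{\cal V}$ of $L^2$ selections of the frozen-state epigraph, identification of the limit via the Lipschitz-in-$x$ estimate and dominated convergence, a measurable selection lemma, and optimality from the nonnegativity of the slack $\gamma$. The only deviation is that you skip the paper's truncation of $\tilde V$ to the compact sections $\tilde V_\beta$; this is harmless, since the closedness of $\tilde V(s,x(s))$ that your strong-closedness argument needs already follows from the compactness of $\Omega$ and the continuity of $(f,f^0)$.
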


If the final time is fixed in $\OCP$ then we assume that $M_1$ is reachable from $M_0$ exactly in time $t_f$.
Note that it is easy to generalize this result to more general situations, for instance the sets $M_0$, $M_1$ and $\Omega$ could depend on $t$ (see \cite{LeeMarkus}, and more generally see \cite{Cesari} for many variants of existence results).

Such existence results are however often difficult to apply in practice because of the strong assumption \eqref{optconv} (not satisfied in general as soon as $f$ is ``too much" nonlinear). In practice, we often apply the Pontryagin maximum principle (that we will see next), without being sure a priori that there exist an optimal solution. If we can solve the resulting necessary conditions, then this often gives a way for justifying that indeed, a posteriori, there exists an optimal solution.

\begin{proof}[Proof of Theorem \ref{thmfilippov}.]
The proof is similar to the one of Theorem \ref{ensacccompact}.

Let $\delta$ be the infimum of costs $C(u)$ over the set of admissible controls $u\in L^\infty(0,t(u);\Omega)$ generating trajectories such that $x(0)\in M_0$, $x(t(u))\in M_1$ and satisfying the state constraints $c_i(x(\cdot))\leq 0$, $i=1,\ldots,r$.
Let us consider a minimizing sequence of trajectories $x_n(\cdot)$ associated with controls $u_n$, that is, a sequence of trajectories satisfying all constraints and such that $C(u_n)\rightarrow\delta$ as $n\rightarrow+\infty$.
For every integer $n$, we set
$$
\tilde F_n(t)=\begin{pmatrix}
f(t,x_n(t),u_n(t))  \\ f^0(t,x_n(t),u_n(t)) 
\end{pmatrix} = \begin{pmatrix}
F_n(t)  \\ F^0_n(t) 
\end{pmatrix}
$$
for almost every $t\in[0,t(u_n)]$.
From the assumptions, the sequence of functions $(\tilde F_n(\cdot))_{n\in\N}$ (extended by $0$ on $(t_n(u),b]$) is bounded in $L^\infty(0,b;\R^n)$, and hence, up to some subsequence, it converges to some function
%$\tilde F(\cdot)=\begin{pmatrix} F(\cdot)\\ F^0(\cdot)\end{pmatrix}$
$\tilde F(\cdot)=\left( F(\cdot), F^0(\cdot)\right)^\top$
for the weak star topology of $L^\infty(0,b;\R^{n+1})$ (see \cite{Brezis}). Also, up to some subsequence, the sequence $(t_n(u_n))_{n\in\N}$ converges to some $T\geq 0$, and we have $\tilde F(t)=0$ for $t\in(T,b]$. Finally, by compactness of $M_0$, up to some subsequence, the sequence $(x_n(0))_{n\in\N}$ converges to some $x_0\in M_0$. For every $t\in[0,T]$, we set
$x(t)=x_0+\int_0^t F(s)\,ds ,$
and then $x(\cdot)$ is absolutely continuous on $[0,T]$. Moreover, for every $t\in[0,T]$, we have $\lim_{n\rightarrow +\infty} x_n(t)=x(t)$, that is, the sequence $(x_n(\cdot))_{n\in\N}$ converges pointwise to $x(\cdot)$. As in the proof of Theorem \ref{ensacccompact}, the objective is then to prove that the trajectory $x(\cdot)$ is associated with a control $u$ taking its values in $\Omega$, and that this control is moreover optimal.

We set
%$
%\tilde h_n(t)=\begin{pmatrix}
%f(t,x(t),u_n(t)) \\ f^0(t,x(t),u_n(t))
%\end{pmatrix} ,
%$
$
\tilde h_n(t)=\left( f(t,x(t),u_n(t)) , f^0(t,x(t),u_n(t)) \right)^\top
$
for every integer $n$ and for almost every $t\in[0,t(u_n)]$.
If $T>t(u_n)$, then we extend $\tilde h_n$ on $[0,T]$ by
%$
%\tilde h_n(t)=\begin{pmatrix}
%f(t,x(t),v) \\ f^0(t,x(t),v)
%\end{pmatrix} ,
%$
$
\tilde h_n(t)=\left( f(t,x(t),v) , f^0(t,x(t),v) \right)^\top
$
for some arbitrary $v\in\Omega$. Besides, we define (note that $\Omega$ is compact)
$$
\beta = \max\{ \vert f^0(t,x,u)\vert\ \vert\ 0\leq t\leq b,\ \Vert x\Vert\leq b,\ u\in\Omega \}.
$$
For every $(t,x)\in\R^{1+n}$, we then slightly modify the definition of $\tilde V(t,x)$ to make it compact (keeping it convex), by setting
$$
\tilde{V}_\beta(t,x)=\left\{
\begin{pmatrix}
f(t,x,u)\\
f^0(t,x,u)+\gamma
\end{pmatrix}
\ \Big\vert\ u\in\Omega,\ \gamma\geq 0,\ \vert f^0(t,x,u)+\gamma\vert\leq \beta\right\}.
$$
We define
$\tilde{\cal V} = \{ \tilde h(\cdot)\in L^2([0,T],\R^{n+1})\ \vert\ h(t)\in \tilde V_\beta(t,x(t))\ \textrm{for a.e.}\ t\in[0,T] \} . $
By construction, we have $\tilde h_n\in\tilde{\cal V}$ for every integer $n$. 
At this step, we need a lemma:

\begin{lemma}\label{lemcaltildeVconv}
The set $\tilde{\cal V}$ is convex and strongly closed in $L^2([0,T],\R^{n+1})$.
\end{lemma}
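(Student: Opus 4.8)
The plan is to reduce the two claimed properties of $\tilde{\cal V}$ to the corresponding pointwise properties of the velocity sets $\tilde V_\beta(t,x)$, exactly in the spirit of the proof of Theorem \ref{ensacccompact}. First I would record the key observation that, for each fixed $(t,x)$ with $\Vert x\Vert\leq b$, one has $\tilde V_\beta(t,x)=\tilde V(t,x)\cap S$, where $S=\{(v,y)\in\R^{n}\times\R\ \vert\ \vert y\vert\leq\beta\}$ is the closed slab cut out by the constraint $\vert f^0(t,x,u)+\gamma\vert\leq\beta$. Since $S$ is convex and $\tilde V(t,x)$ is convex by the standing assumption \eqref{optconv}, the intersection $\tilde V_\beta(t,x)$ is convex.

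Next I would show that $\tilde V_\beta(t,x)$ is in fact compact. Boundedness is immediate: the first $n$ coordinates $f(t,x,u)$ stay in a bounded set because $f$ is continuous and $\Omega$ is compact (and $\Vert x\Vert\leq b$), while the last coordinate is bounded by $\beta$ by construction. For closedness I would argue sequentially: given a sequence $(f(t,x,u_j),f^0(t,x,u_j)+\gamma_j)$ in $\tilde V_\beta(t,x)$ converging to some $(v,y)$, compactness of $\Omega$ yields a subsequence (still denoted $u_j$) with $u_j\to u_\ast\in\Omega$; continuity of $f$ and $f^0$ then gives $v=f(t,x,u_\ast)$ and $\gamma_j\to\gamma_\ast:=y-f^0(t,x,u_\ast)$, where $\gamma_\ast\geq 0$ as a limit of nonnegative numbers and $\vert y\vert\leq\beta$ as a limit of the corresponding bounds, so that $(v,y)\in\tilde V_\beta(t,x)$. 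This is the step where the compactness of $\Omega$ is essential.

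Convexity of $\tilde{\cal V}$ then follows immediately and pointwise: if $\tilde h_1,\tilde h_2\in\tilde{\cal V}$ and $\lambda\in[0,1]$, then for a.e. $t$ both $\tilde h_1(t)$ and $\tilde h_2(t)$ lie in the convex set $\tilde V_\beta(t,x(t))$, hence so does their convex combination, which shows $\lambda\tilde h_1+(1-\lambda)\tilde h_2\in\tilde{\cal V}$. For the strong closedness I would take a sequence $\tilde h_k\to\tilde h$ in $L^2([0,T],\R^{n+1})$ with $\tilde h_k\in\tilde{\cal V}$, and use the standard fact that strong $L^2$ convergence admits a subsequence $\tilde h_{k_l}$ converging to $\tilde h$ almost everywhere. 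For a.e. $t$, the values $\tilde h_{k_l}(t)$ belong to the closed set $\tilde V_\beta(t,x(t))$ and converge to $\tilde h(t)$, whence $\tilde h(t)\in\tilde V_\beta(t,x(t))$ and therefore $\tilde h\in\tilde{\cal V}$.

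The argument is essentially routine once the pointwise compactness and convexity of $\tilde V_\beta(t,x)$ are in hand, so I do not expect a serious obstacle. The only points requiring a little care are the passage from strong $L^2$ convergence to a.e. convergence along a subsequence, which is exactly what makes pointwise closedness usable at the level of $L^2$, and the fact that one must invoke the compactness of $\Omega$ to obtain closedness of $\tilde V_\beta(t,x)$, since without it the limiting control $u_\ast$ need not exist. Once strong closedness and convexity are established, weak closedness of $\tilde{\cal V}$ follows for free, which is precisely what is exploited afterwards in the proof of Theorem \ref{thmfilippov}.
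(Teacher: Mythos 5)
Your proof is correct and takes essentially the same route as the paper's: convexity of $\tilde{\cal V}$ is obtained pointwise from convexity of $\tilde V_\beta(t,x(t))$, and strong closedness is obtained by extracting from a strongly convergent sequence a subsequence converging almost everywhere and then using the pointwise closedness of $\tilde V_\beta(t,x(t))$. The only difference is that you also prove the convexity (via the intersection with the slab $\vert y\vert\leq\beta$) and the compactness of $\tilde V_\beta(t,x)$, facts which the paper merely asserts when it introduces this set; this is a welcome completion rather than a different approach.
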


\begin{proof}[Proof of Lemma \ref{lemcaltildeVconv}]
Let us prove that $\tilde{\cal V}$ is convex. Let $\tilde r_1,\tilde r_2\in\tilde{\cal V}$, and let $\lambda\in[0,1]$. By definition, for almost every $t\in [0,T]$, we have $\tilde r_1(t)\in \tilde V_\beta(t,x(t))$ and $\tilde r_2(t)\in \tilde V_\beta(t,x(t))$. Since $\tilde V_\beta(t,x(t))$ is convex, it follows that $\lambda\tilde r_1(t)+(1-\lambda)\tilde r_2(t)\in \tilde V_\beta(t,x(t))$. Hence $\lambda\tilde r_1+(1-\lambda)\tilde r_2\in \tilde{\cal V}$.

Let us prove that $\tilde{\cal V}$ is strongly closed in $L^2([0,T],\R^n)$. Let $(\tilde r_n)_{n\in\N}$ be a sequence of $\tilde{\cal V}$ converging to $\tilde r$ for the strong topology of $L^2([0,T],\R^n)$. Let us prove that $\tilde r\in \tilde{\cal V}$. Up to some subsequence, $(\tilde r_n)_{n\in\N}$ converges almost everywhere to $\tilde r$, but by definition, for almost every $t\in [0,T]$ we have $\tilde r_n(t)\in \tilde V_\beta(t,x(t))$, and $\tilde V_\beta(t,x(t))$ is compact, hence $\tilde r(t)\in \tilde V_\beta(t,x(t))$ for almost every $t\in [0,T]$.
Lemma \ref{lemcaltildeVconv} is proved.
\end{proof}

We now continue the proof of Theorem \ref{thmfilippov}. By Lemma \ref{lemcaltildeVconv}, the set $\tilde{\cal V}$ is convex and strongly closed and thus also weakly closed in $L^2([0,T],\R^{n+1})$ (see \cite{Brezis}). The sequence $(\tilde h_n)_{n\in\N}$ being bounded in $L^2([0,T],\R^{n+1})$, up to some subsequence, it converges weakly to some $\tilde h\in\tilde{\cal V}$ since this set is weakly closed.

Let us prove that $\tilde F=\tilde h$ almost everywhere. We have
\begin{equation}\label{temp1__1}
\int_0^T\varphi(t)\tilde F_n(t)\, dt = \int_0^T\varphi(t)\tilde h_n(t)\, dt +
\int_0^T\varphi(t)\left(\tilde  F_n(t)-\tilde h_n(t)\right) dt
\end{equation}
for every $\varphi\in L^2(0,T)$.
By assumption, the mappings $f$ and $f^0$ are globally Lipschitz in $x$ on $[0,T]\times \bar{B}(0,b) \times \Omega$, hence there exists $C>0$ such that $\Vert \tilde F_n(t)-\tilde h_n(t)\Vert\leq C\Vert x_n(t)-x(t)\Vert$ for almost every $t\in[0,T]$.
Since the sequence $(x_n(\cdot))_{n\in\N}$ converges pointwise to $x(\cdot)$, by the dominated convergence theorem we infer that 
$\int_0^T\varphi(t)\left(\tilde F_n(t)-\tilde h_n(t)\right) dt \rightarrow 0$
as $n\rightarrow +\infty$.
Passing to the limit in \eqref{temp1__1}, it follows that $\int_0^T\varphi(t)\tilde F(t)\, dt = \int_0^T\varphi(t)\tilde h(t)\, dt$ for every $\varphi\in L^2(0,T)$, and therefore $\tilde F=\tilde h$ almost everywhere on $[0,T]$.

In particular, $\tilde F\in\tilde {\cal V}$, and hence for almost every $t\in[0,T]$ there exist $u(t)\in\Omega$ and $\gamma(t)\geq 0$ such that
%$
%\tilde F(t)=\begin{pmatrix}
%f(t,x(t),u(t)) \\
%f^0(t,x(t),u(t))+\gamma(t)\end{pmatrix}.
%$
$
\tilde F(t)=\left( f(t,x(t),u(t)) , f^0(t,x(t),u(t))+\gamma(t)\right)^\top .
$
Applying a measurable selection lemma (noting that $\tilde F\in L^\infty([0,T],\R^{n+1})$), the functions $u(\cdot)$ and $\gamma(\cdot)$ can moreover be chosen to be measurable on $[0,T]$ (see \cite[Lem. 2A, 3A p.~161]{LeeMarkus}).

It remains to prove that the control $u$ is optimal for $\OCP$. First of all, since $x_n(t_n(u_n))\in M_1$, by compactness of $M_1$ and using the convergence properties established above, we get that $x(T)\in M_1$. Similarly, we get, clearly, that $c_i(x(\cdot))\leq 0$, $i=1,\ldots,r$. Besides, by definition $C(u_n)$ converges to $\delta$, and, using the convergence properties established above, $C(u_n)$ converges as well to $\int_0^T ( f^0(t,x(t),u(t))+\gamma(t) )\, dt+g(T,x(T))$. Since $\gamma$ takes nonnegative values, this implies that
$\int_0^T  f^0(t,x(t),u(t))\, dt +g(T,x(T)) 
\leq  \int_0^T ( f^0(t,x(t),u(t))+\gamma(t) )\, dt +g(T,x(T)) \leq C(v),
$
for every admissible control $v$ generating a trajectory steering the system from $M_0$ to $M_1$ and satisfying all constraints. 
Hence $u$ is optimal and $\gamma=0$.
Theorem \ref{thmfilippov} is proved.
\end{proof}

\section{Pontryagin maximum principle (PMP)}
\subsection{General statement}
The Pontryagin maximum principle (in short, PMP) states first-order necessary conditions for optimality. 

\begin{theorem}\label{PMP}
If $(x(\cdot),u(\cdot))$ is an optimal solution of \OCP on $[0,t_f]$,
then there exist an absolutely continuous function $p(\cdot): [0,t_f]\longrightarrow \R^n$ called adjoint vector and a real number $p^0\leq 0$, with $(p(\cdot),p^0)\neq (0,0)$, such that\footnote{With rigorous notations, we should write \eqref{systPMP} in the form $\dot x=\frac{\partial H}{\partial p}^\top$ and $\dot p=-\frac{\partial H}{\partial x}^\top$, or equivalently, $\dot x=\nabla_p H$ and $\dot p=-\nabla_x H$ (as these are vectors in $\R^n$). But we keep the writing \eqref{systPMP},  used in classical mechanics for Hamiltonian systems. In coordinates, this means that $\dot x_i=\frac{\partial H}{\partial p_i}$ and $\dot p_i=-\frac{\partial H}{\partial x_i}$ for every $i\in\{1,\ldots,n\}$.}
\begin{equation} \label{systPMP}
\dot{x}(t)=\frac{\partial H}{\partial p}(t,x(t),p(t),p^0,u(t)),\quad
\dot{p}(t)=-\frac{\partial H}{\partial x}(t,x(t),p(t),p^0,u(t)),
\end{equation}
for almost every $t\in[0,t_f]$, where the function $H:\R\times\R^n\times\R^n\times\R\times\R^m\rightarrow \R$, called Hamiltonian of \OCP, is defined by
\begin{equation}\label{def_H}
H(t,x,p,p^0,u)=\langle p,f(t,x,u)\rangle+p^0f^0(t,x,u)
\end{equation}
and we have the maximization condition
\begin{equation} \label{contraintePMP}
H(t,x(t),p(t),p^0,u(t))=\max_{v\in\Omega} H(t,x(t),p(t),p^0,v)
\end{equation}
for almost every $t\in[0,t_f]$.

If the final time $t_f$ is not fixed in \OCP, then we have moreover
\begin{equation}\label{condannul}
\max_{v\in\Omega} H(t_f,x(t_f),p(t_f),p^0,v) = -p^0\frac{\partial g}{\partial t}(t_f,x(t_f)).
\end{equation}

Moreover, the adjoint vector can be chosen such that we have the so-called transversality conditions (if they make sense)
\begin{equation}\label{condt1}
p(0)\ \bot\ T_{x(0)}M_0
\end{equation}
\begin{equation}\label{condt2}
p(t_f)-p^0\nabla_x g(t_f,x(t_f))\ \bot\ T_{x(t_f)}M_1
\end{equation}
where the notation $T_xM$ stands for the usual tangent space to $M$ at the point $x$ (these conditions can be written as soon as the tangent space is well defined).
\end{theorem}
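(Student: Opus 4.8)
The plan is to reduce the statement to a geometric separation property of the accessible set of an augmented control system, and then to read off from it both the adjoint dynamics and the pointwise maximization condition. First I would append the cost as an extra state coordinate: setting $x^0(t)=\int_0^t f^0(s,x(s),u(s))\,ds$, so that $\dot x^0=f^0(t,x,u)$ and $x^0(0)=0$, the problem becomes that of minimizing the terminal coordinate $x^0(t_f)+g(t_f,x(t_f))$ along trajectories of the augmented field $\tilde f=(f^0,f)$ in $\R^{n+1}$, subject to $x(0)\in M_0$ and $x(t_f)\in M_1$. Optimality then means exactly that the augmented endpoint cannot be pushed in the ``decreasing $x^0$'' direction while keeping the $x$-component admissible; the whole proof consists in turning this obstruction into a covector $(p^0,p(t_f))$.

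For the \emph{weak} form of the maximization condition the Fr\'echet differential computed in Proposition \ref{prop_diffFrechetE} already suffices, but to obtain the strong pointwise condition \eqref{contraintePMP} over all of $\Omega$ I would use needle (spike) variations. At a Lebesgue point $\tau\in(0,t_f)$ of the optimal control and for an arbitrary value $v\in\Omega$, replace $u$ by $v$ on a short interval $[\tau-\varepsilon\ell,\tau]$ and leave it unchanged elsewhere; a standard Gronwall estimate shows that the induced endpoint perturbation is, to first order in $\varepsilon$, the vector $\varepsilon\ell\, R(t_f,\tau)\big(\tilde f(\tau,x(\tau),v)-\tilde f(\tau,x(\tau),u(\tau))\big)$, where $R$ is the state-transition matrix of the linearized augmented system. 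Letting $\tau$, $v$ and $\ell\geq 0$ vary, and concatenating finitely many needles at distinct times (whose first-order effects simply add), these vectors generate a convex cone $K\subset\R^{n+1}$ of accessible endpoint directions; endpoint displacements along $T_{x(0)}M_0$ and $T_{x(t_f)}M_1$, and in the free-time case a variation of $t_f$, contribute further directions to $K$.

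The heart of the argument is to show that the downward cost direction $-e_0=(-1,0,\dots,0)$ does not lie in the interior of $K$. If it did, one could pick variation vectors spanning a small simplex around $-e_0$ and, via a Brouwer fixed point argument absorbing the uniform quadratic remainders of the combined needle variations, build a genuine admissible control whose trajectory still meets $M_1$ at strictly smaller cost, contradicting optimality. Hence $-e_0\notin\mathrm{int}\,K$, so by Hahn-Banach there is a nonzero covector $\tilde p_f=(p^0,p(t_f))$ separating $-e_0$ from $K$; orienting it so that its $e_0$-component is nonpositive yields both $p^0\leq 0$ and the nontriviality $(p(\cdot),p^0)\neq(0,0)$. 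Defining $p(t)$ by backward transport $p(t)^\top=\tilde p_f^\top R(t_f,t)$ (restricted to the last $n$ coordinates) is precisely solving the adjoint system $\dot p=-\partial H/\partial x$ in \eqref{systPMP} with $H$ as in \eqref{def_H}, and evaluating the separation inequality on each needle vector gives $\langle p(\tau),f(\tau,x(\tau),v)\rangle+p^0f^0(\tau,x(\tau),v)\leq\langle p(\tau),f(\tau,x(\tau),u(\tau))\rangle+p^0f^0(\tau,x(\tau),u(\tau))$ for every $v\in\Omega$ and a.e.\ $\tau$, which is the maximization condition \eqref{contraintePMP}.

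Finally, the transversality conditions \eqref{condt1}--\eqref{condt2} come from requiring that $\tilde p_f$ also annihilate the endpoint-variation directions placed in $K$: variations of $x(0)$ within $M_0$ force $p(0)\perp T_{x(0)}M_0$, and variations of $x(t_f)$ within $M_1$, once the $g$-term is accounted for, force $p(t_f)-p^0\nabla_x g(t_f,x(t_f))\perp T_{x(t_f)}M_1$; in the free-time case, differentiating the cost in $t_f$ and using $\dot x=\partial H/\partial p$ yields the terminal relation \eqref{condannul}. I expect the main obstacle to be the Brouwer fixed point step, namely controlling the second-order remainders of several simultaneous needle variations uniformly enough that the linear cone $K$ faithfully reflects the nonlinear accessible set, so that ``$-e_0\in\mathrm{int}\,K$'' genuinely would produce an admissible improving competitor. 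Once that is secured, everything else---the adjoint transport, the separation, and the transversality bookkeeping---reduces to linear algebra and Gronwall estimates.
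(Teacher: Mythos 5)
Your outline is correct, but it is not the route the paper itself takes: the paper never actually proves Theorem \ref{PMP} in full generality --- it cites \cite{Pontryagin,LeeMarkus} for precisely the needle-variation/Brouwer argument you describe --- and instead, in Section \ref{sec_weakPMP}, proves only the ``weak PMP'' in the simplified setting where $\Omega=\R^m$, or where the optimal control lies in the interior of $L^\infty([0,t_f],\Omega)$. The two arguments share a skeleton: both append the cost as an extra coordinate $x^0$, both produce a nontrivial covector $\tilde\psi=(\psi,\psi^0)$ by a separation argument in $\R^{n+1}$, and both transport it backward by the transition matrix of the linearized augmented system, so that $p^0$ is constant and $p(\cdot)$ solves the adjoint equation of \eqref{systPMP}. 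They differ in where the separation comes from and in what it yields. The paper observes that $F(u)=(E_{x_0,T}(u),C_{x_0,T}(u))$ lies on the boundary of the range of $F$, deduces from the surjective mapping theorem that $dF(u)$ cannot be surjective, and separates from the proper subspace $\mathrm{Ran}\,dF(u)$; this is elementary, but it only gives the stationarity condition \eqref{dHdu=0}, and it collapses when the control saturates the constraint set. Your route replaces $\mathrm{Ran}\,dF(u)$ by the Pontryagin cone of needle variations, which is exactly what is needed to get the full maximization \eqref{contraintePMP} over all of $\Omega$ (hence bang-bang situations) and the free-time condition \eqref{condannul}; the price is the step you yourself flag as the obstacle, namely the Brouwer fixed point argument showing that ``$-e_0\in\mathrm{int}\,K$'' would produce a genuine admissible improving trajectory despite the nonlinear remainders of finitely many simultaneous needles --- this is the hard core of \cite{Pontryagin} and is not reproduced in the paper. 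Two points in your sketch would need care in a full write-up: the terminal cost $g$ must enter the separation correctly (the direction attached to $\eta\in T_{x(t_f)}M_1$ is $(\langle\nabla_x g(t_f,x(t_f)),\eta\rangle,\eta)$, not $(0,\eta)$, which is what produces the term $-p^0\nabla_x g$ in \eqref{condt2}), and since tangent spaces are linear, the one-sided separation inequality on them must be upgraded to equality to yield the orthogonality statements \eqref{condt1}--\eqref{condt2}.
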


\begin{remark}\label{rem_normalization}
%We will see in the proof of the weak PMP (see Section \ref{sec_weakPMP}) that $(p(T),p^0)=(\psi,\psi^0)$ is a Lagrange multiplier. It is defined up to scaling. Therefore, 
If $(p(\cdot),p^0)$ is a given adjoint vector satisfying the various conclusions stated in Theorem \ref{PMP}, then, for every $\lambda>0$, $(\lambda p(\cdot),\lambda p^0)$ is also an adjoint vector satisfying the statements.

Note that we cannot take $\lambda<0$ since this would lead to a change of sign in the Hamiltonian and thus would impact the maximization condition \eqref{contraintePMP}. Actually, the historical choice made by Pontryagin is to take $p^0\leq 0$ in the statement: this leads to the \textit{maximum principle} (the choice $p^0\geq 0$ is valid as well but in that case leads to a \textit{minimum principle}).
\end{remark}

A quadruple $(x(\cdot),p(\cdot),p^0,u(\cdot))$ satisfying \eqref{systPMP} and \eqref{contraintePMP} is called an \textit{extremal}.
The PMP says that every optimal trajectory $x(\cdot)$, associated with a control $u(\cdot)$, is the projection onto $\R^n$ of an extremal $(x(\cdot),p(\cdot),p^0,u(\cdot))$.
\begin{itemize}[parsep=1mm,itemsep=1mm,topsep=1mm]%,leftmargin=*
\item If $p^0 < 0$, the extremal is said to be \textit{normal}. In that case, it is usual (but not mandatory) to \emph{normalize} the adjoint vector so that $p^0=-1$.
\item If $p^0=0$, the extremal is said to be \textit{abnormal}.
\end{itemize}

The historical proof of the PMP stated in Theorem \ref{PMP} can be found in \cite{Pontryagin}. As in \cite{AgrachevSachkov, BonnardChyba, HermesLaSalle, LeeMarkus}, it is based on the use of needle-like variations combined with a Brouwer fixed point argument. It is interesting to note that there are other proofs, based on the Ekeland variational principle (see \cite{Ekeland}), on the Hahn-Banach theorem (see \cite{BressanPiccoli}). A concise sketch of proof, based on an implicit function argument (and using needle-like variations) can be found in \cite{HaberkornTrelat}.
As discussed in \cite{BourdinTrelat}, all these different approaches of proof have their specificities. One approach or another may be preferred when trying to derive a PMP in a given context (for instance, the Ekeland approach is well adapted to derive versions of the PMP with state constraints, or in infinite dimension).

\medskip

In Section \ref{sec_weakPMP}, we give a proof of the PMP in the simplified context where $\Omega=\R^m$ (no control constraint), or at least, under the assumption that the optimal control $u$ belongs to the interior of $L^\infty([0,t_f],\Omega)$. In this case, \eqref{contraintePMP} implies that
\begin{equation}\label{dHdu=0}
\frac{\partial H}{\partial u}(t,x(t),p(t),p^0,u(t))=0
\end{equation}
almost everywhere on $[0,t_f]$ and the corresponding statement is sometimes called ``weak PMP" (see \cite{BonnardChyba, Trelat}).
By the way, it is interesting to note that, in this context, a control $u$ that is the projection of an abnormal extremal ($p^0=0$) must satisfy $p(t_f)^\top dE_{x_0,t_f}(u)=0$ (see Section \ref{sec_weakPMP}), i.e., is a singularity of the end-point mapping; in other words, the linearized control system along $(x_u(\cdot),u(\cdot))$ is not controllable in time $t_f$.

In the general case where the optimal control $u$ may saturate the constraints, the proof is more difficult and requires more technical developments such as needle-like variations, or a version of the implicit function theorem under constraints.

\begin{remark}\label{rem622}
In the conditions of Theorem \ref{PMP}, we have moreover 
\begin{equation}\label{tez}
\frac{d}{dt} \max_{v\in\Omega} H(t,x(t),p(t),p^0,v) = \frac{\partial H}{\partial t} (t,x(t),p(t),p^0,u(t)) 
\end{equation}
for almost every $t\in[0,t_f]$ (this can be proved by the Danskin theorem, using the fact that $u$ takes its values in a compact subset of $\Omega$).

In particular if $\OCP$ is autonomous, that is, if $f$ and $f^0$ do not depend on $t$, then $H$ does not depend on $t$ as well, and it follows from \eqref{tez} that
$$
\max_{v\in\Omega} H(x(t),p(t),p^0,v)=\textrm{Cst} \qquad \forall t\in[0,t_f].
$$
Note that this equality is valid for every (not only for almost every) time $t\in[0,t_f]$ because the function $t\mapsto \max_{v\in\Omega} H(x(t),p(t),p^0,v)$ is Lipschitz.

Note also that, in \eqref{contraintePMP}, the maximum over $\Omega$ exists even when $\Omega$ is not compact. This is part of the result and this is due to the fact that we have assumed that there exists an optimal solution.
\end{remark}

\begin{remark}
If $g$ does not depend on $t$ then \eqref{condannul} says that, roughly, if $t_f$ is free then the (maximized) Hamiltonian vanishes at $t_f$. Note that if $\OCP$ is autonomous then this implies that $H=0$ along every extremal.
\end{remark}

\begin{remark}
If $M_1=\{x\in\R^n\ \vert\ F_1(x)=\cdots=F_p(x)=0\}$, where the functions $F_i$ are of class $C^1$ on $\R^n$, then \eqref{condt2} implies that
$$
\exists \lambda_1,\ldots,\lambda_p\in\R\ \bigm|\  p(t_f)=\sum_{i=1}^p\lambda_i\nabla F_i(x(t_f))+ p^0\nabla_x g(t_f,x(t_f)) .
$$
\end{remark}

\begin{remark}
The minimal time problem corresponds to choose either $f^0=1$ and $g=0$, or $f^0=0$ and $g(t,x)=t$. In both cases it can be checked that the implied transversality conditions coincide.
\end{remark}

\begin{remark}[On the transversality conditions]
Note that if $M_0$ (or $M_1$) is the singleton $\{x_0\}$, which means that the initial point is fixed in \OCP, then the corresponding transversality condition is empty (since the tangent space is then reduced to the singleton $\{0\}$.

At the opposite, if for instance $M_1=\R^n$, which means that the final point is free in \OCP, then the corresponding transversality condition yields that $p(t_f)=p^0\nabla_x g(t_f,x(t_f))$ (since the tangent space is then equal to $\R^n$).
In particular, in that case $p^0$ cannot be equal to $0$, for otherwise we would get $p^0=0$ and $p(t_f)=0$, which contradicts the fact that $(p(\cdot),p^0)$ is nontrivial.
\end{remark}

\subsection{Proof in a simplified context}\label{sec_weakPMP}
In this section, we consider a simplified version of $\OCP$ and we derive a weaker version of the PMP called ``weak PMP". The simplified framework is the following:
\begin{itemize}[parsep=1mm,itemsep=1mm,topsep=1mm]%,leftmargin=*
\item $M_0=\{x_0\}$ and $M_1=\{x_1\}$, where $x_0$ and $x_1$ are two given points of $\R^n$. In other words, we consider a "point to point" control problem.
\item $g=0$ in the definition of the cost \eqref{def_cost}.
\item The final time $t_f=T$ is fixed.\footnote{We are used to denote the final time by $T$ when it is fixed in $\OCP$, and by $t_f$ when it is let free.}
\end{itemize}
These three first simplifications are minor: it is easy to reduce a given optimal control problem to that case (see \cite{Trelat}). In contrast, the following one is major:
\begin{itemize}[parsep=1mm,itemsep=1mm,topsep=1mm]%,leftmargin=*
\item $\Omega=\R^m$, i.e., there are no control constraints; or, if there are some control constraints, we assume that the optimal control $u$ is in the interior of $L^\infty([0,T],\Omega)$ for the topology of $L^\infty([0,T],\R^m)$.
\end{itemize}
The latter assumption is the most important simplification. We will shortly comment further on the difficulties coming from control constraints.

\medskip

%Let us now analyze the simplified $\OCP$ and come finally out with the weak PMP. 
First of all, we note that $\OCP$ is equivalent to the optimization problem
$$
\min_{E_{x_0,T}(v)=x_1}{C_{x_0,T}(v)}
$$
where $E_{x_0,T}$ is the end-point mapping (see Definition \ref{def_Ex0T}) and $C_{x_0,T}$ is the cost defined by \ref{def_cost}.
In this form, this is a nonlinear optimization problem with $n$ equality constraints, in the infinite dimensional space of controls $v\in\mathcal{U}_{x_0,T,\R^m}\subset L^\infty([0,T],\R^m)$.

Let $u\in\mathcal{U}_{x_0,T,\R^m}$ be an optimal control (here, we assume its existence, without making any further assumption on the dynamics). In order to derive first-order necessary conditions for optimality, we apply the well known Lagrange multipliers rule, which we recover as follows. Let us consider Figure \ref{ensaccaug}, in which we draw the range of the mapping $F$ defined by
$
F(v) = \left( E_{x_0,T}(v), C_{x_0,T}(v) \right)
$
with $E_{x_0,T}(v)\in\R^n$ in abscissa and $C_{x_0,T}(v)\in\R$ in ordinate. The range of $F$ is thus seen as a subset of $\R^n\times\R$, whose shape is not important. We are interested in controls steering the system from $x_0$ to $x_1$; on the figure this corresponds to a point that is in the range of $F$, projecting onto $x_1$. Now the optimal control $u$ corresponds on Figure \ref{ensaccaug} to the point $F(u)$, which projects onto $x_1$ and is at the boundary of the range of $F$. In other words, the necessary condition for optimality is:
$$
u\ \textrm{optimal}\ \Rightarrow\ F(u)\in\partial F(L^\infty([0,T],\Omega)).
$$
Indeed, if $F(u)$ were not at the boundary of $F(L^\infty([0,T],\Omega))$ then this would imply that one can find another control, steering the system from $x_0$ to $x_1$ with a lower cost, which would contradict the optimality of $u$.

\begin{figure}[h]
\begin{center}
\resizebox{6.5cm}{!}{\input 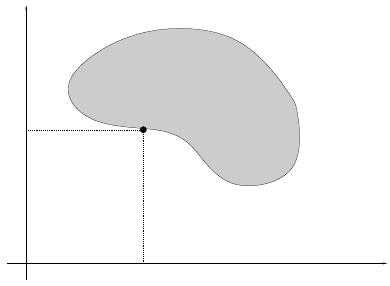_t}
\end{center}
\caption{Range of the mapping $F$}\label{ensaccaug}
\end{figure}

At this step, we use the important simplification $\Omega=\R^m$. Since $F(u)\in\partial F(L^\infty([0,T],\R^m))$, it follows from an implicit function argument (more precisely, the surjective mapping theorem) that %the linear continuous mapping
$
dF(u): L^\infty([0,T],\R^m)\rightarrow\R^n\times\R
$
is not surjective. Indeed, otherwise, the surjective mapping theorem would imply that $F$ be locally surjective: in other words there would exist a neighborhood of $F(u)$ in $\R^n\times\R$ contained in $F(L^\infty([0,T],\R^m))$, which would contradict the fact that $F(u)\in\partial F(L^\infty([0,T],\R^m))$.
Therefore, $\mathrm{Ran}(dF(u))$ is a proper subspace of $\R^n\times\R$.

\medskip

Note that, when there are some control constraints, the above argument works as well provided $u$ belongs to the interior of $L^\infty([0,T],\Omega)$ for the topology of $L^\infty([0,T],\R^m)$. The argument is however no more valid whenever the control saturates the constraint, that is, whenever for instance the trajectory contains some sub-arc such that $u(t)\in\partial\Omega$. At least, to make it work we would need rather to use an implicit function theorem allowing one to take into account some constraints. Here is actually the main technical difficulty that one has to deal with in order to derive the strong version of the PMP. A usual proof consists of developing needle-like variations (see \cite{Pontryagin}), but except this (important) technical point, the structure of the proof remains the same, in particular an implicit function argument can still be used (see the sketch of proof in \cite{HaberkornTrelat}).

\smallskip

Now, since $\mathrm{Ran}(dF(u))$ is a proper subspace of $\R^n\times\R$, there must exist $\tilde\psi=(\psi,\psi^0)\in\R^n\times\R\setminus\{(0,0)\}$ such that $\tilde\psi\perp\mathrm{Ran}\, dF(u)$, i.e., $\tilde\psi^\top dF(u)=0$ (here, for convenience $dF(u)$ is identified to a matrix with $n+1$ rows). 
In other words, we have obtained the usual Lagrange multipliers relation
\begin{equation}\label{lagmu}
%\langle \psi,dE_{x_0,T}(u).\delta u\rangle+\psi^0dC_{x_0,T}(u).\delta u=0 \qquad \forall \delta u\in L^\infty([0,T],\R^m).
\psi^\top dE_{x_0,T}(u)+\psi^0dC_{x_0,T}(u)=0 .
\end{equation}

Let us now exploit \eqref{lagmu} (or, more exactly, the equation $\tilde\psi^\top dF(u)=0$).
We define a new coordinate $x^0$ and consider the differential equation $\dot x^0(t)=f^0(t,x(t),u(t))$, with the initial condition $x^0(0)=0$. Therefore we have $x^0(T)=C_{x_0,T}(u)$.
We define the \textit{augmented state} $\tilde x=(x,x^0)\in\R^{n+1}$ and the \textit{augmented dynamics}
$\tilde f(t,\tilde x,v) = \begin{pmatrix}f(t,x,v)\\ f^0(t,x,v)\end{pmatrix}$.
We consider the \textit{augmented control system} in $\R^{n+1}$
\begin{equation}\label{aug}
\dot{\tilde x}(t)=\tilde f(t,\tilde x(t),v(t)).
\end{equation}
Note that $\OCP$ is then equivalent to the optimal control problem of steering the system \eqref{aug} from
$\tilde{x}_0=(x_0,0)$ to $\tilde{x}_1=(x_1,x^0(T))$ by minimizing $x^0(T)$.

Since $F(v) = (E_{x_0,T}(v),C_{x_0,T}(v))$, it follows that $F$ is the end-point mapping for the augmented control system \eqref{aug}.
Note that the range of $F$ (drawn on Figure \ref{ensaccaug}) is the accessible set $\tilde{Acc}(\tilde x_0,T)$ of the augmented control system.

\medskip

Using Proposition \ref{prop_diffFrechetE}, the (Fr\'echet) differential $dF(u):L^\infty([0,T],\R^m)\rightarrow\R^n$ is given by
$$
dF(u).\delta u = \int_0^T \tilde R(T,t)\tilde B(t)\delta u(t)\,dt \qquad \forall \delta u\in L^\infty([0,T],\R^m)
$$
where the (augmented) state transition matrix $\tilde R(\cdot,\cdot)$ is defined as the solution of the Cauchy problem $\partial_t\tilde R(t,s)=\tilde A(t)\tilde R(t,s)$, $\tilde R(s,s)=I_{n+1}$, with
$$
\tilde A(t)=\frac{\partial \tilde f}{\partial \tilde x}(t,\tilde x(t),u(t)),\quad \tilde B(t)=\frac{\partial \tilde f}{\partial u}(t,\tilde x(t),u(t)) .
$$
Since $\tilde\psi^\top dF(u).\delta u=0$ for every $\delta u\in L^\infty([0,T],\R^m)$, it follows that
\begin{equation}\label{almostpmp}
\tilde B(t)^\top \tilde R(T,t)^\top \tilde\psi  = 0
\end{equation}
for almost every $t\in[0,T]$. We set $\tilde p(t)=\tilde R(T,t)^\top\tilde\psi$. By derivating with respect to $t$ the relation $\tilde R(T,t)\tilde R(t,T)=I_{n+1}$, is is easy to establish that 
$
\frac{d}{dt}\tilde R(T,t) = -\tilde R(T,t) \tilde A(t).
$
We infer that $\tilde p(\cdot)$ is the unique solution of the Cauchy problem
\begin{equation}\label{ptilde}
\dot{\tilde p}(t) = - \tilde A(t)^\top \tilde p(t),\quad \tilde p(T)=\tilde\psi.
\end{equation}
We are almost done. Let us now come back to the initial coordinates in $\R^n\times\R$. We set $\tilde p(t)=\begin{pmatrix}p(t)\\ p^0(t)\end{pmatrix}$. Since $\tilde f$ does not depend on the (slack) variable $x^0$, we have $\frac{\partial\tilde f}{\partial x^0}=0$ and therefore, using \eqref{ptilde}, 
$$
\begin{pmatrix}\dot{p}(t)\\ \dot{p}^0(t)\end{pmatrix}
=- \begin{pmatrix}
\frac{\partial f}{\partial x}(t,x(t),u(t))^\top & \frac{\partial f^0}{\partial x}(t,x(t),u(t))^\top \\
0 & 0
\end{pmatrix}
\begin{pmatrix}p(t)\\ p^0(t)\end{pmatrix} ,
$$
with $p(T)=\psi$ and $p^0(T)=\psi^0$.
In particular, we have $\dot{p}^0(t)=0$ and thus $p^0=\psi^0$ is a constant.
Defining the Hamiltonian by \eqref{def_H}, the latter equations give \eqref{systPMP}, and from \eqref{almostpmp} we infer \eqref{dHdu=0}. This is the ``weak PMP".
%$H(t,x,p,p^0,u) = \langle p,f(t,x,u)\rangle + p^0f^0(t,x,u)$,
%we easily get that
%\begin{equation} \label{systweakPMP}
%\dot{x}(t)=\frac{\partial H}{\partial p}(t,x(t),p(t),p^0,u(t)) ,  \quad \dot{p}(t)=-\frac{\partial H}{\partial x}(t,x(t),p(t),p^0,u(t)),
%\end{equation}
%and from \eqref{almostpmp} we get that 
%$$
%\frac{\partial H}{\partial u}(t,x(t),p(t),p^0,u(t))=0
%$$
%almost everywhere on $[0,T]$.
%We have obtained what we call the \emph{weak PMP}.
%
%\medskip
%
%We do not encapsulate it into a summarized statement, because this section was just to motivate the general version of the PMP, that we state hereafter.
%Besides of the transversality conditions (that can be easily derived with simple considerations, see \cite{Trelat}), as already said the most technical part is to take into account control constraints. Then, the condition $\frac{\partial H}{\partial u}=0$ becomes a (pointwise) maximization condition of the Hamiltonian.

\begin{remark}
In the above proof, we have constructed the adjoint vector so that $(p(T),p^0)=(\psi,\psi^0)$ is a Lagrange multiplier. It is defined up to scaling (see Remark \ref{rem_normalization} and the subsequent comments).
\end{remark}

\subsection{Generalizations and additional comments}\label{sec_generalizations_PMP}
The PMP withstands many possible generalizations. %Anyway its proof (at least, the one of the weak PMP) is quite simple and can be adapted to many situations.

\medskip
\noindent\textbf{More general transversality conditions.}
In Theorem \ref{PMP}, we have given transversality conditions for "decoupled" terminal conditions $x(0)\in M_0$ and $x(t_f)\in M_1$.
Assume that, instead, we have the coupled terminal conditions $(x(0),x(t_f))\in M$, where $M$ is a subset of $\R^n\times\R^n$.
In this case, using a simple "copy-paste" of the dynamics, it is then easy to prove (see \cite{AgrachevSachkov}) that the transversality conditions become (if they make sense)
$$
\big( -p(0),p(t_f)-p^0\nabla_x g(t_f,x(t_f)) \big)\ \bot\ T_{(x(0),x(t_f))}M .
$$
An important case is the one of periodic terminal conditions $x(0)=x(T)$: then $M=\{(x,x)\ \vert\ x\in\R^n\}$, and, if moreover $g=0$ then $p(0)=p(t_f)$.

\smallskip

Another useful generalization is when $M$ is a general closed subset of $\R^n\times\R^n$, but is not necessarily a manifold, at least, locally around $(x(0),x(t_f))$. In this case, one can still write transversality conditions, by using notions of nonsmooth analysis (see \cite{Clarke,Vinter}), and there holds
$$
\big( p(0),-p(t_f)+p^0\nabla_x g(t_f,x(t_f)) \big) \in N_M(x(0),x(t_f))
$$
where $N_M(x,y)$ is the {\em limiting normal cone} to $M$ at $(x,y)$. This generalized condition can be useful to provide sign conditions on the adjoint vector, whenever the subset $M$ is not smooth.

\medskip
\noindent\textbf{Infinite time horizon.}
The statement of the PMP remains the same when $t_f=+\infty$, under the assumption that the limit of the optimal trajectory $x(t)$ exists when $t\rightarrow+\infty$; in particular, the result then asserts that the limit of $p(t)$ exists (see \cite{LeeMarkus}).

\medskip
\noindent\textbf{State constraints, hybrid optimal control problems.}
Among the most well known and useful generalizations, one can think of the PMP for $\OCP$ with state constraints (see \cite{BrysonHo,Clarke,Pontryagin,Vinter}), for nonsmooth $\OCP$ (see \cite{Clarke,Vinter}), hybrid $\OCP$ (see \cite{GaravelloPiccoli,HaberkornTrelat}), $\OCP$ settled on time scales (see \cite{BourdinTrelat}). There exist several possible proofs of the PMP (see \cite{Dmitruk}), based either on an implicit function argument (as we did here), or on a (Brouwer) fixed point argument (as in the classical book \cite{Pontryagin}), or on a Hahn-Banach separation argument (as in \cite{BressanPiccoli}, or on Ekeland's principle (see \cite{Ekeland}). Each of them may or may not be adapted to such or such generalization.

Let us note that, when dealing with state constraints, in full generality the adjoint vector becomes a measure. The generic situation that one has in mind is the case where this measure has only a finite number of atoms: in this favorable case the adjoint vector is then piecewise absolutely continuous, with possible jumps when touching the state constraint. Unfortunately the structure of the measure might be much more complicated, but such a discussion is outside of the scope of the present manuscript. We refer the reader to \cite{BonnardFaubourgTrelat, BrysonHo, Clarke, HartlSethi, IoffeThikhomirov, Jacobson, Maurer, Pontryagin, Vinter}.

Although it is then not exact, it can be noted that state constraints may be tackled with usual penalization considerations, so as to deal rather with an $\OCP$ without state constraint. In some cases where getting the true optimal trajectory is not the main objective, this may be useful.

\medskip
\noindent\textbf{PMP in infinite dimension.}
We refer to Section \ref{sec_PMP_infinitedim} in Part \ref{part2} and to \cite[Chapter 4]{LiYong} for a generalization of the PMP in infinite dimension. To comment briefly on this extension, we notice that the argument to prove the weak PMP remains valid when replacing $\R^n$ with a Banach space $X$, at the exception of one notable difficulty: in the argument by contraposition, we have to ensure that $\mathrm{Ran}(dF(u))$ is contained in a closed proper subspace of $X\times\R$, i.e., its codimension is $\geq 1$. Here is the main difference with finite dimension. Indeed, the fact that $\mathrm{Ran}(dF(u))$ is a proper subspace of $X\times\R$ is not enough to ensure a separation argument: it could happen that $\mathrm{Ran}(dF(u))\subsetneq X\times\R$ be dense in $X\times\R$. Such a situation corresponds to \emph{approximate controllability}, as we will see in Part \ref{part2}, and in this case the PMP fails to be true (see Example \ref{contreex_PMP} in Section \ref{sec_PMP_infinitedim}).
In few words, a classical sufficient assumption under which the PMP is still valid in infinite dimension is that there is only a finite number of scalar conditions on the final state (finite codimension condition on $M_1$). 
%We refer to \cite[Chapter 4]{LiYong} for a general statement, noting that, in that book, the PMP is proved thanks to the Ekeland variational principle.
Except this additional assumption on $M_1$, the statement in infinite dimension is exactly the same as in Theorem \ref{PMP}.

\medskip
\noindent\textbf{Further comments: second-order conditions.}
Let us insist on the fact that the PMP is a \emph{first-order necessary condition} for optimality.\footnote{This is an elaborated version of the first-order necessary condition $\nabla f(x)=0$ when minimizing a $C^1$ function over $\R^n$!}
As already stressed, the PMP states that every optimal trajectory $x(\cdot)$, associated with a control $u(\cdot)$, is the projection onto $\R^n$ of an extremal $(x(\cdot),p(\cdot),p^0,u(\cdot))$.
However, conversely, an extremal (i.e., a solution of the equations of the PMP) is not necessarily optimal. The study of the optimality status of extremals can be done with the theory of \emph{conjugate points}. More precisely, as in classical optimization where extremal points are characterized by a first-order necessary condition (vanishing of some appropriate derivative), there exists in optimal control a theory of second-order conditions for optimality, which consists of investigating a quadratic form that is the intrinsic second-order derivative of the end-point mapping: if this quadratic form is positive definite then this means that the extremal under consideration is locally optimal (for some appropriate topology), and if it is indefinite then the extremal is not optimal; conversely if the extremal is optimal then this quadratic form is nonnegative. Times at which the index of this quadratic form changes are called conjugate times. The optimality status of an extremal is then characterized by its first conjugate time. We refer to \cite{BonnardCaillauTrelat_COCV2007} (see references therein) for a survey on theory and algorithms to compute conjugate times. Much could be written on conjugate time theory (which has nice extensions in the bang-bang case), but this is beyond of the scope of the present book.

\medskip
\noindent\textbf{Numerical computation.}
The application of the PMP leads to a \textit{shooting problem}, that is a boundary value problem consisting of computing extremals satisfying certain terminal conditions. It can be solved numerically by implementing a Newton method combined with an ODE integrator: this approach is called the \emph{shooting method}.

We do not have room enough, here, to describe numerical methods in optimal control. We refer to \cite{Betts} for a thorough description of so-called direct methods, and to \cite{Trelat_JOTA} for a survey on indirect methods and the way to implement them in practice (see also \cite{Trelat} and references cited therein).

\section{Particular cases and examples}
In this section, we first specify the PMP for two important particular classes of examples: the minimal time problem for linear control systems, which yields the bang-bang principle; linear control systems with a quadratic cost, leading to the well-known ``Linear Quadratic" (LQ) theory.
Finally, we provide several examples of application of the PMP to nonlinear optimal control problems.

\subsection{Minimal time problem for linear control systems}\label{sec241}
Let us assume that the control system is linear, of the form
$$
\dot{x}(t)=A(t)x(t)+B(t)u(t)+r(t)
$$
with the notations and regularity assumptions made in the introduction of Chapter \ref{chap_cont}. Let $x_0\in\R^n$ be an arbitrary initial point, and let $\Omega$ be a compact subset of $\R^m$. For any target point $x_1\in\R^n$, we investigate the problem of steering the system from $x_0$ to $x_1$ in minimal time, under the control constraint $u(t)\in\Omega$.

It can be noticed that, if $x_1$ is accessible from $x_0$, then there exists a minimal time trajectory steering the system from $x_0$ to $x_1$, in a minimal time denoted by $t_f$.
Indeed, by Theorem \ref{access}, $\mathrm{Acc}_\Omega(x_0,t)$ is a compact convex set depending continuously on $t$, thus 
$
t_f = \min \{ t\geq 0\ \mid\ x_1\in\mathrm{Acc}_\Omega(x_0,t) \}.
$

With the notations introduced at the beginning of Chapter \ref{chap_opt}, we have $f(t,x,u)=A(t)x+B(t)u+r(t)$, $f^0(t,x,u)=1$ and $g=0$ (note that we could as well take $f^0=0$ and $g(t,x)=t$).
The Hamiltonian of the optimal control problem is then
$
H(t,x,p,p^0,u) = p^\top A(t)x+ p^\top B(t)u+ p^\top r(t) +p^0 .
$

Let $(x(\cdot),u(\cdot))$ be an optimal trajectory on $[0,t_f]$. According to the PMP, there exist $p^0\leq 0$ and an absolutely continuous mapping $p(\cdot):[0,t_f]\rightarrow\R^n$, with $(p(\cdot),p^0)\neq (0,0)$, such that $\dot{p}(t)=-A(t)^\top p(t)$ for almost every $t\in[0,t_f]$, and the maximization condition yields
\begin{equation}\label{condmaxlineaire}
\langle B(t)^\top p(t),u(t)\rangle=\max_{v\in\Omega}\langle B(t)^\top p(t),v \rangle
\end{equation}
for almost every $t\in[0,t_f]$.

Since the function $v\mapsto \langle B(t)^\top p(t),v\rangle=p(t)^\top B(t)v$ is linear, we expect that the maximum over $\Omega$ be reached at the boundary of $\Omega$ (unless $B(t)^\top p(t)=0$). This is the contents of the \textit{bang-bang principle}.

Let us consider particular but important cases.

\paragraph{Case $m=1$ (scalar control).} 
Let us assume that $m=1$, and that $\Omega=[-a,a]$ with $a>0$. This means that the control must satisfy the constraints $\vert u(t)\vert\leq a$. In that case, $B(t)$ is a vector of $\R^n$, and $\varphi(t)=p(t)^\top B(t)$ is called \textit{switching function}. The maximization condition \eqref{condmaxlineaire} implies that 
$$
u(t)=a\,\mathrm{sign}(\varphi(t)) 
$$
as soon as $\varphi(t)\neq 0$.
Here, we see that the structure of the optimal control $u$ is governed by the switching function. We say that the control is \textit{bang-bang} if the switching function $\varphi$ does not vanish identically on any subset of positive measure of $[0,t_f]$. For instance, this is the case under the assumption: $\varphi(t)=0\Rightarrow\dot\varphi(t)\neq 0$ (because then the zeros of $\varphi$ are isolated). In that case, the zeros of the switching functions are called the switchings of the optimal control. According to the monotonicity of $\varphi$, we see that the optimal control switches between the two values $\pm a$. This is the typical situation of a bang-bang control.

In contrast, if the switching function $\varphi$ vanishes, for instance, along a time subinterval $I$ of $[0,t_f]$, then the maximization condition \eqref{condmaxlineaire} does not provide any immediate information in order to compute the optimal control $u$.
But we can then differentiate with respect to time (if this is allowed) the relation $p(t)^\top B(t)=0$, and try to recover some useful information. This is a usual method in order to prove by contradiction, when it is possible, that optimal controls are bang-bang.
An important example where this argument is successful is the following result, that we let as an exercise (see \cite{LeeMarkus}).

\begin{lemma}
Let us assume that $A(t)\equiv A$, $B(t)\equiv B$, $r(t)\equiv 0$ (autonomous control system), and that the pair $(A,B)$ satisfies the Kalman condition. Then any extremal control is bang-bang, and
\begin{itemize}[parsep=1mm,itemsep=1mm,topsep=1mm]%,leftmargin=*[leftmargin=*]
\item has at most $n-1$ switchings on $[0,+\infty)$ if all eigenvalues of $A$ are real;
\item has an infinite number of switchings on $[0,+\infty)$ if all eigenvalues of $A$ have a nonzero imaginary part. In this case, for every $N\in\N^*$, there exists $x_0\in\R^n$ for which the corresponding minimal time control, steering $x_0$ to $0$, has more than $N$ switchings.
\end{itemize}
\end{lemma}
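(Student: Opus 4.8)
The plan is to study the \emph{switching function} $\varphi(t)=p(t)^\top B$ directly, exploiting that the adjoint vector solves an autonomous linear ODE and is therefore real-analytic in time. Applying the PMP (Theorem \ref{PMP}) to this minimal time problem ($f^0=1$, $g=0$, $t_f$ free, $r=0$), the adjoint equation is $\dot p(t)=-A^\top p(t)$, so $p(t)=e^{-tA^\top}p(0)$ and
\[
\varphi(t)=B^\top e^{-tA^\top}p(0).
\]
This is an exponential polynomial: its value is a linear combination of terms $t^j e^{-\lambda t}$ with $\lambda\in\mathrm{Spec}(A)$, and $\varphi$ is annihilated by the constant-coefficient operator obtained by applying the minimal polynomial of $-A^\top$ to $\tfrac{d}{dt}$, whose order is at most $n$. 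The whole proof reduces to counting the real zeros of $\varphi$, since these are exactly the switchings.

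First I would establish the bang-bang property by showing $\varphi\not\equiv 0$. Suppose $\varphi\equiv 0$; differentiating repeatedly and evaluating at $t=0$ gives $p(0)^\top A^kB=0$ for every $k\in\N$, hence $p(0)^\top K(A,B)=0$, and by the Kalman condition $\mathrm{rank}\,K(A,B)=n$ (see Lemma \ref{lemK}) this forces $p(0)=0$, so $p\equiv 0$. But then the Hamiltonian reduces to $H=p^0$ along the extremal, and the free-final-time condition \eqref{condannul}, together with the fact that $H$ is constant in the autonomous case (Remark \ref{rem622}), yields $p^0=0$, contradicting $(p(\cdot),p^0)\neq(0,0)$. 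Hence $\varphi\not\equiv 0$; being real-analytic it has only isolated zeros, so it vanishes on a set of measure zero and $u(t)=a\,\mathrm{sign}(\varphi(t))$ almost everywhere, i.e.\ the control is bang-bang with finitely many switchings on every compact interval.

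When all eigenvalues of $A$ are real, every exponent $-\lambda$ appearing in $\varphi$ is real, so $\varphi(t)=\sum_i P_i(t)e^{-\lambda_i t}$ is a real exponential polynomial with $\sum_i(\deg P_i+1)\leq n$. A classical consequence of Rolle's theorem (induction on the number of distinct exponents: multiply by one exponential, differentiate enough times to remove the corresponding polynomial factor, and iterate) shows that such a function has at most $n-1$ real zeros. This gives the bound of $n-1$ switchings on $[0,+\infty)$. When instead all eigenvalues have nonzero imaginary part, I would show $\varphi$ changes sign infinitely often: writing $\varphi(t)=\sum_k e^{\gamma_k t}\bigl(P_k(t)\cos\omega_k t+Q_k(t)\sin\omega_k t\bigr)$, where $\gamma_k+i\omega_k$ runs over the exponents $-\lambda$ and $\omega_k\neq 0$, I isolate the largest rate $\gamma^*=\max_k\gamma_k$ and the highest degree $d^*$ occurring with it; then $t^{-d^*}e^{-\gamma^* t}\varphi(t)$ converges, as $t\to+\infty$, to a nonzero trigonometric polynomial $S(t)$ with no constant term. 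Since $S$ has zero mean but positive mean square, $\limsup S>0>\liminf S$, so $S$, and hence $\varphi$, changes sign on arbitrarily large intervals. Thus $\varphi$ has infinitely many zeros on $[0,+\infty)$.

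For the final assertion — that for every $N$ there is an $x_0$ whose time-optimal control steering it to $0$ has more than $N$ switchings — I would combine two facts: the minimal time $t_f(x_0)$ to reach $0$ is unbounded on the null-controllable set (the sets $\mathrm{Acc}_\Omega(\cdot,t)$ being compact for each $t$ by Theorem \ref{access}, so the minimal time blows up near the boundary of that set), while the oscillatory switching function produces sign changes with a positive lower density, uniformly over the extremals. Hence trajectories with large $t_f$ must carry many switchings. \textbf{The main obstacle is precisely this last step}: one must control the number of zeros of $\varphi$ on the \emph{finite} optimal interval $[0,t_f]$ uniformly in $x_0$ (the switching function depends on $p(0)$, hence on $x_0$), and simultaneously exhibit initial conditions with arbitrarily large minimal time. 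Obtaining a uniform lower density of zeros across this whole family of exponential polynomials, and ruling out near-degenerate cases where zeros could be lost, is the delicate point; by comparison the bang-bang property and the real-eigenvalue count are routine.
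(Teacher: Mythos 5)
Your treatment of the bang-bang property (Kalman condition forces $p(0)^\top K(A,B)=0$, hence $p\equiv 0$, and then \eqref{condannul} together with Remark \ref{rem622} gives $p^0=0$, contradicting nontriviality), of the $n-1$ bound (Rolle induction on real exponential polynomials with total degree count at most $n$), and of the infinitude of sign changes of $\varphi$ for every extremal (reduction, via the dominant growth rate, to a nonzero mean-zero trigonometric polynomial) is correct; these are the standard arguments.

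The gap is exactly where you flagged it, and it is genuine: your strategy for the last assertion requires a lower bound on the density of sign changes of $\varphi(t)=p(0)^\top e^{-tA}B$ that is \emph{uniform} over all $p(0)$, and this is nowhere proved. It is not a routine point: the maximal gap between consecutive zeros of an exponential polynomial is not an obviously semicontinuous function of its coefficients, so compactness of the unit sphere of initial adjoint vectors does not immediately yield uniformity; one would have to run a translation-plus-compactness argument in the $n$-dimensional solution space of the constant-coefficient ODE annihilating $\varphi$, and handle possible tangential zeros in the limit. As written, the last bullet is therefore not established.

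The classical proof --- the text leaves this lemma as an exercise with reference to \cite{LeeMarkus}, and this is how the argument goes there --- bypasses uniformity by constructing the control first and proving its optimality directly, instead of starting from an optimal control and counting its switchings. Fix any $p_0\neq 0$ and let $\varphi(s)=p_0^\top e^{-sA}B$, which by your third step changes sign infinitely often on $[0,+\infty)$. Given $N$, choose $T>0$ such that $\varphi$ has more than $N$ sign changes on $(0,T)$, set $u^*(s)=a\,\mathrm{sign}(\varphi(s))$ and
$$
x_0=-\int_0^T e^{-sA}Bu^*(s)\,ds ,
$$
so that $u^*$ steers $x_0$ to $0$ in time $T$. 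If $v$ is any admissible control steering $x_0$ to $0$ in some time $\tau\leq T$, then also $x_0=-\int_0^\tau e^{-sA}Bv(s)\,ds$; equating the two expressions of $x_0$ and multiplying on the left by $p_0^\top$ yields
$$
a\int_0^T\vert\varphi(s)\vert\,ds=\int_0^\tau\varphi(s)v(s)\,ds\leq a\int_0^\tau\vert\varphi(s)\vert\,ds ,
$$
hence $\int_\tau^T\vert\varphi(s)\vert\,ds\leq 0$. Since $\varphi\not\equiv 0$ (Kalman condition, as in your first step) and $\varphi$ is real-analytic, its zeros are isolated, so this forces $\tau=T$. Thus $T$ is the minimal time from $x_0$ and $u^*$ is a minimal time control (indeed the unique one, by the equality case of the same inequality), with more than $N$ switchings. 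Note that this makes both halves of your plan for the last step unnecessary: neither the unboundedness of the minimal time nor any uniform density of zeros is needed.
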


\paragraph{Case $m=2$ (two scalar controls $u_1$ and $u_2$).} 
Let us assume that $m=2$. In that case, $B(t)=(B_1(t),B_2(t))$, where $B_1(t)$ and $B_2(t)$ are vectors of $\R^n$.
Let us show how to make explicit the extremal controls from the maximization condition of the PMP, for two important constraints very often considered in practice.

\begin{itemize}[parsep=1mm,itemsep=1mm,topsep=1mm]%,leftmargin=*[leftmargin=*]
\item Assume that $\Omega=[-1,1]\times[-1,1]$, the unit square of $\R^2$. This means that the controls $u_1$ and $u_2$ must satisfy the constraints $\vert u_1(t)\vert\leq 1$ and $\vert u_2(t)\vert\leq 1$. As for the case $m=1$, we set $\varphi_i(t)=p(t)^\top B_i(t)$, for $i=1,2$, and the maximization condition \eqref{condmaxlineaire} implies that $u_i(t)=\mathrm{sign}(\varphi_i(t))$ as soon as $\varphi_i(t)\neq 0$, for $i=1,2$.

\item Assume that $\Omega=\bar B(0,1)$, the closed unit ball of $\R^2$. This means that the controls $u_1$ and $u_2$ must satisfy the constraints $u_1(t)^2+u_2(t)^2\leq 1$. Setting again $\varphi_i(t)=p(t)^\top B_i(t)$, for $i=1,2$, the maximization condition \eqref{condmaxlineaire} can be written as
$$
\varphi_1(t) u_1(t) + \varphi_2(t) u_2(t) = \max_{v_1^2+v_2^2\leq 1}\left\langle \begin{pmatrix} \varphi_1(t)\\ \varphi_2(t)\end{pmatrix} , \begin{pmatrix} v_1\\ v_2\end{pmatrix}  \right\rangle
$$
and it follows from the Cauchy-Schwarz inequality that
$$
u_i(t) = \frac{\varphi_i(t)}{\sqrt{\varphi_1(t)^2+\varphi_2(t)^2}}
$$
for $i=1,2$, as soon as $\varphi_1(t)^2+\varphi_2(t)^2\neq 0$.
\end{itemize}
In these two cases, the comments done previously are still in force in the degenerate case where the switching functions vanish identically on some subset of positive measure.
We do not insist on such difficulties at this step.
Note that what is done here with $m=2$ could be written as well for any value of $m$.

\subsection{Linear quadratic theory}\label{sec_LQ}
In this chapter we make an introduction to the well known LQ (linear quadratic) theory, which has many applications in concrete applications, such as Kalman filtering or regulation problems. We first study and solve the basic LQ problem, and then we provide an important application to the tracking problem. For other applications (among which the Kalman filter), see \cite{AndersonMoore, Kailath, KwakernaakSivan, Sontag, Trelat}.

\subsubsection{The basic LQ problem}
We consider the optimal control problem
\begin{equation}\label{systemLQ}
\begin{split}
& \dot{x}(t)=A(t)x(t)+B(t)u(t),\quad x(0)=x_0 \\
& \min \int_0^T \left({x(t)}^\top W(t)x(t)+{u(t)}^\top U(t)u(t) \right) dt +{x(T)}^\top Qx(T)
\end{split}
\end{equation}
where $x_0\in\R^n$ and $T>0$ are fixed (arbitrarily), $W(t)$ and $Q$ are symmetric nonnegative matrices of size $n$, $U(t)$ is a symmetric positive definite matrix of size $m$. The dependence in time of the matrices above is assumed to be $L^\infty$ on $[0,T]$.
The controls are all possible functions of $L^2([0,T],\R^m)$.

We call this problem the basic LQ problem. Note that the final point is let free. The matrices $W(t)$, $U(t)$, and $Q$, are called weight matrices.

We assume that there exists $\alpha>0$ such that
$$
\int_0^T u(t)^\top U(t) u(t) \, dt \geq\alpha\int_0^T {u(t)}^\top u(t) \, dt \qquad\forall u\in L^2([0,T],\R^m).
$$
For instance, this assumption is satisfied if $t\mapsto
U(t)$ is continuous on $[0,T]$. In practice, the weight matrices are often constant.

\begin{theorem}\label{existenceLQ}
There exists a unique optimal solution to \eqref{systemLQ}.
\end{theorem}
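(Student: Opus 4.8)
The plan is to recognize \eqref{systemLQ} as the minimization of a strictly convex, coercive quadratic functional on the Hilbert space $L^2([0,T],\R^m)$, and to conclude by the direct method of the calculus of variations (for existence) together with strict convexity (for uniqueness).

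First I would make the dependence of the cost on the control explicit. By the Duhamel formula for the linear system, the trajectory associated with a control $u$ is $x_u(t)=R(t,0)x_0+L_t u$, where $L_t u=\int_0^t R(t,s)B(s)u(s)\,ds$ is the linear continuous operator already used in the proofs of Theorems \ref{controlabiliteinstationnaire} and \ref{access}; by Cauchy--Schwarz and the $L^\infty$ bounds on $A,B$, the map $u\mapsto x_u$ is affine and continuous from $L^2([0,T],\R^m)$ into $C^0([0,T],\R^n)$. Substituting into the cost yields
$$J(u)=\int_0^T\bigl(x_u(t)^\top W(t)x_u(t)+u(t)^\top U(t)u(t)\bigr)\,dt+x_u(T)^\top Q x_u(T),$$
which decomposes as $J(u)=Q_2(u)+\ell(u)+c$, with $Q_2$ the homogeneous quadratic part, $\ell$ linear and $c$ constant. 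Since $W(t)\geq 0$ and $Q\geq 0$, the degree-two contributions of the first and last terms are nonnegative, while the coercivity hypothesis gives $\int_0^T u^\top U u\,dt\geq\alpha\Vert u\Vert_{L^2}^2$. Hence $Q_2(u)\geq\alpha\Vert u\Vert_{L^2}^2$, which shows at once that $Q_2$ is positive definite (so $J$ is strictly convex) and that $J(u)\to+\infty$ as $\Vert u\Vert_{L^2}\to+\infty$ (so $J$ is coercive).

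Then I would run the direct method. Let $(u_n)$ be a minimizing sequence; coercivity bounds it in $L^2([0,T],\R^m)$, so by reflexivity a subsequence, still denoted $(u_n)$, converges weakly, $u_n\rightharpoonup u$. Because $L_t$ takes values in the finite-dimensional space $\R^n$, each component of $L_t u_n$ is the $L^2$ inner product of $u_n$ with a fixed function, so $L_t u_n\to L_t u$ for every $t$; exactly as in the proofs of Theorems \ref{access} and \ref{ensacccompact}, the trajectories $x_{u_n}$ are moreover bounded in $H^1$ and hence converge uniformly to $x_u$, giving convergence of the $W$- and $Q$-terms. Finally the convex continuous map $u\mapsto\int_0^T u^\top U u\,dt$ is weakly lower semicontinuous. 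Passing to the limit, $J(u)\leq\liminf_n J(u_n)=\inf J$, so $u$ is a minimizer. Uniqueness follows from strict convexity: if $u_1\neq u_2$ were two minimizers, then $J\bigl(\tfrac12(u_1+u_2)\bigr)<\tfrac12 J(u_1)+\tfrac12 J(u_2)=\inf J$, a contradiction.

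The argument is essentially routine; the only points that require care are the quantitative coercivity bound $Q_2(u)\geq\alpha\Vert u\Vert_{L^2}^2$ (which is where the positive definiteness of $U$, and not merely its nonnegativity, is used) and the passage to the limit in the two state-dependent quadratic terms, which is precisely the weak-compactness/strong-convergence mechanism already exploited for the accessible set. Alternatively, one can bypass the direct method: writing $J(u)=\langle\mathcal A u,u\rangle_{L^2}+\langle b,u\rangle_{L^2}+c$ with $\mathcal A$ bounded, self-adjoint and coercive, the Lax--Milgram theorem yields at once a unique critical point $u=-\tfrac12\mathcal A^{-1}b$, which is the global minimizer by convexity.
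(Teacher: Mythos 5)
Your proof is correct, but it follows a genuinely different route from the one the paper invokes. The paper's proof of Theorem \ref{existenceLQ} is only sketched: it refers to the Filippov-type argument of Theorem \ref{thmfilippov} --- a minimizing sequence of trajectories, weak-star compactness of the extended velocities, convexity of the epigraph \eqref{optconv}, and a measurable selection lemma to recover a control from the limiting velocity --- adapted to the LQ setting (where $\Omega=\R^m$ is not compact, so the coercivity hypothesis on $U(\cdot)$ must be used to bound the minimizing controls), with uniqueness coming from strict convexity. You instead work entirely in the Hilbert space of controls: the affine continuous control-to-state map turns weak $L^2$ convergence of a minimizing sequence of controls into pointwise (indeed uniform) convergence of the states, so the $W$- and $Q$-terms pass to the limit, while the $U$-term is handled by weak lower semicontinuity of a convex continuous functional; no measurable selection and no velocity-set convexity are needed, and the single estimate $Q_2(u)\geq\alpha\Vert u\Vert_{L^2}^2$ delivers both coercivity and strict convexity, hence uniqueness in the same stroke. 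What the paper's route buys is generality (it specializes an argument valid for nonlinear dynamics and non-quadratic costs); what yours buys is a self-contained, shorter proof tailored to the LQ structure, and --- via the Lax--Milgram reformulation $u=-\tfrac12\mathcal{A}^{-1}b$ --- an explicit characterization of the optimal control as the solution of a linear equation, which foreshadows the feedback synthesis of Theorem \ref{thmLQ}.
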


This theorem can be proved using classical functional analysis arguments, as in the proof of Theorem \ref{thmfilippov}. The uniqueness comes from the strict convexity of the cost. For a proof, see \cite{Trelat}.

Let us apply the PMP to the basic LQ problem. The Hamiltonian is
$$
H(t,x,p,p^0,u) = p^\top A(t)x+p^\top B(t)u+p^0({x}^\top W(t)x+{u}^\top U(t)u)
$$
and the adjoint equation is 
$
\dot p(t) = -A(t)^\top p(t)-2p^0W(t)x(t) .
$
Since the final point is free, the transversality condition on the final adjoint vector yields $p(T)=2p^0Qx(T)$, and hence necessarily $p^0\neq 0$ (otherwise we would have $(p(T),p^0)=(0,0)$, which is a contradiction with the PMP). According to Remark \ref{rem_normalization}, we choose, here, to normalize the adjoint vector so that $p^0=-\frac{1}{2}$ (this will be convenient when derivating the squares...). Now, since there is no constraint on the control, we have 
$$
0 = \frac{\partial H}{\partial u}(t,x(t),p(t),p^0,u(t)) = B(t)^\top p(t) - U(t)u(t)
$$
and hence $u(t)=U(t)^{-1}B(t)^\top p(t)$.

Summing up, we have obtained that, for the optimal solution $(x(\cdot),u(\cdot))$ of the basic LQ problem, we have 
\begin{equation*}
\begin{split}
\dot x(t) &= A(t) x(t) + B(t) U(t)^{-1}B(t)^\top p(t),\qquad\, x(0)=x_0,\\
\dot p(t) &= -A(t)^\top p(t) + W(t) x(t),\qquad\qquad\qquad p(T) = -Q x(T).
\end{split}
\end{equation*}
At this step, things are already nice, and we could implement a shooting method in order to solve the above problem. But the structure of the problem is so particular that we are actually able, here, to express $p(t)$ linearly in function of $x(t)$. This property is very remarkable but also very specific to that problem. We claim that we can search $p(t)$ in the form $p(t) = E(t)x(t)$. Replacing in the above equations, we easily obtain a relation of the form $R(t)x(t)=0$, with 
$$
R(t) = \dot{E}(t)-W(t)+{A(t)}^\top E(t)+E(t)A(t)+E(t)B(t)U(t)^{-1}{B(t)}^\top E(t)
$$
and $E(T)x(T)=-Qx(T)$. Therefore, ``simplifying by $x$", we see that, if we assume that $R(t)=0$ by definition, then we can go back in the reasoning and infer, by Cauchy uniqueness, that $p(t) = E(t)x(t)$. We have obtained the following result.

\begin{theorem} \label{thmLQ}
The optimal solution $x(\cdot)$ of the basic LQ problem is associated with the control
$$
u(t)=U(t)^{-1} B(t)^\top E(t)x(t)
$$
where $E(t)\in{\cal M}_n(\R)$ is the unique solution on $[0,T]$ of the Riccati matrix differential equation
\begin{equation}\label{riccati}
\begin{split}
\dot{E}(t)&=W(t)-{A(t)}^\top E(t)-E(t)A(t)- E(t)B(t)U(t)^{-1}{B(t)}^\top E(t)\\
E(T)&=-Q 
\end{split}
\end{equation}
\end{theorem}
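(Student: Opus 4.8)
The plan is to turn the formal computation preceding the statement into a rigorous argument in three movements: first observe that the optimality system obtained from the PMP is here both necessary \emph{and} sufficient, then establish that the Riccati equation \eqref{riccati} genuinely admits a solution on the whole of $[0,T]$ (the only delicate point, since the equation is quadratic and could a priori blow up in finite time), and finally read off the feedback law by a direct verification combined with the uniqueness granted by Theorem \ref{existenceLQ}. First I would note that, since $U(t)$ is positive definite and $W(t),Q$ are nonnegative, the cost in \eqref{systemLQ} is strictly convex in $u$ while the dynamics are affine; hence the stationarity conditions extracted above (the Hamiltonian boundary value problem $\dot x = Ax+BU^{-1}B^\top p$, $\dot p = Wx-A^\top p$, $x(0)=x_0$, $p(T)=-Qx(T)$, together with $u=U^{-1}B^\top p$) are not merely necessary but also sufficient for global optimality. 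Thus it suffices to exhibit one extremal of this form, after which Theorem \ref{existenceLQ} identifies it with the optimal solution.

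The heart of the proof is global solvability of \eqref{riccati}. Local existence and uniqueness are immediate because the right-hand side is quadratic, hence locally Lipschitz in $E$; symmetry of $E(t)$ follows since $E^\top$ solves the very same Cauchy problem (using $W=W^\top$, $U=U^\top$, $Q=Q^\top$), so $E=E^\top$ by uniqueness. To rule out blow-up I would linearize: let $(X(\cdot),Y(\cdot))$ be the globally defined (because linear) solution of the matrix Hamiltonian system $\dot X = AX+BU^{-1}B^\top Y$, $\dot Y = WX-A^\top Y$ with $X(T)=I_n$, $Y(T)=-Q$. A short computation shows that $E=YX^{-1}$ solves \eqref{riccati} wherever $X$ is invertible, so everything reduces to proving that $X(t)$ is invertible for every $t\in[0,T]$.

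Proving this invertibility is the step I expect to be the main obstacle, and I would handle it by contradiction. Suppose $X(t_0)v=0$ for some $t_0\in[0,T)$ and $v\neq 0$, and set $(x,p)=(Xv,Yv)$, a solution of the Hamiltonian system with $x(t_0)=0$ and $p(T)=-Qx(T)$ (using $X(T)=I_n$). Differentiating along the flow gives $\frac{d}{dt}(x^\top p)=p^\top BU^{-1}B^\top p+x^\top Wx$, whose integrand coincides with $u^\top U u+x^\top Wx$ for $u=U^{-1}B^\top p$; integrating over $[t_0,T]$ yields $x(T)^\top p(T)=-x(T)^\top Q x(T)$, so that
$$
\int_{t_0}^T \big(x^\top W x + u^\top U u\big)\,dt + x(T)^\top Q x(T) = 0 .
$$
Since the three terms are nonnegative they all vanish, and the coercivity assumption $\int u^\top U u\geq \alpha\int\Vert u\Vert^2$ forces $u\equiv 0$; then $\dot x=Ax$ with $x(t_0)=0$ gives $x\equiv 0$, hence $p\equiv 0$, and finally $v=X(T)v=x(T)=0$, a contradiction. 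Therefore $E=YX^{-1}$ is defined on all of $[0,T]$.

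To conclude, I would define $x(\cdot)$ by the closed-loop equation $\dot x=(A+BU^{-1}B^\top E)x$, $x(0)=x_0$ (a linear ODE, hence globally solvable), set $p=Ex$ and $u=U^{-1}B^\top Ex$, and verify directly, using \eqref{riccati} for $\dot E$, that $\dot p = Wx-A^\top p$ and $p(T)=E(T)x(T)=-Qx(T)$, so that $(x,p,-\frac{1}{2},u)$ satisfies the entire optimality system (the maximization condition holding because $H$ is strictly concave in $u$, its $u$-gradient vanishing at $u=U^{-1}B^\top p$). By the sufficiency noted at the outset together with the uniqueness of Theorem \ref{existenceLQ}, this pair is the optimal solution, and the associated control is precisely $u(t)=U(t)^{-1}B(t)^\top E(t)x(t)$, which is the assertion of the theorem.
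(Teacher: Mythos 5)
Your proof is correct, but it takes a genuinely different route from the paper's, and it is in fact more complete. The paper argues by necessity: it applies the PMP to the optimal solution (which exists by Theorem \ref{existenceLQ}), obtains the Hamiltonian boundary value problem, posits the ansatz $p(t)=E(t)x(t)$, reads off \eqref{riccati} by ``simplifying by $x$'', and invokes Cauchy uniqueness---but it explicitly does \emph{not} prove that the solution of \eqref{riccati}, a priori defined only near $t=T$, extends to the whole of $[0,T]$, deferring this point to \cite{Trelat}. Your proposal supplies precisely this missing step via the classical linearization of the Riccati flow: $E=YX^{-1}$ where $(X,Y)$ solves the linear matrix Hamiltonian system, with invertibility of $X(t)$ proved by contradiction from the identity $\frac{d}{dt}(x^\top p)=u^\top Uu+x^\top Wx$ and the vanishing of the nonnegative quantity $\int_{t_0}^T(x^\top Wx+u^\top Uu)\,dt+x(T)^\top Qx(T)$; the coercivity assumption on $U$ is used exactly where needed to conclude $u\equiv 0$. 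You also invert the logical structure: instead of relying on the PMP as a necessary condition plus existence, you verify that the closed-loop pair $(x,\,p=Ex)$ solves the optimality system and conclude by sufficiency, since strict convexity of the cost in $u$ (the dynamics being affine in $u$) makes any stationary control the unique global minimizer. What your route buys is a self-contained and rigorous proof (no external citation, no informal ``simplification by $x$''); what the paper's route buys is brevity and a direct pedagogical illustration of the PMP machinery of the chapter. A cosmetic remark: in the invertibility argument, once $u\equiv 0$ and $x\equiv 0$ on $[t_0,T]$ you get $v=x(T)=0$ directly, since $X(T)=I_n$; the intermediate observation $p\equiv 0$ is true but superfluous.
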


Actually, there is a difficulty for finishing the proof of that theorem, by proving that the unique solution $E(\cdot)$ of the Cauchy problem \eqref{riccati}, which is a priori defined in a neighborhood of $T$, is indeed well defined over the whole interval $[0,T]$. Indeed, such a Riccati differential equation may produce blow-up, and the well-posedness over the whole $[0,T]$ is not obvious. We do not prove that fact here, and we refer the reader, e.g., to \cite{Trelat} for a proof (which uses the optimality property of $u(\cdot)$).

It can be noted that $E(t)$ is symmetric (this is easy to see by Cauchy uniqueness).
The result above is interesting because, in that way, the problem is completely solved, without having to compute any adjoint vector for instance by means of a shooting method. Moreover the optimal control is expressed in a feedback form, $u(t)=K(t)x(t)$, well adapted to robustness issues.

This is because of that property that the LQ procedures are so much used in practical problems and industrial issues. We are next going to give an application to tracking.

\subsubsection{Tracking problem}
Let us consider the general control system $\dot x(t) = f(t,x(t),u(t))$, with initial condition $x(0)=x_0$, with the regularity assumptions done at the beginning of Chapter \ref{chap_cont}.
Let $t\mapsto\xi(t)$ be a trajectory in $\R^n$, defined on $[0,T]$, and which is arbitrary (in particular, this is not necessarily a solution of the control system). We assume however that $\xi(\cdot)$ is Lipschitz (or, at least, absolutely continuous).
The objective is to design a control $u$ generating a trajectory $x(\cdot)$ that tracks the trajectory $\xi(\cdot)$ in the ``best possible way" (see Figure \ref{figreg}).

\begin{figure}[h]
\begin{center}
\includegraphics[width=5.7cm]{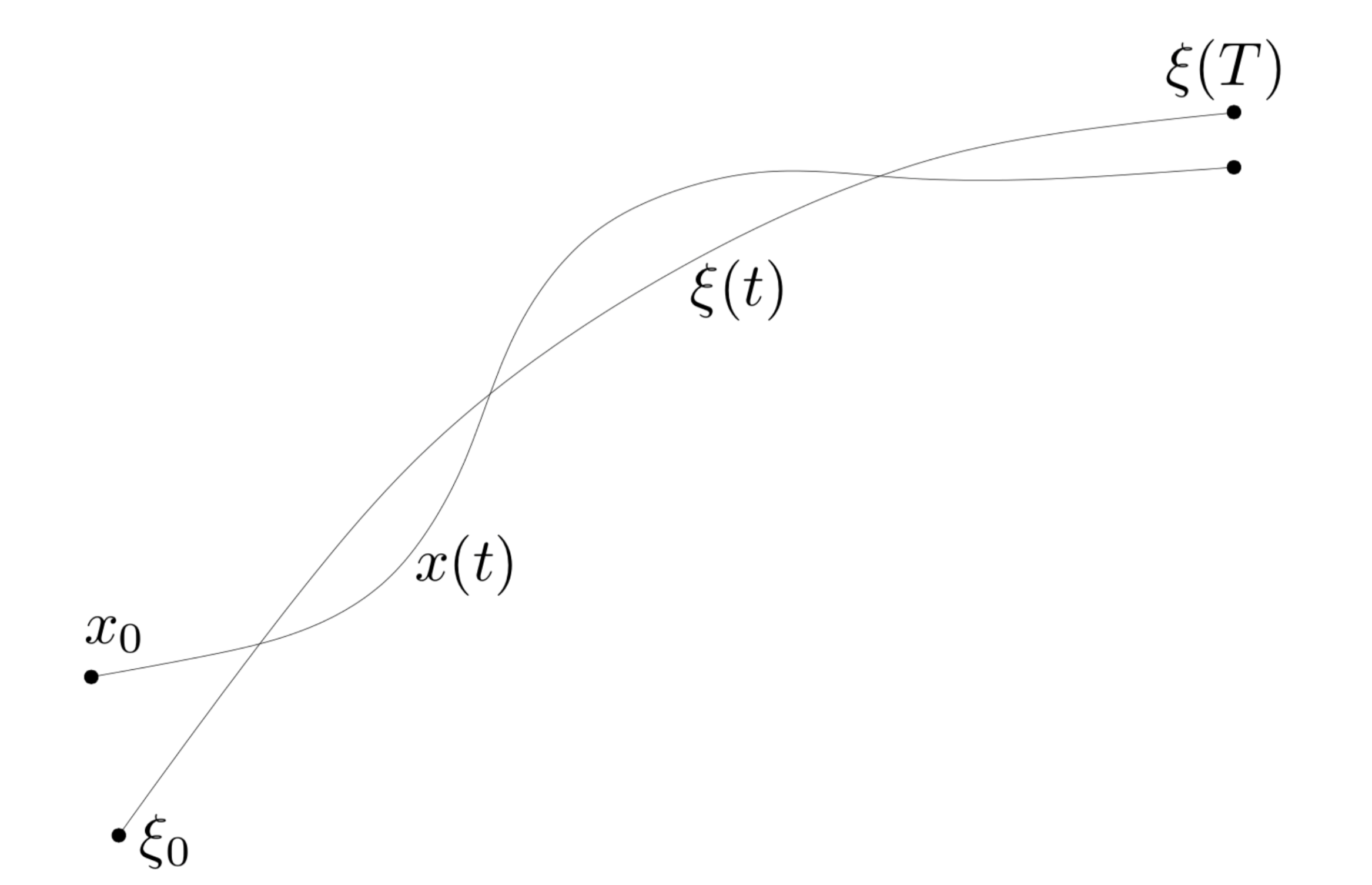}
\end{center}
\caption{Tracking problem}\label{figreg}
\end{figure}

We proceed as follows. 
We set $e(t)=x(t)-\xi(t)$, and we will try to design $u$ so that $e(\cdot)$ remains as small as possible. Using a first-order expansion, we have
$$
\dot e(t) = f(t,\xi(t)+e(t),u(t))-\dot\xi(t)
= A(t)e(t) + B(t)u(t) + r(t)
$$
with 
$$
A(t) = \frac{\partial f}{\partial x}(t,\xi(t),0), \qquad
B(t) = \frac{\partial f}{\partial u}(t,\xi(t),0), 
$$
$$
r(t) = f(t,\xi(t),0)-\dot\xi(t) + \mathrm{o}(e(t),u(t)) .
$$
It seems reasonable to seek a control $u$ minimizing the cost
$$
C(u) = \int_0^T\left( z(t)^\top W(t) z(t) + u(t)^\top U(t) u(t)\right) dt + z(T)^\top Qz(T)
$$
for the control system
$$
\dot z(t) = A(t)z(t) + B(t)u(t) + r_1(t),\quad z(0) = x_0-\xi(0),
$$
with $r_1(t)=f(t,\xi(t),0)-\dot\xi(t)$, where $W(t)$, $U(t)$ and $Q$ are weight matrices that are chosen by the user. We hope that the resulting control will be such that the term $\mathrm{o}(z(t),u(t))$ is small. In any case, the choice above produces a control, which hopefully tracks the trajectory $\xi(t)$ as closely as possible. Note that, when linearizing the system, we have linearized at $u=0$, considering that $u$ will be small. We could have linearized along a given $\bar u(t)$: we then obtain one of the many possible variants of the method.

Let us now solve the above optimal control problem. In order to absorb the perturbation term $r_1$, we consider an augmented system, by adding one dimension. We set
$$
z_1=\begin{pmatrix} z \\ 1 \end{pmatrix},\ 
A_1=\begin{pmatrix} A & r_1 \\ 0 & 0 \end{pmatrix},\ 
B_1=\begin{pmatrix} B \\ 0 \end{pmatrix} ,\ 
Q_1=\begin{pmatrix} Q & 0 \\ 0 & 0 \end{pmatrix},\ 
W_1=\begin{pmatrix} W & 0 \\ 0 & 0 \end{pmatrix}, 
$$
and hence we want to minimize the cost
$$
C(u) = \int_0^T\left( z_1(t)^\top W_1(t) z_1(t) + u(t)^\top U(t) u(t) \right) dt + z_1(T)^\top Q_1z_1(T)
$$
for the control system $\dot{z}_1(t)=A_1(t)z_1(t)+B_1(t)u(t)$, with $z_1(0)$ fixed.
In this form, this is a basic LQ problem, as studied in the previous section. 
According to Theorem \ref{thmLQ}, there exists a unique optimal control, given by $u(t)=U(t)^{-1}B_1(t)^\top E_1(t)z_1(t)$, where $E_1(t)$ is the solution of the Cauchy problem $\dot{E}_1=W_1-{A_1}^\top E_1-E_1A_1-E_1B_1U^{-1}{B_1}^\top E_1$, $E_1(T)=-Q_1$.
Setting
$$
E_1(t)=\begin{pmatrix} E(t) & h(t) \\ h(t)^\top & \alpha(t) \end{pmatrix} 
$$
with $E(t)$ square matrix of size $n$, $h(t)\in \R^n$ and $\alpha(t)\in\R$, we obtain the following result.

\begin{proposition}
The optimal (in the sense above) tracking control is
$$
u(t)=U(t)^{-1}B(t)^\top E(t)(x(t)-\xi(t)) + U(t)^{-1}B(t)^\top h(t)
$$
where
$$
\begin{array}{rll}
\dot{E}&=W-{A}^\top E-EA-EBU^{-1}{B}^\top E, & E(T)=-Q, \\
\dot{h}&=-{A}^\top h-E(f(t,\xi,0) -\dot{\xi})-EBU^{-1}{B}^\top h, & h(T)=0 .
\end{array}
$$
\end{proposition}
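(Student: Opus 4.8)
The plan is to invoke Theorem~\ref{thmLQ} directly on the augmented LQ problem and then to decompose everything into blocks compatible with the splitting $z_1=(z,1)^\top$. Theorem~\ref{thmLQ} already supplies the optimal control $u(t)=U(t)^{-1}B_1(t)^\top E_1(t)z_1(t)$ together with the Riccati equation $\dot E_1=W_1-A_1^\top E_1-E_1A_1-E_1B_1U^{-1}B_1^\top E_1$, $E_1(T)=-Q_1$. The entire content of the proposition is to read off what these two identities say once $E_1$ is written in the block form $E_1=\begin{pmatrix} E & h \\ h^\top & \alpha \end{pmatrix}$.

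First I would treat the control formula. Writing $B_1^\top=(B^\top,0)$ and multiplying, one finds $B_1^\top E_1=(B^\top E,\,B^\top h)$, hence $B_1^\top E_1 z_1=B^\top E z+B^\top h$. Since $z(t)$ is the error variable, identified with $x(t)-\xi(t)$ to the order at which the linearization is carried out, substituting $z=x-\xi$ yields exactly $u=U^{-1}B^\top E(x-\xi)+U^{-1}B^\top h$, the claimed feedback.

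Next I would expand the Riccati equation block by block. Setting $S=BU^{-1}B^\top$, the key observation is that $B_1U^{-1}B_1^\top=\begin{pmatrix} S & 0 \\ 0 & 0 \end{pmatrix}$, so the quadratic term $E_1B_1U^{-1}B_1^\top E_1$ has $(1,1)$-block $ESE$ and $(1,2)$-block $ESh$. Computing $A_1^\top E_1$ and $E_1A_1$ in the same way (the only contributions to the $(1,2)$-block being $A^\top h$ from the former and $Er_1$ from the latter, as $W_1$ has vanishing off-diagonal block), the $(1,1)$-entry of the Riccati identity reproduces $\dot E=W-A^\top E-EA-EBU^{-1}B^\top E$, while the $(1,2)$-entry gives $\dot h=-A^\top h-Er_1-EBU^{-1}B^\top h$. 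Replacing $r_1=f(t,\xi,0)-\dot\xi$ yields the stated $h$-equation, and reading the blocks of $E_1(T)=-Q_1=\begin{pmatrix} -Q & 0 \\ 0 & 0 \end{pmatrix}$ gives the terminal conditions $E(T)=-Q$ and $h(T)=0$.

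The computation is entirely routine, and the points needing a word of care are purely bookkeeping. One should observe that the symmetry of $E_1$ (guaranteed by Theorem~\ref{thmLQ}) is precisely what makes the ansatz consistent, the two off-diagonal blocks being genuine transposes of one another, and that the $(2,2)$-block produces a scalar equation for $\alpha$ that plays no role in the control and may be discarded. The only genuinely non-computational point, already flagged in the discussion following Theorem~\ref{thmLQ}, is the solvability of the Riccati equation on the whole of $[0,T]$ rather than merely near $T$; this is inherited from the well-posedness of the augmented LQ problem and is not reproved here.
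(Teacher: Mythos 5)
Your proposal is correct and follows exactly the paper's route: the paper likewise applies Theorem~\ref{thmLQ} to the augmented pair $(A_1,B_1)$ with weights $(W_1,Q_1)$ and reads the proposition off the block decomposition $E_1=\begin{pmatrix} E & h \\ h^\top & \alpha \end{pmatrix}$, leaving the block algebra (which you carried out correctly, including the discarded scalar $(2,2)$-equation for $\alpha$ and the terminal conditions) implicit. Your added remarks on the symmetry of $E_1$ and on global solvability of the Riccati equation being inherited from the augmented problem are consistent with the paper's discussion following Theorem~\ref{thmLQ}.
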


It is interesting to note that the control is written in a feedback form $u(t)=K(t)(x(t)-\xi(t))+H(t)$.

\medskip

As said at the beginning of the section, there are many other applications of the LQ theory, and many possible variants. For instance, one can easily adapt the above tracking procedure to the problem of output tracking: in that case we track an observable. It is also very interesting to let the horizon of time $T$ go to $+\infty$. In that case, we can expect to obtain stabilization results. This is indeed the case for instance when one considers a linear autonomous control system (regulation over an infinite horizon); the procedure is referred to as LQR in practice and is very much used for stabilization issues.

In practice, we often make the choice of constant diagonal weight matrices $W(t)=w_0\, I_n$, $U(t)=u_0\, I_m$, and $Q=q_0\, I_n$, with $w_0\geq 0$, $u_0>0$ and $q_0\geq 0$. If $w_0$ is chosen much larger than $u_0$, then it is expected that $\Vert x(t)-\xi(t)\Vert$ will remain small (while paying the price of larger values of $u(t)$). Conversely if $u_0$ is chosen much larger than $w_0$ then it is expected that $u(t)$ will take small values, while the tracking error $\Vert x(t)-\xi(t)\Vert$ may take large values. Similarly, if $q_0$ is taken very large then it is expected (at least, under appropriate controllability assumptions) that $x(T)$ will be close to $\xi(T)$. A lot of such statements, with numerous possible variants, may be established. We refer to \cite{AndersonMoore, Kailath, Khalil, KwakernaakSivan, LeeMarkus, Sontag, Trelat} for (many) more precise results.

\subsection{Examples of nonlinear optimal control problems}

\begin{example}[Zermelo problem]
Let us consider a boat moving with constant speed along a river of constant width $\ell$, in which there is a current $c(y)$ (assumed to be a function of class $C^1$). The movement of the center of mass of the boat is governed by the control system
\begin{equation*}
\begin{split}
\dot{x}(t)&=v\cos u(t)+c(y(t)),\quad\ \ x(0)=0,\\
\dot{y}(t)&=v\sin u(t),\qquad\qquad\qquad y(0)=0,
\end{split}
\end{equation*}
where $v>0$ is the constant speed, and the control is the angle $u(t)$ of the axis of the boat with respect to the axis $(0x)$ (see Figure \ref{fig_zermelo}). We investigate three variants of optimal control problems with the objective of reaching the opposite side: the final condition is $y(t_f)=\ell$, where the final time $t_f$ is free.
\begin{figure}[h]
\begin{center}
{ \resizebox{7cm}{!}{\input 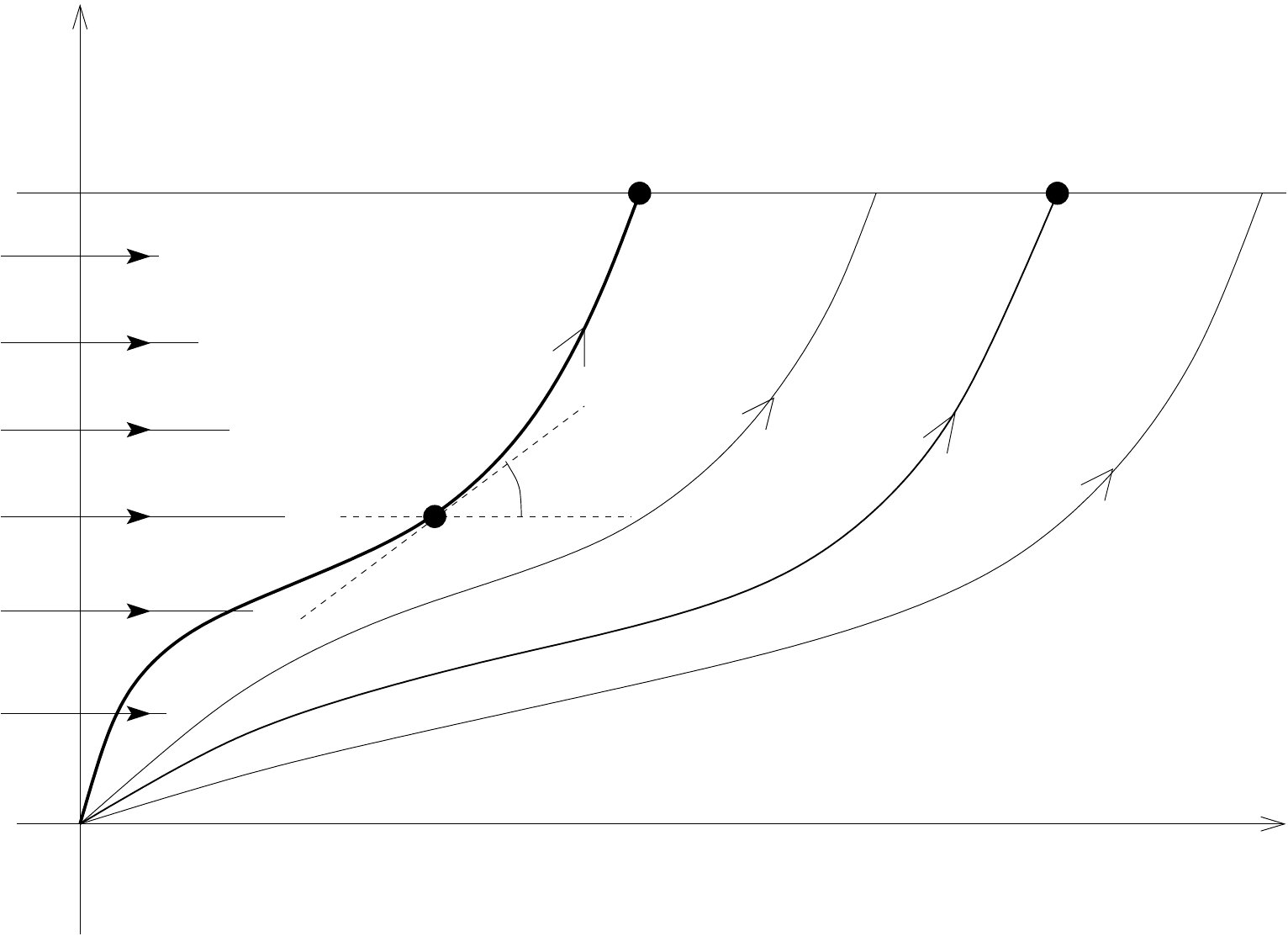_t} }
\end{center}
\caption{Zermelo problem.}\label{fig_zermelo}
\end{figure}
\begin{enumerate}
\item[1.] Assuming that $c(y)\geq v$ for every $y\in[0,\ell]$ (strong current), compute the optimal control minimizing the drift $x(t_f)$.
\item[2.]  Compute the minimal time control.
\item[3.]  Compute the minimal time control for the problem of reaching a precise point $M=(x_1,\ell)$ of the opposite side.
\end{enumerate}

\medskip

\noindent\textit{1. Reaching the opposite side by minimizing the drift $x(t_f)$.}\\[1mm]
We choose $f^0=0$ and $g(t,x,y)=x$.
The Hamiltonian is 
$$
H = p_x(v\cos u+c(y))+p_yv\sin u .
$$
The adjoint equations are $\dot p_x=0$, $\dot p_y=-p_xc'(y)$. In particular $p_x$ is constant. Since the target is $M_1=\{y=\ell\}$, the transversality condition on the adjoint vector yields $p_x=p^0$.
The maximization condition of the Hamiltonian leads to
$$
\cos u(t) = \frac{p_x}{\sqrt{p_x^2+p_y(t)^2}},\qquad
\sin u(t) = \frac{p_y(t)}{\sqrt{p_x^2+p_y(t)^2}},
$$
for almost every $t$, provided that the function $\varphi(t)=p_x^2+p_y(t)^2$ does not vanish on any subset of positive measure. This condition is proved by contradiction: if $\varphi(t)\equiv 0$ on $I$, then $p_x=0$ and $p_y(t)=0$ on $I$, but then also $p^0=p_x=0$, and we get a contradiction (because the adjoint $(p_x,p_y,p^0)$ must be non trivial).
Finally, since $t_f$ is free and the problem is autonomous, we get that $H=0$ along any extremal, that is, $H=v\sqrt{p_x^2+p_y^2}+p_xc(y)=0$ along any extremal.

We must have $p^0\neq 0$. Indeed, otherwise, $p^0=0$ implies that $p_x=0$, and from $H=0$ we infer that $p_y=0$ as well. This is a contradiction.
Hence, we can take $p^0=-1$, and therefore $p_x=-1$.

From $H=0$, we have $\sqrt{1+p_y^2}=\frac{c(y)}{v}$, and hence $\cos u=-\frac{v}{c(y)}$. Since $c(y)\geq v$, this equation is solvable, and we get
$$u(t)=\textrm{Arccos}\left( -\frac{v}{c(y(t))} \right).$$
Note that we have thus determined the optimal control in a \textit{feedback form}, which is the best possible one (in practice, such a control can be made fully automatic, provided one can measure the position $y$ at any time).

\begin{remark}
The assumption $c(y)\geq v$ means that the current is strong enough. Without this assumption, the optimal control problem consisting of minimizing the drift $x(t_f)$ would be ill-posed: there would not exist any optimal solution (at least, in finite time), because if, for some $y$, we have $c(y)<v$, then, along this $y$, the boat can go against the current towards $x=-\infty$.

We also realize that, if we had not made this assumption, then the above equation would not be solvable. This remark provides a way for showing that the optimal control problem has no solution, by contradiction (recall that the PMP says that \textbf{if} a trajectory is optimal \textbf{then} it must satisfy the various conditions stated in the PMP).
\end{remark}

\begin{remark}\label{rem_zermelo_reachable}
The optimal trajectory, minimizing the lateral deport, is represented on Figure \ref{fig_zermelo}. It is interesting to note that any other trajectory is necessarily at the right of that optimal trajectory. In particular, this gives the reachable set (in any time): the reachable set consists of all points such that $0\leq y\leq \ell$ that are at the right of the optimal trajectory.
\end{remark}

\medskip

\noindent\textit{2. Reaching the opposite side in minimal time.}\\[1mm]
We choose $f^0=1$ and $g=0$.
Now, the Hamiltonian is 
$$
H = p_x(v\cos u+c(y))+p_yv\sin u+p^0,
$$
the adjoint equations are the same as previously, as well as the extremal controls provided that $\varphi(t)\neq 0$. The transversality condition on the adjoint vector is different: we now obtain $p_x=0$. It follows that $p_y$ is constant.
Besides, we still have $H=0$ along any extremal. We claim that $p_y\neq 0$. Indeed, otherwise, $H=0$ implies that $p^0=0$, and then $(p_x,p_y,p^0)=(0,0,0)$, which is a contradiction. Hence, we get that $\cos u(t)=0$ and $\sin u(t)=\textrm{sign}(p_y)$, and thus $u(t)=\frac{\pi}{2}$ for every time $t$ (the sign of $u$ is given by the fact that, at least at the beginning, the boat must leave the initial riverbank with $\dot y>0$).

\begin{remark}
Actually, the fact that the minimal time control is $u=\frac{\pi}{2}$ is obvious by inspecting the equation in $y$. Indeed, since $t_f=\int_0^{t_f}dt=\int_0^\ell \frac{dy}{\dot y}$, it easily follows that, in minimal time, we must have $\dot y=1$.

Note that we do not need to assume, here, that the current is strong enough. The calculations above are valid, whatever the function $c(y)$ may be.
\end{remark}

\medskip

\noindent\textit{3. Reaching a precise point of the opposite side in minimal time.}\\[1mm]
This is a variant of the second problem, in which we lose the transversality condition on the adjoint vector, because the target point is fixed. The constant $p_x$ is thus undetermined at this step. We still have that $H=0$ along any extremal.
By contradiction, it is still true that the function $\varphi$ cannot vanish on any subset of positive measure (indeed otherwise $p_x=0$ and $p_y=0$, and from $H=0$ we get that $p^0=0$: contradiction).

This third variant is interesting because both the normal case $p^0\neq 0$ and the abnormal case $p^0=0$ may occur. Let us analyze them.
\begin{itemize}[parsep=1mm,itemsep=1mm,topsep=1mm]%,leftmargin=*]
\item Normal case: $p^0=-1$. In that case, using that $H=v\sqrt{p_x^2+p_y^2}+p_xc(y)-1=0$ along any extremal, we get  
$$
\cos u(t)= \frac{p_xv}{1-p_xc(y(t))} .
$$
Note that, for this equation to be solvable, $p_x$ must be such that $\vert p_x v\vert\leq \vert 1-p_xc(y(t))\vert$ for every time $t$. We have thus obtained all possible optimal controls, parametrized by $p_x$. The parameter $p_x$ is the ``shooting parameter", that is, the degree of freedom that is required in order to tune the final condition $x(t_f)=x_1$, i.e., in order to reach the target point $M$.

All optimal trajectories are represented on Figure \ref{fig_zermelo} in the case of a strong current. To go further, we could specify a current function $c(y)$, and either implement a shooting method, or try to make explicit computations if this is possible.

\item Abnormal case: $p^0=0$. Using $H=0$, we get $\cos u=-\frac{v}{c(y)}$. In the case where the current is strong enough, we see that we recover exactly the solution of the first variant, that is, the optimal trajectory with minimal drift.

Then, two cases may occur: either the target point $M$ is different from the end-point of the minimal drift trajectory, and then, the trajectory is not solution of our problem and the abnormal does not occur; or, by chance, the target point $M$ exactly coincides with the end-point of the minimal drift trajectory, and then (under the assumption $c(y)\geq v$) the abnormal case indeed occurs, and the optimal trajectory coincides with the minimal drift trajectory.
\end{itemize}
This example is interesting because it gives a very simple situation where we may have an abnormal minimizer and how to interpret it.
\end{example}

\begin{example}[Optimal control of damaging insects by predators.]\label{exo7.3.22}
In order to eradicate as much as possible a population $x_0>0$ of damaging pests, we introduce in the ecosystem a population $y_0>0$ of (nondamaging) predator insects killing the pests.

\medskip

\noindent\textit{First part.}
In a first part, we assume that the predator insects that we introduce are infertile, and thus cannot reproduce themselves. The control consists of the continuous introduction of predator insects. The model is
\begin{equation*}
\begin{split}
\dot{x}(t) &= x(t) ( a-b y(t)),\quad x(0)=x_0,\\
\dot{y}(t) &= -cy(t) + u(t),\quad\ \ y(0)=y_0,
\end{split}
\end{equation*}
where $a>0$ is the reproduction rate of pests, $b>0$ is the predation rate, $c>0$ is the natural death rate of predators. The control $u(t)$ is the rate of introduction of new predators at time $t$. It must satisfy the constraint
$
0\leq u(t)\leq M
$
where $M>0$ is fixed. Let $T>0$ be fixed. We want to minimize, at the final time $T$, the number of pests, while minimizing as well the number of introduced predators. We choose to minimize the cost
$$x(T)+\int_0^Tu(t)\, dt.$$

Throughout, we denote by $p=(p_x,p_y)$ and by $p^0$ the adjoint variables.

\medskip

First, we claim that $x(t)>0$ and $y(t)>0$ along $[0,T]$, for every control $u$.

Indeed, since $u(t)\geq 0$, we have $\dot{y}(t)\geq -cy(t)$, hence $y(t)\geq y_0\mathrm{e}^{-ct}>0$. For $x(t)$, we argue by contradiction: if there exists $t_1\in [0,T]$ such that $x(t_1)=0$, then $x(t)=0$ for every time $t$ by Cauchy uniqueness; this raises a contradiction with the fact that $x(0)=x_0>0$.

The Hamiltonian of the optimal control problem is 
$
H=p_xx(a-by)+p_y(-cy+u)+p^0u
$
and the adjoint equations are $\dot{p}_x=-p_x(a-by)$, $\dot{p}_y=bp_xx+cp_y$. The transversality conditions yield $p_x(T)=p^0$ and $p_y(T)=0$. It follows that $p^0\neq 0$ (otherwise we would have $(p(T),p^0)=(0,0)$, which is a contradiction). In what follows, we set $p^0=-1$.

\medskip

We have 
$
\frac{d}{dt}x(t)p_x(t)=x(t)p_x(t)(a-by(t))-x(t)p_x(t)(a-by(t))=0,
$
hence $x(t)p_x(t)=\mathrm{Cst}=-x(T)$ because $p_x(T)=p^0=-1$. It follows that $\dot{p}_y=-bx(T)+cp_y$, and since $p_y(T)=0$, we infer, by integration, that
$p_y(t) = \frac{b}{c}x(T) ( 1-\mathrm{e}^{c(t-T)} ).$
The maximization condition of the PMP yields %$\max_{0\leq u\leq M} (p_y-1)u$, hence
\begin{equation*}
u(t)=\left\{\begin{array}{lll}
0 & \textrm{if} & p_y(t)-1<0\\
M & \textrm{if} & p_y(t)-1>0
\end{array}\right.
\end{equation*}
unless the function $t\mapsto p_y(t)-1$ vanishes identically on some subinterval. But this is not possible because we have seen above that the function $p_y$ is decreasing. We conclude that the optimal control is bang-bang.
Moreover, at the final time we have $p_y(T)-1=-1$, hence, by continuity, there exists $\varepsilon>0$ such that $p_y(t)-1<0$ along $[T-\varepsilon,T]$, and hence $u(t)=0$ along a subinterval containing the final time. 

We can be more precise: we claim that, actually, the optimal control has at most one switching along $[0,T]$ (and is $0$ at the end).

Indeed, the function $p_y$ is decreasing (because $x(T)>0$, as we have seen at the beginning), hence the function $t\mapsto p_y(t)-1$, which is equal to $-1$ at $t=T$, vanishes at most one time. 
If there is such a switching, necessarily it occurs at some time $t_1\in[0,T]$ such that $p_y(t_1)=1$, which yields
$$
t_1 = T+\frac{1}{c}\mathrm{ln}\left(1-\frac{c}{bx(T)}\right).
$$
Note that this switching can occur only if $t_1>0$ (besides, we have $t_1<T$), hence, only if $x(T)>\frac{c}{b}\frac{1}{1-\mathrm{e}^{-cT}}$. By integrating backwards the equations, we could even express an implicit condition on the initial conditions, ensuring that this inequality be true, hence, ensuring that there is a switching.

\medskip

\noindent\textit{Second part.}
We now assume that the predators that we introduce are fertile, and reproduce themselves with a rate that is proportional to the number of pests. The control is now the death rate of predators. In order to simplify, we assume that the variables are normalized so that all other rates are equal to $1$. The model is
\begin{equation*}
\begin{split}
\dot{x}(t) &= x(t) (1-y(t)),\qquad\quad x(0)=x_0,\\
\dot{y}(t) &= -y(t)(u(t)-x(t)),\quad y(0)=y_0,
\end{split}
\end{equation*}
where the control $u(t)$ satisfies the constraint $0< \alpha\leq u(t)\leq \beta$.
\\
First, as before we have $x(t)>0$ and $y(t)>0$ along $[0,T]$, for every control $u$.

All equilibrium points of the system are given by $x_e=u_e$, $y_e=1$, for every $\alpha\leq u_e\leq\beta$. In the quadrant, we have a whole segment of equilibrium points.

Let us investigate the problem of steering the system in minimal time $t_f$ to the equilibrium point $x(t_f)=a$, $y(t_f)=1$.

The Hamiltonian is 
$
H=p_xx(1-y)-p_yy(u-x)+p^0
$
and the adjoint equations are $\dot{p}_x=-p_x(1-y)-p_yy$, $\dot{p}_y=p_xx+p_y(u-x)$. The transversality condition on the final time gives $H(t_f)=0$, and since the system is autonomous, it follows that the Hamiltonian is constant along any extremal, equal to $0$.

The maximization condition of the PMP is $\underset{0\leq u\leq M}{\max} (-p_yyu)$, which gives, since $y(t)>0$,
\begin{equation*}
u(t)=\left\{\begin{array}{lll}
\alpha & \textrm{if} & p_y(t)>0\\
\beta  & \textrm{if} & p_y(t)<0
\end{array}\right.
\end{equation*}
unless the function $t\mapsto p_y(t)$ vanishes identically along a subinterval. If this is the case then $p_y(t)=0$ for every $t\in I$. Derivating with respect to time, we get that $xp_x=0$ and thus $p_x=0$ along $I$. Therefore, along $I$, we have $H=p^0$, and since $H=0$ we infer that $p^0=0$, which raises a contradiction. Therefore, we conclude that the optimal control is bang-bang.

Along an arc where $u=\alpha$ (resp., $u=\beta$), we compute $\frac{d}{dt}F_\alpha(x(t),y(t))=0$, where
$$F_\alpha(x,y)=x+y-\alpha\,\mathrm{ln}\, x-\mathrm{ln}\, y $$
(resp., $F_\beta$), that is, $F_\alpha(x(t),y(t))$ is constant along such a bang arc.

It can be noted that, formally, this integral of the movement can be obtained by computing $\frac{dy}{dx}=\frac{\dot{y}}{\dot{x}}=\frac{-y}{1-y}\frac{\alpha-x}{x}$ and by integrating this separated variables one-form.

Considering a second-order expansion of $F_\alpha$ at the point $(\alpha,1)$,
$$F_\alpha(\alpha+h,1+k)=\alpha-\alpha\, \mathrm{ln}\, \alpha+1+\frac{1}{2}\left(\frac{h^2}{\alpha}+k^2\right)+\mathrm{o}(h^2+k^2),$$
we see that $F_\alpha$ has a strict local minimum at the point $(\alpha,1)$. Moreover, the function $F_\alpha$ is (strictly) convex, because its Hessian 
$\begin{pmatrix} \frac{a}{x^2} & 0 \\ 0 & \frac{1}{y^2} \end{pmatrix}$
is symmetric positive definite at any point such that $x>0$, $y>0$. It follows that the minimum is global.

For any controlled trajectory (with a control $u$), we have 
$$\frac{d}{dt} F_\alpha(x(t),y(t))=(u(t)-\alpha)(1-y(t)) .$$

Let us prove that there exists $\varepsilon>0$ such that $u(t)=\alpha$, for almost every $t\in[t_f-\varepsilon,t_f]$ (in other words, the control $u$ is equal to $\alpha$ at the end).

Indeed, at the final time, we have either $p_y(t_f)=0$ or $p_y(t_f)\neq0$.

If $p_y(t_f)=0$, then, using the differential equation in $p_y$, we have $\dot{p}_y(t_f)=p_x(t_f)a$. We must have $p_x(t_f)\neq 0$ (otherwise we would get a contradiction, noticing as previously that $H(t_f)=p^0=0$). Hence $\dot{p}_y(t_f)\neq 0$, and therefore the function $p_y$ has a constant sign along some interval $[t_f-\varepsilon,t_f[$. Hence, along this interval, the control is constant, either equal to $\alpha$ or to $\beta$. It cannot be equal to $\alpha$, otherwise, since the function $F_\alpha$ is constant along this arc, and since this arc must reach the point $(\alpha,1)$, this constant would be equal to the minimum of $F_\alpha$, which would imply that the arc is constant, equal to the point $(\alpha,1)$: this is a contradiction because we consider a minimal time trajectory reaching the point $(\alpha,1)$.

If $p_y(t_f)\neq 0$, then the function $p_y$ has a constant sign along some interval $[t_f-\varepsilon,t_f[$, and hence, along this interval, the control is constant, equal either to $\alpha$ or to $\beta$. With a similar reasoning as above, we get $u=\alpha$.

\medskip

Let us now provide a control strategy (which can actually be proved to be optimal) in order to steer the system from any initial point $(x_0,y_0)$ to $(\alpha,1)$.

First, in a neighborhood of the point $(\alpha,1)$, the level sets of the function $F_\alpha$ look like circles. Farther from that point, the level sets look more and more like rectangle triangles, asymptotic to the coordinate axes. Similarly for the level sets of $F_\beta$, with respect to the point $(\beta,1)$ (see Figure \ref{figlevel}).

\begin{figure}[h]
\begin{center}
\includegraphics[width=5.2cm]{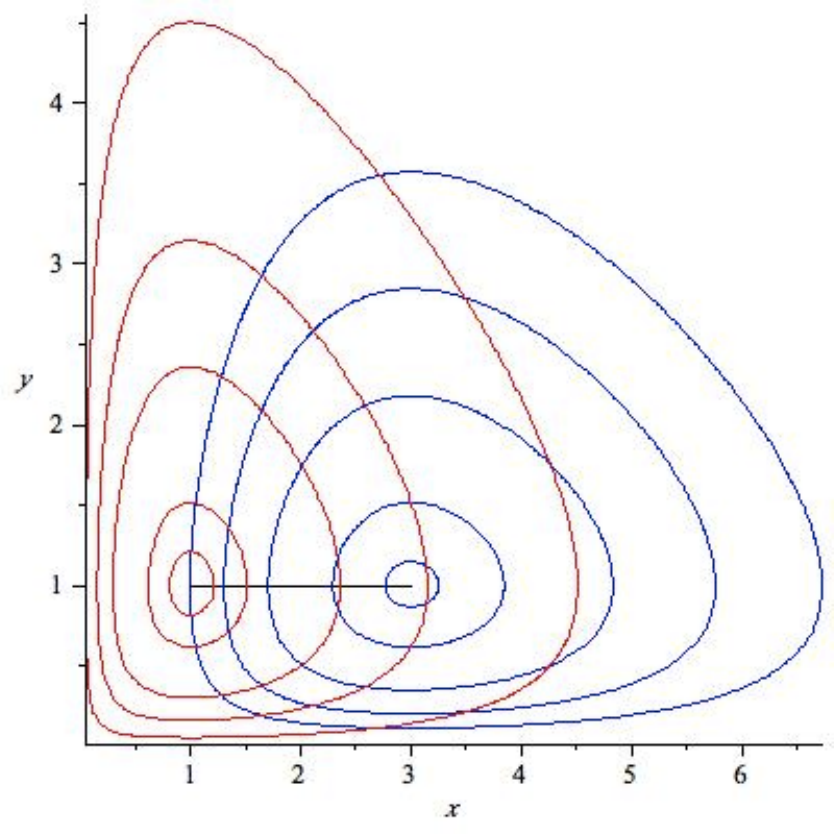}
\end{center}
\caption{Example with $\alpha=1$ and $\beta=3$}\label{figlevel}
\end{figure}

Let us start from a point $(x_0,y_0)$, which is located on the segment joining the two points $(\alpha,1)$ and $(\beta,1)$, that is, such that $\alpha<x_0<\beta$ and $y_0=1$.
We start with the control $u=\alpha$, and we remain along a level set of the function $F_\alpha$ (thus, ``centered" at the point $(\alpha,1)$). 
At some time, we switch on the control $u=\beta$, and then we remain along a level set of $F_\beta$ (thus, ``centered" at the point $(\alpha,1)$) which passes through the target point $(\alpha,1)$.

Now, if we start from any other point $(x_0,y_0)$, we can determine on Figure \ref{figlevel} a sequence of arcs along the level sets, respectively, of $F_\alpha$ and of $F_\beta$, that steers the system from the starting point to the target point.
\end{example}

\begin{example}[The Brachistochrone Problem.]
The objective is to determine what is the optimal shape of a children's slide (between two given altitudes) so that, if a ball slides along it (with zero initial speed), it arrives at the other extremity in minimal time.

This problem can be modeled as the following optimal control problem. In the Euclidean plane, the slide is modeled as a continuous curve, starting from the origin, and arriving at a given fixed point $(x_1,y_1)$, with $x_1>0$. We consider a ball of mass $m>0$ sliding along the curve. We denote by $(x(t),y(t))$ its position at time $t$. The ball is subject to the gravity force $m\vec{g}$ and to the reaction force of the children's slide. At time $t$, we denote by $u(t)$ the (oriented) angle between the unit horizontal vector and the velocity vector $(\dot x(t),\dot y(t))$ of the ball (which is collinear to the tangent to the curve). See Figure \ref{fig_tobog}.

\begin{figure}[h]
\begin{center}
{ \resizebox{7cm}{!}{\input 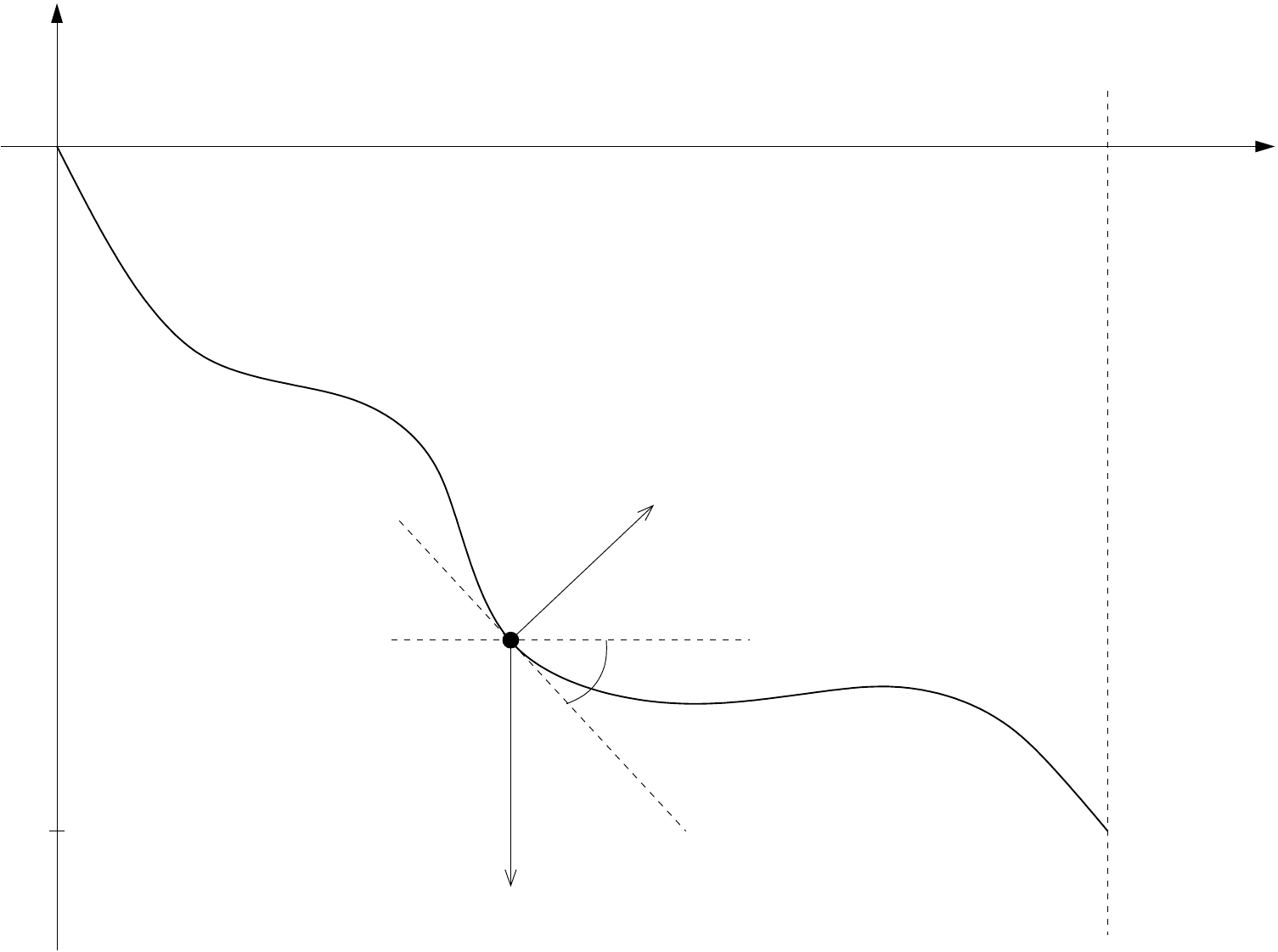_t} }
\end{center}
\caption{The Brachistochrone Problem, seen as an optimal control problem.}\label{fig_tobog}
\end{figure}

Seeking the curve is equivalent to seeking the angle $u(t)$. Therefore, we stipulate that $u$ is a control. By projecting the equations given by the fundamental principle of dynamics onto the tangent to the curve, we obtain the following control system:
\begin{equation}\label{sys_brach}
\begin{split}
\dot x(t) &= v(t)\cos u(t),\qquad x(0)=0,\quad x(t_f)=x_1, \\
\dot y(t) &= -v(t)\sin u(t),\quad\ y(0)=0,\quad y(t_f)=y_1, \\
\dot v(t) &= g\sin u(t),\quad\qquad\  v(0)=0,\quad v(t_f)\ \textrm{free}.
\end{split}
\end{equation}
The control is $u(t)\in\R$, $g>0$ is a constant. We want to minimize the final time $t_f$.

\medskip

First of all, noticing that $\dot y=-\frac{1}{g} v\dot v$, we get, by integration, that $y(t)=-\frac{1}{2g} v(t)^2$, for every control $u$. This implies that any final point such that $y_1>0$ is not reachable. Therefore, from now on, we will assume that $y_1\leq 0$.

Because of the relation between $y(t)$ and $v(t)$, we can reduce the optimal control problem \eqref{sys_brach} to the minimal time control problem for the following system:
\begin{equation}\label{sys_brach_reduit}
\begin{split}
\dot x(t) &= v(t)\cos u(t),\quad x(0)=0,\quad x(t_f)=x_1>0\ \textrm{fixed}, \\
\dot v(t) &= g\sin u(t),\quad\quad\  v(0)=0,\quad v(t_f)\ \textrm{fixed}.
\end{split}
\end{equation}
Note that, since $y(t_f)=y_1$ is fixed, it follows that $v(t_f)=\pm\sqrt{-2gy_1}$.

Let us apply the Pontryagin maximum principle to the optimal control problem \eqref{sys_brach_reduit}. The Hamiltonian is $H=p_xv\cos u+p_vg\sin u+p^0$. The adjoint equations are $\dot p_x=0$ et $\dot p_v=-p_x\cos u$. In particular, $p_x$ is constant.
Since the final time is free and the problem is autonomous, we have $H=0$ along any extremal. The maximization condition of the Hamiltonian yields
$$
\cos u(t) = \frac{p_x v(t)}{\sqrt{(p_xv(t))^2+(gp_v(t))^2}}, \quad
\sin u(t) = \frac{g p_v(t)}{\sqrt{(p_xv(t))^2+(gp_v(t))^2}}, 
$$
provided that $\varphi(t)=(p_xv(t))^2+(gp_v(t))^2\neq 0$.

We have $\frac{d}{dt}(p_xv(t))=p_xg\sin u(t)$ et $\frac{d}{dt}(gp_v(t))=-p_xg\cos u(t)$.
As a consequence, if $\varphi(t)=0$ for every $t$ in some subset $I$ of positive measure, then $p_x=0$. Therefore $\varphi(t)=(gp_v(t))^2$ and thus $p_v(t)=0$ for every $t\in I$. Since $H=0$, we infer that $p^0=0$. We have obtained $(p_x,p_v(t),p^0)=(0,0,0)$, which is a contradiction.
We conclude that the function $\varphi$ never vanishes on any subset of $[0,t_f]$ of positive measure. Therefore the above expression of the controls is valid almost everywhere.

The maximized Hamiltonian is $H=\sqrt{(p_xv(t))^2+(gp_v(t))^2}+p^0$. Since $H=0$ along any extremal, we infer, by contradiction, that $p^0\neq 0$ (indeed otherwise we would infer that $\varphi\equiv 0$, which leads to a contradiction). Hence, from now on, we take $p^0=-1$.

Since $H=0$ along any extremal, we get that $(p_xv(t))^2+(gp_v(t))^2=1$, and therefore $\cos u(t)=p_x v(t)$ and $\sin u(t)=gp_v(t)$.

\medskip

If $p_x$ were equal to $0$, then we would have $\cos u(t)=0$ and thus $\dot x(t)=0$, and then we would never reach the point $x_1>0$. Therefore $p_x\neq 0$.

Let us now integrate the trajectories.
We have $\dot v=g^2p_v$ and $\dot p_v=-p_x^2v$ and thus $\ddot v+g^2p_x^2v=0$, and since $v(0)=0$ we get that $v(t)=A\sin(gp_xt)$. Since $H=0$ and $v(0)=0$, we have $g^2p_v(0)^2=1$ hence $p_v(0)=\pm \frac{1}{g}$, and thus $\dot v(0)=\pm g$. We infer that $A=\pm\frac{1}{p_x}$, and hence that $v(t)=\pm\frac{1}{p_x}\sin(gp_xt)$. Now, $\dot x=v\cos u=p_xv^2$ and $y=-\frac{1}{2g}v^2$, and by integration we get 
\begin{equation*}
\begin{split}
x(t) &= \frac{1}{2p_x}t-\frac{1}{4gp_x^2}\sin(2gp_xt),\\
y(t) &= -\frac{1}{2gp_x^2}\sin^2(gp_xt) = -\frac{1}{4gp_x^2} \left( 1-\cos(2gp_xt) \right).
\end{split}
\end{equation*}
Note that, since $\dot x=p_xv^2$, we must have $p_x>0$.

Representing in the plane the parametrized curves $(x(t),y(t))$ (with parameters $p_x$ and $t$), we get \textit{cycloid curves}.

\medskip

Let us now prove that there is a unique optimal trajectory joining $(x_1,y_1)$, and it has at most one cycloid arch.

Let us first compute $p_x$ and $t_f$ in the case where $y_1=0$. If $y_1=0$ then $\sin(gp_xt_f)=0$ hence $gp_xt_f=\pi+k\pi$ with $k\in\Z$, but since $t_f$ must be minimal, we must have $k=0$ (and this is what will imply that optimal trajectories have at most one arch). Hence $p_x=\frac{t_f}{2x_1}$. Since $2gp_xt_f=2\pi$, we have $x_1=x(t_f)=\frac{t_f}{2p_x}$, and thus $t_f=\sqrt{\frac{2\pi x_1}{g}}$ and $p_x=\sqrt{\frac{\pi}{2gx_1}}$.

\begin{figure}[h]
\begin{center}
\includegraphics[width=5.7cm]{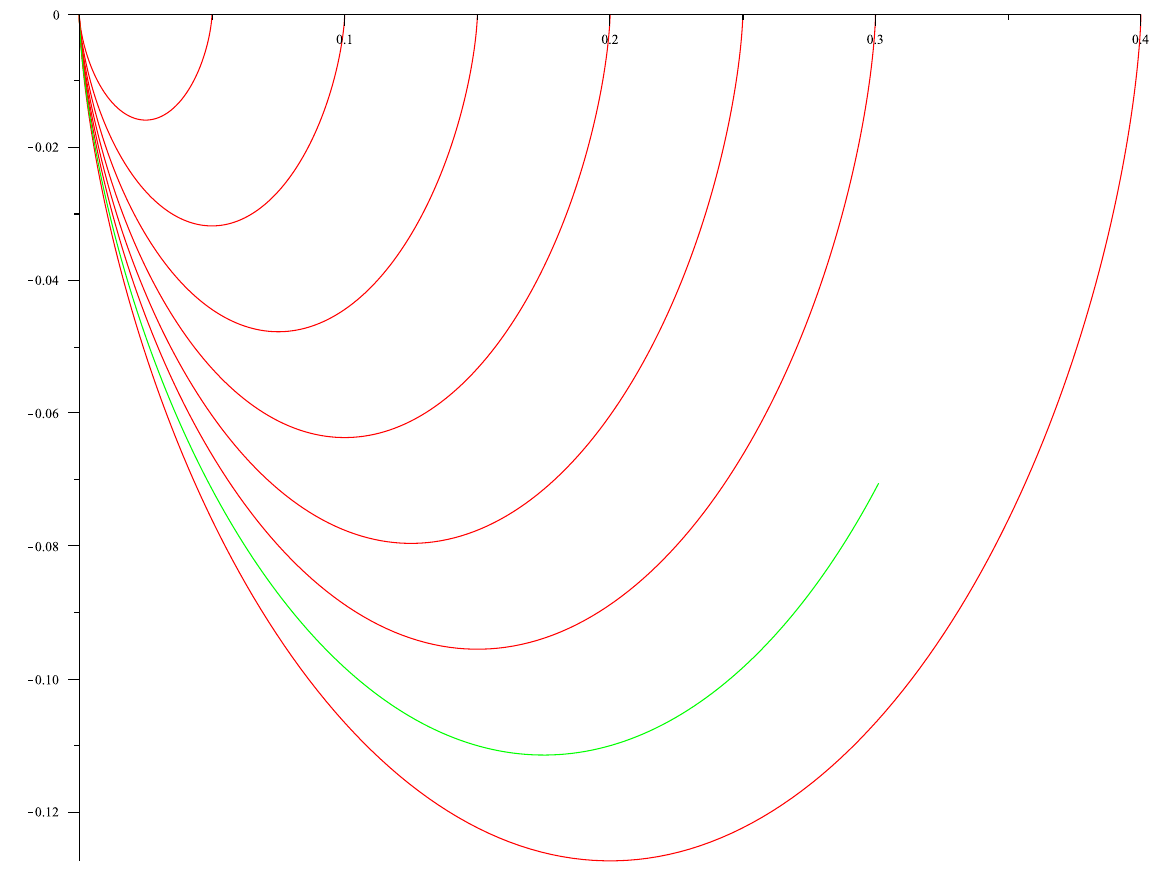}
\end{center}
\caption{Optimal trajectories}\label{fig_brachi}
\end{figure}

On Figure \ref{fig_brachi}, we have represented all optimal trajectories joining points $(x_1,0)$, with $x_1>0$. 

Now, we note that, if a trajectory is optimal on the interval $[0,t_f]$, then, for any $t_1\in]0,t_f[$, it is optimal as well on the interval $[0,t_1]$ for the problem of reaching the point $(x(t_1),y(t_1))$ in minimal time (this is the dynamic programming principle). From that remark, we deduce that any optimal trajectory of the problem \eqref{sys_brach} is the truncation of an optimal trajectory reaching a point of the abscissa axis. In particular, we get the desired uniqueness property, and the fact that such a trajectory has at most one point at which $\dot y=0$ (hence, at most one arch). See Figure \ref{fig_brachi}.

Moreover, if $\dot y(t)=0$ then $2gp_x t=\pi+k\pi$ with $k\in\Z$, and necessarily (by optimality) $k=0$, hence $t=\frac{\pi}{2gp_x}$. Therefore the set of points where $\dot y(t)=0$ is the parametrized curve $x(p_x)=\frac{\pi}{4gp_x^2}$, $y(p_x)=-\frac{1}{2gp_x^2}$, that is the graph $y=-\frac{2}{\pi}x$. Therefore, we have proved that the optimal trajectory $(x(t),y(t))$ reaching $(x_1,y_1)$ is such that
\begin{itemize}[parsep=1mm,itemsep=1mm,topsep=1mm]%,leftmargin=*
\item $y(t)$ passes through a minimum if $y_1>-\frac{2}{\pi}x_1$,
\item $y(t)$ is decreasing if $y_1<-\frac{2}{\pi}x_1$.
\end{itemize}

\begin{remark}
If we investigate the variant of the optimal control problem \eqref{sys_brach}, for which we minimize the final time $t_f$ with $y(t_f)$ free, then in the reduced problem we have moreover that $v(t_f)$ is free, and hence we gain the transversality condition $p_v(t_f)=0$, hence $\dot v(t_f)=0$. This gives $2gp_x t_f=\pi$, in other words, we find exactly the final points of the previously computed optimal trajectories, stopping when $\dot y=0$.

This means that, if $y(t_f)$ is free (with $x_1$ fixed), we minimize the time $t_f$ by choosing, on Figure \ref{fig_brachi}, the arc of cycloid starting from the origin and reaching $x=x_1$ with an horizontal tangent.
\end{remark}

\end{example}

\chapter{Stabilization}\label{chap_stab}

In this chapter, our objective will be to stabilize a possibly unstable equilibrium point by means of a feedback control.

Let $n$ and $m$ be two positive integers.
In this chapter we consider an autonomous control system in $\R^n$
\begin{equation}\label{contsyststab}
\dot{x}(t) = f(x(t),u(t))
\end{equation}
where $f:\R^n\times\R^m\rightarrow\R^n$ is of class $C^1$ with respect to $(x,u)$, and the controls are measurable essentially bounded functions of time taking their values in some measurable subset $\Omega$ of $\R^m$ (set of control constraints).

Let $(\bar x,\bar u)\in\R^n\times\R^m$ be an equilibrium point, that is, $f(\bar x,\bar u)=0$, such that $\bar u\in\mathring{\Omega}$ (interior of $\Omega$). Our objective is to be able to design a feedback control $u(x)$ stabilizing locally the equilibrium $(\bar x,\bar u)$, that is, such that the closed-loop system $\dot x(t)=f(x(t),u(x(t)))$ be locally asymptotically stable at $\bar x$.

\section{Stabilization of autonomous linear systems}
\subsection{Reminders on stability notions}
Consider the linear system $\dot{x}(t)=Ax(t)$, with $A$ a $n\times n$ matrix. The point $0$ is of course an equilibrium point of the system (it is the only one if $A$ is invertible). We have the following well-known result (easy to prove with simple linear algebra considerations).

\begin{theorem}
\begin{itemize}[parsep=1mm,itemsep=1mm,topsep=1mm]%,leftmargin=*
\item If there exists a (complex) eigenvalue $\lambda$ of $A$ such that $\Real(\lambda)>0$, then the equilibrium point $0$ is unstable, meaning that there exists $x_0\in\R^n$ such that the solution of $\dot{x}(t)=Ax(t)$, $x(0)=x_0$ satisfies $\Vert x(t)\Vert\rightarrow+\infty$ as $t\rightarrow+\infty$.
\item If all (complex) eigenvalues of $A$ have negative real part, then $0$ is asymptotically stable, meaning that all solutions of $\dot{x}(t)=Ax(t)$ converge to $0$ as $t\rightarrow+\infty$.
\item The equilibrium point $0$ is stable (i.e., not unstable)\footnote{See also the general definition \ref{def_stable} further.} if and only if all eigenvalues of $A$ have nonpositive real part and if an eigenvalue $\lambda$ is such that $\Real(\lambda)=0$ then $\lambda$ is a simple root\footnote{Equivalently, $\ker(A-\lambda I_n)=\ker(A-\lambda I_n)^2$, or, equivalently, the Jordan decomposition of $A$ does not have any strict Jordan block.} of the minimal polynomial of $A$.
\end{itemize}
\end{theorem}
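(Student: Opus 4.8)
The plan is to reduce everything to the Jordan normal form of $A$ together with the explicit form of the matrix exponential. Write $A = PJP^{-1}$ with $P\in GL_n(\C)$ and $J$ block-diagonal, each block being a Jordan block $J_k(\lambda)=\lambda I_k+N_k$ attached to an eigenvalue $\lambda$ of $A$, where $N_k$ is the nilpotent shift of size $k$. Every solution is $x(t)=e^{tA}x_0=Pe^{tJ}P^{-1}x_0$, so the behaviour of all trajectories is governed by $e^{tJ}$, and block by block one has the classical formula
\begin{equation*}
e^{tJ_k(\lambda)} = e^{\lambda t}\sum_{j=0}^{k-1}\frac{t^j}{j!}N_k^j ,
\end{equation*}
so that each entry of $e^{tJ_k(\lambda)}$ is, up to a constant, of the form $\frac{t^j}{j!}e^{\lambda t}$ with $0\leq j\leq k-1$, of modulus $\frac{t^j}{j!}e^{\Real(\lambda)t}$. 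I will repeatedly use the elementary fact that $t\mapsto t^je^{\alpha t}$ tends to $0$ as $t\to+\infty$ if and only if $\alpha<0$, and is bounded on $[0,+\infty)$ if and only if either $\alpha<0$, or $\alpha=0$ and $j=0$.

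For the second item, if all eigenvalues satisfy $\Real(\lambda)<0$ then every entry of $e^{tJ}$ tends to $0$, hence $\Vert e^{tA}\Vert\to 0$ and therefore $x(t)=e^{tA}x_0\to 0$ for every $x_0$: this is asymptotic stability. For the first item, if some eigenvalue $\lambda$ has $\Real(\lambda)>0$, I pick a (possibly complex) eigenvector $w$ with $Aw=\lambda w$, so that $e^{tA}w=e^{\lambda t}w$; writing $w=a+ib$ with $a,b$ real and taking real or imaginary parts produces a real solution of the form $e^{\Real(\lambda)t}\big(\cos(\mathrm{Im}(\lambda)t)\,a-\sin(\mathrm{Im}(\lambda)t)\,b\big)$, whose norm is bounded below by $c\,e^{\Real(\lambda)t}$ for some $c>0$ (the oscillating factor being periodic and nonvanishing when $a,b$ are independent, which holds for non-real $\lambda$), hence tends to $+\infty$. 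Thus $0$ is unstable.

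The third item is the delicate one, since it concerns the borderline eigenvalues with $\Real(\lambda)=0$. Following the convention here that \emph{stable} means \emph{not unstable}, i.e.\ that no trajectory blows up, I would first show that $0$ is stable if and only if $\sup_{t\geq 0}\Vert e^{tA}\Vert<+\infty$, and then that this boundedness holds exactly under the stated spectral condition. If every eigenvalue has $\Real(\lambda)<0$, or has $\Real(\lambda)=0$ with only Jordan blocks of size $1$, then by the elementary fact above every entry of $e^{tJ}$ is bounded, so $e^{tA}$ is bounded and all trajectories remain bounded. Conversely, if some eigenvalue has $\Real(\lambda)>0$ we are in the unstable case above; and if some eigenvalue $\lambda$ with $\Real(\lambda)=0$ carries a Jordan block of size $k\geq 2$, there is a generalized eigenvector $v$ with $(A-\lambda I)^{k-1}v\neq 0$ and $(A-\lambda I)^kv=0$, whence $e^{tA}v=e^{\lambda t}\sum_{l=0}^{k-1}\frac{t^l}{l!}(A-\lambda I)^lv$ has dominant term $\frac{t^{k-1}}{(k-1)!}e^{\lambda t}(A-\lambda I)^{k-1}v$ of modulus growing like $t^{k-1}\to+\infty$; passing again to real or imaginary parts gives a real trajectory tending to infinity, so $0$ is unstable. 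To finish I would translate ``all Jordan blocks of $\lambda$ have size $1$'' into ``$\lambda$ is a simple root of the minimal polynomial'': the minimal polynomial of $A$ is $\prod_i(X-\lambda_i)^{m_i}$, where $m_i$ is the size of the largest Jordan block attached to $\lambda_i$, so the size-one condition is exactly $m_i=1$, equivalently $\ker(A-\lambda_i I)=\ker(A-\lambda_i I)^2$.

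The main obstacle is precisely this third item: one must treat the purely imaginary eigenvalues carefully, distinguishing the semisimple case (bounded oscillations, hence stability) from the defective case (polynomial growth, hence instability), and then identify defectiveness with the minimal-polynomial criterion. Everything else reduces to the behaviour of the scalar functions $t^je^{\Real(\lambda)t}$.
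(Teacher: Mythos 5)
The paper offers no proof to compare against: it states this theorem as well known, ``easy to prove with simple linear algebra considerations'', and moves on. Your Jordan-form argument is exactly the standard proof being alluded to, and it is correct and complete: the reduction to entries $\frac{t^j}{j!}e^{\lambda t}$ of $e^{tJ}$ settles the first two items, and for the delicate third item you correctly separate, among eigenvalues with $\Real(\lambda)=0$, the semisimple case (all entries bounded, hence no trajectory can blow up) from a Jordan block of size $k\geq 2$ (growth like $t^{k-1}$), and you correctly translate semisimplicity into $\lambda$ being a simple root of the minimal polynomial, i.e.\ $\ker(A-\lambda I_n)=\ker(A-\lambda I_n)^2$. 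You also take care of the one point where such proofs usually leak: passing from complex to real solutions, which requires that the real and imaginary parts of an eigenvector of a real matrix attached to a non-real eigenvalue are linearly independent, so that the periodic factor $\cos(\mathrm{Im}(\lambda)t)\,a-\sin(\mathrm{Im}(\lambda)t)\,b$ is bounded below in norm. The only step you leave implicit is that, in the defective purely imaginary case, the leading coefficient $(A-\lambda I_n)^{k-1}v$ is itself an eigenvector, so that same independence argument applies to it and the real part of the dominant term cannot degenerate along a sequence of times; this deserves one explicit line but is immediate from what you already proved.
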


\begin{definition}
The matrix $A$ is said to be \textit{Hurwitz} if all its eigenvalues have negative real part.
\end{definition}

We are next going to see two classical criteria ensuring that a given matrix (with real coefficients) is Hurwitz. These criteria are particularly remarkable because they are purely algebraic (polynomial conditions on the coefficients of the matrix), and they do not require the calculation of the roots of the characteristic polynomial of the matrix (which is impossible to achieve algebraically in general, for degrees larger than $5$, as is well known from Galois theory).

\paragraph{Routh criterion.}
We consider the complex polynomial
$$
P(z)=a_0z^n+a_1z^{n-1}+\cdots+a_{n-1}z+a_n
$$
with real coefficients $a_i$, and we are going to formulate some conditions under which all roots of this polynomial have negative real part (in that case we also say that $P$ is Hurwitz).
Note that $A$ is Hurwitz if and only if its characteristic polynomial $\chi_A$ is Hurwitz.

\begin{definition}
The \textit{Routh table} is defined as follows:
\begin{equation*}
\begin{split}
& a_0\ \ a_2\ \ a_4\ \ a_6\ \ \cdots \quad \textrm{completed by $0$} \\
& a_1\ \ a_3\ \ a_5\ \ a_7\ \ \cdots \quad \textrm{completed by $0$} \\
& b_1\ \ \, b_2\ \ \, b_3\ \  b_4\ \ \cdots \quad \textrm{where}\ 
b_1=\frac{a_1a_2-a_0a_3}{a_1},\
b_2=\frac{a_1a_4-a_0a_5}{a_1},\ldots \\
& c_1\ \ \,c_2\ \ \,c_3\ \ \,c_4\ \ \cdots \quad \textrm{where}\ 
c_1=\frac{b_1a_3-a_1b_2}{b_1},\
c_2=\frac{b_1a_5-a_1b_3}{b_1},\ldots \\
& \ \vdots\ \ \ \ \vdots\ \ \ \ \vdots\ \ \ \ \vdots
\end{split}
\end{equation*}
The process goes on as long as the first element of the row is not equal to $0$. The process stops when we have built $n+1$ rows.

The Routh table is said to be \textit{complete} if it has $n+1$ rows whose first coefficient is not equal to $0$.
\end{definition}

We have the two following theorems (stated in \cite{Routh}), which can be proved by means of (nonelementary) complex analysis.

\begin{theorem}\label{thm_Routh}
All roots of $P$ have negative real part if and only if the Routh table is complete and the elements in the first column have the same sign.
\end{theorem}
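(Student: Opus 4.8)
The plan is to reduce the purely algebraic statement to a root-counting problem in the complex plane, and then to read the count off the Routh table via the classical Euclidean-algorithm interpretation. Throughout I assume $a_0>0$ (so that ``same sign'' means ``all positive''; the general case follows by replacing $P$ with $-P$). First I would split $P$ into its even and odd parts, writing $P(z)=P_0(z)+P_1(z)$ where $P_0$ collects the monomials with coefficients $a_0,a_2,a_4,\dots$ and $P_1$ those with $a_1,a_3,\dots$; the first two rows of the Routh table are exactly the coefficient lists of $P_0$ and $P_1$. The crucial algebraic observation is that the Routh recursion $b_1=(a_1a_2-a_0a_3)/a_1,\dots$ is nothing but one step of the Euclidean division of $P_0$ by $P_1$, lowering the degree by one at a time: passing from rows $(R_{k-1},R_k)$ to $R_{k+1}$ replaces the pair $(P_{k-1},P_k)$ by $(P_k,\ P_{k-1}-q_kP_k)$ for the appropriate linear quotient $q_k$. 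Thus the first column records, up to known positive factors, the leading coefficients of the successive remainders in this generalized Euclidean (Sturm-type) chain, and completeness of the table is exactly the statement that every remainder has full expected degree, i.e. that no degree drop occurs and $\gcd(P_0,P_1)$ is constant.

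Next I would bring in complex analysis through the argument principle. Assuming first that $P$ has no root on the imaginary axis, I integrate $P'/P$ over the boundary of the right half-plane (the segment from $-iR$ to $iR$ closed by a large semicircle). The semicircle contributes a change of argument $n\pi$ coming from the leading term $a_0z^n$, so the number $k$ of roots of $P$ with positive real part satisfies $2\pi k = n\pi - \Delta_{-\infty}^{+\infty}\arg P(i\omega)$. Hence $P$ is Hurwitz ($k=0$) if and only if $\arg P(i\omega)$ increases by exactly $n\pi$ as $\omega$ runs from $-\infty$ to $+\infty$. Writing $P(i\omega)=U(\omega)+iV(\omega)$ with $U$ built from $P_0$ and $V$ from $P_1$, this monotone winding condition is equivalent (Hermite--Biehler) to the interlacing of the real zeros of $U$ and $V$, and it is measured by the \emph{Cauchy index} of the real rational function $V/U$ along $\R$ (the signed number of jumps from $-\infty$ to $+\infty$) being maximal, namely equal to $n$.

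The last step is to evaluate this Cauchy index from the Euclidean chain. By the Sturm-type theory for the Cauchy index, the index of $V/U$ along $\R$ is computed from the sign pattern of the leading coefficients of the remainder sequence produced above, which is precisely the data appearing in the first column of the Routh table. The index attains its maximal value $n$ (forcing $k=0$) exactly when the chain has full length with no degree drops and the leading coefficients never change sign, that is, when the table is complete and its first column has constant sign. Running the equivalences backwards yields the converse. I would finally treat the degenerate cases separately: a vanishing first entry signals either an imaginary-axis root or a premature breakdown of the chain, and in either case one shows, by the same index count applied to a slightly perturbed polynomial (the standard $\varepsilon$-substitution), that $k\geq 1$, so $P$ is not Hurwitz, consistently with the ``complete table'' hypothesis failing.

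The hard part will be making the bridge between the Routh recursion and the Cauchy index fully rigorous: one must verify carefully that the Routh division is the correct Euclidean step, tracking the positive normalizing factors so that the \emph{signs} of the first column match the signs of the remainder leading coefficients, and one must apply the argument principle with due attention to the boundary cases where zeros lie on the imaginary axis or the chain terminates early. These degenerate situations are exactly where elementary reasoning fails and where the nonelementary complex analysis alluded to above is genuinely needed.
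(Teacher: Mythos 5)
The paper does not actually prove Theorem \ref{thm_Routh}: it states it together with the companion theorem on sign changes, cites Routh's treatise, and remarks only that the proofs ``can be proved by means of (nonelementary) complex analysis.'' Your proposal therefore cannot be compared line-by-line with a proof in the text; what it does is supply precisely the kind of argument the paper alludes to, namely the classical Routh--Hurwitz proof via the argument principle and the Cauchy index (the treatment one finds in Gantmacher's \emph{Theory of Matrices}, Chap.~XV). As an outline it is sound, and the individual steps are the right ones: the identification of the Routh rows with the even/odd parts $P_0,P_1$ of $P$ is correct, and it is worth noting that by parity the quotient in the division of $P_{k-1}$ by $P_k$ is exactly a monomial $c_k z$, so each single Routh row-step \emph{is} a full Euclidean step (no constant term in the quotient); the contour computation giving $\Delta_{-\infty}^{+\infty}\arg P(i\omega)=(n-2k)\pi$ is correct; and the reduction of the maximal-winding condition to a maximal Cauchy index computed from the sign pattern of the leading coefficients of the remainder chain is the standard Sturm-type argument. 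Your method in fact proves the stronger second theorem of the paper (number of right-half-plane roots $=$ number of sign changes in the first column), from which Theorem \ref{thm_Routh} is the special case $k=0$.

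Two points you flag as ``the hard part'' are indeed where all the genuine work lies, and they should not be underestimated in a full write-up. First, the Routh recursion produces \emph{positive} remainders, whereas Sturm's theorem for the Cauchy index is stated for chains with \emph{negated} remainders; reconciling the two introduces sign factors whose bookkeeping (depending on $n \bmod 4$ and on which of $U$, $V$ has the higher degree) is exactly what makes the literal statement ``same sign in the first column'' come out right. Second, in the degenerate direction (incomplete table $\Rightarrow$ not Hurwitz), the $\varepsilon$-perturbation argument shows that nearby polynomials have roots with positive real part, hence by continuity of roots $P$ has a root with \emph{nonnegative} real part; that already negates the Hurwitz property, so the argument closes, but one must phrase it this way rather than claim a root with strictly positive real part. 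Also, Hurwitz $\Rightarrow$ completeness needs the observation that a nonconstant $\gcd(P_0,P_1)$ would force $P$ to have a pair of roots with real parts of opposite signs (or a purely imaginary root), contradicting Hurwitz --- this is implicit in your interlacing step but deserves to be made explicit. With those caveats, your plan is a correct and essentially complete road map for the proof the paper omits.
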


\begin{theorem}
If the Routh table is complete then $P$ has no purely imaginary root, and the number of roots with positive real part is equal to the number of sign changes in the first column.
\end{theorem}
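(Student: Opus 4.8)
The plan is to count, via the argument principle, the number $N_+$ of roots of $P$ in the open right half-plane $\{z\in\C\ \mid\ \Re z>0\}$, and to show that this number equals the number of sign changes in the first column of the Routh table. I would apply the argument principle to the boundary of the half-disk $D_R=\{z\in\C\ \mid\ \Re z>0,\ |z|<R\}$, oriented positively, and let $R\to+\infty$. Since $P$ has degree $n$ with leading coefficient $a_0\neq 0$, on the large semicircle $P(z)\sim a_0 z^n$, so the contribution of the semicircle to the variation of $\arg P$ tends to $n\pi$; the remaining contribution comes from the segment of the imaginary axis. Thus $2\pi N_+$ equals $n\pi$ minus the variation of $\arg P(iy)$ as $y$ runs from $-\infty$ to $+\infty$, provided $P$ has no root on the imaginary axis (a point I return to below). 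Everything is then reduced to computing this last variation of argument.

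Next I would separate real and imaginary parts along the imaginary axis. Writing $P(iy)=U(y)+iV(y)$ with $U,V$ real polynomials, one checks that the coefficients of $U$ are, up to signs, exactly $a_0,a_2,a_4,\dots$ and those of $V$ exactly $a_1,a_3,a_5,\dots$; that is, $U$ and $V$ are built from the first two rows of the Routh table. The variation of $\arg(U+iV)$ over $\R$ is, by a standard computation, expressible through the \emph{Cauchy index} of the rational function $V/U$ (the signed count of its jumps across $\pm\infty$ as $y$ increases). The crucial algebraic observation is that the Routh recursion $b_1=(a_1a_2-a_0a_3)/a_1$, and so on, is nothing but one step of the Euclidean algorithm applied to the pair $(U,V)$: each new row is a positive multiple of the coefficient list of the next polynomial remainder, and the first-column entries $a_0,a_1,b_1,c_1,\dots$ are the leading coefficients of the successive remainders. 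The Routh sequence is therefore a Sturm-type sequence for $(U,V)$.

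With this dictionary in hand, I would invoke the theory relating Cauchy indices to generalized Sturm sequences: the Cauchy index of $V/U$ is computed from the sign variations of the leading coefficients of the Euclidean remainders, that is, precisely from the sign pattern of the first Routh column. Combining this with the degree contribution from the semicircle yields $N_+$ equal to the number of sign changes in the first column, which is the desired count. As for the statement that $P$ has no purely imaginary root: completeness of the table is exactly what guarantees this. Indeed, the table is complete if and only if the Euclidean algorithm applied to $(U,V)$ never terminates prematurely, i.e. the degree drops by exactly one at each step down to a nonzero constant; this means $\gcd(U,V)$ is constant, so $U$ and $V$ share no common real zero $y_0$. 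But $P(iy_0)=0$ would force $U(y_0)=V(y_0)=0$, a common real zero; hence $P$ has no purely imaginary root, which also retroactively validates the use of the argument principle above.

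I expect the main obstacle to be the careful sign and orientation bookkeeping in the second and third steps: fixing the convention for the Cauchy index, tracking the contribution of the semicircle (which depends on the parity of $n$ and on the behaviour of $U,V$ at $\pm\infty$), and verifying rigorously that the leading coefficients of the Euclidean remainders reproduce the Routh first column with the correct signs. These are the delicate points where a naive count is off by a sign or by the semicircle term; the complex-analytic ingredient (argument principle) and the real-algebraic ingredient (Sturm and Cauchy index theory) are each classical, but stitching them together with consistent conventions is where the real work lies.
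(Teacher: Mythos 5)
You should first know that the paper offers no proof of this statement: both Routh theorems are quoted from \cite{Routh} with only the remark that they ``can be proved by means of (nonelementary) complex analysis'', so there is no proof in the text to compare yours against. Your outline is precisely the classical argument that this remark alludes to (the Routh--Hurwitz proof as in Gantmacher's \emph{Theory of Matrices}): the argument principle on a large right half-disk, with the semicircle contributing $n\pi$ and the correctly oriented identity $2\pi N_+ = n\pi - \Delta_{-\infty}^{+\infty}\arg P(iy)$; the identification of the Routh recursion with the Euclidean algorithm applied to the even/odd parts of $P$, so that the first column lists the leading coefficients of the successive remainders; Sturm--Cauchy index theory to convert the axis contribution into sign variations; and the gcd argument deducing the absence of purely imaginary roots from completeness of the table. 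All of these reductions are sound, and the gcd argument is in fact a complete proof of the first assertion of the theorem: completeness forces the remainders to have degrees $n, n-1, \dots, 0$ with the last one a nonzero constant, so $\gcd(U,V)$ is constant and $U,V$ cannot share the real zero that a root $iy_0$ of $P$ would produce.

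Two points deserve more care than your sketch gives them, and they are exactly where the quantitative content of the theorem sits. First, which of $U$, $V$ carries the coefficients $a_0, a_2, a_4, \dots$ depends on the parity of $n$ (for $n$ odd the roles of the two rows swap), so the Cauchy index must be set up in a parity-uniform way; this is a bookkeeping issue, but it is not the semicircle term that depends on parity, as you suggest, rather the dictionary between $(U,V)$ and the first two rows. Second, and more substantively, the Routh recursion produces \emph{positive} remainders, $F_{k+1}=F_{k-1}-(\textrm{monomial})\,F_k$, whereas Sturm's theorem on Cauchy indices requires the negative-remainder convention; the Routh sequence is therefore not literally a Sturm sequence, and one must either insert the classical periodic sign correction (signs $+,+,-,-,+,+,\dots$ along the rows) or invoke the version of the index theorem adapted to positive remainders before the count ``number of sign changes in the first column'' comes out equal to $N_+$. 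Both repairs are classical and purely computational, so your proposal is a correct plan; but a complete write-up would have to execute this step rather than cite it, since it is the heart of the theorem.
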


\paragraph{Hurwitz criterion.}
We set $a_{n+1}=a_{n+2}=\cdots=a_{2n-1}=0$, and we define %the square matrix of size $n$
$$
H = \begin{pmatrix}
a_1 & a_3 & a_5 & \cdots & \cdots & a_{2n-1} \\
a_0 & a_2 & a_4 & \cdots & \cdots & a_{2n-2} \\
0   & a_1 & a_3 & \cdots & \cdots & a_{2n-3} \\
0   & a_0 & a_2 & \cdots & \cdots & a_{2n-4} \\
0   &  0  & a_1 & \cdots & \cdots & a_{2n-5} \\
\vdots&\vdots&\ddots&    &        & \vdots   \\
0   & 0   & 0   &   *    & \cdots & a_n
\end{pmatrix}
$$
where $*=a_0$ or $a_1$ according to the parity of $n$.
Let $(H_i)_{i\in\{1,\ldots,n\} }$ be the principal minors of $H$, defined by
$$H_1=a_1,\ 
H_2=\left\vert\begin{array}{cc}
a_1 & a_3 \\ a_0 & a_2 \end{array}\right\vert,\ 
H_3=\left\vert\begin{array}{ccc}
a_1 & a_3 & a_5 \\ a_0 & a_2 & a_4 \\ 0 & a_1 & a_3
\end{array}\right\vert,\ \ldots,\ H_n=\textrm{det}\,H .$$

\begin{theorem}\cite{Hurwitz}
If $a_0>0$, then $P$ is Hurwitz if and only if $H_i>0$ for every $i\in\{1,\ldots,n\}$.
\end{theorem}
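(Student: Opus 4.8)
The plan is to deduce the Hurwitz criterion from the Routh criterion (Theorem \ref{thm_Routh}), which is already at our disposal, by making explicit the arithmetic link between the two tables. Set $H_0=1$ by convention, and denote by $r_0=a_0,r_1,r_2,\ldots,r_n$ the successive entries of the \emph{first column} of the Routh table, so that $r_1=a_1$, $r_2=b_1$, $r_3=c_1$, and so on. The heart of the matter is the identity
$$
r_0=a_0,\qquad r_k=\frac{H_k}{H_{k-1}}\quad(1\leq k\leq n),
$$
expressing each leading entry of the Routh array as the ratio of two consecutive leading principal minors of the Hurwitz matrix $H$. The first cases are immediate: $r_1=a_1=H_1=H_1/H_0$, and since $H_2=a_1a_2-a_0a_3$ one finds $r_2=(a_1a_2-a_0a_3)/a_1=H_2/H_1$, which is exactly the quantity $b_1$ of the Routh table.

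Next I would establish this identity in general by induction on $k$. The Routh recursion builds each new row by a cross-multiplication of the two preceding rows; read on the coefficient data that feeds $H$, this step is precisely a Gaussian-elimination (Schur-complement) operation applied to the top-left $(k+1)\times(k+1)$ block of $H$. The point is that these operations clear a subdiagonal column without changing the determinant of the relevant leading block, so that tracking the pivot produced at stage $k$ yields $r_k\,H_{k-1}=H_k$. Carrying out this bookkeeping carefully — matching the shifted segments of the Routh rows to the staircase pattern of $H$ and verifying that the elementary operations indeed preserve the successive minors — is the main obstacle, and it is where all the genuine work lies.

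Granting the identity, the conclusion is a short two-way argument. Suppose first that $H_i>0$ for all $i\in\{1,\ldots,n\}$; since $H_0=1>0$ and $a_0>0$, the identity gives $r_k=H_k/H_{k-1}>0$ for every $k$, so no leading entry vanishes, the Routh table is complete, and its first column has constant (positive) sign, whence $P$ is Hurwitz by Theorem \ref{thm_Routh}. Conversely, if $P$ is Hurwitz then by Theorem \ref{thm_Routh} the table is complete and all first-column entries share the sign of $r_0=a_0>0$; thus $r_k>0$ for all $k$, and from $H_k=r_k H_{k-1}$ together with $H_0=1>0$ an immediate induction gives $H_k>0$ for every $k$. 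This proves the equivalence. An alternative, more self-contained route would bypass the Routh table entirely and argue by the argument principle (or via the Hermite--Biehler interlacing theorem), reducing the degree inductively by splitting $P$ into its even and odd parts; this proves Routh and Hurwitz in parallel but is analytically heavier, so I would prefer the reduction above.
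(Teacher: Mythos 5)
Your strategy---deducing the Hurwitz criterion from the Routh criterion via the identity $r_k=H_k/H_{k-1}$---cannot be compared with the paper's own argument, because the paper gives none: the theorem is quoted from Hurwitz's 1895 paper, and Theorem \ref{thm_Routh}, on which you lean, is itself stated without proof. Your route is nonetheless the classical one (Routh's algorithm is exactly Gaussian elimination performed on the Hurwitz matrix, and the link between the two criteria is traditionally established this way), and \emph{granting} the identity, your two-way deduction is sound: if all $H_i>0$, each successive pivot $r_k=H_k/H_{k-1}$ is positive, so the table is complete with a positive first column and Theorem \ref{thm_Routh} applies; conversely, completeness and constancy of sign together with $r_0=a_0>0$ give $H_k=r_1r_2\cdots r_k>0$ by telescoping.

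The genuine gap is that the identity itself---which you correctly call the heart of the matter---is only verified for $k=1,2$; for general $k$ you offer a prose description of a Schur-complement bookkeeping and explicitly defer it as ``where all the genuine work lies.'' As written, this is an outline, not a proof, and what is deferred is precisely the substance of the theorem. Two points need care when you fill it in. First, the identity must be stated conditionally, e.g.\ ``if $H_1,\dots,H_k$ are all nonzero, then the first $k+1$ rows of the Routh table are defined and $r_j=H_j/H_{j-1}$ for $j\leq k$''; otherwise your forward direction is circular, since you invoke the identity to prove that the table can be constructed, while the identity presupposes that its rows exist. Second, the cleanest execution is an induction on the degree $n$ rather than on $k$: subtract $(a_0/a_1)$ times each odd-numbered row of $H$ from the following even-numbered row. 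These operations add multiples of earlier rows to later ones, hence preserve every leading principal minor; they turn the first column into $(a_1,0,\dots,0)^\top$, and the matrix obtained by deleting the first row and column is exactly the Hurwitz matrix of the Routh-reduced polynomial $a_1z^{n-1}+b_1z^{n-2}+a_3z^{n-3}+b_2z^{n-4}+\cdots$, whose Routh table is yours with the first row deleted. Expanding along the first column gives $H_k=a_1\widetilde H_{k-1}$ for $1\leq k\leq n$ (with $\widetilde H_0=1$), and the induction hypothesis applied to the reduced polynomial yields $r_k=\widetilde H_{k-1}/\widetilde H_{k-2}=H_k/H_{k-1}$ for $k\geq 2$. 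With this lemma proved, your argument closes; without it, the proposal only reduces one unproved classical statement to another.
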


\begin{remark}
Assume that $a_0>0$.

If all roots of $P$ have nonpositive real part, then $a_k\geq 0$ and $H_k\geq 0$, for every $k\in\{1,\ldots, n\}$.

If $n\leq 3$, $a_k\geq 0$ and $H_k\geq 0$ for every $k\in\{1,2,3\}$, then all roots of $P$ have nonpositive real part.

A necessary condition for stability is that $a_k\geq 0$ for every $k\in\{1,\ldots,n\}$. This condition is however not sufficient (take $P(z)=z^4+z^2+1$).
\end{remark}

\subsection{Pole-shifting theorem}
\begin{definition}
The linear autonomous control system $\dot{x}(t)=Ax(t)+Bu(t)$, with $x(t)\in\R^n$, $u(t)\in\R^m$, $A$ a $n\times n$ matrix and $B$ a $n\times m$ matrix, is said to be \textit{feedback stabilizable} if there exists a $m\times n$ matrix $K$ (called gain matrix) such that the closed-loop system with the (linear) feedback $u(t)=Kx(t)$,
$$
\dot{x}(t)=(A+BK)x(t)
$$
is asymptotically stable, i.e., equivalently, $A+BK$ is Hurwitz.
\end{definition}

\begin{remark}
This concept is invariant under similar transforms $A_1=PAP^{-1}$, $B_1=PB$, $K_1=KP^{-1}$.
\end{remark}

\begin{theorem}[pole-shifting theorem]\label{thmplacementpoles}
If $(A,B)$ satisfies the Kalman condition $\mathrm{rank}\ K(A,B)=n$, then for every real polynomial $P$ of degree $n$ whose leading coefficient is $1$, there exists a $m\times n$ matrix $K$ such that the characteristic polynomial $\chi_{A+BK}$ of $A+BK$ is equal to $P$.\footnote{Actually, the converse statement is also true.}
\end{theorem}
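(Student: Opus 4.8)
The plan is to reduce the general multi-input problem to the single-input case $m=1$, where the result is a direct consequence of the Brunovski normal form, and to bridge the two via Heymann's lemma.

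\emph{Single-input case ($m=1$).} The property to be established is invariant under similarity: if $A_1 = PAP^{-1}$ and $B_1 = PB$, then $A_1 + B_1(KP^{-1}) = P(A+BK)P^{-1}$ has the same characteristic polynomial as $A+BK$. Hence, by Theorem~\ref{thmbrunovski}, I may assume that $(A,B)$ is already in companion form $(\tilde A,\tilde B)$, with $\tilde B=(0,\dots,0,1)^\top$ and the last row of $\tilde A$ equal to $(-a_n,\dots,-a_1)$, the $a_i$ being the coefficients of $\chi_A$. Writing the gain as a row vector $K=(k_1,\dots,k_n)$, the product $\tilde B K$ has all rows zero except the last, which equals $K$; thus $\tilde A+\tilde B K$ is again a companion matrix, with last row $(-a_n+k_1,\dots,-a_1+k_n)$. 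Its characteristic polynomial can therefore be read off directly, and given a target monic $P(X)=X^n+b_1X^{n-1}+\cdots+b_n$ it suffices to solve the trivially solvable system $k_{n+1-i}=a_i-b_i$ to obtain $\chi_{\tilde A+\tilde B K}=P$.

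\emph{Reduction of the multi-input case.} For $m>1$ I would invoke Heymann's lemma: since the Kalman condition forces $B\neq 0$, there is $v\in\R^m$ with $b:=Bv\neq 0$, and there exists a real $m\times n$ matrix $K_0$ such that the single-input pair $(A+BK_0,b)$ still satisfies the Kalman condition. Granting this, I seek the feedback in the form $u=(K_0+v\tilde K)x$ with $\tilde K$ a row vector. The closed-loop matrix is then $A+B(K_0+v\tilde K)=(A+BK_0)+b\tilde K$, i.e.\ a single-input feedback applied to the controllable pair $(A+BK_0,b)$. The single-input case above furnishes $\tilde K$ with $\chi_{(A+BK_0)+b\tilde K}=P$, and $K=K_0+v\tilde K$ solves the problem.

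\emph{Main obstacle.} The crux is Heymann's lemma, which I would prove by a constructive Krylov-type argument. Set $x_1=b$ and, having built linearly independent $x_1,\dots,x_i$ spanning $V_i$ with $i<n$, choose $u_i$ so that $x_{i+1}:=Ax_i+Bu_i\notin V_i$. Such a $u_i$ must exist: otherwise $Ax_i\in V_i$ and $\mathrm{Ran}\,B\subseteq V_i$, and together with the recursion $x_{j+1}=Ax_j+Bu_j$ (which gives $Ax_j=x_{j+1}-Bu_j\in V_i$ for $j<i$) this forces $V_i$ to be $A$-invariant while containing $\mathrm{Ran}\,B$; any such subspace contains $\mathrm{Ran}\,K(A,B)=\R^n$, contradicting $\dim V_i=i<n$. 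Defining $K_0$ on the resulting basis $(x_1,\dots,x_n)$ by $K_0x_i=u_i$ (and arbitrarily on $x_n$) yields $(A+BK_0)^{i-1}b=x_i$, so that $b,(A+BK_0)b,\dots,(A+BK_0)^{n-1}b$ is a basis and $(A+BK_0,b)$ is controllable. This inductive step, guaranteeing that the span strictly grows at each stage, is the only genuinely delicate point; everything else is linear bookkeeping.
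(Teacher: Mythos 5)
Your proposal is correct and follows essentially the same route as the paper: the single-input case via the Brunovski companion form, and the reduction of the multi-input case via what you call Heymann's lemma, which is exactly the paper's Lemma~\ref{lem_reducedK}, proved there by the same Krylov-type induction (if the span $V_i$ could not be enlarged, it would be $A$-invariant and contain $\mathrm{Ran}\,B$, hence contain $\mathrm{Ran}\,K(A,B)=\R^n$, a contradiction). The only cosmetic difference is that the paper writes the final gain as $K=C+yK_1$ where you write $K=K_0+v\tilde K$; the arguments are identical.
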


\begin{corollary}
If the linear control system $\dot{x}(t)=Ax(t)+Bu(t)$ is controllable then it is stabilizable.
\end{corollary}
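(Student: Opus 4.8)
The plan is to derive this directly from the pole-shifting theorem (Theorem \ref{thmplacementpoles}), which does essentially all the work. First I would note that, by the Kalman theorem (Theorem \ref{thm_Kalman}), the assumption that the system $\dot x(t)=Ax(t)+Bu(t)$ is controllable is exactly equivalent to the Kalman rank condition $\mathrm{rank}\, K(A,B)=n$. This is precisely the hypothesis under which Theorem \ref{thmplacementpoles} applies, so the controllability assumption unlocks the pole-shifting mechanism.

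The core idea is that feedback stabilizability requires producing a gain matrix $K$ such that $A+BK$ is Hurwitz, and the pole-shifting theorem lets us prescribe the \emph{entire} characteristic polynomial of $A+BK$. So it suffices to exhibit one monic real polynomial of degree $n$ all of whose roots have negative real part, and then invoke the theorem. I would simply take
$$
P(X) = (X+1)^n ,
$$
which is monic of degree $n$, has real coefficients, and has $-1$ as its only root, so all roots have real part $-1<0$; equivalently, one may use any product $\prod_{i=1}^n (X-\lambda_i)$ with $\Re(\lambda_i)<0$ and the $\lambda_i$ closed under complex conjugation so that the coefficients are real.

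Applying Theorem \ref{thmplacementpoles} to this $P$ yields an $m\times n$ matrix $K$ with $\chi_{A+BK}=P$. Since every root of $P$ has negative real part, the matrix $A+BK$ is Hurwitz by definition, and hence the closed-loop system $\dot x(t)=(A+BK)x(t)$ is asymptotically stable. By the definition of feedback stabilizability, this establishes that the control system is stabilizable, completing the proof.

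There is no real obstacle here: the only nontrivial content has already been absorbed into Theorem \ref{thmplacementpoles}, and the single remaining point—the existence of a monic real Hurwitz polynomial of each degree—is immediate from the explicit choice above.
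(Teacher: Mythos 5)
Your proof is correct and is essentially identical to the paper's own argument: the paper also proves the corollary by taking $P(X)=(X+1)^n$ and invoking the pole-shifting theorem (Theorem \ref{thmplacementpoles}). Your additional remarks (the equivalence of controllability with the Kalman rank condition, and the possibility of other Hurwitz choices of $P$) simply make explicit what the paper leaves implicit.
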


To prove the corollary, it suffices to take for instance $P(X)=(X+1)^n$ and to apply the pole-shifting theorem.

\begin{proof}[Proof of Theorem \ref{thmplacementpoles}]
We prove the result first in the case $m=1$. It follows from Theorem \ref{thmbrunovski} (Brunovski normal form) that the system is similar to
$$A=\begin{pmatrix}
0  & 1 & \cdots & 0 \\
\vdots & \ddots & \ddots & \vdots \\
0 & \cdots & 0 & 1 \\
-a_n & -a_{n-1} & \cdots & -a_1
\end{pmatrix},
\qquad
B = \begin{pmatrix}
0 \\ \vdots \\ 0 \\ 1 \end{pmatrix} .$$
Setting $K=(k_1\ \cdots\ k_n)$ and $u=Kx$, we have
$$
A+BK=\begin{pmatrix}
0  & 1 & \cdots & 0 \\
\vdots & \ddots & \ddots & \vdots \\
0 & \cdots & 0 & 1 \\
k_1-a_n & k_2-a_{n-1} & \cdots & k_n-a_1
\end{pmatrix}
$$
and thus $\chi_{A+BK}(X) = X^n+(a_1-k_n)X^{n-1}+\cdots+(a_n-k_1).$
Therefore, for every polynomial $P(X)=X^n+\alpha_1X^{n-1}+\cdots+\alpha_n$, it suffices to choose $k_1=a_n-\alpha_n,\ldots,k_n=a_1-\alpha_1$.

Let us now prove that the general case $m\geq 1$ can be reduced to the case $m=1$. We have the following lemma.

\begin{lemma}\label{lem_reducedK}
If $(A,B)$ satisfies the Kalman condition, then there exists $y\in\R^m$ and a $m\times n$ matrix $C$ such that $(A+BC,By)$ satisfies the Kalman condition.
\end{lemma}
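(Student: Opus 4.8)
The plan is to prove what is essentially Heymann's lemma: by a suitable feedback we can make a multi-input controllable pair cyclic with respect to a single input vector $By$. First I would exploit controllability to secure a nonzero starting vector: since $(A,B)$ satisfies the Kalman condition and $n\geq 1$, the matrix $B$ cannot be zero, so there exists $y\in\R^m$ with $b:=By\neq 0$. This $y$ will be the input direction in the final single-input pair, and the matrix $C$ will be constructed afterwards so as to turn $b$ into a cyclic vector for $A+BC$.

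The heart of the argument is a greedy construction of a basis. Setting $v_1=b$, I would build vectors $v_1,\ldots,v_n$ inductively by choosing, at each stage, a control $w_k\in\R^m$ such that $v_{k+1}=Av_k+Bw_k$ lies outside $V_k:=\mathrm{Span}\{v_1,\ldots,v_k\}$, so that $v_1,\ldots,v_{k+1}$ remain linearly independent. The key step, and the place where the Kalman hypothesis enters, is to show this extension is always possible as long as $k<n$. The proof is by contradiction: if $Av_k+Bw\in V_k$ for every $w\in\R^m$, then taking $w=0$ gives $Av_k\in V_k$, and subtracting this shows $\mathrm{Ran}\,B\subseteq V_k$. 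Using the relations $Av_j=v_{j+1}-Bw_j$ for $j<k$, together with $\mathrm{Ran}\,B\subseteq V_k$, one then checks that $V_k$ is $A$-invariant; being $A$-invariant and containing $\mathrm{Ran}\,B$, it must contain $\mathrm{Ran}\,B+A\,\mathrm{Ran}\,B+\cdots=\mathrm{Ran}\,K(A,B)=\R^n$, contradicting $\dim V_k=k<n$. I expect this verification that $V_k$ is $A$-invariant to be the main (though still elementary) obstacle, as it is the step that correctly uses the recursive definition of the $v_j$.

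With the basis $v_1,\ldots,v_n$ in hand and the controls $w_1,\ldots,w_{n-1}$ recorded, I would then define the $m\times n$ matrix $C$ by prescribing its action on this basis: $Cv_k=w_k$ for $k=1,\ldots,n-1$ and $Cv_n=0$ (say). Since $v_1,\ldots,v_n$ is a basis, $C$ is well defined. By construction $(A+BC)v_k=Av_k+Bw_k=v_{k+1}$ for $k=1,\ldots,n-1$, so by immediate induction $(A+BC)^{k-1}b=v_k$ for every $k\in\{1,\ldots,n\}$. Hence $\mathrm{Span}\{b,(A+BC)b,\ldots,(A+BC)^{n-1}b\}=\mathrm{Span}\{v_1,\ldots,v_n\}=\R^n$, which is exactly the statement that the single-input Kalman matrix $K(A+BC,By)$ has full rank $n$. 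This gives the desired pair $(A+BC,By)$ and proves the lemma.
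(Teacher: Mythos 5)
Your proof is correct and follows essentially the same route as the paper: starting from $x_1=By\neq 0$, greedily extending a basis via $v_{k+1}=Av_k+Bw_k$, using the contradiction argument (span is $A$-invariant and contains $\mathrm{Ran}\,B$, hence contains $\mathrm{Ran}\,K(A,B)=\R^n$) to justify each extension, and then defining $C$ on the resulting basis. The only cosmetic difference is that you make explicit the preliminary observation that the Kalman condition forces $B\neq 0$, which the paper leaves implicit.
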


The proof of this lemma is done hereafter. 
It follows from Lemma \ref{lem_reducedK} that, for every polynomial $P$ of degree $n$ whose leading coefficient is $1$, there exists a $1\times n$ matrix $K_1$ such that $\chi_{A+BC+ByK_1}=P$, and therefore, defining the $m\times n$ matrix $K=C+yK_1$, we have $\chi_{A+BK}=P$, and Theorem \ref{thmplacementpoles} is proved.
\end{proof}

\begin{proof}[Proof of Lemma \ref{lem_reducedK}]
Let $y\in\R^m$ be such that $By\neq 0$. Let $x_1=By$. 

\smallskip
\noindent\emph{Claim 1:} There exists $x_2\in Ax_1+\mathrm{Ran}(B)$ (and thus there exists $y_1\in\R^m$ such that $x_2=Ax_1+By_1$) such that $\dim(\textrm{Span}(x_1,x_2))=2$.

Indeed, otherwise, $Ax_1+\mathrm{Ran}(B)\subset\R x_1$, hence $Ax_1\in\R x_1$ and $\mathrm{Ran}(B)\subset\R x_1$. Therefore
$
\mathrm{Ran}(AB) = A\mathrm{Ran}(B)\subset\R Ax_1\subset\R x_1
$
and by immediate iteration, $\mathrm{Ran}(A^kB)\subset\R x_1$, for every integer $k$. This implies that
$$
\mathrm{Ran}(B,AB,\ldots,A^{n-1}B)=\mathrm{Ran}(B)+
\mathrm{Ran}(AB)+\cdots+\mathrm{Ran}(A^{n-1}B)  \subset\R x_1
$$
which contradicts the Kalman condition.

\medskip

\noindent\emph{Claim 2:} For every $k\leq n$, there exists $x_k\in Ax_{k-1}+\mathrm{Ran}(B)$ (and thus there exists $y_{k-1}\in\R^m$ such that $x_k=Ax_{k-1}+By_{k-1}$) such that $\dim (E_k)=k$, where $E_k=\mathrm{Span}(x_1,\ldots,x_k)$.

Indeed, otherwise, $Ax_{k-1}+\mathrm{Ran}(B)\subset E_{k-1}$, and hence $Ax_{k-1}\subset E_{k-1}$ and $\mathrm{Ran}(B)\subset E_{k-1}$. Let us then prove that $AE_{k-1}\subset E_{k-1} $. Indeed, note that $Ax_1=x_2-By_1\in E_{k-1}+\mathrm{Ran}(B)\subset E_{k-1}$, and similarly for $Ax_2$, etc, $Ax_{k-2}=x_{k-1}-By_{k-1}\in E_{k-1}+\mathrm{Ran}(B)\subset E_{k-1}$, and finally, $Ax_{k-1}\in E_{k-1}$.

Therefore $\mathrm{Ran}(AB)=A\,\mathrm{Ran}(B)\subset A E_{k-1}\subset E_{k-1}$, and similarly we have
$\mathrm{Ran}(A^iB)\subset E_{k-1}$ for every integer $i$. Hence
$\mathrm{Ran}(B,AB,\ldots,A^{n-1}B)\subset E_{k-1}$,
which contradicts the Kalman condition.

We have thus built a basis $(x_1,\ldots,x_n)$ of $\R^n$. We define the $m\times n$ matrix $C$ by the relations
$Cx_1=y_1$, $Cx_2=y_2$, $\ldots$, $Cx_{n-1}=y_{n-1}$, and $Cx_n$ arbitrary.
Then $(A+BC,x_1)$ satisfies the Kalman condition since $(A+BC)x_1=Ax_1+By_1=x_2$, $\ldots$, $(A+BC)x_{n-1}=Ax_{n-1}+By_{n-1}=x_n$. Lemma \ref{lem_reducedK} is proved.
\end{proof}

\begin{remark}
To stabilize a linear control system in practice, one has the following solutions:
\begin{itemize}[parsep=1mm,itemsep=1mm,topsep=1mm]%,leftmargin=*
\item If $n$ is not too large, one can apply the Routh or Hurwitz criteria and thus determine an algebraic necessary and sufficient condition on the coefficients of $K$ ensuring the desired stabilization property. Note that the characteristic polynomial of $A+BK$ can be computed with a formal computations software like \textit{Maple}.
\item There exist many numerical routines in order to compute numerical gain matrices. In the \textit{Matlab Control Toolbox}, we quote \textit{acker.m}, based on Ackermann's formula (see \cite{Kailath}), limited however to $m=1$ and not very reliable numerically. Better is to use \textit{place.m}, which is a robust pole-shifting routine (see \cite{Kautsky}) based on spectral considerations (but in which the desired poles have to be distinct two by two).
\item Another way consists of applying the LQ theory, elements of which have been given in Section \ref{sec_LQ}, by taking an infinite horizon of time $T=+\infty$ as quickly mentioned at the end of that section (LQR stabilization).
\end{itemize}
\end{remark}

\section{Stabilization of instationary linear systems}
For instationary linear systems $\dot{x}(t)=A(t)x(t)+B(t)u(t)$, the situation is much more complicated and there is no simple and definitive theory as in the autonomous case.

Let us explain which difficulties appear, by considering the system $\dot{x}(t)=A(t)x(t)$ without any control.
A priori one could expect that, if the matrix $A(t)$ is Hurwitz for every $t$, then the system is asymptotically stable. This is however wrong. The statement remains wrong even under stronger assumptions on $A(t)$, such as assuming that there exists $\varepsilon>0$ such that, for every time $t$, every (complex) eigenvalue $\lambda(t)$ of $A(t)$ satisfies $\Re(\lambda(t))\leq-\varepsilon$.
Indeed, for example, take
$$
A(t) = \begin{pmatrix}
-1+a\cos^2t & 1-a\sin t \cos t \\
-1-a\sin t \cos t  &  -1+a\sin^2t
\end{pmatrix}
$$
with $a\in[1,2)$ arbitrary.
Then
%$$
%x(t) = e^{(a-1)t} \begin{pmatrix}
%\cos t \\ -\sin t
%\end{pmatrix}
%$$
$
x(t) = e^{(a-1)t} \begin{pmatrix}
\cos t , -\sin t
\end{pmatrix}^\top
$
is a solution of $\dot{x}(t)=A(t)x(t)$, and does not converge to $0$ whenever $a\geq 1$.
Besides, it can be shown that if $a<1$ then the system is asymptotically stable.

\medskip

Let us explain the reason of this failure. A simple way to understand is the following (not so much restrictive) case. Let us assume that, for every $t$, $A(t)$ is diagonalizable, and that there exists $P(t)$ invertible such that $P(t)^{-1}A(t)P(t)=D(t)$, with $D(t)$ diagonal, and $P(\cdot)$ and $D(\cdot)$ of class $C^1$.
Setting $x(t)=P(t)y(t)$, we get immediately that
$$
\dot{y}(t) = ( D(t) - P(t)^{-1}\dot P(t) ) y(t) .
$$
If the term $P(t)^{-1}\dot P(t)$ were equal to $0$ (as it is the case in the autonomous case), then, obviously, the asymptotic stability would hold true as soon as the eigenvalues (diagonal of $D(t)$) would have negative real parts. But, even if $D(t)$ is Hurwitz, the term $P(t)^{-1}\dot P(t)$ can destabilize the matrix and imply the failure of the asymptotic stability.

In other words, what may imply the divergence is the fact that the eigenvectors (making up the columns of $P(t)$) may evolve quickly in time, thus implying that the norm of $\dot P(t)$ be large.

To end up however with a positive result, it can be noted that, if the matrix $A(t)$ is slowly varying in time, then the norm of the term $P(t)^{-1}\dot P(t)$ is small, and then if one is able to ensure that this norm is small enough with respect to the diagonal $D(t)$, then one can ensure an asymptotic stability result. This is the theory of slowly time-varying linear systems (see \cite{Khalil}).

\section{Stabilization of nonlinear systems}
\subsection{Local stabilization by linearization}
\paragraph{Reminders.}
Consider the continuous dynamical system $\dot{x}(t)=f(x(t))$, where $f:\R^n\rightarrow\R^n$ is of class $C^1$. We denote by $x(\cdot,x_0)$ the unique solution of this system such that $x(0,x_0)=x_0$.
We assume that $\bar{x}$ is an \textit{equilibrium point}, that is, $f(\bar{x})=0$.

\begin{definition}\label{def_stable}
The equilibrium point $\bar{x}$ is said to be \textit{stable} if, for every $\varepsilon>0$, there exists $\delta>0$ such that, for every initial point $x_0$ such that $\Vert x_0-\bar x\Vert\leq \delta$, one has $\Vert x(t,x_0)-\bar x\Vert\leq\varepsilon$ for every $t\geq 0$.
It is said to be \textit{locally asymptotically stable} (in short, LAS) if it is stable and if moreover $x(t,x_0)\rightarrow \bar{x}$ as $t\rightarrow+\infty$ for every $x_0$ in some neighborhood of $\bar x$.
If the neighborhood is the whole $\R^n$ then we speak of \textit{global asymptotic stability} (in short, GAS). If an asymptotic stability result is established in some neighborhood $V$ of $\bar x$, then we say that $\bar x$ is GAS in $V$.
\end{definition}

\begin{theorem}\label{thm_lineariz}
%Assume that $f$ is of class $C^1$.
Let $A$ be the Jacobian matrix of $f$ at the equilibrium point $\bar{x}$.
If all eigenvalues of $A$ have negative real parts (that is, if $A$ is Hurwitz), then $\bar{x}$ is LAS.
If $A$ has an eigenvalue with a positive real part then $\bar{x}$ is not LAS.
\end{theorem}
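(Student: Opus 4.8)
The plan is to normalize $\bar x=0$ by translation and to write the dynamics as $f(x)=Ax+g(x)$, where $g$ is $C^1$ with $g(0)=0$ and $dg(0)=0$; in particular, for every $\varepsilon>0$ there is $\delta>0$ such that $\Vert g(x)\Vert\leq\varepsilon\Vert x\Vert$ whenever $\Vert x\Vert\leq\delta$. Both assertions are then handled by the Lyapunov indirect method: build a quadratic Lyapunov function from the linear part $A$ and absorb the nonlinear remainder $g$.

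For the first (Hurwitz) case, since $A$ is Hurwitz one has $\Vert e^{tA}\Vert\leq Ce^{-\alpha t}$ for some $C,\alpha>0$, so the integral $P=\int_0^{+\infty}e^{tA^\top}e^{tA}\,dt$ converges and defines a symmetric positive definite matrix solving $A^\top P+PA=-I_n$. Setting $V(x)=x^\top Px$, a direct computation along trajectories gives $\dot V=x^\top(A^\top P+PA)x+2x^\top Pg(x)=-\Vert x\Vert^2+2x^\top Pg(x)$. Choosing $\varepsilon$ so small that $1-2\Vert P\Vert\varepsilon>0$, the remainder bound yields $\dot V\leq-(1-2\Vert P\Vert\varepsilon)\Vert x\Vert^2<0$ on a punctured ball around $0$. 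As $V$ is positive definite and $\dot V$ negative definite near $0$, the classical Lyapunov theorem gives local (in fact exponential) asymptotic stability. The point requiring care is to invoke invariance of small sublevel sets $\{V\leq c\}$, so that a trajectory starting close to $0$ remains where the estimate $\Vert g(x)\Vert\leq\varepsilon\Vert x\Vert$ is valid.

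For the second (instability) assertion, pick $\sigma>0$ strictly smaller than the least positive real part among the eigenvalues of $A$, so that $\sigma$ separates the eigenvalues of positive real part from all the others. Decompose $\R^n=E^+\oplus E^-$ into the $A$-invariant subspaces on which, respectively, $-(A-\sigma I_n)$ is Hurwitz and $A-\sigma I_n$ is Hurwitz; in adapted coordinates $x=(x_+,x_-)$ write $A=\mathrm{diag}(A_+,A_-)$. Solving $(A_+-\sigma I)^\top P_++P_+(A_+-\sigma I)=I$ and $(A_--\sigma I)^\top P_-+P_-(A_--\sigma I)=-I$ yields symmetric positive definite $P_+,P_-$, and the indefinite form $V(x)=x_+^\top P_+x_+-x_-^\top P_-x_-$ satisfies, along the \emph{linear} flow, $\dot V=\Vert x\Vert^2+2\sigma V$ (the shift by $\sigma$ is what makes this work even when $A$ has eigenvalues on the imaginary axis). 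The nonlinear term perturbs this by $o(\Vert x\Vert^2)$, so on a small ball one still has $\dot V\geq\tfrac12\Vert x\Vert^2$ on $\{V\geq 0\}$. Since the open cone $\{V>0\}$ contains points arbitrarily close to $0$ (e.g.\ small nonzero vectors in $E^+$), Chetaev's instability theorem forces any trajectory issued from $\{V>0\}$ near $0$ to leave a fixed neighborhood, so $0$ is not stable and a fortiori not LAS.

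I expect the instability part to be the main obstacle: one must check that the indefinite Lyapunov function behaves correctly under the perturbation and, above all, run the Chetaev escape argument so that the trajectory stays in the region where $\dot V\geq\tfrac12\Vert x\Vert^2$ holds until it exits the neighborhood — a bootstrapping argument more delicate than the invariance argument of the stable case. For the Hurwitz case, an alternative avoiding the Lyapunov equation uses the variation-of-constants formula $x(t)=e^{tA}x_0+\int_0^te^{(t-s)A}g(x(s))\,ds$ together with a Gronwall estimate on $w(t)=e^{\alpha t}\Vert x(t)\Vert$, but the Lyapunov route is cleaner and transfers almost verbatim to the instability statement.
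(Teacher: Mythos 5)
Your proposal is correct. For the Hurwitz half it is exactly the paper's route: the paper's proof is the sketch in Example \ref{Lyapunov_lemma}, namely take $P=\int_0^{+\infty}e^{tA^\top}e^{tA}\,dt$ solving $A^\top P+PA=-I_n$, set $V(x)=\langle x,Px\rangle$, and absorb the nonlinear remainder by a first-order expansion and norm estimates — which is word for word your stability argument, including the care about small sublevel sets being invariant so that the bound $\Vert g(x)\Vert\leq\varepsilon\Vert x\Vert$ stays applicable. Where you go beyond the paper is the instability half: the paper never proves it (Example \ref{Lyapunov_lemma} only covers the Hurwitz case), whereas you supply a complete and correct Chetaev argument. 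Your construction is sound: choosing $\sigma>0$ below the least positive real part makes $-(A_+-\sigma I)$ and $A_--\sigma I$ both Hurwitz, the two Lyapunov equations give the identity $\dot V=\Vert x\Vert^2+2\sigma V$ along the linear flow (in the adapted norm, equivalent to the Euclidean one), and the shift by $\sigma$ is indeed what rescues the case of eigenvalues on the imaginary axis, which would otherwise obstruct solving the Lyapunov equation on $E^-$. The escape argument also closes properly: monotonicity of $V$ keeps the trajectory in $\{V\geq V(x_0)\}$, where $V(x)\leq C\Vert x\Vert^2$ forces $\Vert x(t)\Vert^2\geq V(x_0)/C$, hence $V$ grows at a uniform rate while $V$ is bounded on the ball, so the trajectory exits in finite time and $0$ is not stable, a fortiori not LAS. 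The net effect is that your write-up is strictly more complete than the paper's on this theorem.
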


The above linearization theorem is an easy first result (see Example \ref{Lyapunov_lemma} further for a proof), not saying anything however, for the moment, on the size of the neighborhoods of stability.

\paragraph{Application: local stabilization of nonlinear control systems.}
Consider the general nonlinear control system \eqref{contsyststab}, $\dot x=f(x,u)$, and an equilibrium point $(\bar x,\bar u)\in\R^n\times\mathring{\Omega}$, as settled at the beginning of Chapter \ref{chap_stab}.
Setting $x(t)=\bar x+\delta x(t)$ and $u(t)=\bar u+\delta u(t)$ and keeping the terms of order $1$, we obtain (as already discussed for controllability issues) the linearized system at $(\bar x,\bar u)$,
$$
\delta\dot{x}(t)=A\delta x(t)+B\delta u(t)
$$
where
$$
A=\frac{\partial f}{\partial x}(\bar x,\bar u) \quad\textrm{and}\quad 
B=\frac{\partial f}{\partial u}(\bar x,\bar u).
$$
If one can stabilize the linearized system, that is, find a matrix $K$ of size $m\times n$ such that $A+BK$ is Hurwitz (and take $\delta u=K\delta x$), then Theorem \ref{thm_lineariz} implies a local stabilization result for the nonlinear control system \eqref{contsyststab}. We thus have the following theorem.

\begin{theorem}\label{thm_linearization}
If the pair $(A,B)$ satisfies the Kalman condition, then there exists a matrix $K$ of size $m\times n$ such that the feedback $u = K(x-\bar x)+\bar u$ stabilizes asymptotically the control system \eqref{contsyststab} locally around $(\bar x,\bar u)$: the closed-loop system $\dot{x}(t)=f(x(t),K(x(t)-\bar x)+\bar u)$ is LAS at $\bar x$.
\end{theorem}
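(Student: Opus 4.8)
The plan is to reduce the statement to the two results already at our disposal: the pole-shifting theorem (Theorem \ref{thmplacementpoles}) and the linearization principle for nonlinear dynamical systems (Theorem \ref{thm_lineariz}). The strategy proceeds in two movements: first produce a gain matrix $K$ that renders the \emph{linear} closed loop $A+BK$ Hurwitz, and then transfer this spectral information to the \emph{nonlinear} closed loop through a Jacobian computation.

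First I would invoke the Kalman hypothesis. Since $(A,B)$ satisfies $\mathrm{rank}\,K(A,B)=n$, Theorem \ref{thmplacementpoles} allows one to prescribe the characteristic polynomial of $A+BK$ arbitrarily; choosing for instance $P(X)=(X+1)^n$ (exactly as in the corollary following that theorem) yields a matrix $K$ of size $m\times n$ such that every eigenvalue of $A+BK$ equals $-1$, so that $A+BK$ is Hurwitz.

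Next I would analyze the closed-loop system $\dot x(t)=g(x(t))$ with $g(x)=f(x,K(x-\bar x)+\bar u)$. Since $f$ is $C^1$ in $(x,u)$ and the feedback $x\mapsto K(x-\bar x)+\bar u$ is affine, the map $g$ is of class $C^1$, so Theorem \ref{thm_lineariz} is applicable. One checks that $\bar x$ is an equilibrium of $g$, because $g(\bar x)=f(\bar x,\bar u)=0$, and then computes the Jacobian of $g$ at $\bar x$ by the chain rule:
\[
dg(\bar x)=\frac{\partial f}{\partial x}(\bar x,\bar u)+\frac{\partial f}{\partial u}(\bar x,\bar u)\,K=A+BK .
\]
By the first step this Jacobian is Hurwitz, hence Theorem \ref{thm_lineariz} yields that $\bar x$ is LAS for the closed-loop system \eqref{contsyststab} with the feedback $u=K(x-\bar x)+\bar u$, which is exactly the claim.

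There is no genuine analytic obstacle here, the whole substance being carried by the earlier pole-shifting and linearization theorems; the only points requiring care are of a bookkeeping nature. The first is to verify the $C^1$ regularity of the composed vector field $g$ so that the linearization theorem is legitimately invoked, and the second, which I regard as the main (mild) pitfall, is to carry out the chain rule correctly so that the controlled block $\frac{\partial f}{\partial u}(\bar x,\bar u)\,K=BK$ appears with the right factor, thereby recovering precisely the matrix $A+BK$ made Hurwitz in the first step.
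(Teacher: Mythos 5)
Your proposal is correct and follows essentially the same route as the paper: linearize at $(\bar x,\bar u)$, use the Kalman condition with the pole-shifting theorem to make $A+BK$ Hurwitz, and conclude by the linearization stability theorem (Theorem \ref{thm_lineariz}) applied to the closed-loop vector field. The chain-rule computation $dg(\bar x)=A+BK$ that you spell out is exactly the step the paper leaves implicit, so your writeup is simply a slightly more detailed version of the paper's argument.
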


Note that the stability neighborhood must be small enough so that the closed-loop control $u$ takes its values in the set $\Omega$.

\begin{example}
Consider the inverted pendulum system given in Example \ref{ex_pendulum}. 
Applying the Routh criterion (Theorem \ref{thm_Routh}) and then Theorem \ref{thm_linearization}, we establish that a sufficient condition on $K=(k_1,k_2,k_3,k_4)$ to stabilize the inverted pendulum locally at the unstable equilibrium $(\bar\xi,0,0,0)^\top$ is
\begin{align*}
& k_1>0, \qquad k_4-k_2L>0, \qquad k_3-k_1L-(m+M)g>0, \\
& k_2((k_4-k_2L)(k_3-k_1L-(m+M)g)-MLgk_2) > k_1(k_4-k_2L)^2.
\end{align*}
\end{example}

\begin{example}
Consider the Maxwell-Bloch system given in Example \ref{ex_Maxwell-Bloch}.
Let us stabilize locally the system at the equilibrium point $\bar x=(a,0,0)$, $\bar u=(0,0)$, where $a\neq 0$ is fixed.

By Example \ref{ex_Maxwell-Bloch}, (2), the linearized system (with $c=0$ and $a\neq 0$) at this point %, $\delta\dot x=A\delta x+B\delta u$, 
is controllable and thus stabilizable with a linear feedback of matrix $K$. %$\delta u = K \delta x$.
We seek a particular matrix $K$ stabilizing the system, of the form $K = \begin{pmatrix} k_1 & 0 & 0 \\ 0 & k_2 & 0 \end{pmatrix}$. By the Routh criterion, it is easy to see that $A+BK$ is Hurwitz if and only if $k_1<0$ and $k_2<0$.
We infer from Theorem \ref{thm_linearization} that the Maxwell-Bloch system is locally stabilizable around $\bar x$, with feedbacks $u_1=k_1(x_1-a)$, $u_2=k_2x_2$ with $k_1,k_2<0$.
\end{example}

\subsection{Global stabilization by Lyapunov theory}

\paragraph{Reminders: Lyapunov and LaSalle theorems.}
Consider the continuous dynamical system $\dot{x}(t)=f(x(t))$, where $f:\R^n\rightarrow\R^n$ is of class $C^1$. We assume that $\bar{x}$ is an \textit{equilibrium point}, that is, $f(\bar{x})=0$.
Let us recall two important results of Lyapunov theory, providing more knowledge on the stability neighborhoods, with the concept of Lyapunov function.

\begin{definition}
Let $\mathcal{D}$ be an open subset of $\R^n$ containing the equilibrium point $\bar x$. The function $V:\mathcal{D}\rightarrow \R$ is called a \textit{Lyapunov function} at $\bar{x}$ on $\mathcal{D}$ if
\begin{itemize}[parsep=1mm,itemsep=1mm,topsep=1mm]%,leftmargin=*
\item $V$ is of class $C^1$ on $\mathcal{D}$;
\item $V(\bar{x})=0$ and $V(x)>0$ for every $x\in\mathcal{D}\setminus\{\bar{x}\}$;
\item $\langle\nabla V(x),f(x)\rangle\leq 0$ for every $x\in\mathcal{D}$. If the inequality is strict on $\mathcal{D}\setminus\{\bar{x}\}$ then the Lyapunov function $V$ is said to be strict.
\end{itemize}
\end{definition}

Note that, along a given trajectory of the dynamical system, one has
$$
\frac{d}{dt}V(x(t))=\langle\nabla V(x(t)),f(x(t))\rangle .
$$
Therefore if $V$ is a Lyapunov function then the value of $V$ is nonincreasing along any trajectory. A Lyapunov function can be seen as a potential well, ensuring stability.

\begin{theorem}[Lyapunov theorem, see \cite{Hale}]
If there exists a Lyapunov function $V$ at $\bar x$ on $\mathcal{D}$ then $\bar x$ is stable, and if $V$ is strict then $\bar x$ is LAS. If $V$ is strict and proper\footnote{$V$ is said to be proper whenever $
V^{-1}([0,L])$ is a compact subset of $\mathcal{D}$, for every $L\in V(\mathcal{D})$; in other words, the inverse image of every compact is compact. When $\mathcal{D}=\R^n$, $V$ is proper if and only if $V(x)\rightarrow+\infty$ as $\Vert x\Vert\rightarrow+\infty$.} then $\bar{x}$ is GAS in $\mathcal{D}$.
\end{theorem}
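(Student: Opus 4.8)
The plan is to prove the three assertions in increasing order of strength, the common engine being that $t\mapsto V(x(t,x_0))$ is nonincreasing along any trajectory, since $\frac{d}{dt}V(x(t))=\langle\nabla V(x(t)),f(x(t))\rangle\le 0$ by the third defining property of $V$. For stability, I would fix $\varepsilon>0$ small enough that the closed ball $\bar B(\bar x,\varepsilon)\subset\mathcal{D}$, and set $m=\min_{\Vert x-\bar x\Vert=\varepsilon}V(x)$; this minimum is attained and strictly positive because $V$ is continuous and strictly positive on the compact sphere $\{\Vert x-\bar x\Vert=\varepsilon\}$, which avoids $\bar x$. By continuity of $V$ at $\bar x$ together with $V(\bar x)=0$, there is $\delta\in(0,\varepsilon)$ such that $V(x_0)<m$ whenever $\Vert x_0-\bar x\Vert\le\delta$. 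For such an $x_0$, monotonicity gives $V(x(t))\le V(x_0)<m$ for as long as the solution is defined and remains in $\bar B(\bar x,\varepsilon)$; since every point of the sphere carries value $\ge m$, the trajectory can never reach it, so it stays trapped in the open ball $B(\bar x,\varepsilon)$. This simultaneously confines it to a compact subset of $\mathcal{D}$, hence it is defined for all $t\ge 0$ (no finite-time escape), and yields $\Vert x(t,x_0)-\bar x\Vert\le\varepsilon$, which is precisely stability.

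When $V$ is strict I would add attractivity. A trajectory starting in $B(\bar x,\delta)$ remains in the compact set $\bar B(\bar x,\varepsilon)$, and $V(x(t))$ decreases to some limit $\ell\ge 0$. The crucial step is to rule out $\ell>0$: on the compact set $\{x\in\bar B(\bar x,\varepsilon):V(x)\ge\ell\}$, which does not contain $\bar x$ since $V(\bar x)=0<\ell$, the continuous function $\langle\nabla V,f\rangle$ is strictly negative and thus bounded above by some $-c<0$; this would force $V(x(t))\le V(x_0)-ct\to-\infty$, contradicting $V\ge 0$. Hence $\ell=0$. Finally $V(x(t))\to 0$ forces $x(t)\to\bar x$, for otherwise a subsequence $x(t_n)$ would remain in a compact annulus $\{\eta\le\Vert x-\bar x\Vert\le\varepsilon\}$ on which $V$ admits a strictly positive minimum, contradicting $V(x(t_n))\to 0$. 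This establishes that $\bar x$ is LAS.

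For the global statement, properness lets me replace the local ball by a forward-invariant compact sublevel set. Given any $x_0\in\mathcal{D}$, I would set $L=V(x_0)$ and $S_L=V^{-1}([0,L])$, which is a compact subset of $\mathcal{D}$ by properness. Monotonicity of $V$ along trajectories makes $S_L$ positively invariant, so the solution is defined for all $t\ge 0$ and confined to $S_L$. The very same two arguments as above then apply verbatim with $S_L$ in place of $\bar B(\bar x,\varepsilon)$: strict negativity of $\langle\nabla V,f\rangle$ on $\{x\in S_L:V(x)\ge\ell\}$ forces the decreasing limit $\ell$ to be $0$, and the positive minimum of $V$ on an annulus inside $S_L$ upgrades $V(x(t))\to 0$ to $x(t)\to\bar x$. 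Since $x_0\in\mathcal{D}$ was arbitrary, $\bar x$ is GAS in $\mathcal{D}$.

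The main obstacle, and the two points deserving genuine care, are: (i) guaranteeing that the solution neither leaves $\mathcal{D}$ nor blows up before the Lyapunov estimate can be invoked on an infinite time horizon — this must be secured \emph{first}, by the trapping argument in the stable case and by the invariance of $S_L$ in the proper case, before monotonicity is used globally in time; and (ii) excluding a strictly positive limit $\ell$ of $V(x(t))$, where both strictness of $V$ and the compactness of the relevant sublevel region are indispensable. Everything else reduces to routine continuity and compactness arguments.
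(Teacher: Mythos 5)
Your proof is correct. The paper does not actually prove this theorem --- it is recalled as a classical result with a reference to Hale's textbook --- and your argument (trapping the trajectory in a ball or sublevel set to secure forward completeness and stability first, then ruling out a positive limit $\ell$ of $V$ along the trajectory by compactness and strictness of $V$, with properness supplying the compact positively invariant sublevel set $V^{-1}([0,L])$ in the global case) is precisely the standard proof found in such references, with the delicate points (no finite-time escape from $\mathcal{D}$, exclusion of $\ell>0$) handled correctly.
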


When a Lyapunov function is not strict then one can be more specific and infer that trajectories converge to some invariant subset. The following result is even more general and does not assume the existence of an equilibrium point.

\begin{theorem}[LaSalle principle, see \cite{Hale}]
Assume that $V:\mathcal{D}\rightarrow[0,+\infty)$ is a proper $C^1$ function such that $\langle\nabla V(x),f(x)\rangle\leq 0$ for every $x\in\mathcal{D}$.
Let $\mathcal{I}$ be the largest subset of $\{x\in\mathcal{D}\
\vert\ \langle\nabla V(x),f(x)\rangle=0\}$that is invariant under the flow (in positive time) of the dynamical system. Then all trajectories starting in $\mathcal{D}$ converge to $\mathcal{I}$, in the sense that
$d(x(t),\mathcal{I})\rightarrow 0$ (Euclidean distance) as $t\rightarrow+\infty$.
\end{theorem}

\begin{remark}
It is interesting to formulate the LaSalle principle in the particular case where the invariant set $\mathcal{I}$ is reduced to a singleton (which must then be an equilibrium point). The statement is then as follows.

Assume that $V$ is a proper Lyapunov function at $\bar x$ on $\mathcal{D}$ and that, if $x(\cdot)$ is a solution of the system such that $\langle\nabla V(x(t)),f(x(t))\rangle=0$ for every $t\geq 0$, then $x(t)=\bar{x}$.
Then the equilibrium point $\bar{x}$ is GAS in $\mathcal{D}$.
\end{remark}

\begin{example}
Let $g:\R\rightarrow\R$ be a function of class $C^1$ such that $g(0)=0$ and $xg(x)>0$ if $x\neq 0$, and satisfying $\int_0^{+\infty} g=+\infty$ and $\int_{-\infty}^0 g=-\infty$. By considering the Lyapunov function $V(x,y)=\frac{1}{2}y^2+\int_0^x g(s)\, ds$, it is easy to prove that the point $x=\dot x=0$ is GAS for the system $\ddot x+\dot x+g(x)=0$ (which has to be written as a first-order system).
\end{example}

\begin{example}
Consider the system in $\R^2$
\begin{equation*}
\dot x = \alpha x - y - \alpha x (x^2+y^2), \qquad
\dot y = x + \alpha y - \alpha y (x^2+y^2),
\end{equation*}
with $\alpha>0$ arbitrary.
The equilibrium point $(0,0)$ is not stable. With the LaSalle principle, it is easy to prove that the unit circle $x^2+y^2=1$ is globally attractive in $\R^2\setminus\{(0,0)\}$, in the sense that a trajectory, starting from any point different from $(0,0)$, converges to the circle in infinite time. Indeed, note that, setting $V(x,y)=\frac{1}{2}(x^2+y^2)$, we have
$$
\frac{d}{dt}V(x(t),y(t)) = \alpha ( x(t)^2+y(t)^2 ) ( 1-x(t)^2-y(t)^2 )
$$
and one can see that $\frac{d}{dt}V(x(t),y(t))$ is positive inside the unit disk (except at the origin), and negative outside of the unit disk. It is then easy to design (by translation) Lyapunov functions inside the punctured unit disk, and outside of the unit disk, and to conclude by the LaSalle principle.
\end{example}

\begin{example}[Lyapunov lemma and applications]\label{Lyapunov_lemma}
Let $A$ be a $n\times n$ real matrix, whose eigenvalues have negative real parts. Then there exists a symmetric positive definite $n\times n$ matrix $P$ such that $A^\top P+PA=-I_n$.
Indeed, it suffices to take $P=\int_0^{+\infty}{e}^{tA^\top}{e}^{tA}\,dt$.

Clearly, the function $V(x)=\langle x,Px\rangle$ is then a strict Lyapunov function for the system $\dot x(t)=Ax(t)$. We recover the fact that $0$ is GAS.

Using a first-order expansion and norm estimates, it is then easy to prove Theorem \ref{thm_lineariz} and even to obtain stability neighborhoods thanks to $V$.
\end{example}

\paragraph{Application to the stabilization of nonlinear control systems.}
As Theorem \ref{thm_lineariz} implied Theorem \ref{thm_linearization}, the Lyapunov and LaSalle theorems can be applied to control systems, providing knowledge on the stability neighborhoods. 
For instance, we get the following statement: consider the nonlinear control system \eqref{contsyststab} and assume that there exists a function $V:\mathcal{D}\rightarrow\R^+$ of class $C^1$, taking positive values in $\mathcal{D}\setminus\{\bar x\}$, such that for every $x\in\mathcal{D}$ there exists $u(x)\in\Omega$ such that $\langle \nabla V(x),f(x,u(x))\rangle < 0$; then the feedback control $u$ stabilizes the control system, globally in $\mathcal{D}$. 

Many other similar statements can be easily derived, based on the Lyapunov or LaSalle theorems.
There exists an elaborate theory of control Lyapunov functions (see, e.g., \cite{Sontag}).
A difficulty in this theory is to ensure a nice regularity of the feedback control. We do not discuss further this difficult question but we mention that it has raised a whole field of intensive researches (see \cite{Coron, Sontag}).

To illustrate the role of the Lyapunov functions in stabilization, and as an application of the idea described above, we next provide a spectacular and widely used (and however very simple) strategy in order to design stabilizing controls thanks to Lyapunov functions.

%\paragraph{Jurdjevic-Quinn method.}

\begin{theorem}[Jurdjevic-Quinn method \cite{JurdjevicQuinn}]\label{thm_JQ}
Consider the control-affine system in $\R^n$\\[-1mm]
$$
\dot{x}(t)=f(x(t))+\sum_{i=1}^mu_i(t)g_i(x(t))
$$
where $f$ and the $g_i$'s are smooth vector fields in $\R^n$. Let $\bar x$ be such that $f(\bar{x})=0$, i.e., $\bar x$ is an equilibrium point of the uncontrolled system (that is, with $u_i=0$).

We assume that there exists a proper Lyapunov function $V$ at $\bar x$ on $\R^n$ for the uncontrolled system, i.e., satisfying
\begin{itemize}[parsep=1mm,itemsep=1mm,topsep=1mm]%,leftmargin=*
\item $V(\bar{x})=0$ and $V(x)>0$ for every $x\in\R^n\setminus\{\bar{x}\}$;
\item $V$ is proper;
\item $L_fV(x)= \langle \nabla V(x),f(x)\rangle \leq 0$ for every $x\in\R^n$;\footnote{The notation $L_fV$ is called the Lie derivative of $V$ along $f$. It is defined by $L_fV(x)=dV(x).f(x)=\langle \nabla V(x),f(x)\rangle$, which is the derivative of $V$ along the direction $f$ at $x$.}
\item the set\\[-1mm]
$$\{x\in\R^n\ \vert\ L_fV(x)=0\ \textrm{and}\
L_f^kL_{g_i}V(x)=0\quad \forall i\in\{1,\ldots,m\}\quad \forall
k\in\N\}$$
is reduced to the singleton $\{\bar{x}\}$.
\end{itemize}
Then the equilibrium point $\bar x$ is GAS in $\R^n$ for the control system in closed-loop with the feedback control defined by $u_i(x)=-L_{g_i}V(x)$, $i=1,\ldots,m$.
\end{theorem}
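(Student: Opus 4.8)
The plan is to use $V$ itself as a Lyapunov function for the closed-loop system and to conclude by the singleton form of the LaSalle principle stated in the Remark following the LaSalle theorem. First I would substitute the feedback $u_i(x)=-L_{g_i}V(x)$ into the dynamics, so that the closed-loop vector field is $f_{\mathrm{cl}}(x)=f(x)-\sum_{i=1}^m (L_{g_i}V(x))\,g_i(x)$. Differentiating $V$ along a closed-loop trajectory gives
$$\frac{d}{dt}V(x(t))=L_fV(x(t))+\sum_{i=1}^m u_i(x(t))\,L_{g_i}V(x(t))=L_fV(x(t))-\sum_{i=1}^m\bigl(L_{g_i}V(x(t))\bigr)^2,$$
which is $\leq 0$ because $L_fV\leq 0$ by hypothesis. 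Together with the facts that $V$ is positive definite at $\bar x$ and proper, this shows that $V$ is a proper Lyapunov function at $\bar x$ for the closed-loop system. Note also that $\nabla V(\bar x)=0$ since $V$ attains its minimum there, hence $u_i(\bar x)=-L_{g_i}V(\bar x)=0$ and $f_{\mathrm{cl}}(\bar x)=f(\bar x)=0$, so $\bar x$ is indeed an equilibrium of the closed-loop system.

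By the singleton version of the LaSalle principle, it then suffices to prove that the only complete trajectory $x(\cdot)$ along which $\frac{d}{dt}V(x(t))=0$ for all $t\geq 0$ is the constant one $x(t)\equiv\bar x$. On such a trajectory, both nonpositive contributions above must vanish identically, giving $L_fV(x(t))=0$ and $L_{g_i}V(x(t))=0$ for every $i$ and every $t\geq 0$. The crucial observation is that, since the feedback $u_i=-L_{g_i}V$ vanishes along this trajectory, the closed-loop dynamics reduces there to the \emph{uncontrolled} dynamics $\dot x(t)=f(x(t))$.

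This is exactly what lets the last hypothesis of the theorem enter, and it is the heart of the argument: the hard part is to upgrade the identities $L_{g_i}V\equiv 0$ into the full family $L_f^kL_{g_i}V\equiv 0$. I would do this by induction on $k$: differentiating $L_{g_i}V(x(t))=0$ in time and using $\dot x=f(x)$ yields $0=\frac{d}{dt}L_{g_i}V(x(t))=\langle\nabla(L_{g_i}V)(x(t)),f(x(t))\rangle=L_fL_{g_i}V(x(t))$, and iterating gives $L_f^kL_{g_i}V(x(t))=0$ for all $k\in\N$. Hence for every fixed $t$ the point $x(t)$ satisfies $L_fV(x(t))=0$ and $L_f^kL_{g_i}V(x(t))=0$ for all $i,k$, so by the last assumption it belongs to the singleton $\{\bar x\}$; thus $x(t)\equiv\bar x$. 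Applying the Remark (LaSalle, singleton case) with $\mathcal D=\R^n$, and using that $V$ is proper on $\R^n$, we conclude that $\bar x$ is GAS in $\R^n$ for the closed-loop system.
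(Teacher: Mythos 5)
Your proposal is correct and follows essentially the same route as the paper's own proof: same closed-loop vector field, same computation $\frac{d}{dt}V = L_fV - \sum_i (L_{g_i}V)^2 \leq 0$, same observation that along a trajectory where this vanishes the dynamics reduce to $\dot x = f(x)$ so that iterated differentiation yields $L_f^kL_{g_i}V \equiv 0$, and the same conclusion via the singleton form of the LaSalle principle. Your explicit remark that the vanishing of the feedback is what permits the induction on $k$ is exactly the (tersely stated) key step in the paper's argument.
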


\begin{proof}
Let $F(x)=f(x)-\sum_{i=1}^m L_{g_i}V(x)g_i(x)$ be the dynamics of the closed-loop system. First of all, we note that $F(\bar{x})=0$, that is, $\bar{x}$ is an equilibrium point for the closed-loop system. Indeed, $V$ is smooth and reaches its minimum at $\bar{x}$, hence $\nabla V(\bar{x})=0$, and therefore $L_{g_i}V(\bar{x})=0$ for $i=1,\ldots,m$. Moreover, we have $f(\bar{x})=0$. Besides, we have
$$
L_FV(x)=\langle\nabla V(x),F(x)\rangle = L_fV(x)-\sum_{i=1}^m
(L_{g_i}V(x))^2 \leq 0
$$
and if $L_FV(x(t))=0$ for every $t\geq 0$, then $L_fV(x(t))=0$ and $L_{g_i}V(x(t))=0$, $i=1,\ldots,m$. Derivating with respect to $t$, we infer that
$$
0=\frac{d}{dt}L_{g_i}V(x(t))=L_fL_{g_i}V(x(t))
$$
since $L_{g_i}V(x(t))=0$. Therefore, clearly, we get that $L_f^kL_{g_i}V(x(t))=0$, for every $i\in\{1,\ldots,m\}$ and for every $k\in\N$. By assumption, it follows that $x(t)=\bar{x}$, and the conclusion follows from the LaSalle principle.
\end{proof}

The idea of the Jurdjevic-Quinn method, that can be seen in the above proof, is very simple. The uncontrolled system has a Lyapunov function, which may be not strict. Then, to get an asymptotic stabilization result, we compute the derivative of $V$ along solutions of the control system, we see that the control enters linearly into the resulting expression, and we design the controls so as to get the desired decrease.

\begin{remark}
It is remarkable that the Jurdjevic-Quinn method also allows one to design globally stabilizing feedback controls, which satisfy moreover some constraints. For instance, in the framework of Theorem \ref{thm_JQ}, let us add the requirement that $\vert u_i\vert\leq 1$, $i=1,\ldots,m$. Then, with the feedback
\begin{equation*}
\begin{split}
u_i &= \textrm{sat}(-1,-L_{g_i}V(x),1)  
= \left\{\begin{array}{lcl}
-1 & \textrm{if} & -L_{g_i}V(x)\leq -1 \\
-L_{g_i}V(x) & \textrm{if} & -1\leq -L_{g_i}V(x)\leq 1 \\
1 & \textrm{if} & -L_{g_i}V(x)\geq 1 
\end{array}\right.
\end{split}
\end{equation*}
the equilibrium point $\bar x$ is GAS.
Indeed, the above proof is easily adapted, and the dynamics $F(x)$ of the closed-loop system is locally Lipschitz.
\end{remark}

The Jurdjevic-Quinn method is much used, for instance, for the stabilization of satellites along a given orbit.
We next give some applications in mathematical biology (control of populations in Lotka-Volterra systems).

\begin{example}
Consider the controlled predator-prey system
\begin{equation*}
\dot{x}=x(1-y+u), \quad \dot{y}=-y(1-x).
\end{equation*}
and the equilibrium point $(x=1,y=1)$. Prove that the function $V(x,y)=x-1-\ln(x)+y-1-\ln(y)$ satisfies all assumptions of Theorem \ref{thm_JQ}, and deduce a feedback control such that the equilibrium point is GAS in $x>0$, $y>0$. Note that the function $x\mapsto x-1-\ln(x)$ is nonnegative on $(0,+\infty)$ and vanishes only at $x=1$.
\end{example}

\begin{example}[Generalized Lotka-Volterra system]
%% inspir\'e de Harrison_1978
Consider the generalized Lotka-Volterra system
$$
\dot N_i = N_i \bigg( b_i + \sum_{j=1}^n a_{ij} N_j \bigg),\quad i=1,\ldots,n.
$$
Consider the equilibrium point $\bar N=(\bar N_1,\ldots,\bar N_n)^T$ defined by $b+A\bar N=0$, where $b=(b_1,\ldots,b_n)^\top$ and $A$ is the square matrix of coefficients $a_{ij}$. Let $c_1,\ldots,c_n$ be some real numbers. Let $C$ be the diagonal matrix whose coefficients are the $c_i$'s. We set
$$
V(N) = \sum_{i=1}^n c_i \left( N_i-\bar N_i - \bar N_i\ln\frac{N_i}{\bar N_i} \right) .
$$
An easy computation shows that
$$
\frac{d}{dt} V(N(t)) = \sum_{i=1}^n c_i (N_i-\bar N_i) (b_i+(AN)_i)
$$
where $(AN)_i$ is the $i^\textrm{th}$ component of the vector $AN$. By noticing that $b_i+(A\bar N)_i=0$, we easily deduce that
$$
\frac{d}{dt} V(N(t)) = \frac{1}{2} \langle N-\bar N , (A^\top C+CA) (N-\bar N) \rangle .
$$
If there exists a diagonal matrix $C$ such that $A^\top C+CA$ be negative definite, then we infer that $\bar N$ is GAS.\footnote{Note that a necessary condition for $A^\top C+CA$ to be negative definite is that the diagonal coefficients $a_{ii}$ of $A$ be negative. If at least one of them is zero then $A^\top C+CA$ is not definite.} Assume for instance that $A$ be skew-symmetric, and take $C=I_n$. Then $V(N(t))$ is constant. We introduce some controls, for instance, in the $n-1$ first equations:
$$
\dot N_i = N_i \bigg( b_i + \sum_{j=1}^n a_{ij} N_j + \alpha_i u_i \bigg),\quad i=1,\ldots,n-1.
$$
Then, we compute $\frac{d}{dt} V(N(t)) = \sum_{i=1}^{n-1} \alpha_i (N_i(t)-\bar N_i) u(t) $. It is then easy to design a feedback control stabilizing globally (by the LaSalle principle) the system to the equilibrium $\bar N$, under the assumption that at least one of the coefficients $a_{in}$, $i=1,\ldots,n-1$, be nonzero, and that $A$ be invertible.

In the particular case $n=2$, it is actually possible to ensure moreover that $u(t)\geq 0$, by playing with the periodicity of the trajectories (as in Example \ref{exo7.3.22}).
\end{example}

\begin{example}
We consider the bilinear control system in $\R^2$
$$
\dot x_1(t) = x_2(t) , \qquad
\dot x_2(t) = -x_1(t) + u(t) x_1(t) 
$$
where the control is subject to the constraint $\vert u\vert\leq 1$. 
Let us stabilize globally this system to $(0,0)$.
Setting $V(x_1,x_2)=\frac{1}{2}(x_1^2+x_2^2)$, we have $\frac{d}{dt}V(x_1(t),x_2(t)) = u(t)x_1(t)x_2(t)$. We choose the feedback $u(x_1,x_2) = \mathrm{sat}(-1,-x_1x_2,1)$. %, we have $\frac{d}{dt}V(x_1(t),x_2(t)) = - \min(\vert x_1(t)x_2(t)\vert, x_1(t)^2x_2(t)^2)\leq 0$. 
To prove the asymptotic stability we apply the LaSalle invariance principle. If $\dot V\equiv 0$ then either $x_1\equiv 0$ (and then by derivation we also have $x_2\equiv 0$), or $x_2\equiv 0$ and then by derivation we also find $x_1\equiv 0$. In all cases the maximal invariant set is $\{0,0\}$, which yields the conclusion.
\end{example}

\begin{example}
Consider the control system
$$
\dot x(t) = -y(t) + v(t)\cos\theta(t) , \quad
\dot y(t) = x(t) + v(t)\sin\theta(t) , \quad
\dot\theta(t) = u(t),
$$
where the controls $u$ and $v$ are subject to the constraints $\vert u\vert\leq 1$ and $\vert v\vert\leq 1$.
Let us stabilize globally this system to $(0,0,0)$.
Setting $V=\frac{1}{2}(x^2+y^2+\theta^2)$, we have $\dot V = v(x\cos\theta+y\sin\theta)+\theta u$. We choose the feedback controls
$$
v = -\mathrm{sat}(-1,x\cos\theta+y\sin\theta,1) \qquad\textrm{et}\qquad u = -\mathrm{sat}(-1,\theta,1)
$$
If $\dot V=0$ along a trajectory then $\theta=0$, $u=0$, $0=x\cos\theta+y\sin\theta=x$, $v=0$, and thus also $0=\dot x=-y$. Therefore the invariant set in the LaSalle principle is reduced to the equilibrium point. This yields the conclusion.
\end{example}

\part{Control in infinite dimension}\label{part2}
%\noindent Use the template \emph{part.tex} together with the Springer document class SVMono (monograph-type books) or SVMult (edited books) to style your part title page and, if desired, a short introductory text (maximum one page) on its verso page in the Springer layout.

\noindent In this part we introduce the control theory for infinite-dimensional systems, that is, control systems
% $\dot{x}(t)=f(x(t),u(t))$ 
where the state $x(t)$ evolves in an infinite-dimensional Banach space. Controlled partial differential equations enter into this category.

As we will see, the tools used to analyze such systems significantly differ from the ones used in finite dimension. The techniques are strongly use tools of functional analysis. The reader should then be quite well acquainted with such knowledge, and we refer to the textbook \cite{Brezis} on functional analysis.

The study of the control of nonlinear partial differential equations is beyond the scope of the present monograph, and we refer the reader to \cite{Coron} for a complete survey.
Throughout this part, except at the end, we will focus on linear autonomous infinite-dimensional control systems of the form
$\dot{x}(t)=Ax(t)+Bu(t)$
where $A$ and $B$ are operators (which can be viewed, at a first step, as infinite-dimensional matrices).

Since such systems involve partial differential equations, throughout this part the state $x(t)$ will be rather denoted as $y(t)$. For PDEs settled on some domain $\Omega$, $y$ is a function of $t$ and $x$, where $t$ is the time and $x$ the spatial variable. The control system considered throughout is
\begin{equation}\label{sys_part2}
\dot y = Ay + Bu
\end{equation}
where $\dot y$ means $\partial_t y(t,x)$ when $y$ is a function of $(t,x)$.

The first step is to define the concept of a solution, which in itself is far from being obvious, in contrast to the finite-dimensional setting. In infinite dimension the exponential of $tA$ is is replaced with the concept of \textit{semigroup}.
Hence, in this part, a whole chapter is devoted to semigroup theory, with the objective of giving a rigorous sense to the solution of \eqref{sys_part2} with $y(0)=y_0$,
$$
y(t) = S(t)y_0 + \int_0^t S(t-s)Bu(s)\, ds
$$
where $S(t)$ is a semigroup, generalizing $\mathrm{e}^{tA}$.

\medskip

There are plenty of ways to introduce the theory of controlled PDEs. Here, one of our main objectives is to provide the general framework in which the Hilbert Uniqueness Method (HUM) of J.-L. Lions can be stated.

\chapter{Semigroup theory}\label{chap_semigroup}

The objective of this chapter is to establish that, in an appropriate functional setting, there is a unique solution of the Cauchy problem
\begin{equation}\label{sys4.1}
\dot{y}(t) = Ay(t)+f(t) , \qquad
y(0) = y_0 ,
\end{equation}
where $A$ is a linear operator on a Banach space $X$, and where $y(t)$ and $f(t)$ evolve in $X$, which is given by
\begin{equation}\label{f4.2}
y(t) = S(t) y_0 + \int_0^t S(t-s)f(s)\, ds
\end{equation}
where $(S(t))_{t\geq 0}$ is the semigroup generated by the operator $A$.

In finite dimension (that is, if $X=\R^n$), this step is easy and one has $S(t)=\mathrm{e}^{tA}$, with the usual matrix exponential. In infinite dimension this step is far from being obvious and requires to define rigorously the concept of (unbounded) operator and of semigroup. The reader can keep in mind the example where the operator $A$ is the Dirichlet-Laplacian, defined on a domain $\Omega$ of $\R^n$.

Most results of the present chapter are borrowed from the textbooks \cite{Nagel} and \cite{Pazy} on semigroup theory and from \cite{TucsnakWeiss}, and are given without proof. 

\medskip

Let us recall several basic notions of functional analysis that are instrumental in what follows (see \cite{Brezis}).

Let $X$ be a Banach space, endowed with a norm denoted by $\Vert\ \Vert_{X}$, or simply by $\Vert\ \Vert$ when there is no ambiguity. Let $Y$ be another Banach space. 
The norm of a bounded (i.e., continuous) linear mapping $g:X\rightarrow Y$ is denoted as well by $\Vert g\Vert$ and is defined as usual by
$$\Vert g\Vert = \sup_{x\in X\setminus\{0\}} \frac{\Vert g(x)\Vert_Y}{\Vert x\Vert_X}.$$
The set of bounded linear mappings from $X$ to $Y$ is denoted by $L(X,Y)$.

The notation $X'$ stands for the (topological) dual of the Banach space $X$, that is, the vector space of all linear continuous mappings $\ell:X\rightarrow\R$ (in other words, $X'=L(X,\R)$). Endowed with the norm of linear continuous forms defined above, it is a Banach space.
The duality bracket is defined as usual by
$\langle\ell,x\rangle_{X',X} = \ell(x)$, for every $\ell\in X'$ and every $x\in X$.

In what follows, the word \textit{operator} is a synonym for \textit{mapping}.
By definition, an unbounded linear operator $A$ from $X$ to $Y$ is a linear mapping $A:D(A)\rightarrow Y$ defined on a vector subspace $D(A)\subset X$ called the \textit{domain} of $A$. The operator $A$ is said to be \textit{bounded}\footnote{Note that the terminology is paradoxal, since an unbounded linear operator can be bounded! Actually, "unbounded operator" usually underlies that $A$ is defined on a domain $D(A)$ that is a proper subset of $X$.} if $D(A)=X$ and if there exists $C>0$ such that $\Vert Ax\Vert_Y\leq C\Vert x\Vert_X$ for every $x\in D(A)$.

The operator $A:D(A)\subset X\rightarrow Y$ is said to be \textit{closed} whenever its \textit{graph}
$$
G(A) = \{ (x,Ax)\ \vert\ x\in D(A) \}
$$
is a closed subset of $X\times Y$. By the closed graph theorem, $A$ is a continuous linear mapping from $X$ to $Y$ if and only if $D(A)=X$ and $G(A)$ is closed.

Let $A:D(A)\subset X\rightarrow Y$ be a densely defined linear operator (that is, $D(A)$ is dense in $X$).
The \textit{adjoint operator} $A^*:D(A^*)\subset Y'\rightarrow X'$ is defined as follows. We set
$$
D(A^*)=\{ z\in Y'\ \vert\ \exists C\geq 0\ \textrm{such that}\ \forall x\in D(A)\ \ \vert\langle z,Ax\rangle_{Y',Y}\vert\leq C\Vert x\Vert_{X} \}.
$$
Then $D(A^*)$ is a vector subspace of $Y'$. For every $z\in D(A^*)$, we define the linear form $\ell_z:D(A)\rightarrow\R$ by $\ell_z(x)=\langle z,Ax\rangle_{Y',Y}$ for every $x\in D(A)$. By definition of $D(A^*)$ we have $\vert\ell_z(x)\vert\leq C\Vert x\Vert_{X}$ for every $x\in D(A)$. Since $D(A)$ is dense in $X$, it follows that the linear form $\ell_z$ can be extended in a unique way to a continuous linear form on $X$, denoted by $\tilde\ell_z\in X'$ (classical continuous extension argument of uniformly continuous mappings on complete spaces). Then we set $A^*z=\tilde\ell_z$. This defines the unbounded linear operator $A^*:D(A^*)\subset Y'\rightarrow X'$, called the \textit{adjoint} of $A$. The fundamental property of the adjoint is that
$$
\langle z, Ax\rangle_{Y',Y} = \langle A^*z, x\rangle_{X',X}
$$
for every $x\in D(A)$ and every $z\in D(A^*)$.
Note that:
\begin{itemize}[parsep=1mm,itemsep=0mm,topsep=1mm]%,leftmargin=*
\item $A$ is bounded if and only if $A^*$ is bounded, and in this case their norms are equal;
\item $A^*$ is closed;
\item $D(A^*)$ is not necessarily dense in $Y'$ (even if $A$ is closed), however if $A$ is closed and if $Y$ is reflexive then $D(A^*)$ is dense in $Y'$. % (see \cite{Brezis}).
\end{itemize}

In the case where $X$ is a Hilbert space, we identify $X'$ with $X$. A densely defined linear operator $A:D(A)\subset X\rightarrow X$ is said to be \textit{self-adjoint} (resp. \textit{skew-adjoint}) whenever $D(A^*)=D(A)$ and $A^*=A$ (resp., $A^*=-A$). Note that self-adjoint and skew-adjoint operators are necessarily closed.

More generally, given two Hilbert spaces $X$ and $Z$ such that $Z\hookrightarrow X$, i.e., $Z$ is continuously embedded in $X$, we have $X'\hookrightarrow Z'$. Now we can decide, by the Riesz theorem, to identify $X$ with $X'$; in this case, we have the triple $Z\hookrightarrow X\hookrightarrow Z'$ (but then we cannot identify $Z$ with $Z'$).
Then, for any $x\in X\subset Z'$ and any $z\in Z\subset X$, we have $\langle x,z\rangle_{Z',Z} = (x,z)_X$, i.e., the duality bracket $\langle\ ,\ \rangle_{Z',Z}$, standing for the application of a linear continuous mapping on $Z$ (that is an element of $Z'$) to an element of $Z$ is identified to the scalar product on $X$, when both elements are in $X$. We say that $X$ is the \emph{pivot space}.

\medskip

Throughout this part, we consider a Banach space $X$. An operator on $X$ will mean an (unbounded) linear operator $A:D(A)\subset X\rightarrow X$. In practice, most of unbounded operators used to model systems of the form \eqref{sys4.1} are operators $A:D(A)\subset X\rightarrow X$ that are closed and whose domain $D(A)$ is dense in $X$.

When an integral is considered over the Banach space $X$ (like in \eqref{f4.2}), it is understood that it is in the usual sense of the Bochner integral (see \cite{Rudin,TucsnakWeiss}).

\section{Homogeneous Cauchy problems}
We first focus on the homogeneous Cauchy problem, that is \eqref{sys4.1} with $f=0$, with the objective of giving a sense to the unique solution $y(t)=S(t)y_0$.

\subsection{Semigroups of linear operators}
In the sequel, the notation $\mathrm{id}_X$ stands for the identity mapping on $X$.

\begin{definition}\label{def_semigroup}
A \textit{$C_0$ semigroup of bounded linear operators} on $X$ is a one-parameter family $(S(t))_{t\geq 0}$ of bounded linear mappings $S(t)\in L(X)$ such that
\begin{enumerate}[parsep=1mm,itemsep=1mm,topsep=1mm]%,leftmargin=*
\item $S(0)=\mathrm{id}_X$;
\item $S(t+s)=S(t)S(s)$ for all $(t,s)\in[0,+\infty)^2$ (semigroup property);
\item $\displaystyle\lim_{t\rightarrow 0,\ t>0}\ S(t)y=y$ for every $y\in X$.
\end{enumerate}
The linear operator $A:D(A)\subset X\rightarrow X$, defined by
$$
Ay=\lim_{t\rightarrow 0^+}\frac{S(t)y-y}{t}
$$
on the domain $D(A)$ that is the set of $y\in X$ such that the above limit (computed in $X$, i.e., with the norm $\Vert\cdot\Vert_X$) exists,
is called the \textit{infinitesimal generator} of the semigroup $(S(t))_{t\geq 0}$.

The semigroup is said to be a group if the second property holds true for all $(t,s)\in\R^2$.
\end{definition}

\begin{proposition}\cite[Section 1.2]{Pazy}\label{propclosed}
Let $(S(t))_{t\geq 0}$ be a $C_0$ semigroup. Then
\begin{itemize}[parsep=1mm,itemsep=1mm,topsep=1mm]%,leftmargin=*
\item the mapping $t\in [0,+\infty)\mapsto S(t)y$ is continuous for every $y\in X$;
\item $A$ is closed and $D(A)$ is dense in $X$;
\item $S(t)y\in D(A)$ and $\dot S(t)y=AS(t)y$ for all $y\in D(A)$ and $t>0$.
\end{itemize}
\end{proposition}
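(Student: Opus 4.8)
The plan is to extract first a quantitative bound on $\Vert S(t)\Vert$, since every subsequent argument rests on such a bound combined with the strong continuity at $0$. First I would show that there exist $M\geq 1$ and $\omega\in\R$ with $\Vert S(t)\Vert\leq M e^{\omega t}$ for all $t\geq 0$. The crucial point is local boundedness near $0$: if $\Vert S(t)\Vert$ were unbounded along some sequence $t_n\to 0^+$, the uniform boundedness principle (Banach--Steinhaus) would furnish $y\in X$ with $\Vert S(t_n)y\Vert$ unbounded, contradicting $S(t_n)y\to y$. Thus $\Vert S(t)\Vert\leq M$ on some interval $[0,\delta]$, and writing $t=n\delta+r$ with $r\in[0,\delta)$ and using $S(t)=S(\delta)^n S(r)$ upgrades this to the exponential bound. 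Continuity of $t\mapsto S(t)y$ is then immediate: for right increments I would write $S(t+h)y-S(t)y=S(t)\big(S(h)y-y\big)$ and bound it by $\Vert S(t)\Vert\,\Vert S(h)y-y\Vert\to 0$, and for left increments $S(t)y-S(t-h)y=S(t-h)\big(S(h)y-y\big)$, controlled by $M e^{\omega t}\Vert S(h)y-y\Vert\to 0$. This settles the first bullet.

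Next I would establish density of $D(A)$ by mollification. For $y\in X$ and $t>0$ set $y_t=\frac1t\int_0^t S(s)y\,ds$, a Bochner integral that is well defined by continuity of the integrand. A direct computation gives $\frac{S(h)-\mathrm{id}_X}{h}y_t=\frac{1}{ht}\Big(\int_t^{t+h}S(s)y\,ds-\int_0^h S(s)y\,ds\Big)$, whose limit as $h\to 0^+$ is $\frac1t\big(S(t)y-y\big)$ by the fundamental theorem of calculus for Bochner integrals. Hence $y_t\in D(A)$, and $y_t\to S(0)y=y$ as $t\to 0^+$, again by continuity, so $D(A)$ is dense in $X$.

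For the third bullet, fix $y\in D(A)$. From $\frac{S(h)-\mathrm{id}_X}{h}S(t)y=S(t)\frac{S(h)-\mathrm{id}_X}{h}y\to S(t)Ay$ I would read off at once that $S(t)y\in D(A)$, that $AS(t)y=S(t)Ay$, and that the right derivative of $t\mapsto S(t)y$ equals $AS(t)y$. The left derivative requires the bound once more: splitting $S(t-h)\frac{S(h)y-y}{h}-S(t)Ay$ as $S(t-h)\big(\frac{S(h)y-y}{h}-Ay\big)+\big(S(t-h)-S(t)\big)Ay$ shows it tends to $0$, the first term because $\Vert S(t-h)\Vert\leq Me^{\omega t}$ and $y\in D(A)$, the second by strong continuity. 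This gives $\dot S(t)y=AS(t)y=S(t)Ay$, and integrating the continuous map $s\mapsto S(s)Ay$ yields the representation $S(t)y-y=\int_0^t S(s)Ay\,ds$.

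Finally, closedness of $A$ is the one place where I expect genuine subtlety, and the integral formula just obtained is exactly what makes it go through. Given $y_n\in D(A)$ with $y_n\to y$ and $Ay_n\to z$, I would pass to the limit in $S(t)y_n-y_n=\int_0^t S(s)Ay_n\,ds$: the integrand converges to $S(s)z$ uniformly on $[0,t]$ since $\Vert S(s)(Ay_n-z)\Vert\leq Me^{\omega t}\Vert Ay_n-z\Vert$, giving $S(t)y-y=\int_0^t S(s)z\,ds$. Dividing by $t$ and letting $t\to 0^+$ produces $Ay=z$ with $y\in D(A)$, so $A$ is closed. The main obstacle throughout is really the initial Banach--Steinhaus step: every continuity, density, and limit-exchange argument silently invokes the uniform bound $Me^{\omega t}$, so that preliminary must be secured before anything else.
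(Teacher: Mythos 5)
Your proof is correct and is essentially the standard argument: the paper states this proposition without proof, citing \cite[Section 1.2]{Pazy}, and your chain of steps — Banach--Steinhaus to secure the local bound and hence $\Vert S(t)\Vert\leq Me^{\omega t}$, the averaged elements $\frac1t\int_0^t S(s)y\,ds$ for density of $D(A)$, the commutation $AS(t)y=S(t)Ay$ with separate right/left difference quotients for differentiability, and the representation $S(t)y-y=\int_0^t S(s)Ay\,ds$ to get closedness — is precisely the proof given in that reference. No gaps.
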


This proposition shows that the notion of $C_0$ semigroup is adapted to solve the homogeneous Cauchy problem.

%Note that (it is easy to see) if $S(t)$ is a $C_0$ semigroup then the mapping $t\mapsto S(t)x$ is continuous for every $x\in X$. Moreover, for every $x\in D(A)$, one has $S(t)x\in D(A)$ and $\frac{d}{dt}S(t)x=AS(t)x$ for every $t\geq 0$. Indeed, $\frac{S(h)-\mathrm{id}_X}{h} S(t) x = S(t) \frac{S(h)-\mathrm{id}_X}{h} x \rightarrow S(t) Ax$ as $h\rightarrow 0$, whence $S(t)x \in D(A)$ and the conclusion.
%This simple property already shows that the notion of $C_0$ semigroup is adapted to solve the homogeneous Cauchy problem.

Actually, more generally, semigroups are defined with the two first items of Definition \ref{def_semigroup} (see \cite{Pazy}). The additional third property characterizes so-called $C_0$ semigroups (also called strongly continuous semigroups).
It is a simple convergence property. If the $C_0$ semigroup $(S(t))_{t\geq 0}$ satisfies the stronger (uniform convergence) property $\lim_{t\rightarrow 0,\ t>0}\ \Vert S(t)-\mathrm{id}_X\Vert=0$, then it is said to be \textit{uniformly continuous}.
The following result is however proved in \cite[Theorem 1.2]{Pazy}:

\begin{quotation}
\noindent\textit{A linear operator $A:D(A)\rightarrow X$ is the infinitesimal generator of a uniformly continuous semigroup if and only if $A$ is bounded and $D(A)=X$. In that case, moreover, $S(t) = \mathrm{e}^{tA} = \sum_{n=0}^{+\infty} \frac{t^n}{n!}A^n$.}
\end{quotation}

\noindent This result shows that, as soon as a given control process in infinite dimension involves unbounded operators (i.e., $D(A)\subsetneq X$, like for instance a PDE with a Laplacian), then the underlying semigroup is not uniformly continuous. Actually as soon as an operator involves a derivation then it is unbounded. In what follows we focus on $C_0$ semigroups.

\begin{proposition}\cite[Section 1.2]{Pazy}
Let $(S(t))_{t\geq 0}$ be a $C_0$ semigroup. There exist $M\geq 1$ and $\omega\in\R$ such that 
\begin{equation}\label{inegsemigroup}
\Vert S(t)\Vert\leq M\mathrm{e}^{\omega t} \qquad\forall t\geq 0 .
\end{equation}
We say that $(S(t))_{t\geq 0}\in\mathcal{G}(M,\omega)$.
The infimum $\omega^*$ of all possible real numbers $\omega$ such that \eqref{inegsemigroup} is satisfied for some $M\geq 1$ is the growth bound of the semigroup, and is given by
$$
\omega^* = \inf_{t>0} \frac{1}{t} \ln \Vert S(t)\Vert.
$$
\end{proposition}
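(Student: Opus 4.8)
The plan is to proceed in three stages: first establish a bound on $\|S(t)\|$ on a small interval $[0,\delta]$, then bootstrap it to an exponential bound on all of $[0,+\infty)$ using the semigroup property, and finally identify the growth bound through a subadditivity argument. For the local bound I would show that there exist $\delta>0$ and $M\geq 1$ with $\|S(t)\|\leq M$ for all $t\in[0,\delta]$. The natural tool is the uniform boundedness principle (Banach--Steinhaus). Arguing by contradiction, if no such $\delta,M$ existed one could extract a sequence $t_n\to 0^+$ with $\|S(t_n)\|\to+\infty$; but for each fixed $y\in X$ the third property in Definition \ref{def_semigroup} gives $S(t_n)y\to y$, so the orbit $(S(t_n)y)_n$ is bounded in $X$ for every $y$, whence Banach--Steinhaus forces $\sup_n\|S(t_n)\|<+\infty$, a contradiction. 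Note $M\geq 1$ is automatic since $\|S(0)\|=\|\mathrm{id}_X\|=1$ (assuming $X\neq\{0\}$).

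Next, for arbitrary $t\geq 0$ I would write $t=n\delta+r$ with $n=\lfloor t/\delta\rfloor\in\N$ and $r\in[0,\delta)$, and invoke the semigroup property (item 2 of Definition \ref{def_semigroup}) to factor $S(t)=S(\delta)^nS(r)$. Submultiplicativity of the operator norm then yields $\|S(t)\|\leq M^{n+1}\leq M\,M^{t/\delta}=M\mathrm{e}^{\omega t}$ with $\omega=\delta^{-1}\ln M\geq 0$. This establishes \eqref{inegsemigroup}.

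For the growth bound, set $a(t)=\ln\|S(t)\|$ and observe that submultiplicativity makes $a$ subadditive, $a(t+s)\leq a(t)+a(s)$. Since $t\mapsto S(t)y$ is continuous for every $y$ (Proposition \ref{propclosed}), the map $t\mapsto\|S(t)\|$ is lower semicontinuous (a supremum over the unit ball of the continuous maps $t\mapsto\|S(t)y\|$) and is locally bounded by the previous steps, so the classical subadditivity (Fekete) lemma applies and gives $\lim_{t\to+\infty}a(t)/t=\inf_{t>0}a(t)/t=:\omega_0$. It then remains to check that $\omega^*$, the infimum of admissible exponents $\omega$ in \eqref{inegsemigroup}, equals $\omega_0$. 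The inequality $\omega_0\leq\omega^*$ follows by letting $t\to+\infty$ in $a(t)\leq\ln M+\omega t$; conversely, given any $\omega>\omega_0$ one has $a(t)<\omega t$ for $t$ large, and combining this with the local bound on an interval $[0,T]$ produces a constant $M$ with $\|S(t)\|\leq M\mathrm{e}^{\omega t}$ for all $t\geq 0$, so $\omega^*\leq\omega$; letting $\omega\downarrow\omega_0$ gives $\omega^*\leq\omega_0$, hence equality.

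The main obstacle is the local boundedness step: strong continuity alone does not deliver a uniform operator-norm bound, and the Banach--Steinhaus argument is precisely what bridges pointwise convergence and uniform control, so it is the crux of the first part. The other nontrivial ingredient is the Fekete argument for the growth bound, where the delicate point is to justify the regularity (lower semicontinuity and local boundedness of $t\mapsto\|S(t)\|$) needed to pass from the discrete statement of the subadditive lemma to its continuous-parameter form; everything else reduces to the routine estimates indicated above.
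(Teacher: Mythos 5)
Your proof is correct, and it is essentially the standard argument of the reference the paper cites here (the paper itself states this proposition without proof): Banach--Steinhaus to get a uniform bound on $[0,\delta]$, the splitting $S(t)=S(\delta)^nS(r)$ for the exponential estimate, and the subadditive (Fekete) lemma to identify $\omega^*$ with $\inf_{t>0}\frac{1}{t}\ln\Vert S(t)\Vert$. The only point worth flagging is the degenerate case where $\Vert S(t_0)\Vert=0$ for some $t_0$ (nilpotent semigroups), so that $\ln\Vert S(t)\Vert=-\infty$ for $t\geq t_0$; with the usual conventions for $[-\infty,+\infty)$-valued subadditive functions your argument goes through and yields $\omega^*=-\infty$, consistent with the statement.
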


This proposition shows that $C_0$ semigroups have at most an exponential growth in norm. This property is similar to what happens in finite dimension.

\begin{definition}
Let $A:D(A)\rightarrow X$ be a linear operator on $X$ defined on the domain $D(A)\subset X$. The \textit{resolvent set} $\rho(A)$ of $A$ is defined as the set of complex numbers $\lambda$ such that $\lambda\, \mathrm{id}_X - A:D(A)\rightarrow X$ is invertible and $(\lambda\, \mathrm{id}_X - A)^{-1}:X\rightarrow X$ is bounded (we say it is boundedly invertible).
The \textit{resolvent} of $A$ is defined by
$R(\lambda,A)=(\lambda\, \mathrm{id}_X-A)^{-1}$,
for every $\lambda\in\rho(A)$.
\end{definition}

Notice the so-called \textit{resolvent identity}, often instrumental in some proofs:
$$
R(\lambda,A)-R(\mu,A)=(\mu-\lambda)R(\lambda,A)R(\mu,A)\qquad \forall(\lambda,\mu)\in\rho(A)^2 .
$$

\begin{remark}
If $(S(t))_{t\geq 0}\in\mathcal{G}(M,\omega)$ then $\{ \lambda\in\C\ \vert\ \Re\lambda>\omega \}\subset\rho(A)$, and
\begin{equation}\label{laplacetransform}
R(\lambda,A)y=(\lambda\, \mathrm{id}_X-A)^{-1}y=\int_0^{+\infty}\mathrm{e}^{-\lambda t} S(t) y\, dt
\end{equation}
for every $x\in X$ and every $\lambda\in\C$ such that $\Re\lambda>\omega$ (Laplace transform). Indeed, integrating by parts, one has
$
\lambda\int_0^{+\infty} \mathrm{e}^{-\lambda t} S(t)\, dt = \mathrm{id}_X + A \int_0^{+\infty} \mathrm{e}^{-\lambda t} S(t)\, dt 
$
and thus 
$
(\lambda\,\mathrm{id}_X - A) \int_0^{+\infty} \mathrm{e}^{-\lambda t} S(t)\, dt = \mathrm{id}_X .
$
\end{remark}

Note that, using the expression \eqref{laplacetransform} of $R(\lambda,A)$ with the Laplace transform, it follows that, if $(S(t))_{t\geq 0}\in\mathcal{G}(M,\omega)$ then $\Vert R(\lambda,A)\Vert \leq \frac{M}{\Re\lambda-\omega}$ for every $\lambda\in\C$ such that $\Re\lambda>\omega$. This can be iterated, by derivating $R(\lambda,A)$ with respect to $\lambda$, using the resolvent formula and the Laplace transform, and this yields the estimates $\Vert R(\lambda,A)^n\Vert \leq \frac{M}{(\Re\lambda-\omega)^n}$ for every $n\in\N^*$ and for every $\lambda\in\C$ such that $\Re\lambda>\omega$.
Actually, we have the following general result.

\begin{theorem}\cite[Section 1.5, Theorem 5.2]{Pazy}\label{thm4.1}
A linear operator $A:D(A)\subset X\rightarrow X$ is the infinitesimal generator of a $C_0$ semigroup $(S(t))_{t\geq 0}\in\mathcal{G}(M,\omega)$ if and only if the following conditions are satisfied:
\begin{itemize}[parsep=1mm,itemsep=1mm,topsep=1mm]%,leftmargin=*
\item $A$ is closed and $D(A)$ is dense in $X$;
\item $(\omega,+\infty)\subset\rho(A)$ and
$\Vert R(\lambda,A)^n\Vert\leq\frac{M}{(\Re\lambda-\omega)^n}$
for every $n\in\N^*$ and every $\lambda\in\C$ such that $\Re\lambda >\omega$.
\end{itemize}
\end{theorem}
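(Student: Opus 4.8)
The plan is to prove the two implications separately; the forward (necessity) implication is essentially already contained in the preceding discussion, so the real substance lies in the converse. For necessity, assume $A$ generates $(S(t))_{t\geq 0}\in\mathcal{G}(M,\omega)$. Proposition \ref{propclosed} already provides that $A$ is closed and densely defined. For the resolvent estimates I would start from the Laplace transform representation \eqref{laplacetransform}, which shows $(\omega,+\infty)\subset\rho(A)$ and $\Vert R(\lambda,A)\Vert\leq M/(\Re\lambda-\omega)$. Differentiating \eqref{laplacetransform} $(n-1)$ times with respect to $\lambda$ (equivalently, using the resolvent identity to get $\frac{d^{n-1}}{d\lambda^{n-1}}R(\lambda,A)=(-1)^{n-1}(n-1)!\,R(\lambda,A)^n$) yields $R(\lambda,A)^n y=\frac{1}{(n-1)!}\int_0^{+\infty}t^{n-1}\mathrm{e}^{-\lambda t}S(t)y\,dt$, and bounding under the integral by $\int_0^{+\infty}t^{n-1}\mathrm{e}^{-(\Re\lambda-\omega)t}\,dt=(n-1)!/(\Re\lambda-\omega)^n$ gives the required estimate $\Vert R(\lambda,A)^n\Vert\leq M/(\Re\lambda-\omega)^n$ for all $n\in\N^*$.

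For sufficiency, the plan is the Yosida approximation. Given the estimates, I introduce the bounded operators $A_\lambda=\lambda A R(\lambda,A)=\lambda^2 R(\lambda,A)-\lambda\,\mathrm{id}_X$ for real $\lambda>\omega$. First I would prove that $\lambda R(\lambda,A)y\to y$ as $\lambda\to+\infty$ for every $y\in X$: for $y\in D(A)$ this follows from $\lambda R(\lambda,A)y-y=R(\lambda,A)Ay$ together with $\Vert R(\lambda,A)\Vert\leq M/(\lambda-\omega)\to 0$, and the general case follows from density of $D(A)$ and the uniform bound $\Vert\lambda R(\lambda,A)\Vert\leq M\lambda/(\lambda-\omega)$. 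Consequently $A_\lambda y=\lambda R(\lambda,A)Ay\to Ay$ for every $y\in D(A)$.

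Each $A_\lambda$ being bounded, it generates the uniformly continuous semigroup $\mathrm{e}^{tA_\lambda}$ by the result quoted from \cite[Theorem 1.2]{Pazy}. The crucial and most delicate step, and the one where the full strength of the estimates on \emph{all} powers is needed, is the $\lambda$-uniform bound. Writing $\mathrm{e}^{tA_\lambda}=\mathrm{e}^{-\lambda t}\mathrm{e}^{t\lambda^2 R(\lambda,A)}$ and bounding the exponential series term by term with $\Vert R(\lambda,A)^n\Vert\leq M/(\lambda-\omega)^n$ gives $\Vert \mathrm{e}^{tA_\lambda}\Vert\leq M\exp\bigl(t\lambda\omega/(\lambda-\omega)\bigr)$, hence $\Vert \mathrm{e}^{tA_\lambda}\Vert\leq M\mathrm{e}^{\omega' t}$ on $[0,+\infty)$ for any fixed $\omega'>\omega$ once $\lambda$ is large. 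Then I would show that $(\mathrm{e}^{tA_\lambda}y)$ is Cauchy as $\lambda\to+\infty$, uniformly on compact $t$-intervals, via the identity $\mathrm{e}^{tA_\lambda}y-\mathrm{e}^{tA_\mu}y=\int_0^t \mathrm{e}^{(t-s)A_\mu}\mathrm{e}^{sA_\lambda}(A_\lambda-A_\mu)y\,ds$ (using that all these bounded operators commute), bounding the integrand by $M^2\mathrm{e}^{\omega' t}\Vert(A_\lambda-A_\mu)y\Vert$ for $y\in D(A)$ and extending to all of $X$ by density together with the uniform bound. This defines $S(t)y=\lim_{\lambda\to+\infty}\mathrm{e}^{tA_\lambda}y$.

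It then remains to verify that $(S(t))_{t\geq 0}$ is a $C_0$ semigroup in $\mathcal{G}(M,\omega)$ (the semigroup law, strong continuity and the norm bound all pass to the limit), and that its generator is exactly $A$. Passing to the limit in $\frac{d}{dt}\mathrm{e}^{tA_\lambda}y=\mathrm{e}^{tA_\lambda}A_\lambda y$ for $y\in D(A)$ shows that the generator $B$ of $(S(t))_{t\geq 0}$ extends $A$. Since $\lambda\,\mathrm{id}_X-A:D(A)\to X$ is bijective (as $\lambda\in\rho(A)$ for $\lambda>\omega$) while $\lambda\,\mathrm{id}_X-B$ is injective (as $\lambda\in\rho(B)$ by the necessity part applied to $B$), a short argument forces $D(B)=D(A)$, hence $B=A$. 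The main obstacle throughout is this sufficiency direction, and specifically the extraction of the $\lambda$-uniform bound on $\mathrm{e}^{tA_\lambda}$: it is precisely here that the estimates on all powers $R(\lambda,A)^n$ are indispensable, since the first-power estimate alone would not close the summation of the exponential series.
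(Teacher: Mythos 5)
Your proof is correct. Bear in mind that the paper itself gives no proof of this theorem: it is quoted from \cite[Section 1.5, Theorem 5.2]{Pazy}, and the only argument the paper supplies is the remark preceding the statement, which sketches exactly your necessity step (the Laplace-transform representation \eqref{laplacetransform} of the resolvent, iterated differentiation in $\lambda$ combined with the resolvent identity). For sufficiency, your route genuinely differs from the one in the cited reference: Pazy reduces to the contraction case by shifting $A\mapsto A-\omega\,\mathrm{id}_X$ and then introducing an equivalent norm (built from $\sup_{n\geq 0}\Vert\mu^nR(\mu,A)^nx\Vert$) in which the resolvent bounds hold with $M=1$, so that the Hille--Yosida theorem (Theorem \ref{thm_HilleYosida}) can be applied in the renormed space. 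You instead run the Yosida approximation $A_\lambda=\lambda^2R(\lambda,A)-\lambda\,\mathrm{id}_X$ directly in the original norm; your key observation --- that summing the exponential series term by term with $\Vert R(\lambda,A)^n\Vert\leq M/(\lambda-\omega)^n$ gives $\Vert\mathrm{e}^{tA_\lambda}\Vert\leq M\mathrm{e}^{t\lambda\omega/(\lambda-\omega)}$, whereas the first-power bound alone would produce the exponent $t\bigl(M\lambda^2/(\lambda-\omega)-\lambda\bigr)$, which diverges as $\lambda\to+\infty$ when $M>1$ --- is precisely what makes the direct argument close, and it even recovers the sharp bound $\Vert S(t)\Vert\leq M\mathrm{e}^{\omega t}$ by letting $\lambda\to+\infty$ at fixed $t$. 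What each approach buys: yours is self-contained (no auxiliary norm, no appeal to the contraction theorem as a black box) and keeps all constants explicit; Pazy's renorming isolates the $M=1$ case once and for all and reuses it, at the price of an equivalent-norm construction. Your remaining steps (commutativity of the approximants in the Cauchy estimate, extension from $D(A)$ to $X$ by density plus the uniform bound, and the identification $D(B)=D(A)$ from surjectivity of $\lambda\,\mathrm{id}_X-A$ and injectivity of $\lambda\,\mathrm{id}_X-B$) are standard and correctly deployed; for full rigor, the identification of the generator is cleanest via the integrated identity $S(t)y-y=\int_0^tS(s)Ay\,ds$, which is exactly what your limiting argument yields.
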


\paragraph{\bf Particular case: contraction semigroups.}

\begin{definition}
Let $(S(t))_{t\geq 0}$ be a $C_0$ semigroup. Assume that $(S(t))_{t\geq 0}\in\mathcal{G}(M,\omega)$ for some $M\geq 1$ and $\omega\in\R$.
If $\omega\leq 0$ and $M=1$ then $(S(t))_{t\geq 0}$ is said to be a \textit{semigroup of contractions}.
\end{definition}

Semigroups of contractions are of great importance and cover many applications. They are mostly considered in many textbooks (such as \cite{Brezis,CazenaveHaraux}) and in that case Theorem \ref{thm4.1} takes the more specific following forms, which are the well-known Hille-Yosida and Lumer-Phillips theorems.

\begin{theorem}[Hille-Yosida theorem, {\cite[Section 1.3, Theorem 3.1]{Pazy}}]\label{thm_HilleYosida}
A linear operator $A:D(A)\subset X\rightarrow X$ is the infinitesimal generator of a $C_0$ semigroup of contractions if and only if the following conditions are satisfied:
\begin{itemize}[parsep=1mm,itemsep=1mm,topsep=1mm]%,leftmargin=*
\item $A$ is closed and $D(A)$ is dense in $X$;
\item $(0,+\infty)\subset\rho(A)$ and
$\Vert R(\lambda,A)\Vert\leq\frac{1}{\lambda}$ for every $\lambda>0$.
\end{itemize}
\end{theorem}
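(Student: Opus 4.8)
The plan is to prove both implications separately, the forward (necessity) direction being essentially a corollary of the material already developed, and the converse (sufficiency) requiring the construction of the semigroup by Yosida regularization. For necessity, suppose $A$ generates a $C_0$ semigroup of contractions, so that $(S(t))_{t\geq 0}\in\mathcal{G}(1,0)$. Then $A$ is closed and densely defined by Proposition \ref{propclosed}, and the remark following \eqref{laplacetransform} shows that every $\lambda$ with $\Re\lambda>0$ lies in $\rho(A)$, with $R(\lambda,A)$ given by the Laplace transform $\int_0^{+\infty}\mathrm{e}^{-\lambda t}S(t)\,dt$. Estimating under the integral sign using $\Vert S(t)\Vert\leq 1$ yields $\Vert R(\lambda,A)\Vert\leq 1/\lambda$ for $\lambda>0$, which is exactly the claimed bound.

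The substance is the converse; assume the two stated conditions. The key device is the \emph{Yosida approximation} $A_\lambda = \lambda A R(\lambda,A) = \lambda^2 R(\lambda,A)-\lambda\,\mathrm{id}_X$ for $\lambda>0$. Since $R(\lambda,A)\in L(X)$, each $A_\lambda$ is bounded and thus generates the uniformly continuous semigroup $S_\lambda(t)=\mathrm{e}^{tA_\lambda}$. First I would establish the resolvent convergence $\lambda R(\lambda,A)y\to y$ as $\lambda\to+\infty$: for $y\in D(A)$ one writes $\lambda R(\lambda,A)y-y=R(\lambda,A)Ay$, whose norm is bounded by $\Vert Ay\Vert/\lambda\to 0$, and the general case $y\in X$ follows from the uniform bound $\Vert\lambda R(\lambda,A)\Vert\leq 1$ together with the density of $D(A)$. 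As an immediate consequence, $A_\lambda y = \lambda R(\lambda,A)Ay \to Ay$ for every $y\in D(A)$.

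Next I would derive a uniform contraction estimate for the approximants: writing $S_\lambda(t)=\mathrm{e}^{-\lambda t}\mathrm{e}^{\lambda^2 t R(\lambda,A)}$ and using $\Vert R(\lambda,A)\Vert\leq 1/\lambda$ gives $\Vert S_\lambda(t)\Vert\leq \mathrm{e}^{-\lambda t}\mathrm{e}^{\lambda t}=1$. To build the limit semigroup, I would show that $(S_\lambda(t)y)$ is Cauchy as $\lambda\to+\infty$, uniformly on compact time intervals. For $y\in D(A)$, using that $S_\lambda$ and $S_\mu$ commute (they are functions of the resolvents, which commute by the resolvent identity), one has
$$S_\lambda(t)y-S_\mu(t)y=\int_0^t \frac{d}{ds}\big(S_\mu(t-s)S_\lambda(s)y\big)\,ds = \int_0^t S_\mu(t-s)S_\lambda(s)(A_\lambda-A_\mu)y\,ds,$$
whence $\Vert S_\lambda(t)y-S_\mu(t)y\Vert\leq t\,\Vert A_\lambda y-A_\mu y\Vert\to 0$ by the previous step; density and the uniform contraction bound then extend the convergence to all $y\in X$. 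Setting $S(t)y=\lim_{\lambda\to+\infty}S_\lambda(t)y$ and passing to the limit in the semigroup relations, one checks that $(S(t))_{t\geq 0}$ is a $C_0$ semigroup of contractions.

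The final and most delicate step is to identify the generator of $S(t)$ with $A$. I would pass to the limit in $S_\lambda(t)y-y=\int_0^t S_\lambda(s)A_\lambda y\,ds$ (justified by the uniform convergence of $S_\lambda(s)y$ and of $A_\lambda y\to Ay$ on $D(A)$) to obtain $S(t)y-y=\int_0^t S(s)Ay\,ds$, which shows that the generator $B$ of $S(t)$ extends $A$, i.e.\ $By=Ay$ for $y\in D(A)$. To upgrade this inclusion to equality of domains, I would fix some $\lambda>0$: by hypothesis $\lambda\,\mathrm{id}_X-A$ maps $D(A)$ bijectively onto $X$, while by the already-proved necessity direction $\lambda\in\rho(B)$, so $\lambda\,\mathrm{id}_X-B$ maps $D(B)$ bijectively onto $X$ as well; since the two operators agree on $D(A)$, injectivity forces $D(A)=D(B)$ and hence $A=B$. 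The hard part will be organizing these limiting arguments carefully — ensuring the convergences are uniform enough in $t$ to interchange limit and integral, and handling the commutation of the approximating semigroups — rather than any single isolated difficulty.
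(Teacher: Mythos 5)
Your proof is correct: the necessity direction follows from Proposition \ref{propclosed} together with the Laplace-transform representation of the resolvent (with $M=1$, $\omega=0$), and the sufficiency direction is the classical Yosida-approximation argument, complete with the uniform contraction bound $\Vert S_\lambda(t)\Vert\leq 1$, the Cauchy estimate $\Vert S_\lambda(t)y-S_\mu(t)y\Vert\leq t\Vert A_\lambda y-A_\mu y\Vert$, and the correct resolvent-based argument for upgrading the inclusion $A\subset B$ to equality of domains. The paper states this theorem without proof, citing Pazy, and your argument is essentially the proof given in that reference, so it matches the approach the paper relies on.
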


\begin{remark}
The latter condition can equivalently be replaced by:
$\{\lambda\in\C\ \vert\ \Re\lambda>0 \}\subset\rho(A)$ and $\Vert R(\lambda,A)\Vert\leq\frac{1}{\Re\lambda}$ for every $\lambda\in\C$ such that $\Re\lambda>0$.
\end{remark}

\begin{remark}
Let $A:D(A)\rightarrow X$ generating a $C_0$ semigroup $(S(t))_{t\geq 0}\in\mathcal{G}(1,\omega)$. Then the operator $A_\omega=A-\omega\,\mathrm{id}_X$ (having the same domain) is the infinitesimal generator of $S_\omega(t)=\mathrm{e}^{-\omega t}S(t)$ which is a semigroup of contractions (and conversely). In particular, we obtain the following corollary.

\begin{corollary}
A linear operator $A:D(A)\subset X\rightarrow X$ is the infinitesimal generator of a $C_0$ semigroup $(S(t))_{t\geq 0}\in\mathcal{G}(1,\omega)$ if and only if the following conditions are satisfied:
\begin{itemize}[parsep=1mm,itemsep=1mm,topsep=1mm]%,leftmargin=*
\item $A$ is closed and $D(A)$ is dense in $X$;
\item $(\omega,+\infty)\subset\rho(A)$ and
$\Vert R(\lambda,A)\Vert\leq\frac{1}{\lambda-\omega}$ for every $\lambda>\omega$.
\end{itemize}
\end{corollary}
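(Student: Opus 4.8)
The plan is to reduce the statement to the Hille--Yosida theorem (Theorem \ref{thm_HilleYosida}) by the rescaling device already announced in the preceding Remark. Concretely, I would introduce the operator $A_\omega = A - \omega\,\mathrm{id}_X$, which shares the domain $D(A_\omega)=D(A)$ since the two operators differ by the bounded operator $\omega\,\mathrm{id}_X$, and I would prove that $A$ is the infinitesimal generator of a $C_0$ semigroup $(S(t))_{t\geq 0}\in\mathcal{G}(1,\omega)$ if and only if $A_\omega$ is the infinitesimal generator of a $C_0$ semigroup of contractions. Once this correspondence is in place, the two stated conditions on $A$ are nothing but the Hille--Yosida conditions on $A_\omega$ rewritten after a shift of the spectral parameter.

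First I would establish the equivalence at the level of semigroups. Given a $C_0$ semigroup $(S(t))_{t\geq 0}$, I set $S_\omega(t)=\mathrm{e}^{-\omega t}S(t)$. The identity $S_\omega(0)=\mathrm{id}_X$ and the semigroup property $S_\omega(t+s)=\mathrm{e}^{-\omega(t+s)}S(t)S(s)=S_\omega(t)S_\omega(s)$ are immediate; strong continuity at $0$ follows from $\mathrm{e}^{-\omega t}\to 1$ together with $S(t)y\to y$; and the norm bound $\Vert S_\omega(t)\Vert=\mathrm{e}^{-\omega t}\Vert S(t)\Vert\leq 1$ is exactly the contraction property, equivalent to $\Vert S(t)\Vert\leq\mathrm{e}^{\omega t}$, i.e.\ to $(S(t))_{t\geq 0}\in\mathcal{G}(1,\omega)$. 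To identify the generator I would split the difference quotient as
$$
\frac{S_\omega(t)y-y}{t}=\mathrm{e}^{-\omega t}\,\frac{S(t)y-y}{t}+\frac{\mathrm{e}^{-\omega t}-1}{t}\,y ,
$$
so that for $y\in D(A)$ the right-hand side converges to $Ay-\omega y=A_\omega y$, and conversely the limit exists precisely when $y\in D(A)$; hence the generator of $(S_\omega(t))_{t\geq 0}$ is $A_\omega$ with $D(A_\omega)=D(A)$. All these steps are reversible (taking $S(t)=\mathrm{e}^{\omega t}S_\omega(t)$), which yields the claimed correspondence.

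Next I would translate the Hille--Yosida hypotheses for $A_\omega$ into the desired hypotheses for $A$. Closedness and density of the domain are unaffected by subtracting the bounded operator $\omega\,\mathrm{id}_X$, so $A_\omega$ is closed and densely defined if and only if $A$ is. For the resolvent, the key algebraic identity is
$$
\lambda\,\mathrm{id}_X - A_\omega = (\lambda+\omega)\,\mathrm{id}_X - A ,
$$
from which $\lambda\in\rho(A_\omega)$ if and only if $\lambda+\omega\in\rho(A)$, and in that case $R(\lambda,A_\omega)=R(\lambda+\omega,A)$. Writing $\mu=\lambda+\omega$, the Hille--Yosida condition ``$(0,+\infty)\subset\rho(A_\omega)$ and $\Vert R(\lambda,A_\omega)\Vert\leq 1/\lambda$ for all $\lambda>0$'' becomes exactly ``$(\omega,+\infty)\subset\rho(A)$ and $\Vert R(\mu,A)\Vert\leq 1/(\mu-\omega)$ for all $\mu>\omega$''.

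Putting the pieces together, $A$ generates a $C_0$ semigroup in $\mathcal{G}(1,\omega)$ iff $A_\omega$ generates a contraction semigroup, iff (by Theorem \ref{thm_HilleYosida}) $A_\omega$ is closed, densely defined, with $(0,+\infty)\subset\rho(A_\omega)$ and $\Vert R(\lambda,A_\omega)\Vert\leq 1/\lambda$, iff $A$ satisfies the two conditions in the statement. There is no serious obstacle in this argument; the only points deserving attention are verifying that the generator of the rescaled semigroup is exactly $A_\omega$ (and not a proper extension of it), which the difference-quotient computation above settles, and the careful bookkeeping of the resolvent shift $\mu=\lambda+\omega$ --- both of which are routine.
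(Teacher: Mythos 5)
Your proof is correct and follows exactly the paper's intended argument: the remark preceding the corollary justifies it by the same rescaling device $A_\omega=A-\omega\,\mathrm{id}_X$, $S_\omega(t)=\mathrm{e}^{-\omega t}S(t)$, reducing to the Hille--Yosida theorem. You have merely filled in the routine verifications (generator identification via the difference quotient, resolvent shift) that the paper leaves implicit.
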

\end{remark}

Before providing the statement of the Lumer-Phillips theorem, which is another characterization of $C_0$ semigroups of contractions, let us recall some important definitions.

For every $y\in X$, we define
$F(y) = \{ \ell\in X' \ \vert\ \langle\ell,y\rangle_{X',X} = \Vert y\Vert_X^2 = \Vert\ell\Vert_{X'}^2 \}$.
It follows from the Hahn-Banach theorem that $F(y)$ is nonempty.
In the important case where $X$ is a Hilbert space, one has $y\in F(y)$ (identifying $X'$ with $X$).

\begin{definition}
The operator $A:D(A)\subset X\rightarrow X$ is said to be:
\begin{itemize}[parsep=1mm,itemsep=1mm,topsep=1mm]%,leftmargin=*
\item \textit{dissipative} if for every $y\in D(A)$ there exists an element $\ell\in F(y)$ such that $\Re\langle\ell,Ay\rangle_{X',X}\leq 0$;

\item \textit{m-dissipative} if it is dissipative and $\mathrm{Ran}(\mathrm{id}_X-A)=(\mathrm{id}_X-A)D(A)=X$.
\end{itemize}
\end{definition}

If $X$ is a Hilbert space, then $A$ is dissipative if and only if $\Re(y,Ay)_{X}\leq 0$ for every $y\in D(A)$, where $(\ ,\ )_X$ is the scalar product of $X$.

Other names are often used in the existing literature (see \cite{Brezis,CazenaveHaraux,Rudin}): $A$ is dissipative if and only if $A$ is \textit{accretive}, if and only if $-A$ is \textit{monotone}, and $A$ is m-dissipative if and only if $-A$ is \textit{maximal monotone} (the letter \textit{m} stands for \textit{maximal}).

\begin{remark}
If $A:D(A)\rightarrow X$ is m-dissipative then $\mathrm{Ran}(\lambda\,\mathrm{id}_X-A)=X$ for every $\lambda>0$.
\end{remark}

\begin{remark}
Let $A:D(A)\rightarrow X$ be a m-dissipative operator.
If $X$ is reflexive\footnote{There is a canonical injection $\iota:X\rightarrow X''$ (the bidual of the Banach space $X$), defined by $\langle \iota y,\ell\rangle_{X'',X'} = \langle\ell,y\rangle_{X',X}$ for every $y\in X$ and every $\ell\in X'$, which is a linear isometry, so that $X$ can be identified with a subspace of $X''$. The Banach space $X$ is said to be \textit{reflexive} whenever $\iota(X)=X''$; in this case, $X''$ is identified with $X$ with the isomorphism $\iota$.} then $A$ is closed and densely defined (i.e., $D(A)$ is dense in $X$).
\end{remark}

\begin{theorem}[Lumer-Phillips theorem, {\cite[Section 1.4, Theorem 4.3]{Pazy}}]\label{thm_LumerPhillips}
Let $A:D(A)\subset X\rightarrow X$ be a densely defined closed linear operator. Then $A$ is the infinitesimal generator of a $C_0$ semigroup of contractions if and only if $A$ is m-dissipative.
\end{theorem}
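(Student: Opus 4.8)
The plan is to prove both implications by reducing to the Hille--Yosida characterization (Theorem \ref{thm_HilleYosida}), which is already at our disposal; the bridge between the two formulations is a single elementary estimate coming from dissipativity. Since $A$ is assumed densely defined and closed throughout, in each direction only the resolvent conditions of Theorem \ref{thm_HilleYosida}, respectively the range condition defining m-dissipativity, remain to be produced.

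First I would establish the basic dissipativity estimate: if $A$ is dissipative, then $\|(\lambda\,\mathrm{id}_X - A)y\| \geq \lambda\|y\|$ for every $y \in D(A)$ and every $\lambda > 0$. Indeed, fixing $y \in D(A)$ and choosing $\ell \in F(y)$ with $\Re\langle\ell, Ay\rangle_{X',X} \leq 0$, and using $\langle\ell,y\rangle_{X',X} = \|y\|^2 = \|\ell\|_{X'}^2$, one bounds
$$\|(\lambda\,\mathrm{id}_X - A)y\|\,\|y\| = \|(\lambda\,\mathrm{id}_X - A)y\|\,\|\ell\|_{X'} \geq \Re\langle\ell, \lambda y - Ay\rangle_{X',X} \geq \lambda\|y\|^2 .$$
This estimate shows at once that $\lambda\,\mathrm{id}_X - A$ is injective with a bounded left inverse of norm $\leq 1/\lambda$, and, using that $A$ (hence $\lambda\,\mathrm{id}_X - A$) is closed, that its range is closed in $X$.

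For the direct implication, assume $(S(t))_{t\geq 0}$ is a $C_0$ semigroup of contractions generated by $A$. Dissipativity follows by a limiting argument: for $y \in D(A)$ and any $\ell \in F(y)$,
$$\Re\langle\ell, S(t)y - y\rangle_{X',X} \leq \|\ell\|_{X'}\,\|S(t)\|\,\|y\| - \|y\|^2 \leq 0 ,$$
so dividing by $t > 0$ and letting $t \to 0^+$ (using continuity of $\ell$ and the fact that $(S(t)y-y)/t \to Ay$ in $X$ for $y\in D(A)$) gives $\Re\langle\ell, Ay\rangle_{X',X} \leq 0$. For the range condition, since $(S(t))_{t\geq 0} \in \mathcal{G}(1,0)$, Theorem \ref{thm_HilleYosida} guarantees $(0,+\infty) \subset \rho(A)$; in particular $1 \in \rho(A)$, so $\mathrm{id}_X - A$ is onto and $\mathrm{Ran}(\mathrm{id}_X - A) = X$. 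Hence $A$ is m-dissipative. For the converse, assume $A$ is m-dissipative. By the estimate above, any $\lambda > 0$ lying in $\rho(A)$ automatically satisfies $\|R(\lambda,A)\| \leq 1/\lambda$, so it only remains to prove $(0,+\infty) \subset \rho(A)$. The range condition $\mathrm{Ran}(\mathrm{id}_X - A) = X$ combined with the injectivity and closed-range properties established above shows that $\mathrm{id}_X - A$ is a bijection with bounded inverse, i.e. $1 \in \rho(A)$. I would then propagate this to the whole half-line by the standard Neumann-series continuation: whenever $\lambda_0 \in \rho(A) \cap (0,+\infty)$, the resolvent exists on the disk $|\lambda - \lambda_0| < 1/\|R(\lambda_0,A)\|$, and since $\|R(\lambda_0,A)\| \leq 1/\lambda_0$ this disk contains the interval $(0, 2\lambda_0)$. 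Thus $\Lambda := \rho(A) \cap (0,+\infty)$ has the property that $\lambda_0 \in \Lambda$ forces $(0, 2\lambda_0) \subset \Lambda$; starting from $1 \in \Lambda$ and iterating doubles the reach at each step, whence $\Lambda = (0,+\infty)$. With $(0,+\infty) \subset \rho(A)$ and $\|R(\lambda,A)\| \leq 1/\lambda$ in hand, Theorem \ref{thm_HilleYosida} yields that $A$ generates a $C_0$ semigroup of contractions.

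The main obstacle is the surjectivity step in the converse: m-dissipativity only provides surjectivity of $\mathrm{id}_X - A$ (the single value $\lambda = 1$), whereas Hille--Yosida requires $\lambda\,\mathrm{id}_X - A$ to be onto for \emph{every} $\lambda > 0$. Everything else is either assumed (closedness, density) or follows from the one-line dissipativity estimate; the real content is the continuation argument showing that $\Lambda$ is simultaneously open (the generic fact that $\rho(A)$ is open) and, thanks to the uniform bound $\|R(\lambda,A)\| \leq 1/\lambda$, reaches arbitrarily far along $(0,+\infty)$. This also makes clear why one does not need the heavier route through Yosida approximations: the resolvent estimate already packages all the quantitative information Hille--Yosida asks for.
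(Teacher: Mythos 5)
Your proof is correct, and it is essentially the standard argument for the Lumer--Phillips theorem: the paper itself gives no proof (deferring to \cite[Section 1.4]{Pazy}), and what you have written is precisely that proof --- the dissipativity estimate $\Vert(\lambda\,\mathrm{id}_X-A)y\Vert\geq\lambda\Vert y\Vert$, the semigroup limiting argument for one direction, and the reduction to Theorem \ref{thm_HilleYosida} via extension of $\{1\}\subset\rho(A)$ to $(0,+\infty)\subset\rho(A)$ for the other. Your explicit doubling iteration is just a concrete rendering of the usual ``$\rho(A)\cap(0,+\infty)$ is nonempty, open and closed in $(0,+\infty)$, hence everything'' connectedness argument, so the two routes coincide.
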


Note that it is not necessary to assume that $A$ is closed and densely defined in this theorem if $X$ is reflexive.
A statement which is very often useful is the following one (see \cite[Section 1.4, Corollary 4.4]{Pazy}).

\begin{proposition}\label{prop4.3}
Let $A:D(A)\subset X\rightarrow X$ be a densely defined operator. If $A$ is closed and if both $A$ and $A^*$ are dissipative then $A$ is the infinitesimal generator of a $C_0$ semigroup of contractions; the converse is true if $X$ is moreover reflexive.
\end{proposition}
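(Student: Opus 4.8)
The plan is to derive both implications from the Lumer-Phillips theorem (Theorem \ref{thm_LumerPhillips}), which already identifies "infinitesimal generator of a $C_0$ semigroup of contractions" with "m-dissipative" for densely defined closed operators. Since $A$ is assumed closed, densely defined and dissipative, the only missing ingredient for the direct implication is the surjectivity condition $\mathrm{Ran}(\mathrm{id}_X - A) = X$; the converse will then follow from a resolvent/duality argument once reflexivity is available.

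First I would record the elementary consequence of dissipativity. Given $x\in D(A)$, choose $\ell\in F(x)$ with $\Re\langle\ell,Ax\rangle_{X',X}\leq 0$; since $\langle\ell,x\rangle_{X',X}=\Vert x\Vert^2=\Vert\ell\Vert^2$, one has $\Vert x\Vert\,\Vert(\mathrm{id}_X-A)x\Vert\geq \Re\langle\ell,(\mathrm{id}_X-A)x\rangle_{X',X} = \Vert x\Vert^2-\Re\langle\ell,Ax\rangle_{X',X}\geq \Vert x\Vert^2$, hence $\Vert(\mathrm{id}_X-A)x\Vert\geq\Vert x\Vert$. In particular $\mathrm{id}_X-A$ is injective with a bounded inverse on its range, and because $A$ is closed this estimate forces $\mathrm{Ran}(\mathrm{id}_X-A)$ to be closed in $X$ (a Cauchy sequence of images pulls back, via the estimate, to a Cauchy sequence of arguments whose limit lies in $D(A)$ by closedness, and passes to the limit).

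The crux is to show that this closed subspace is all of $X$, and here dissipativity of $A^*$ enters. Suppose $\mathrm{Ran}(\mathrm{id}_X-A)\neq X$. By the Hahn-Banach theorem there exists $\ell\in X'\setminus\{0\}$ vanishing on the range, i.e. $\langle\ell,x\rangle_{X',X}=\langle\ell,Ax\rangle_{X',X}$ for every $x\in D(A)$; the right-hand side is then continuous in $x$, so $\ell\in D(A^*)$ and $A^*\ell=\ell$. Applying the dissipativity of $A^*$ to $\ell$ produces some $\mu\in F(\ell)\subset X''$ with $\Re\langle\mu,A^*\ell\rangle_{X'',X'}\leq 0$; but $A^*\ell=\ell$ and $\langle\mu,\ell\rangle_{X'',X'}=\Vert\ell\Vert_{X'}^2$, which yields $\Vert\ell\Vert_{X'}^2\leq 0$, a contradiction. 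Thus $\mathrm{Ran}(\mathrm{id}_X-A)=X$, so $A$ is m-dissipative, and Theorem \ref{thm_LumerPhillips} gives the conclusion. I expect this separation step to be the main obstacle: everything hinges on extracting the relation $A^*\ell=\ell$ from a functional annihilating the range and then contradicting it through the dissipativity of the adjoint.

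For the converse, assume $X$ reflexive and that $A$ generates a $C_0$ semigroup of contractions. By Theorem \ref{thm_LumerPhillips}, $A$ is m-dissipative, hence (by the contraction Hille-Yosida theorem, Theorem \ref{thm_HilleYosida}) every $\lambda>0$ lies in $\rho(A)$ with $\Vert R(\lambda,A)\Vert\leq 1/\lambda$. I would then pass to adjoints: using $(\lambda\,\mathrm{id}_X-A)^*=\lambda\,\mathrm{id}_{X'}-A^*$ together with the fact that the adjoint of a boundedly invertible operator is boundedly invertible with the same norm, one obtains $\lambda\in\rho(A^*)$ and $\Vert R(\lambda,A^*)\Vert\leq 1/\lambda$ for all $\lambda>0$ (reflexivity guarantees that $A^*$ is densely defined and closed, so these manipulations are legitimate). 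Finally, the bound $\Vert R(\lambda,A^*)\Vert\leq 1/\lambda$ rewrites as $\Vert(\lambda\,\mathrm{id}_{X'}-A^*)\ell\Vert\geq\lambda\Vert\ell\Vert$ for every $\ell\in D(A^*)$ and every $\lambda>0$, which is exactly the characterization of dissipativity established in the first paragraph above (read with $\lambda\,\mathrm{id}-A^*$ in place of $\mathrm{id}-A$). Hence $A^*$ is dissipative, which completes the proof.
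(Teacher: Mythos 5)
Your proof of the direct implication is correct, and it is essentially the standard argument behind the statement (the paper itself gives no proof, referring to Pazy, Corollary 1.4.4, whose proof runs exactly along your lines): dissipativity gives $\Vert(\mathrm{id}_X-A)x\Vert\geq\Vert x\Vert$, closedness of $A$ makes $\mathrm{Ran}(\mathrm{id}_X-A)$ closed, and a Hahn--Banach functional $\ell\neq 0$ annihilating that range satisfies $\ell\in D(A^*)$, $A^*\ell=\ell$, which contradicts the dissipativity of $A^*$ through any $\mu\in F(\ell)$. All of these steps are properly justified.

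The converse, however, contains a genuine gap in its final step. From $\Vert R(\lambda,A^*)\Vert\leq 1/\lambda$ you correctly obtain $\Vert(\lambda\,\mathrm{id}_{X'}-A^*)\ell\Vert\geq\lambda\Vert\ell\Vert$ for every $\ell\in D(A^*)$ and every $\lambda>0$, and you then declare $A^*$ dissipative by invoking ``the characterization of dissipativity established in the first paragraph.'' But your first paragraph establishes only the implication (dissipative $\Rightarrow$ norm estimate); what you need here is the reverse implication (norm estimates for \emph{all} $\lambda>0$ $\Rightarrow$ dissipative). That reverse implication is true, but it is the nontrivial half of the Lumer--Phillips characterization (Pazy, Theorem 1.4.2): its proof requires constructing an element of $F(\ell)$ as a weak-$*$ cluster point of normalized functionals chosen in $F(\lambda\ell-A^*\ell)$ as $\lambda\rightarrow+\infty$, a compactness argument that appears nowhere in your text. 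As written, the argument is circular: you cite a one-way estimate as if it were an equivalence.

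The gap can be closed with tools already available in the paper, without proving that characterization. Since $X$ is reflexive and $A$ is closed, $A^*$ is closed and densely defined in $X'$; your adjoint computation shows $(0,+\infty)\subset\rho(A^*)$ with $\Vert R(\lambda,A^*)\Vert\leq 1/\lambda$. Theorem \ref{thm_HilleYosida} then shows that $A^*$ generates a $C_0$ semigroup of contractions on $X'$, and the ``only if'' direction of Theorem \ref{thm_LumerPhillips} gives that $A^*$ is m-dissipative, hence dissipative. Even shorter: by Proposition \ref{propS*}, $(S(t)^*)_{t\geq 0}$ is a $C_0$ semigroup on $X'$ with generator $A^*$, and it consists of contractions since $\Vert S(t)^*\Vert=\Vert S(t)\Vert\leq 1$; then apply Theorem \ref{thm_LumerPhillips} to $A^*$.
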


\begin{remark}
If $A$ is skew-adjoint then it is closed and dissipative. Actually, $A$ is skew-adjoint if and only if $A$ and $-A$ are m-dissipative (see \cite[Proposition 3.7.2]{TucsnakWeiss}).
\end{remark}

\begin{example}\label{ex_heat}
Let $\Omega$ be an open subset of $\R^n$.
The Dirichlet-Laplacian operator $\triangle_D$ is defined on $D(\triangle_D) = \{ f\in H^1_0(\Omega)\ \vert\ \triangle f\in L^2(\Omega) \}$ by $\triangle_Df = \triangle f$ for every $f\in D(\triangle_D)$, where $\triangle$ is the usual Laplacian differential operator.
Note that $H^1_0(\Omega)\cap H^2(\Omega)\subset D(\triangle_D)$ and that, in general, the inclusion is strict. However, if $\Omega$ is an open bounded subset with $C^2$ boundary, or if $\Omega$ is a convex polygon of $\R^2$, then $D(\triangle_D) = H^1_0(\Omega)\cap H^2(\Omega)$ (see \cite[Chapter 1.5]{Grisvard}, see also \cite[Theorem 3.6.2 and Remark 3.6.6]{TucsnakWeiss}).

The operator $\triangle_D$ is self-adjoint and dissipative in $X=L^2(\Omega)$, hence by Proposition \ref{prop4.3} it generates a semigroup of contractions, called the \textit{heat semigroup}.

\medskip

The Dirichlet-Laplacian can be as well defined in the space $X=H^{-1}(\Omega)$ (which is the dual of $H^1_0(\Omega)$ with respect to the pivot space $L^2(\Omega)$). In that case, assuming that $\Omega$ is an open bounded subset of $\R^d$, one has $D(\triangle_D) = H^1_0(\Omega)$, and $\triangle_D:H^1_0(\Omega)\rightarrow H^{-1}(\Omega)$ is an isomorphism (see \cite[Section 3.6]{TucsnakWeiss}).
\end{example}

\begin{example}\label{ex_wave}
Anticipating a bit, let us study the operator underlying the wave equation. With the notations of Example \ref{ex_heat}, we define the operator
$$
A = \begin{pmatrix}
0 & \mathrm{id}_{H^1_0(\Omega)} \\
\triangle_D & 0
\end{pmatrix}
$$
on the domain $D(A) = D(\triangle_D) \times H^1_0(\Omega)$, in the Hilbert space $X=H^1_0(\Omega)\times L^2(\Omega)$.
Then it is easy to see that $A$ is closed, densely defined, skew-adjoint and thus m-dissipative, as well as $-A$. Hence $A$ and $-A$ both generate a semigroup of contractions, therefore $A$ generates a group of contractions. The fact that it is a group reflects the fact that the wave equation is time-reversible.
\end{example}

\begin{example}
Let $X$ be a Hilbert space, let $(e_n)_{n\in\N^*}$ be a Hilbert basis of $X$, and let $(\lambda_n)_{n\in\N^*}$ be a sequence of real numbers such that $\sup_{n\geq 1}\lambda_n<+\infty$ (this is satisfied if $\lambda_n\rightarrow -\infty$ as $n\rightarrow+\infty$). We define the operator
$$
Ay = \sum_{n=1}^{+\infty} \lambda_n (y,e_n)_Xe_n\quad\textrm{on}\quad
D(A) = \Big\{ y\in X\ \bigm|\ \sum_{n=1}^{+\infty}\lambda_n^2(y,e_n)_X^2 <+\infty \Big\}.
$$
Let us prove that $A$ is self-adjoint and generates the $C_0$ semigroup defined by
$$
S(t) y =\sum_{n=1}^{+\infty} \mathrm{e}^{\lambda_n t} (y,e_n)_X e_n.
$$

Firstly, noting that $X_p\subset D(A)$ for every $p\in\N^*$, with $X_p = \{ y\in X\ \mid\ \exists p\in \N^*\ \ \forall n\geq p\ \ (y,e_n)_X=0 \}$ and that $\cup_{p\in\N^*} X_p$ is dense in $X$, it follows that $D(A)$ is dense in $X$.

Secondly, let us prove that $A$ is closed. Let $(y_p)_{p\in\N^*}$ be a sequence of $D(A)$ such that $y_p\rightarrow y\in X$ and $A y_p\rightarrow z\in X$ as $p$ tends to $+\infty$. In particular $(y_p)_{p\in\N^*}$ is bounded in $X$ and thus there exists $M>0$ such that $\sum_{n=1}^{N} \lambda_n^2 (y_p,e_n)_X^2\leq M$, for every $p\in \N^*$ and every $N\in\N^*$. Letting $p$ tend to $+\infty$ then yields that $\sum_{n=1}^{+\infty} \lambda_n^2 (y,e_n)_X^2\leq M$, and thus $y\in D(A)$. It remains to prove that $z=Ay$. Since $Ay_p=\sum_{n=1}^{+\infty}\lambda_n(y_p,e_n)_Xe_n$, and since, for every $n\in\N^*$, $(Ay_p,e_n)_X = \lambda_n (y_p,e_n)_X$ converges to $\lambda_n(y,e_n)_X=(Ay,e_n)_X$, it follows that $Ay_p$ converges weakly to $Ay$ in $X$. Then by uniqueness of the limit it follows that $z=Ay$.

Now, let us prove that $\lambda\,\mathrm{id}_X-A$ is boundedly invertible if and only if $\inf_{n\geq 1}\vert\lambda-\lambda_n\vert>0$.
If $\inf_{n\geq 1}\vert\lambda-\lambda_n\vert>0$, we set
$$
A_\lambda y=\sum_{n=1}^{+\infty} \frac{1}{\lambda-\lambda_n} (y,e_n)_X e_n
$$
for every $y\in X$. Clearly, $A_\lambda:X\rightarrow X$ is linear and bounded, and one has $\mathrm{Ran}A_\lambda\subset D(A)$ and $(\lambda\,\mathrm{id}_X-A)A_\lambda=A_\lambda(\lambda\,\mathrm{id}_X-A)=\mathrm{id}_X$, hence $\lambda\in\rho(A)$ and $A_\lambda = (\lambda\,\mathrm{id}_X-A)^{-1}$.
Conversely, if $(\lambda\,\mathrm{id}_X-A)$ is boundedly invertible, then for every $n\in\N^*$ there exists $y_n\in X$ such that $(\lambda\,\mathrm{id}_X-A)y_n=e_n$, and the sequence $(y_n)_{n\in\N^*}$ is bounded. One has $y_n=\frac{1}{\lambda-\lambda_n}e_n$ and hence $\inf_{n\geq 1}\vert\lambda-\lambda_n\vert>0$.

It follows from these arguments that if $\inf_{n\geq 1}\vert\lambda-\lambda_n\vert>0$ then
$$
R(\lambda,A)^p y = \sum_{n=1}^{+\infty} \frac{1}{(\lambda-\lambda_n)^p} (y,e_n)_X e_n
$$
and hence 
$$
\Vert R(\lambda,A)^p \Vert \leq \sup_{n\geq 1} \frac{1}{\vert\lambda-\lambda_n\vert^p} = \left( \sup_{n\geq 1} \frac{1}{\vert\lambda-\lambda_n\vert} \right)^p .
$$

Let $\omega\geq\sup_{n\geq 1}\lambda_n$. Then for every $\lambda>\omega$ one has $\inf_{n\geq 1}\vert\lambda-\lambda_n\vert\geq\lambda-\omega$ and hence $\sup_{n\geq 1}\frac{\lambda-\omega}{\vert\lambda-\lambda_n\vert}\leq 1$.
Then, the conclusion follows from the Hille-Yosida theorem.

This example can be applied to a number of situations. Indeed any self-adjoint operator having a compact inverse (like the Dirichlet-Laplacian) is diagonalizable and the general framework of this example can then be applied.
\end{example}

\begin{example}
In order to model the 1D transport equation $\partial_t y+\partial_x y=0$, with $0\leq x\leq 1$, we define $X=L^2(\Omega)$ and the operator $Ay=-\partial_x y$ on the domain $D(A)=\{ y\in X\ \vert\ \partial_x y\in X,\ y(0)=0\}$. It is easy to prove that $A$ is closed and densely defined, and that $A$ and $A^*$ are dissipative (it can also be seen that $A-\lambda\,\mathrm{id}_X$ is surjective), and hence $A$ generates a $C_0$ semigroup of contractions.
\end{example}

Recall that (as said in the introduction of the chapter), given a densely defined operator $A:D(A)\rightarrow X$, the adjoint $A^*:D(A^*)\rightarrow X'$ is a closed operator, and if moreover $A$ is closed and $X$ is reflexive then $D(A^*)$ is dense in $X'$. Concerning the semigroup properties, note that, given a $C_0$ semigroup $(S(t))_{t\geq 0}$ on $X$, the adjoint family $(S(t)^*)_{t\geq 0}$ is a family of bounded operators on $X'$ satisfying the semigroup property but is not necessarily a $C_0$ semigroup.

\begin{proposition}\cite[Section 1.10, Corollary 10.6]{Pazy}\label{propS*}
If $X$ is reflexive and if $(S(t))_{t\geq 0}$ is a $C_0$ semigroup on $X$ with generator $A$ then $(S(t)^*)_{t\geq 0}$ is a $C_0$ semigroup on $X'$ with generator $A^*$.
\end{proposition}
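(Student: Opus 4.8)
The plan is to avoid checking the strong continuity of $(S(t)^*)_{t\geq0}$ by hand — this is precisely the property that can fail on a non-reflexive space — and instead to manufacture a $C_0$ semigroup out of $A^*$ via the generation theorem (Theorem \ref{thm4.1}), and then to identify it with $(S(t)^*)_{t\geq0}$ through uniqueness of Laplace transforms. First I would record the purely algebraic facts, which hold on any Banach space and are already noted before the statement: $S(0)^*=\mathrm{id}_{X'}$, and since $S(t)S(s)=S(t+s)=S(s)S(t)$, taking adjoints gives $S(t+s)^*=S(s)^*S(t)^*=S(t)^*S(s)^*$; moreover $\Vert S(t)^*\Vert=\Vert S(t)\Vert\leq M\mathrm{e}^{\omega t}$ because the norm of a bounded operator equals that of its adjoint. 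Thus $(S(t)^*)_{t\geq0}$ satisfies items (1)--(2) of Definition \ref{def_semigroup} and the growth bound, and only strong continuity and the identification of the generator remain in question.

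Next I would exploit reflexivity. Since $A$ is closed and densely defined (Proposition \ref{propclosed}) and $X$ is reflexive, the preliminaries guarantee that $A^*:D(A^*)\subset X'\to X'$ is closed and densely defined. The crucial algebraic step is the resolvent identity $R(\lambda,A^*)=R(\lambda,A)^*$: from $(\lambda\,\mathrm{id}_X-A)R(\lambda,A)=\mathrm{id}_X$ and $R(\lambda,A)(\lambda\,\mathrm{id}_X-A)=\mathrm{id}_X$ on $D(A)$, taking adjoints and using $(\lambda\,\mathrm{id}_X-A)^*=\lambda\,\mathrm{id}_{X'}-A^*$ shows that $R(\lambda,A)^*$ is a two-sided bounded inverse of $\lambda\,\mathrm{id}_{X'}-A^*$, whence $\lambda\in\rho(A^*)$ and $R(\lambda,A^*)=R(\lambda,A)^*$ whenever $\lambda\in\rho(A)$. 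In particular $(\omega,+\infty)\subset\rho(A^*)$, and since $(T^n)^*=(T^*)^n$ for bounded $T$ and adjoints preserve the norm, the estimate $\Vert R(\lambda,A)^n\Vert\leq M/(\lambda-\omega)^n$ from Theorem \ref{thm4.1} transfers verbatim to $\Vert R(\lambda,A^*)^n\Vert\leq M/(\lambda-\omega)^n$. Applying Theorem \ref{thm4.1} in the other direction, $A^*$ is the infinitesimal generator of a $C_0$ semigroup $(T(t))_{t\geq0}\in\mathcal{G}(M,\omega)$ on $X'$.

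Finally I would identify $T(t)=S(t)^*$. For fixed $x\in X$, $\ell\in X'$, and $\lambda>\omega$, the Laplace transform formula \eqref{laplacetransform} applied to both semigroups, together with $R(\lambda,A^*)=R(\lambda,A)^*$, gives
$$\int_0^{+\infty}\mathrm{e}^{-\lambda t}\langle T(t)\ell,x\rangle\,dt=\langle R(\lambda,A^*)\ell,x\rangle=\langle \ell,R(\lambda,A)x\rangle=\int_0^{+\infty}\mathrm{e}^{-\lambda t}\langle \ell,S(t)x\rangle\,dt.$$
The two scalar integrands $t\mapsto\langle T(t)\ell,x\rangle$ and $t\mapsto\langle \ell,S(t)x\rangle=\langle S(t)^*\ell,x\rangle$ are continuous (the first because $T$ is $C_0$, the second because $t\mapsto S(t)x$ is continuous) and are dominated by $M\mathrm{e}^{\omega t}\Vert\ell\Vert\,\Vert x\Vert$; injectivity of the scalar Laplace transform on such functions forces them to coincide for every $t\geq0$. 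As $x$ and $\ell$ are arbitrary, $T(t)=S(t)^*$ for all $t$, so $(S(t)^*)_{t\geq0}$ is a $C_0$ semigroup with generator $A^*$. The main obstacle is conceptual rather than computational: strong continuity of the adjoint family cannot be verified directly (it genuinely fails without reflexivity), so the argument hinges on reflexivity entering to make $A^*$ densely defined — so that the generation theorem even applies — and on the Laplace-transform uniqueness step to glue the abstractly produced semigroup $T$ back onto the concrete family $S(\cdot)^*$; some care is also needed in the resolvent identity to handle domains correctly when transposing the two relations defining $R(\lambda,A)$.
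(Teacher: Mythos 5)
Your proof is correct. Note that the paper itself gives no proof of Proposition \ref{propS*}: it is quoted from Pazy [Section 1.10, Corollary 10.6], so the comparison is with that cited argument. Pazy's route is to study the adjoint family on the subspace $X^{\odot}=\overline{D(A^*)}$ of $X'$ on which $t\mapsto S(t)^*z$ is strongly continuous, to prove that the restriction of $(S(t)^*)_{t\geq 0}$ to $X^{\odot}$ is a $C_0$ semigroup whose generator is the part of $A^*$ in $X^{\odot}$, and then to invoke reflexivity to get $\overline{D(A^*)}=X'$, so that no restriction is needed and the generator is $A^*$ itself. You go the other way around: rather than locating the subspace of strong continuity, you manufacture a $C_0$ semigroup from $A^*$ via the generation theorem (Theorem \ref{thm4.1}) --- which applies because reflexivity makes $A^*$ densely defined, while the identity $R(\lambda,A^*)=R(\lambda,A)^*$ and $(R(\lambda,A)^*)^n=(R(\lambda,A)^n)^*$ transfer the resolvent estimates verbatim --- and you then glue this abstract semigroup to the concrete family $S(\cdot)^*$ by injectivity of the scalar Laplace transform on continuous functions of exponential growth, which you justify correctly (both pairings are continuous in $t$ and dominated by $M\mathrm{e}^{\omega t}\Vert\ell\Vert\,\Vert x\Vert$, and the transforms agree on a real half-line). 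Both arguments use reflexivity exactly once and in the same place, through the density of $D(A^*)$ in $X'$. What your version buys is self-containedness relative to the paper: every ingredient you use (Proposition \ref{propclosed}, Theorem \ref{thm4.1}, formula \eqref{laplacetransform}, and the preliminary density statement for adjoints of closed operators on reflexive spaces) is already quoted in the text, at the modest price of the extra Laplace-uniqueness step. What Pazy's sun-dual argument buys is the finer information that survives without reflexivity, namely that $(S(t)^*)_{t\geq 0}$ is always a $C_0$ semigroup on $\overline{D(A^*)}$. Your handling of the one genuinely delicate point --- the domain bookkeeping when transposing the two relations $(\lambda\,\mathrm{id}_X-A)R(\lambda,A)=\mathrm{id}_X$ and $R(\lambda,A)(\lambda\,\mathrm{id}_X-A)=\mathrm{id}_{D(A)}$ to conclude that $R(\lambda,A)^*$ is a two-sided bounded inverse of $\lambda\,\mathrm{id}_{X'}-A^*$ --- is sound.
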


\subsection{The Cauchy problem}\label{sec_Cauchy}
\subsubsection{Classical solutions}
Let $A:D(A)\subset X\rightarrow X$ be a densely defined linear operator on the Banach space $X$. Consider the Cauchy problem
\begin{equation}\label{cp}
\begin{split}
\dot y(t)&=Ay(t),\qquad  t>0,\\
y(0)&=y_0\in D(A).
\end{split}
\end{equation}
As a consequence of Proposition \ref{propclosed} we have the following result.

\begin{theorem}\label{thm4.4}
Assume that $A$ is the infinitesimal generator of a $C_0$ semigroup $(S(t))_{t\geq 0}$ on $X$. Then the Cauchy problem \eqref{cp} has a unique solution $y\in C^0([0,+\infty);D(A))\cap C^1((0,+\infty);X)$ given by $y(t)=S(t)y_0$ for every $t\geq 0$.
Moreover, the differential equation $\dot y(t)=Ay(t)$ makes sense in $X$.
\end{theorem}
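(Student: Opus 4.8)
The plan is to verify directly that $y(t)=S(t)y_0$ solves the problem and then to establish uniqueness by the standard argument of differentiating $s\mapsto S(t-s)z(s)$. Everything will be a consequence of Proposition \ref{propclosed}.

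First I would treat existence and regularity. For $y_0\in D(A)$, the third item of Proposition \ref{propclosed} gives $S(t)y_0\in D(A)$ and $\dot S(t)y_0=AS(t)y_0$ for $t>0$, so $y(t)=S(t)y_0$ satisfies $\dot y(t)=Ay(t)$, an identity in $X$ since $y(t)\in D(A)$ forces $Ay(t)\in X$. To upgrade this to $y\in C^0([0,+\infty);D(A))$ for the graph norm, I would first record the commutation relation $S(t)Ay_0=AS(t)y_0$ valid for $y_0\in D(A)$: pulling the bounded operator $S(t)$ inside the defining limit and using the semigroup property yields $S(t)Ay_0=\lim_{h\to0^+}\frac{S(t+h)y_0-S(t)y_0}{h}=AS(t)y_0$. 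Then both $t\mapsto S(t)y_0$ and $t\mapsto AS(t)y_0=S(t)(Ay_0)$ are continuous into $X$ by the first item of Proposition \ref{propclosed}, applied to $y_0$ and to $Ay_0$ respectively; this is precisely continuity into $D(A)$ (including at $t=0$, where the values are $y_0$ and $Ay_0$). Since $\dot y(t)=S(t)Ay_0$ is likewise continuous in $X$, we also obtain $y\in C^1((0,+\infty);X)$.

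For uniqueness, let $z$ be any solution with the stated regularity and $z(0)=y_0$. Fixing $t>0$, I would set $\phi(s)=S(t-s)z(s)$ for $s\in[0,t]$ and show $\phi'(s)=0$. The key computation splits the difference quotient as
$$\frac{\phi(s+h)-\phi(s)}{h}=S(t-s-h)\frac{z(s+h)-z(s)}{h}+\frac{S(t-s-h)-S(t-s)}{h}\,z(s).$$
Letting $h\to0$ term by term, the first term tends to $S(t-s)\dot z(s)=S(t-s)Az(s)$, and the second tends to $-AS(t-s)z(s)=-S(t-s)Az(s)$, where I use $z(s)\in D(A)$ and the commutation relation above; hence $\phi'(s)=0$ on $(0,t)$. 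Since $\phi$ is continuous and $X$-valued, composing with an arbitrary $\ell\in X'$ reduces to the scalar mean value theorem, so $\phi$ is constant and $z(t)=\phi(t)=\phi(0)=S(t)y_0=y(t)$. As $t>0$ is arbitrary, $z\equiv y$.

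The main obstacle is the differentiation of $\phi$: the two one-sided limits must be passed carefully, since in the first term one factor converges only strongly while the other is a difference quotient converging in $X$. The local bound $\Vert S(\tau)\Vert\leq M\mathrm{e}^{\omega\tau}$ on compact $\tau$-intervals is what legitimizes taking the limit in the product, and the membership $z(s)\in D(A)$ — guaranteed by $z\in C^0([0,+\infty);D(A))$ — is exactly what makes the second term converge to $-S(t-s)Az(s)$. Everything else reduces to a routine application of Proposition \ref{propclosed}.
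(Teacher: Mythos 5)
Your proposal is correct and follows essentially the same route as the paper: existence and regularity are read off from Proposition \ref{propclosed} (which the paper simply invokes, citing Pazy), and uniqueness comes from showing $\frac{d}{ds}S(t-s)z(s)=0$, which is exactly the argument the paper uses in its proof of Proposition \ref{thm32}. Your write-up merely makes explicit the details (commutation $S(t)Ay_0=AS(t)y_0$, the split of the difference quotient, the local bound on $\Vert S(\tau)\Vert$) that the paper leaves to the cited reference.
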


This solution is often called \textit{strong solution} in the existing literature.

\begin{example}\label{exheat}
Let $\Omega$ be a bounded open subset of $\R^n$ having a Lipschitz boundary (to be able to define a trace). 
Having in mind Example \ref{ex_heat}, let us apply Theorem \ref{thm4.4} to the Dirichlet heat equation.
The Cauchy problem
\begin{equation*}
\partial_t y=\triangle y\ \ \textrm{in}\ \Omega,\qquad
y_{\vert\partial\Omega}=0,\qquad
y(0)=y_0\in H^1_0(\Omega),
\end{equation*}
has a unique solution $y\in C^0([0,+\infty);H^1_0(\Omega))\cap C^1((0,+\infty);H^{-1}(\Omega))$. Moreover, there exist $M\geq 1$ and $\omega<0$ such that
$\Vert y(t,\cdot)\Vert_{L^2(\Omega)}\leq M\mathrm{e}^{\omega t}\Vert y_0(\cdot)\Vert_{L^2(\Omega)}$.
\end{example}

\begin{example}\label{exwave}
Let $\Omega$ be a bounded open subset of $\R^n$ having a Lipschitz boundary. Having in mind Example \ref{ex_wave}, let us apply Theorem \ref{thm4.4} to the Dirichlet wave equation. The Cauchy problem
\begin{equation*}
\partial_{tt} y=\triangle y\ \ \textrm{in}\ \Omega,\qquad
y_{\vert\partial\Omega}=0,\qquad
y(0)=y_0\in H^1_0(\Omega),\ \partial_t y(0)=y_1\in L^2(\Omega),
\end{equation*}
has a unique solution 
$$y\in C^0([0,+\infty);H^1_0(\Omega))\cap C^1((0,+\infty);L^2(\Omega)) \cap C^2((0,+\infty);H^{-1}(\Omega)).$$
Moreover, we have conservation of energy: $\Vert \partial_ty(t)\Vert_{H^{-1}(\Omega)}^2+\Vert y(t)\Vert_{L^2(\Omega)}^2
=\Vert y_1\Vert_{H^{-1}(\Omega)}^2+\Vert y_0\Vert_{L^2(\Omega)}^2$ (by integration by parts).

If $\partial\Omega$ is $C^2$ and if $y_0\in H^1_0(\Omega)\cap H^2(\Omega)$ and $y_1\in H^1_0(\Omega)$ then 
$$
y\in C^0([0,+\infty); H^2(\Omega)\cap H^1_0(\Omega))\cap C^1((0,+\infty);H^1_0(\Omega)) \cap C^2((0,+\infty);L^2(\Omega))
$$
and $\Vert \partial_ty(t)\Vert_{L^2(\Omega)}^2+ \Vert y(t)\Vert_{H^1_0(\Omega)}^2 =\Vert y_1\Vert_{L^2(\Omega)}^2+ \Vert y_0\Vert_{H^1_0(\Omega)}^2$.
\end{example}

\begin{remark}\label{rem_regular}
Concerning the regularity of the solutions of the heat equation (Example \ref{exheat}) and of the wave equation (Example \ref{exwave}), we can actually be much more precise, by expanding the solutions as a series with the eigenfunctions of the Dirichlet-Laplacian. Indeed, let $(\phi_j)_{j\in\N^*}$ be a Hilbert basis of $L^2(\Omega)$, consisting of eigenfunctions of the Dirichlet-Laplacian, corresponding to the eigenvalues $(\lambda_j)_{j\in\N^*}$. %It is well known that $\phi_j$ is (real) analytic in the open set $\Omega$.

For the heat equation of Example \ref{exheat}, if $y_0=\sum_{j=1}^{+\infty}a_j\phi_j\in L^2(\Omega)$ then $y(t,x)=\sum_{j=1}^{+\infty}a_je^{\lambda_jt}\phi_j(x)$ is a function of $(t,x)$ of class $C^\infty$ for $t>0$ (see \cite{CazenaveHaraux}), and for every $t>0$ fixed, the function $x\mapsto y(t,x)$ is (real) analytic on the open set $\Omega$ (see \cite{Nelson}).
This reflects the \textit{smoothing effect} of the heat equation.

For the wave equation, there is no smoothing effect, but smoothness or analyticity properties can also be established for appropriate initial conditions (see \cite{Nelson}).

These remarks show that the regularity properties obtained by the general semigroup theory may be much improved when using the specific features of the operator under consideration (see also Remark \ref{rem_regular2} further).
\end{remark}

\begin{remark}\label{rem4.4}
If $y_0\in X\setminus D(A)$ then in general $y(t)=S(t)y_0\notin D(A)$, and hence $y(t)$ is not a solution of \eqref{cp} in the above sense. Actually, $y(t)$ is solution of $\dot y(t)=Ay(t)$ in a weaker sense, by replacing $A$ with an extension on $A$ to $X$, as we are going to see next. 
\end{remark}

%%%%%%%%%%%%%%%%%%%%%%%%%%%%%%%%%%

\subsubsection{Weak solutions}
The objective of this section is to define an extension of the Banach space $X$, and extensions  of $C_0$ semigroups on $X$ which will provide weaker solutions. A good reference for this part is the textbook \cite{TucsnakWeiss}.

Let $(S(t))_{t\geq 0}\in\mathcal{G}(M,\omega)$ be a $C_0$ semigroup on $X$, of generator $A:D(A)\rightarrow X$. Let $\beta\in\rho(A)$ (if $X$ is real, consider such a real number $\beta$).

\begin{definition}\label{def_Xm1}
Let $X_1$ denote the Banach space $D(A)$, equipped with the norm $\Vert y\Vert_{X_1}=\Vert (\beta\,\mathrm{id}_X-A)y\Vert_X$, and let $X_{-1}$ denote the completion of $X$ with respect to the norm $\Vert y\Vert_{X_{-1}}=\Vert (\beta\,\mathrm{id}_X-A)^{-1}y\Vert_X =\Vert R(\beta,A)y\Vert_X$.
\end{definition}

Note that, by definition, $\beta\,\mathrm{id}_X-A:X_1\rightarrow X$ and $(\beta\,\mathrm{id}_X-A)^{-1}:X\rightarrow X_1$ are surjective isometries (unitary operators).
It is then easy to see that the norm $\Vert\ \Vert_{1}$ on $X_1$ is equivalent to the graph norm $\Vert y\Vert_G=\Vert y\Vert_X+\Vert Ay\Vert_X$. Therefore, from the closed graph theorem, $(X_1,\Vert\ \Vert_{1})$ is a Banach space, and we clearly get an equivalent norm by considering any other $\beta'\in\rho(A)$.

Similarly, the space $X_{-1}$ does not depend on the specific value of $\beta\in\rho(A)$, in the sense that we get an equivalent norm by considering any other $\beta'\in\rho(A)$. Indeed, for all $(\beta,\beta')\in\rho(A)^2$, we have $(\beta\,\mathrm{id}_X-A)(\beta'\,\mathrm{id}_X-A)^{-1}=\mathrm{id}_X+(\beta-\beta')(\beta'\,\mathrm{id}_X-A)^{-1}$, hence
$$
(\beta'\,\mathrm{id}_X-A)^{-1} = (\beta\,\mathrm{id}_X-A)^{-1} + (\beta-\beta') (\beta\,\mathrm{id}_X-A)^{-1} (\beta'\,\mathrm{id}_X-A)^{-1}
$$
(resolvent identity), and moreover $(\beta\,\mathrm{id}_X-A)^{-1}$ and $(\beta'\,\mathrm{id}_X-A)^{-1}$ commute. The conclusion follows easily.

\begin{remark}
The injections $X_1\hookrightarrow X \hookrightarrow X_{-1}$ are, by definition, continuous and dense. They are moreover compact as soon as $\beta\,\mathrm{id}_X-A$ has a compact inverse (i.e., as soon as $A$ has compact resolvents).
\end{remark}

\begin{example}\label{ex4.7}
Let $\Omega$ be an open bounded subset of $\R^n$ having a $C^2$ boundary, and consider the Dirichlet-Laplacian $\triangle_D$ on $X=L^2(\Omega)$ defined on $D(\triangle_D)=H^1_0(\Omega)\cap H^2(\Omega)$. Then $X_1=D(\triangle_D)=H^1_0(\Omega)\cap H^2(\Omega)$ and, as will follow from Theorem \ref{thm4.5} below, $X_{-1}=(H^1_0(\Omega)\cap H^2(\Omega))'$, where the dual is taken with respect to the pivot space $X=L^2(\Omega)$.
\end{example}

Let us now provide a general theorem allowing one to identify the space $X_{-1}$.
Since $A^*$ is closed, $D(A^*)$ endowed with the norm $\Vert z\Vert_{X_1}=\Vert (\beta\,\mathrm{id}_{X'}-A^*)z\Vert_{X'}$ where $\beta\in \rho(A^*)=\rho(A)$, is a Banach space.

\begin{theorem}\label{thm4.5}
If $X$ is reflexive then $X_{-1}$ is isomorphic to $D(A^*)'$, where the dual is taken with respect to the pivot space $X$.
\end{theorem}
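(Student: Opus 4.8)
The plan is to identify both $X_{-1}$ and $D(A^*)'$ as completions of $X$ for one and the same norm, and then invoke uniqueness of the completion. Throughout I fix $\beta\in\rho(A)=\rho(A^*)$ and use two standard facts: since $X$ is reflexive, Proposition \ref{propS*} applies, so $A^*$ generates a $C_0$ semigroup on $X'$, $D(A^*)$ is dense in $X'$, and the resolvents are adjoint to one another, $R(\beta,A^*)=R(\beta,A)^*$ (the usual adjoint formula for resolvents of densely defined operators). By the very definitions of the norms, $R(\beta,A)\colon X\to X_1$ and $R(\beta,A^*)\colon X'\to D(A^*)$ are isometric isomorphisms, the latter being nothing but the ``$X_1$-space'' construction applied to the dual semigroup on $X'$.

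The heart of the argument is a norm computation on the dense subspace $X$. I view $X\cong X''$ as a subspace of $D(A^*)'$ by assigning to $y\in X$ the functional $z\mapsto\langle z,y\rangle_{X',X}$ on $D(A^*)\subset X'$ (this is the meaning of ``dual with respect to the pivot space $X$''). For $y\in X$, performing the change of variables $w=(\beta\,\mathrm{id}_{X'}-A^*)z$, so that $z=R(\beta,A^*)w$ runs over $D(A^*)\setminus\{0\}$ as $w$ runs over $X'\setminus\{0\}$, I would compute
\begin{equation*}
\|y\|_{D(A^*)'}=\sup_{0\neq z\in D(A^*)}\frac{|\langle z,y\rangle_{X',X}|}{\|(\beta\,\mathrm{id}_{X'}-A^*)z\|_{X'}}=\sup_{0\neq w\in X'}\frac{|\langle R(\beta,A^*)w,y\rangle_{X',X}|}{\|w\|_{X'}}.
\end{equation*}
Using $R(\beta,A^*)=R(\beta,A)^*$, the pairing in the numerator becomes $\langle w,R(\beta,A)y\rangle_{X',X}$, and the Hahn--Banach formula $\|x\|_X=\sup_{0\neq w\in X'}|\langle w,x\rangle_{X',X}|/\|w\|_{X'}$ shows that the last supremum equals $\|R(\beta,A)y\|_X=\|y\|_{X_{-1}}$. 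Hence the $X_{-1}$-norm and the $D(A^*)'$-norm agree on $X$.

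It then remains to check that $X$ is dense in $D(A^*)'$, so that $D(A^*)'$ is genuinely a completion of $(X,\|\cdot\|_{X_{-1}})$. For this I would use the adjoint $R(\beta,A^*)^*\colon D(A^*)'\to(X')'=X$, which is an isometric isomorphism because $R(\beta,A^*)$ is one and $X$ is reflexive. A short dual computation, using the same identity $R(\beta,A^*)=R(\beta,A)^*$, shows that on the pivot copy of $X$ inside $D(A^*)'$ the map $R(\beta,A^*)^*$ coincides with $R(\beta,A)$ (consistently, $R(\beta,A^*)^*=R(\beta,A)^{**}=R(\beta,A)$ under $X''=X$); it therefore carries $X$ onto $\mathrm{Ran}\,R(\beta,A)=D(A)$, which is dense in $X$. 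Since an isometric isomorphism pulls dense sets back to dense sets, $X$ is dense in $D(A^*)'$.

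Putting these facts together, $X_{-1}$ and $D(A^*)'$ are both completions of $X$ equipped with the norm $\|\cdot\|_{X_{-1}}$, hence isometrically isomorphic, the isomorphism being the continuous extension of the pivot inclusion $X\hookrightarrow D(A^*)'$. I expect the main obstacle to be essentially bookkeeping: keeping the several duality brackets consistent (the pairing $\langle\cdot,\cdot\rangle_{X',X}$, the pairing of $D(A^*)'$ with $D(A^*)$, and the reflexive identification $X''=X$), and justifying cleanly the adjoint resolvent identity $R(\beta,A^*)=R(\beta,A)^*$ together with $\rho(A)=\rho(A^*)$, on which the whole computation rests.
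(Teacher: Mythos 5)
Your proof is correct and follows essentially the same route as the paper's: both identify $D(A^*)'$ as a completion of the pivot copy of $X$ by proving the norm equality $\Vert y\Vert_{X_{-1}}=\Vert y\Vert_{D(A^*)'}$ on $X$ via the change of variables $w=(\beta\,\mathrm{id}_{X'}-A^*)z$ (i.e., the adjoint resolvent identity $R(\beta,A^*)=R(\beta,A)^*$, which the paper uses implicitly), and then conclude by uniqueness of the completion. The only divergence is the density step: the paper deduces density of $X\simeq X''$ in $D(A^*)'$ from the fact that the dense embedding $D(A^*)\hookrightarrow X'$ dualizes, by reflexivity, to a dense embedding $X''\hookrightarrow D(A^*)'$, whereas you transport the density of $D(A)=\mathrm{Ran}\,R(\beta,A)$ in $X$ through the isometric isomorphism $R(\beta,A^*)^*$ --- both arguments are valid and rest on the same reflexivity hypothesis.
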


\begin{proof}
We begin by recalling the following general fact: if $E$ and $F$ are two Banach spaces with a continuous injection $E\hookrightarrow F$, then we have a continuous injection $F'\hookrightarrow E'$. From this general fact, since $D(A^*)\subset X'$ with a continuous injection, it follows that $X''\subset D(A^*)'$ (with a continuous injection). Using the canonical injection from $X$ to $X''$, it follows that every element of $X$ is (identified with) an element of $D(A^*)'$.
Let us prove that $\Vert y \Vert_{X_{-1}} = \Vert y \Vert_{D(A^*)'}$ for every $y\in X$. By definition, we have
\begin{equation*}
\begin{split}
\Vert y\Vert_{X_{-1}} &= \Vert (\beta\,\mathrm{id}_X-A)^{-1}y\Vert_X \\
&= \sup \left\{  \langle f, (\beta\,\mathrm{id}_X-A)^{-1}y \rangle_{X',X}  \ \mid\ f\in X',\ \Vert f\Vert_{X'}\leq 1 \right\} \\
&= \sup \left\{ \langle (\beta\,\mathrm{id}_{X'}-A^*)^{-1} f, y \rangle_{X',X}  \ \mid\ f\in X',\ \Vert f\Vert_{X'}\leq 1 \right\} .
\end{split}
\end{equation*}
Using the canonical injection of $X$ in $X''$ (and not yet the fact that $X$ is reflexive), $y$ can be considered as an element of $X''$, and then
$$
\Vert y\Vert_{X_{-1}} = \sup \left\{ \langle y, (\beta\,\mathrm{id}_{X'}-A^*)^{-1} f \rangle_{X'',X'}  \ \mid\ f\in X',\ \Vert f\Vert_{X'}\leq 1 \right\} .
$$
Besides, by definition we have
$$
\Vert y\Vert_{D(A^*)'} = \sup \left\{ \langle y,z \rangle_{D(A^*)',D(A^*)} \ \mid\ z\in D(A^*),\ \Vert z\Vert_{D(A^*)}\leq 1 \right\} .
$$
In this expression we make a change of variable: for every $z\in D(A^*)$ such that $\Vert z\Vert_{D(A^*)}\leq 1$, there exists $f\in X'$ such that $z=(\beta\,\mathrm{id}_{X'}-A^*)^{-1}f$, and since $\Vert z\Vert_{D(A^*)} = \Vert (\beta\,\mathrm{id}_{X'}-A^*)z\Vert_{X'}=\Vert f\Vert_{X'}$ it follows that $\Vert f\Vert_{X'}\leq 1$. Therefore
$$
\Vert y\Vert_{D(A^*)'} = \sup \left\{ \langle y,(\beta\,\mathrm{id}_{X'}-A^*)^{-1}f \rangle_{D(A^*)',D(A^*)} \ \mid\ f\in X',\ \Vert f\Vert_{X'}\leq 1 \right\} .
$$
In the above duality bracket, since $y\in X''$ and $(\beta\,\mathrm{id}_{X'}-A^*)^{-1}f\in X'$ we can as well use the duality bracket $\langle\ ,\ \rangle_{X'',X'}$. Hence $\Vert y \Vert_{X_{-1}} = \Vert y \Vert_{D(A^*)'}$.

To conclude the proof, it remains to note that $X_1$ is dense in $X$ and that $X\simeq X''$ is dense in $D(A^*)'$. It is a general fact that $X_1$ is dense in $X$ (see Proposition \ref{propclosed}). The fact that $X$ is dense in $D(A^*)'$ is ensured by the reflexivity assumption: indeed, since $X$ is reflexive it follows that $D(A^*)$ is dense in $X'$ (with a continuous injection), and hence $X\simeq X''$ is dense in $D(A^*)'$. %The proof is done.
\end{proof}

\begin{theorem}
The operator $A:D(A)=X_1\rightarrow X$ can be extended to an operator $A_{-1}:D(A_{-1})=X\rightarrow X_{-1}$, and the $C_0$ semigroup $(S(t))_{t\geq 0}$ on $X$ extends to a semigroup $(S_{-1}(t))_{t\geq 0}$ on $X_{-1}$, generated by $A_{-1}$.
\end{theorem}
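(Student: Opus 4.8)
The plan is to realize the whole extrapolated structure as an \emph{isometric copy} of the original one, transported through the resolvent. Fix $\beta\in\rho(A)$. By the very definition of the norms $\Vert\cdot\Vert_{X_{-1}}$ and $\Vert\cdot\Vert_{X_1}$, the resolvent $R(\beta,A)=(\beta\,\mathrm{id}_X-A)^{-1}$, viewed as a map from $(X,\Vert\cdot\Vert_{X_{-1}})$ into $(X,\Vert\cdot\Vert_X)$, is an isometry, and its range is exactly $R(\beta,A)X=D(A)=X_1$, which is dense in $X$. First I would extend this isometry by density: since $X_{-1}$ is precisely the completion of $X$ for $\Vert\cdot\Vert_{X_{-1}}$, since $X$ is complete, and since $X_1$ is dense in $X$, the map $R(\beta,A)$ extends uniquely to a surjective isometry $\widetilde R:X_{-1}\to X$, i.e.\ an isometric isomorphism whose restriction to $X\subset X_{-1}$ coincides with $R(\beta,A)$.

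Next I would use $\widetilde R$ to define the extended operator. Set $A_{-1}:=\beta\,\iota-\widetilde R^{-1}$, where $\iota:X\hookrightarrow X_{-1}$ is the canonical dense injection and $\widetilde R^{-1}:X\to X_{-1}$. Then $A_{-1}$ is bounded from $X$ to $X_{-1}$, with $D(A_{-1})=X$. For $y\in X_1=D(A)$ one has $\widetilde R^{-1}y=(\beta\,\mathrm{id}_X-A)y\in X$, whence $A_{-1}y=\beta y-(\beta y-Ay)=Ay$, so $A_{-1}$ genuinely extends $A$. Moreover $\beta\,\iota-A_{-1}=\widetilde R^{-1}$ is an isometric isomorphism $X\to X_{-1}$, which already shows $\beta\in\rho(A_{-1})$ with $R(\beta,A_{-1})=\widetilde R$.

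For the semigroup I would transport $S(t)$ through $\widetilde R$, setting $S_{-1}(t):=\widetilde R^{-1}S(t)\widetilde R\in L(X_{-1})$. Being conjugate to a $C_0$ semigroup, $(S_{-1}(t))_{t\geq0}$ automatically satisfies $S_{-1}(0)=\mathrm{id}_{X_{-1}}$, the semigroup law $S_{-1}(t+s)=S_{-1}(t)S_{-1}(s)$, the bound $\Vert S_{-1}(t)\Vert\leq M\mathrm{e}^{\omega t}$, and strong continuity (each of these transfers verbatim under the isometric conjugation). To see that $(S_{-1}(t))$ is an \emph{extension} of $(S(t))$ I would invoke the commutation $S(t)R(\beta,A)=R(\beta,A)S(t)$, valid for every $\beta\in\rho(A)$: for $y\in X$ it gives $S_{-1}(t)y=\widetilde R^{-1}S(t)R(\beta,A)y=\widetilde R^{-1}R(\beta,A)S(t)y=S(t)y$. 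Finally, the generator of $(S_{-1}(t))$ is $\widetilde R^{-1}A\widetilde R$ with domain $\widetilde R^{-1}(D(A))=\widetilde R^{-1}(X_1)=X$; a one-line computation using $AR(\beta,A)=\beta R(\beta,A)-\mathrm{id}_X$ and $\widetilde R^{-1}R(\beta,A)y=y$ identifies this conjugated generator with $\beta\,\iota-\widetilde R^{-1}=A_{-1}$, which concludes.

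The routine parts (semigroup law, strong continuity, the exponential bound) cost nothing thanks to the conjugation, so the real content sits in two places. The first is the construction of $\widetilde R$ as a genuine \emph{isomorphism}: here surjectivity onto $X$ is the delicate point, and it rests exactly on the density of $D(A)$ in $X$ (Proposition \ref{propclosed}); without it $\widetilde R$ would only be an isometric embedding. The second, which I expect to be the main obstacle, is checking that the generator of $(S_{-1}(t))$ is precisely $A_{-1}$, with domain exactly $X$, and not some a priori larger operator. I would settle this by the direct conjugation computation above; equivalently, one may argue that the two candidate operators share the resolvent $\widetilde R$ at the point $\beta$, so that $\beta\,\iota-A_{-1}$ and $\beta\,\mathrm{id}-(\text{generator})$ both coincide with $\widetilde R^{-1}$, forcing the operators to agree on the common domain $X$.
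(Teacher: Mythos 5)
Your proposal is correct, but it follows a genuinely different route from the paper's proof. The paper argues directly at the level of the operator: using the definition of the $X_{-1}$-norm it establishes the estimate $\Vert Ay\Vert_{X_{-1}}=\Vert R(\beta,A)Ay\Vert_X=\Vert y-\beta R(\beta,A)y\Vert_X\leq C_1\Vert y\Vert_X$ for $y\in D(A)$, and then invokes density of $D(A)$ in $X$ to get the unique continuous extension $A_{-1}:X\rightarrow X_{-1}$, together with the norm identity $\Vert y\Vert_X=\Vert(\beta\,\mathrm{id}_X-A)y\Vert_{X_{-1}}$ showing $D(A_{-1})=X$; notably, the paper's proof stops there and does not explicitly construct $(S_{-1}(t))_{t\geq 0}$ nor verify the generation statement (these are left to the cited references). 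You instead transport the entire structure through the resolvent: you extend $R(\beta,A)$ to an isometric isomorphism $\widetilde R:X_{-1}\rightarrow X$ (correctly pinning surjectivity on the density of $D(A)$ plus the fact that an isometric image of a complete space is closed), define $A_{-1}=\beta\,\iota-\widetilde R^{-1}$ and $S_{-1}(t)=\widetilde R^{-1}S(t)\widetilde R$, and identify the generator by conjugation using $AR(\beta,A)=\beta R(\beta,A)-\mathrm{id}_X$. What your approach buys is precisely the part the paper omits: the semigroup axioms, the exponential bound, strong continuity, the extension property (via $S(t)R(\beta,A)=R(\beta,A)S(t)$, which is standard and follows e.g.\ from the Laplace transform representation of the resolvent), and the fact that the generator is exactly $A_{-1}$ with domain exactly $X$ all come essentially for free from the isometric conjugation; this is the "similarity" viewpoint one finds in the semigroup literature. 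What the paper's route buys is brevity for the operator-extension statement alone, since it avoids constructing $\widetilde R$ and checking that conjugation preserves generators. Both arguments rest on the same two pillars: density of $D(A)$ in $X$ and the interplay between the resolvent and the $X_{\pm 1}$-norms.
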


\begin{proof}
Note that the operator $A:D(A)\rightarrow X$ is continuous when one endows $D(A)$ with its norm, because $\Vert Ay\Vert_X \leq \Vert y\Vert_{G} \leq C\Vert y\Vert_{1}$ as already said. 
By definition of the norm in $X_{-1}$, we have easily
\begin{equation*}
\begin{split}
\Vert Ay\Vert_{X_{-1}} = \Vert (\beta\,\mathrm{id}_X-A)^{-1}Ay\Vert_X
&= \Vert y - \beta(\beta\,\mathrm{id}_X-A)^{-1} y\Vert_X \\
&\leq \Vert y\Vert_X + \vert\beta\vert\Vert(\beta\,\mathrm{id}_X-A)^{-1} y\Vert_X 
\end{split}
\end{equation*}
for every $y\in D(A)$, and since $(\beta\,\mathrm{id}_X-A)^{-1}$ is bounded it follows that there exists some constant $C_1>0$ such that $\Vert Ay\Vert_{X_{-1}} \leq C_1 \Vert y\Vert_X$ for every $y\in D(A)$. Therefore the operator $A$ has a unique extension $A_{-1}:D(A_{-1})=X\rightarrow X_{-1}$ that is continuous for the respective norms. The fact that $D(A_{-1})=X$ with equivalent norms follows from the equality
$$
\Vert y\Vert_X = \Vert (\beta\,\mathrm{id}_X-A)^{-1}(\beta\,\mathrm{id}_X-A)y\Vert_X = \Vert (\beta\,\mathrm{id}_X-A)y\Vert_{X_{-1}} = \Vert y\Vert_{D(A_{-1})}
$$
for every $y\in D(A)$, and by density this holds true for every $y\in X$.
\end{proof}

\begin{remark}
Note that, by density, if $A$ is m-dissipative then $A_{-1}$ is m-dissipative as well, and hence $(S_{-1}(t))_{t\geq 0}$ is a $C_0$ semigroup of contractions.
\end{remark}

The following result follows from Theorem \ref{thm4.4}, giving an answer to the question raised in Remark \ref{rem4.4}.

\begin{corollary}
For every $y_0\in X$, the Cauchy problem $\dot{y}(t)=A_{-1}y(t)$, $y(0)=y_0$ has a unique solution $y\in C^0([0,+\infty);X)\cap C^1((0,+\infty);X_{-1})$ given by $y(t)=S(t)y_0=S_{-1}(t)y_0$ for every $t\geq 0$.
\end{corollary}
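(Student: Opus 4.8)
The plan is to apply Theorem \ref{thm4.4} verbatim, but with the triple $(X,A,(S(t))_{t\geq 0})$ replaced by the extended triple $(X_{-1},A_{-1},(S_{-1}(t))_{t\geq 0})$. The preceding theorem provides exactly the hypotheses needed: it asserts that $A_{-1}:D(A_{-1})=X\rightarrow X_{-1}$ is the infinitesimal generator of the $C_0$ semigroup $(S_{-1}(t))_{t\geq 0}$ on the Banach space $X_{-1}$, and that $D(A_{-1})=X$ with equivalent norms. Thus $A_{-1}$ is a densely defined generator on $X_{-1}$ and Theorem \ref{thm4.4} is directly applicable in that space.

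Concretely, I would first observe that the Cauchy problem $\dot y(t)=A_{-1}y(t)$, $y(0)=y_0$, posed in $X_{-1}$ with generator $A_{-1}$, is precisely of the form \eqref{cp} once one renames the ambient space to $X_{-1}$ and the generator to $A_{-1}$. Since, by the extension theorem, the admissible initial data are those lying in $D(A_{-1})=X$, the requirement ``$y_0\in D(A_{-1})$'' is exactly ``$y_0\in X$'', which matches the ``for every $y_0\in X$'' in the statement. Applying Theorem \ref{thm4.4} then yields a unique solution $y\in C^0([0,+\infty);D(A_{-1}))\cap C^1((0,+\infty);X_{-1})$ given by $y(t)=S_{-1}(t)y_0$, with the equation $\dot y(t)=A_{-1}y(t)$ holding in $X_{-1}$.

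It remains to rewrite this conclusion in the form stated. Using that $D(A_{-1})=X$ with equivalent norms, the space $C^0([0,+\infty);D(A_{-1}))$ coincides with $C^0([0,+\infty);X)$, which gives the announced regularity $y\in C^0([0,+\infty);X)\cap C^1((0,+\infty);X_{-1})$. For the explicit formula, I would invoke the extension property once more: by construction $(S_{-1}(t))_{t\geq 0}$ extends $(S(t))_{t\geq 0}$, so $S_{-1}(t)y_0=S(t)y_0$ for every $y_0\in X\subset X_{-1}$ and every $t\geq 0$, whence $y(t)=S(t)y_0=S_{-1}(t)y_0$. Uniqueness is inherited directly from the uniqueness clause of Theorem \ref{thm4.4} applied in $X_{-1}$.

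There is essentially no hard step here: the corollary is a translation result, and all the genuine work (that $A_{-1}$ generates a $C_0$ semigroup on $X_{-1}$ and that $D(A_{-1})=X$) has already been carried out in the preceding theorem. The only point deserving a word of care is the identification $D(A_{-1})=X$ with equivalent norms, which is what allows the regularity class $C^0([0,+\infty);D(A_{-1}))$ to be read as $C^0([0,+\infty);X)$ and guarantees that every $y_0\in X$ is an admissible (strong) initial datum for the problem in $X_{-1}$; this is precisely the content of the equality $\Vert y\Vert_X=\Vert y\Vert_{D(A_{-1})}$ established just above, so no additional argument is required.
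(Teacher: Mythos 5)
Your proposal is correct and is exactly the paper's argument: the corollary is stated there as a direct consequence of Theorem \ref{thm4.4} applied to the extended generator $A_{-1}:D(A_{-1})=X\rightarrow X_{-1}$ on the Banach space $X_{-1}$, using the identification $D(A_{-1})=X$ (with equivalent norms) and the fact that $(S_{-1}(t))_{t\geq 0}$ extends $(S(t))_{t\geq 0}$. Nothing is missing.
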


Note that, here, the differential equation $\dot{y}(t)=A_{-1}y(t)$ is written in $X_{-1}$. In particular, the derivative is computed in $X_{-1}$, with the norm $\Vert\cdot\Vert_{X_{-1}}$.

In other words, with respect to Theorem \ref{thm4.4}, for a given $y_0\in X$, $y(t)=S(t)y_0$ (often called \textit{mild solution} in the existing literature) is still a solution of $\dot y(t)=Ay(t)$ (now written in $X_{-1}$) provided $A$ is replaced with its extension $A_{-1}$.
Note that this weaker solution is a strong solution for the operator $A_{-1}$ in the Banach space $X_{-1}$. For these reasons, we shall not insist on naming solutions "strong", "mild" or "weak". What is important is to make precise the Banach spaces in which we are working.

Note that the above concept of weak solution corresponds to solutions sometimes defined by transposition. Indeed, if $X$ is reflexive then $X_{-1}\simeq D(A^*)'$ (see Theorem \ref{thm4.5}), and hence, considering the differential equation $\dot y(t)=A_{-1}y(t)$ in the space $X_{-1}$ means that
$$
\langle \dot y(t),\varphi\rangle_{D(A^*)',D(A^*)} = \langle A_{-1} y(t),\varphi\rangle_{D(A^*)',D(A^*)} \qquad\forall \varphi\in D(A^*) .
$$
This concept of solution by transposition is often encountered in the existing literature (see, e.g., \cite{Coron} for control issues).

\begin{example}
Let $\Omega\subset\R^n$ be a bounded open set with $C^2$ boundary. 
\begin{itemize}[parsep=1mm,itemsep=1mm,topsep=1mm]%,leftmargin=*
\item The Cauchy problem $\partial_t{y}=\triangle y$ in $\Omega$, $y_{\vert\partial\Omega}=0$, $y(0)=y_0\in L^2(\Omega)$,
has a unique solution 
$$
y\in C^0([0,+\infty);L^2(\Omega))\cap C^1((0,+\infty);(H^{1}_0(\Omega)\cap H^2(\Omega))') .
$$
Moreover, there exist $M\geq 1$ and $\omega\in\R$ (actually, $\omega<0$) such that
$\Vert y(t)\Vert_{L^2(\Omega)}\leq M\mathrm{e}^{\omega t}\Vert
y_0\Vert_{L^2(\Omega)}$ for every $t\geq 0$.

\item Consider the Cauchy problem $\partial_{tt}{y}=\triangle y$ in $\Omega$, $y_{\vert\partial\Omega}=0$, $y(0)=y_0$, $\partial_t{y}(0)=y_1$.
\begin{itemize}[parsep=1mm,itemsep=1mm,topsep=1mm]%,leftmargin=*
\item If $y_0\in H^{-1}(\Omega)$ and $y_1\in (H^{1}_0(\Omega)\cap H^2(\Omega))'$, then there is a unique solution
$$y\in C^0([0,+\infty);H^{-1}(\Omega))\cap C^1((0,+\infty); (H^{1}_0(\Omega)\cap H^2(\Omega))' ) .$$
\item If $y_0\in L^2(\Omega)$ and $y_1\in H^{-1}(\Omega)$, then there is a unique solution
$$y\in C^0([0,+\infty);L^2(\Omega))\cap C^1((0,+\infty); H^{-1}(\Omega)) .$$
\end{itemize}
%Indeed the wave equation can be written as the first-order system
%$$\partial_t \begin{pmatrix} y\\ \partial_t y\end{pmatrix} = A \begin{pmatrix} y\\ \partial_t y\end{pmatrix}\qquad\textrm{with}\qquad A = \begin{pmatrix} 0 & \mathrm{id} \\ A_0 & 0 \end{pmatrix}, $$
%where 
\end{itemize}
\end{example}

\subsection{Scale of Banach spaces}\label{sec:scale}
We can generalize the previous framework and adopt a more abstract (and probably simpler, at the end) point of view.

The construction of $X_1$ and of $X_{-1}$ can indeed be iterated, and leads to a sequence of Banach spaces $(X_n)_{n\in\Z}$ (called "tower of Sobolev spaces" in \cite{Nagel}).
For positive integers $n$, the operator $A_n:D(A_n)=D(A^{n+1})\rightarrow D(A^n)$ is the restriction of $A$ to $D(A^{n+1})$.

The construction can even be generalized in order to obtain a continuous \textit{scale of Banach spaces} $(X_\alpha)_{\alpha\in\R}$, with the property that if $\alpha_1>\alpha_2$ then the canonical injection $X_{\alpha_1}\hookrightarrow X_{\alpha_2}$ is continuous and dense, and is compact as soon as the resolvent of $A$ is compact.
We refer to \cite[Sections 3.6 and 3.9]{Staffans} for this general construction (where these spaces are called \textit{rigged spaces}) and for further properties (see also \cite[Chapter II, Section 5, c]{Nagel}). %In the sequel we will briefly refer to this continuous scale of Banach spaces.
%% see also \cite[Chapter 2, Section 4.3]{LiYong} for analytic operators

The Banach space $X_\alpha$, with $\alpha$ an arbitrary real number, can be defined for instance by symbolic calculus with real powers of the resolvent of $A$ and complex integrals (see \cite{Nagel, Kato, Pazy, Staffans}), or by Banach spaces interpolation (see \cite{Lions_interp, LionsMagenes, Triebel}), or by Fourier transform when it is possible (see \cite{Kato}), or with a Hilbert basis when $X$ is a Hilbert space and $A$ is diagonalizable (see Remark \ref{rem41} below).
For instance, the construction with the fractional powers of the resolvent goes as follows, in few words (see \cite[Section 3.9]{Staffans}), provided $A$ generates a $C_0$ semigroup. 
%% see also \cite[Section 2.6]{Pazy} but Pazy restricts mainly to analytic semigroups, while Staffans considers general C_0 semigroups 
%% Another construction is given in \cite[Chapter II, Section 5, c]{Nagel}, under a slight assumption on $A$ (satisfied if $A$ generates a $C_0$ semigroup).
Given $\beta\in\rho(A)$ with $\Real(\beta)>\omega$, given any $\alpha>0$ we define\footnote{This formula extrapolates the Laplace transform formula $\frac{1}{z^\alpha} = \frac{1}{\Gamma(\alpha)} \int_0^{+\infty} t^{\alpha-1}e^{-tz}\, dt$ valid for any $z\in\C$ such that $\Real(z)>0$.}
\begin{equation}\label{def_poweralpha}
(\beta\,\mathrm{id}_X-A)^{-\alpha} = \frac{1}{\Gamma(\alpha)}\int_0^{+\infty} t^{\alpha-1}e^{-\beta t} S(t)\, dt 
\end{equation}
and then we define $(\beta\,\mathrm{id}_X-A)^\alpha = ((\beta\,\mathrm{id}_X-A)^{-\alpha})^{-1}$ on the domain $\mathrm{Ran}((\beta\,\mathrm{id}_X-A)^{-\alpha})$.
We also define $(\beta\,\mathrm{id}_X-A)^0=\mathrm{id}$.
We denote by 
$$
X_\alpha=\mathrm{Ran}((\beta\,\mathrm{id}_X-A)^{-\alpha})=(\beta\,\mathrm{id}_X-A)^{-\alpha}(X)
$$
the Banach space endowed with the norm $\Vert y\Vert_{X_\alpha} = \Vert (\beta\,\mathrm{id}_X-A)^{\alpha} y\Vert_X$. 
The Banach space $X_{-\alpha}$ is defined as the completion of $X$ for the norm $\Vert y\Vert_{X_{-\alpha}} = \Vert (\beta\,\mathrm{id}_X-A)^{-\alpha} y\Vert_X$. Accordingly, we set $X_0=X$, endowed with the norm of $X$.
We have thus defined the scale of Banach spaces $(X_\alpha)_{\alpha\in\R}$.
The construction does not depend on the specific choice of $\beta\in\rho(A)$.

In this general framework, the operator 
$$
A_\alpha:D(A_\alpha)=X_{\alpha+1}\rightarrow X_\alpha
$$
(with $\alpha\in\R$), which is either the restriction or the extension of $A:D(A)\rightarrow X$ (with $X_0=X$) according to the sign of $\alpha$, generates the $C_0$ semigroup $(S_\alpha(t))_{t\geq 0}$.

Hereafter, when it is clear from the context, we skip the index $\alpha$ in $S_\alpha(t)$ or in $A_\alpha$, when referring to the restriction or extension of $S(t)$ or of $A$ to $X_\alpha$.

Note that, for any $\alpha_1,\alpha_2\in\R$, 
$$
(\beta\,\mathrm{id}_X-A)^{\alpha_1-\alpha_2} : X_{\alpha_1} \rightarrow X_{\alpha_2}
$$ 
is a surjective isometry (unitary operator), where $A$ denotes here (without the index) the appropriate restriction or extension of the operator $A$.

The spaces $X_\alpha$ are interpolation spaces between the spaces $X_n$ with integer indices. It can be noted that there exists $C>0$ such that, for every $n\in\Z$, for every $\alpha\in[n,n+1]$, we have 
$$
\Vert y\Vert_{X_\alpha}\leq C \Vert y\Vert_{X_n}^{n+1-\alpha} \Vert y\Vert_{X_{n+1}}^{\alpha-n} 
\qquad\forall y\in X_{n+1}
$$
(see \cite[Lemma 3.9.8]{Staffans}). This is an interpolation inequality, as in \cite{LionsMagenes}. 
Replacing the operator $A$ with any real power of $\beta\,\mathrm{id}_X-A$, we infer from those inequalities the following more general interpolation inequalities (see \cite[Chapter II, Section 5, Theorem 5.34]{Nagel}): given any real numbers $\alpha<\beta<\gamma$, there exists $C(\gamma-\alpha)>0$ (only depending on $\gamma-\alpha$) such that
\begin{equation}\label{interpolation_EngelNagel}
\Vert y\Vert_{X_\beta}\leq C(\gamma-\alpha) \Vert y\Vert_{X_\alpha}^{\frac{\gamma-\beta}{\gamma-\alpha}} \Vert y\Vert_{X_\gamma}^{\frac{\beta-\alpha}{\gamma-\alpha}}
\qquad \forall y\in X_\gamma . 
\end{equation}

When $X$ is reflexive, the operator $A^*:D(A^*)\rightarrow X'$ generates the $C_0$ semigroup $(S(t)^*)_{t\geq 0}$ (see Proposition \ref{propS*}). By the construction above where $A$ is replaced with $A^*$, there exists a scale of Banach spaces denoted by $(X^*_\alpha)_{\alpha\in\R}$, with $X^*_0=X'$.
Similarly as in Theorem \ref{thm4.5}, we have 
\begin{equation}\label{Xalphastar}
X_{-\alpha} = (X^*_\alpha)'\qquad\forall\alpha\in\R
\end{equation}
where the dual is taken with respect to the pivot space $X$.

There exist plenty of other constructions of (different) interpolation spaces, such as Favard or abstract H\"older spaces (see \cite[Chapter II, Section 5, b]{Nagel}).

\paragraph{Cauchy problems.}
The above general construction allows one to generalize in a wide sense the concept of strong or weak solution. As a consequence of Theorem \ref{thm4.4}, we have the following result.

\begin{theorem}
The Cauchy problem
\begin{equation}\label{cauchyalpha}
\dot{y}(t)=Ay(t), \quad y(0)=y_0\in X_\alpha,
\end{equation}
has a unique solution 
$
y\in C^0([0,+\infty);X_\alpha)\cap C^1((0,+\infty);X_{\alpha-1})
$
given by $y(t)=S(t)y_0$ for every $t\geq 0$. Here, we have skipped the index $\alpha$, but it is understood that $A=A_{\alpha-1}$ in \eqref{cauchyalpha}; the differential equation is written in $X_{\alpha-1}$ and the derivative is computed with respect to the norm $\Vert\cdot\Vert_{X_{\alpha-1}}$.
\end{theorem}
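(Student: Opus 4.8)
The plan is to reduce the statement to a verbatim application of Theorem \ref{thm4.4}, carried out not in $X$ but in the Banach space $X_{\alpha-1}$ of the scale. The key point, already recorded in the construction of the scale, is that the operator $A_{\alpha-1}:D(A_{\alpha-1})=X_\alpha\rightarrow X_{\alpha-1}$ is the infinitesimal generator of the $C_0$ semigroup $(S_{\alpha-1}(t))_{t\geq 0}$ on $X_{\alpha-1}$. Once this is in hand, the theorem is just Theorem \ref{thm4.4} read in the right space.

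First I would fix the ambient Banach space to be $X_{\alpha-1}$ and the generator to be $A_{\alpha-1}$, whose domain is $D(A_{\alpha-1})=X_\alpha$. The one point that requires a word of justification is that the norm of $X_\alpha$ coincides, up to equivalence, with the graph norm of $A_{\alpha-1}$ regarded as an unbounded operator on $X_{\alpha-1}$. This follows from the fact that $\beta\,\mathrm{id}_X-A$ is, by construction of the scale, a surjective isometry $X_\alpha\rightarrow X_{\alpha-1}$: hence $\Vert y\Vert_{X_\alpha}=\Vert(\beta\,\mathrm{id}_X-A)y\Vert_{X_{\alpha-1}}$ is equivalent to $\Vert y\Vert_{X_{\alpha-1}}+\Vert A_{\alpha-1}y\Vert_{X_{\alpha-1}}$, exactly as the equivalence of $\Vert\ \Vert_1$ with the graph norm was established for $X_1$ over $X$. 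This identification lets me rewrite the space $C^0([0,+\infty);D(A_{\alpha-1}))$ produced by Theorem \ref{thm4.4} as $C^0([0,+\infty);X_\alpha)$.

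Next, I would apply Theorem \ref{thm4.4} with $X$ replaced by $X_{\alpha-1}$, the generator replaced by $A_{\alpha-1}$, and initial datum $y_0\in D(A_{\alpha-1})=X_\alpha$. This yields a unique solution $y\in C^0([0,+\infty);X_\alpha)\cap C^1((0,+\infty);X_{\alpha-1})$, given by $y(t)=S_{\alpha-1}(t)y_0$, the equation $\dot y(t)=A_{\alpha-1}y(t)$ being understood in $X_{\alpha-1}$ with the derivative taken in $\Vert\cdot\Vert_{X_{\alpha-1}}$. It then remains only to replace $S_{\alpha-1}(t)y_0$ by $S(t)y_0$, which is nothing but the consistency of the semigroups along the scale: $S_{\alpha-1}(t)$ is by construction the restriction or extension of $S(t)$, and the two agree on $X_\alpha\subset X_{\alpha-1}$, so the index may legitimately be dropped.

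The main obstacle, if any, is not analytical but organizational: everything hinges on the scale having been set up so that the restrictions/extensions $A_\alpha$ genuinely generate the semigroups $(S_\alpha(t))_{t\geq 0}$ and so that the surjective isometries $(\beta\,\mathrm{id}_X-A)^{\alpha_1-\alpha_2}:X_{\alpha_1}\rightarrow X_{\alpha_2}$ intertwine them consistently. Granting those structural facts (which are part of the scale construction), the proof is a transcription of Theorem \ref{thm4.4}, and the only computation worth writing is the norm equivalence identifying $D(A_{\alpha-1})$ with $X_\alpha$.
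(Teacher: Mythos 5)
Your proposal is correct and is essentially the paper's own argument: the paper states this result as a direct consequence of Theorem \ref{thm4.4}, applied verbatim in the ambient space $X_{\alpha-1}$ to the generator $A_{\alpha-1}$ with domain $D(A_{\alpha-1})=X_\alpha$, using the structural facts of the scale (that $A_{\alpha-1}$ generates $(S_{\alpha-1}(t))_{t\geq 0}$ and that these semigroups are consistent restrictions/extensions of $S(t)$). Your additional check that $\Vert\cdot\Vert_{X_\alpha}$ is equivalent to the graph norm of $A_{\alpha-1}$, via the isometry $\beta\,\mathrm{id}_X-A:X_\alpha\rightarrow X_{\alpha-1}$, is exactly the right detail to make the identification $C^0([0,+\infty);D(A_{\alpha-1}))=C^0([0,+\infty);X_\alpha)$ rigorous.
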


\begin{remark}\label{rem41}
It is interesting to perform the construction of the scale of Banach spaces in the following important case of diagonalizable operators.

Assume that $X$ is a Hilbert space and that $\mathcal{A}:D(\mathcal{A})\rightarrow X$ is a self-adjoint positive operator with $\mathcal{A}^{-1}$ compact (for instance, the negative of the Dirichlet-Laplacian on a bounded domain). 
Then there exists a normalized Hilbert basis $(e_j)_{j\in\N^*}$ of eigenvectors of $\mathcal{A}$, associated with eigenvalues $(\lambda_j)_{j\in\N^*}$. One has
$$
\mathcal{A}y = \sum_{j=1}^{+\infty} \lambda_j (e_j,y)_X e_j\quad\textrm{on}\quad
D(\mathcal{A}) = \Big\{ y\in X \ \bigm|\ \sum_{j=1}^{+\infty} \lambda_j^2 (e_j,y)_X^2 < +\infty \Big\} 
$$
and then $\mathcal{A}^\alpha$ is defined for every $\alpha\in\R$ in a spectral way by
$$
\mathcal{A}^\alpha y = \sum_{j=1}^{+\infty} \lambda_j^\alpha (e_j,y)_X e_j\quad\textrm{on}\quad
D(\mathcal{A}^\alpha) = \Big\{ y\in X \ \bigm|\ \sum_{j=1}^{+\infty} \lambda_j^{2\alpha} (e_j,y)_X^2 < +\infty \Big\}.
$$
Note that we have used a calligraphic $\mathcal{A}$, to avoid the confusion with the operator $A=-\mathcal{A}$ that one can consider in the differential equation $\dot y(t)=Ay(t)$.
\end{remark}

\begin{example}\label{example_chain_Dirichlet}
Let us consider the negative of the Dirichlet-Laplacian $\mathcal{A}=-\triangle_D$ defined on $D(\mathcal{A})=\{ y \in H^1_0(\Omega)\ \mid\ \triangle y \in L^2(\Omega)\}$, with $X=L^2(\Omega)$, where $\Omega$ is a bounded open subset of $\R^n$ with $C^2$ boundary. We have $D(\mathcal{A})=H^2(\Omega)\cap H^1_0(\Omega)$ (see Example \ref{ex_heat}) and $X_{-1}=(H^1_0(\Omega)\cap H^2(\Omega))'$ (see Example \ref{ex4.7}) where the dual is taken with respect to the pivot space $X=L^2(\Omega)$.
We can define $\mathcal{A}^{1/2}=\sqrt{-\triangle}$ in a spectral way as above.

Assuming that the boundary of $\Omega$ is of class $C^\infty$, the spaces $D(\mathcal{A}^{j/2})$, for $j\in\N$, called \emph{Dirichlet spaces}, are the Sobolev spaces with (the so-called) Navier boundary conditions, defined by
\begin{equation*}
\begin{split}
D(\mathcal{A}^{1/2}) & = \{ y \in H^1(\Omega) \ \mid\  y_{\vert\partial\Omega}=0 \}  = H^1_0(\Omega) , \\
D(\mathcal{A}) & = \{ y \in H^2(\Omega) \ \mid\   y_{\vert\partial\Omega}=0 \} = H^1_0(\Omega)\cap H^2(\Omega) ,  \\
D(\mathcal{A}^{3/2}) &= \{ y \in H^3(\Omega) \ \mid\   y_{\vert\partial\Omega}=(\triangle  y)_{\vert\partial\Omega}=0 \} , \\
D(\mathcal{A}^2) &= \{ y \in H^4(\Omega) \ \mid\   y_{\vert\partial\Omega}=(\triangle  y)_{\vert\partial\Omega}=0 \} , \\
D(\mathcal{A}^{5/2}) &= \{ y \in H^5(\Omega) \ \mid\   y_{\vert\partial\Omega}=(\triangle  y)_{\vert\partial\Omega}=(\triangle^2  y)_{\vert\partial\Omega}=0 \} , \\
D(\mathcal{A}^3) &= \{ y \in H^6(\Omega) \ \mid\   y_{\vert\partial\Omega}=(\triangle  y)_{\vert\partial\Omega}=(\triangle^2  y)_{\vert\partial\Omega}=0 \} ,
\end{split}
\end{equation*}
etc; in other words,
$$
D(\mathcal{A}^{j/2}) = \left\{ y \in H^j(\Omega) \ \bigm|\   y_{\vert\partial\Omega}=(\triangle  y)_{\vert\partial\Omega}=\cdots = (\triangle^{\left[\frac{j-1}{2}\right]}  y)_{\vert\partial\Omega}=0 \right\} 
$$
for every $j\in\N^*$, where $\left[\ \right]$ is the floor function. Moreover, the operator $\mathcal{A}^{j/2}:D(\mathcal{A}^{j/2})\rightarrow L^2(\Omega)$ is an isomorphism (see \cite{TucsnakWeiss} for other properties). 
It can be noted that 
$$
\Vert \mathcal{A}^{j/2}y\Vert_{L^2(\Omega)} = 
\left\{\begin{array}{ll}
\Vert(-\triangle)^{j/2} y\Vert_{L^2(\Omega)} & \textrm{if $j$ is even}, \\
\Vert(-\triangle)^{j/2} y\Vert_{H^1_0(\Omega)}=\Vert(-\triangle)^{(j+1)/2} y\Vert_{L^2(\Omega)} %=\Vert\nabla\triangle^{j/2} y\Vert_{L^2(\Omega)} 
& \textrm{if $j$ is odd}.
\end{array}\right.
$$
Omitting the indices, we have the scale of Hilbert spaces
$$
\cdots \xrightarrow{\mathcal{A}^{1/2}}{} \! D(\mathcal{A}) \xrightarrow{\mathcal{A}^{1/2}}{} \! D(\mathcal{A}^{1/2}) \xrightarrow{\mathcal{A}^{1/2}}{} \! L^2(\Omega) \xrightarrow{\mathcal{A}^{1/2}}{} \! D(\mathcal{A}^{1/2})' \xrightarrow{\mathcal{A}^{1/2}}{} \! D(\mathcal{A})' \xrightarrow{\mathcal{A}^{1/2}}{}  \!\cdots
$$
with $D(\mathcal{A}^{1/2})'=H^{-1}(\Omega)$ and $D(\mathcal{A})'=(H^1_0(\Omega)\cap H^2(\Omega))'$ (with respect to the pivot space $L^2(\Omega)$). All mappings $\mathcal{A}^{1/2}=\sqrt{-\triangle}$, between the corresponding spaces, are isometric isomorphims.

As in the previous remark, we can even define $X_\alpha=D(\mathcal{A}^\alpha)$ (and their duals) in a spectral way, for any $\alpha\in\R$, thus obtaining the scale $(X_\alpha)_{\alpha\in\R}$ of Dirichlet spaces associated with the Dirichlet-Laplacian. By interpolation theory (see \cite{LionsMagenes}), for every $\alpha\in[0,1)$, we have $X_\alpha=H^{2\alpha}_0(\Omega)$ if $\alpha\neq 1/4$ and $X_{1/4}=H^{1/2}_{00}(\Omega)$ (Lions-Magenes space).
\end{example}

\begin{remark}
Using Proposition \ref{propS*}, if $X$ is reflexive then all these results can be stated as well for the adjoint operator $A^*$ and the adjoint $C_0$ semigroup $S(t)^*$.
\end{remark}

\section{Nonhomogeneous Cauchy problems}\label{chap4_sec4.2}
Let $y_0\in X$. We consider the Cauchy problem
\begin{equation}\label{cp1}
\dot{y}(t)=Ay(t)+f(t), \qquad y(0)=y_0,
\end{equation}
where $A:D(A)\rightarrow X$ generates a $C_0$ semigroup $(S(t))_{t\geq 0}$ on $X$.

\begin{proposition}\label{thm32}
If $y_0\in D(A)$ and $f\in L^p_\mathrm{loc}([0,+\infty),D(A))$ with $1\leq p\leq +\infty$, then \eqref{cp1} has a unique solution $y\in C^0([0,+\infty);D(A))\cap W^{1,p}_\mathrm{loc}([0,+\infty),X)$ (often referred to as \textit{strong solution} of \eqref{cp1}) given by
\begin{equation}\label{eqq1}
y(t)=S(t)y_0+\int_0^t S(t-s)f(s)ds.
\end{equation}
Morever, the differential equation \eqref{cp1} makes sense in $X$.
\end{proposition}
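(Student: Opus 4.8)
The plan is to write $y = u + v$ with $u(t) = S(t)y_0$ handling the homogeneous part and $v(t) = \int_0^t S(t-s)f(s)\,ds$ the forcing term, to show that $v$ lands in the announced regularity class and solves $\dot v = Av + f$, and finally to recover the formula \eqref{eqq1} for any solution by a time-reversal argument, which gives uniqueness.

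First I would check that \eqref{eqq1} is meaningful and regular. Since $D(A)\hookrightarrow X$ continuously (graph norm) and $\Vert S(\sigma)\Vert\leq M\mathrm{e}^{\omega\sigma}$ by \eqref{inegsemigroup}, the maps $s\mapsto S(t-s)f(s)$ and $s\mapsto S(t-s)Af(s)$ are Bochner integrable on $[0,t]$; in particular $v(t)$ is well defined. Because $A$ is closed (Proposition \ref{propclosed}) and $S(t-s)$ commutes with $A$ on $D(A)$ with $AS(t-s)f(s)=S(t-s)Af(s)$, the fact that a closed operator passes through a Bochner integral yields $v(t)\in D(A)$ and $Av(t)=\int_0^t S(t-s)Af(s)\,ds$. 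Writing, for $h>0$, $v(t+h)-v(t)=(S(h)-\mathrm{id}_X)v(t)+\int_t^{t+h}S(t+h-s)f(s)\,ds$ and using strong continuity together with $\int_t^{t+h}\Vert f(s)\Vert\,ds\to0$, I obtain continuity of $v$ into $X$; the identical estimate applied to $Af$ gives continuity of $Av$, hence $v\in C^0([0,+\infty);D(A))$. The homogeneous part $u$ lies in $C^0([0,+\infty);D(A))\cap C^1((0,+\infty);X)$ with $\dot u=Au$ by Theorem \ref{thm4.4}.

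Next I would identify the equation satisfied by $v$, avoiding a direct differentiation under the integral sign. Using Fubini for the Bochner integral (licit since the double integral of $\Vert S(s-r)Af(r)\Vert$ over the triangle is finite) together with the elementary identity $\int_0^\tau S(\sigma)Ax\,d\sigma=S(\tau)x-x$ for $x\in D(A)$ (integrate $\frac{d}{d\sigma}S(\sigma)x=S(\sigma)Ax$ from Proposition \ref{propclosed}), I compute
\begin{equation*}
\int_0^t Av(s)\,ds = \int_0^t\!\!\int_0^{t-r} S(\sigma)Af(r)\,d\sigma\,dr = \int_0^t\bigl(S(t-r)f(r)-f(r)\bigr)\,dr = v(t)-\int_0^t f(s)\,ds .
\end{equation*}
Hence $v(t)=\int_0^t\bigl(Av(s)+f(s)\bigr)\,ds$. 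Since $Av$ is continuous and $f\in L^p_\mathrm{loc}$, the integrand $Av+f$ belongs to $L^p_\mathrm{loc}([0,+\infty),X)$, so $v$ is the indefinite integral of an $L^p_\mathrm{loc}$ function and therefore $v\in W^{1,p}_\mathrm{loc}([0,+\infty),X)$ with $\dot v(t)=Av(t)+f(t)$ for almost every $t$. Adding $\dot u=Au$ shows that $y=u+v$ solves \eqref{cp1} in $X$, satisfies $y(0)=y_0$, and lies in the announced class.

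For uniqueness, let $y$ be any solution in $C^0([0,+\infty);D(A))\cap W^{1,p}_\mathrm{loc}([0,+\infty),X)$ and fix $t>0$. I would introduce $\phi(s)=S(t-s)y(s)$ on $[0,t]$ and show it is absolutely continuous with $\phi'(s)=S(t-s)f(s)$ almost everywhere: writing the difference quotient as $S(t-s-h)\frac{y(s+h)-y(s)}{h}-S(t-s-h)\frac{(S(h)-\mathrm{id}_X)y(s)}{h}$ and letting $h\to0$ using $y(s)\in D(A)$, the derivative equals $S(t-s)\dot y(s)-S(t-s)Ay(s)=S(t-s)f(s)$. Integrating from $0$ to $t$ gives $y(t)-S(t)y_0=\int_0^t S(t-s)f(s)\,ds$, i.e.\ exactly \eqref{eqq1}, forcing uniqueness. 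The main obstacle is the rigorous handling of the closed unbounded operator $A$ inside the Bochner integrals — justifying $Av(t)=\int_0^t S(t-s)Af(s)\,ds$ and the absolute continuity of $\phi$ — rather than the bookkeeping; this is precisely where the hypothesis $f\in L^p_\mathrm{loc}([0,+\infty),D(A))$ (values in the domain, not merely in $X$) is used in an essential way.
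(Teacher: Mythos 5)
Your proof is correct and follows essentially the same route as the paper: the uniqueness device --- differentiating $s\mapsto S(t-s)y(s)$ and integrating from $0$ to $t$ --- is exactly the computation the paper uses, applied there to the difference $z=y_1-y_2$ of two solutions (so with $f=0$) rather than to an arbitrary solution. The existence half, which you establish carefully via closedness of $A$ passing through the Bochner integral and the Fubini identity, is precisely the part the paper dismisses as ``clearly a solution,'' so your write-up is a fleshed-out version of the same argument.
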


\begin{proof}
The function $y$ defined by \eqref{eqq1} is clearly a solution of \eqref{cp1}. To prove uniqueness, let $y_1$ and $y_2$ be two solutions. Then $z=y_1-y_2$ is solution of $\dot z(t)=Az(t)$, $z(0)=0$. Since $\frac{d}{ds} S(t-s)z(s) = -S(t-s)Az(s) + S(t-s)Az(s) = 0$ for every $s\in[0,t]$, it follows that $0=S(t)z(0)=S(0)z(t)=z(t)$. %, whence the conclusion.
\end{proof}

Note that, if $f\in L^p_\mathrm{loc}([0,+\infty),X)$, then \eqref{eqq1} still makes sense. 
Note also that, using the extension of $A$ (and of $S(t)$) to $X_{-1}$, Proposition \ref{thm32} implies that, if $y_0\in X$ and $f\in L^p_\mathrm{loc}([0,+\infty),X)$, then \eqref{cp1} has a unique solution $y\in C^0([0,+\infty),X)\cap W^{1,p}_\mathrm{loc}([0,+\infty),X_{-1})$ given as well by the Duhamel formula \eqref{eqq1} (and often referred to as \textit{mild solution} of \eqref{cp1}), and the differential equation \eqref{cp1} is written in $X_{-1}$ (see, e.g., \cite[Chapter 4]{Pazy}). Moreover, for every $T>0$ there exists $K_T>0$ (not depending on $y_0$ and $f$) such that
$$
\Vert y(t)\Vert_X \leq K_T ( \Vert y_0\Vert_X + \Vert f\Vert_{L^p([0,T],X)} ).
$$

More generally, using the general scale of Banach spaces $(X_\alpha)_{\alpha\in\R}$ mentioned previously, we have the following result (see \cite{Nagel} or \cite[Theorem 3.8.2]{Staffans}).

\begin{proposition}\label{propXalpha}
If $f\in L^p_\mathrm{loc}([0,+\infty),X_{\alpha})$ for some $\alpha\in\R$ and $1\leq p\leq +\infty$, then for every $y_0\in X_\alpha$ the Cauchy problem \eqref{cp1} has a unique solution 
$$
y\in C^0([0,+\infty),X_\alpha)\cap W^{1,p}_\mathrm{loc}([0,+\infty),X_{\alpha-1})
$$
given as well by \eqref{eqq1} (called strong solution in $X_{\alpha}$ in \cite{Staffans,TucsnakWeiss}).
Here, we have $A=A_{\alpha-1}$ in the equation \eqref{cp1} which is written in $X_{\alpha-1}$ almost everywhere, and the integral in \eqref{eqq1} is done in $X_{\alpha}$ (with $S(\cdot)=S_{\alpha}(\cdot)$ in the integral).
\end{proposition}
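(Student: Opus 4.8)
The plan is to reduce the statement to the already-established base case $\alpha=0$ by conjugating the Cauchy problem with an isometric isomorphism of the scale. Fix $\beta\in\rho(A)$ with $\Real(\beta)>\omega$ and write $\Lambda=\beta\,\mathrm{id}_X-A$ for the generating operator of the scale, so that for any two indices the map $\Lambda^{\alpha_1-\alpha_2}$ is a surjective isometry $X_{\alpha_1}\to X_{\alpha_2}$ (as recalled above). In particular $\Phi:=\Lambda^\alpha$ restricts to surjective isometries $X_\alpha\to X_0=X$ and $X_{\alpha-1}\to X_{-1}$. The key structural fact I would use is that $\Lambda^\alpha$ commutes with $A$ and with the semigroup across the scale: since $S(t)$ commutes with the resolvent $R(\beta,A)$, it commutes with every real power of $\Lambda$ built from \eqref{def_poweralpha}, and likewise $A$ commutes with $\Lambda^\alpha$. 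This yields the intertwining relations $\Phi\,S_\alpha(t)=S_0(t)\,\Phi$ and $\Phi\,A_{\alpha-1}=A_{-1}\,\Phi$.

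First I would transport the data: given $y_0\in X_\alpha$ and $f\in L^p_\mathrm{loc}([0,+\infty),X_\alpha)$, set $\tilde y_0=\Phi y_0\in X$ and $\tilde f=\Phi f$. Because $\Phi$ is an isometry $X_\alpha\to X$, it maps $L^p_\mathrm{loc}$ functions valued in $X_\alpha$ to $L^p_\mathrm{loc}$ functions valued in $X$, so $\tilde f\in L^p_\mathrm{loc}([0,+\infty),X)$. Then I would invoke the mild-solution version of Proposition \ref{thm32} (the $\alpha=0$ case extended to $X_{-1}$, stated just after that proposition): the conjugate problem $\dot{\tilde y}(t)=A_{-1}\tilde y(t)+\tilde f(t)$, $\tilde y(0)=\tilde y_0$, has a unique solution $\tilde y\in C^0([0,+\infty),X)\cap W^{1,p}_\mathrm{loc}([0,+\infty),X_{-1})$ given by $\tilde y(t)=S_0(t)\tilde y_0+\int_0^t S_0(t-s)\tilde f(s)\,ds$.

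Next I would push the solution back by $y=\Phi^{-1}\tilde y$. Since $\Phi^{-1}=\Lambda^{-\alpha}$ is an isometry $X\to X_\alpha$ and also $X_{-1}\to X_{\alpha-1}$, applying it preserves the $C^0$ and $W^{1,p}$ regularity and yields $y\in C^0([0,+\infty),X_\alpha)\cap W^{1,p}_\mathrm{loc}([0,+\infty),X_{\alpha-1})$. Using the intertwining $\Phi^{-1}S_0(t)=S_\alpha(t)\Phi^{-1}$ inside the Duhamel integral (the integral commuting with the bounded operator $\Phi^{-1}$), I obtain
$$
y(t)=S_\alpha(t)y_0+\int_0^t S_\alpha(t-s)f(s)\,ds,
$$
which is exactly formula \eqref{eqq1} read in the scale; similarly $\Phi^{-1}$ turns the conjugate equation, written in $X_{-1}$, into $\dot y(t)=A_{\alpha-1}y(t)+f(t)$ written in $X_{\alpha-1}$. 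Uniqueness transports in the same way: two solutions of the $X_\alpha$-problem are mapped by $\Phi$ to two solutions of the $X$-problem, which coincide by the base case, hence so do the originals.

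The genuinely delicate points, rather than any hard estimate, are bookkeeping ones: verifying that $\Lambda^\alpha$ realizes simultaneous isometries on the three relevant levels $X_\alpha\to X$ and $X_{\alpha-1}\to X_{-1}$ and commutes with $A$ and $S(t)$ across the scale. These rest on the construction \eqref{def_poweralpha} of the fractional powers and on the fact that all the operators involved are functions of the single operator $\Lambda$, so they mutually commute. The only other care needed is to confirm that conjugation by a bounded operator preserves membership in $C^0$ and $W^{1,p}$ and commutes with the Bochner integral, which is immediate from linearity and continuity. Once these intertwinings are in place the whole statement is a transparent transcription of the $\alpha=0$ result, and no new analytic input beyond Proposition \ref{thm32} is required.
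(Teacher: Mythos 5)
Your proof is correct, but it takes a different route from the one the paper relies on: the paper states Proposition \ref{propXalpha} without proof, citing \cite{Nagel} and \cite[Theorem 3.8.2]{Staffans}, and within the paper's own framework the shortest argument is a direct instantiation — the operator $A_{\alpha-1}$ generates a $C_0$ semigroup on the Banach space $X_{\alpha-1}$ with domain $D(A_{\alpha-1})=X_\alpha$, so Proposition \ref{thm32}, whose proof uses nothing about $X$ beyond the fact that $A$ generates a $C_0$ semigroup on it, applies verbatim with $(X,A,D(A))$ replaced by $(X_{\alpha-1},A_{\alpha-1},X_{\alpha})$, and that is exactly the statement to be proved. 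You instead conjugate the level-$\alpha$ problem down to the level-$0$ mild-solution statement (the remark following Proposition \ref{thm32}) via the surjective isometry $\Phi=(\beta\,\mathrm{id}_X-A)^{\alpha}$, which requires the intertwining relations $\Phi S_\alpha(t)=S_0(t)\Phi$ and $\Phi A_{\alpha-1}=A_{-1}\Phi$; these do hold, for the reason you give (all operators involved are functions of $\beta\,\mathrm{id}_X-A$, and $S(t)$ commutes with the fractional powers \eqref{def_poweralpha}, hence with their inverses and their density-extensions across the scale), so the reduction is sound, including the transport of uniqueness. What your approach buys is that you never reopen the proof of Proposition \ref{thm32}: you use only its stated $\alpha=0$ consequence together with structural facts about the scale that the paper records explicitly (the isometries $(\beta\,\mathrm{id}_X-A)^{\alpha_1-\alpha_2}:X_{\alpha_1}\to X_{\alpha_2}$). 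What the direct instantiation buys is the elimination of all commutation bookkeeping — precisely the part you rightly single out as the delicate step — at the mild cost of observing that the earlier proof is generic in the Banach space. The two arguments are really two expressions of the same fact: conjugation by $\Phi$ is exactly the isomorphism identifying the level-$\alpha$ Cauchy problem with the level-$0$ one.
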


Proposition \ref{thm32} corresponds to $\alpha=1$. 

\begin{remark}
The regularity stated in Proposition \ref{propXalpha} is sharp in general.
Given $y_0\in X_{\alpha+1}$, the condition $f\in C^0([0,+\infty);X_\alpha)$ does not ensure that $y\in C^0([0,+\infty);X_{\alpha+1})\cap C^1((0,+\infty);X_{\alpha})$ (unless the semigroup is analytic, see \cite[Chapter VI, Section 7.b, Corollary 7.17]{Nagel}). 
Indeed, for $y_0=0$ and for a given $y_1\in X_\alpha$, the solution of the Cauchy problem $\dot y(t)=Ay(t)+S(t)y_1$, $y(0)=0$ is $y(t)=\int_0^t S(t-s)S(s)y_1\, ds = tS(t)y_1$. Hence, if $y_1\in X_\alpha\setminus X_{\alpha+1}$ then $S(t)y_1$ may not belong to $X_{\alpha+1}$. 

It can be however noted that if $f$ is more regular in time then the solution gains some regularity with respect to the space variable. More precisely we have the following (sometimes useful) result (see \cite{Nagel,Pazy,TucsnakWeiss}).

\begin{lemma}\label{lem_moreregular}
If $y_0\in X_{\alpha+1}$ and $f\in W^{1,p}_\mathrm{loc}([0,+\infty),X_\alpha)$ then \eqref{cp1} has a unique solution 
$$
y\in C^0([0,+\infty);X_{\alpha+1})\cap C^1((0,+\infty),X_\alpha)\cap W^{2,p}_\mathrm{loc}([0,+\infty),X_{\alpha-1})
$$
given by \eqref{eqq1}.
\end{lemma}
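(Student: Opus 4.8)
The plan is to take for granted, from Proposition \ref{propXalpha} applied at index $\alpha$, that the Duhamel formula \eqref{eqq1} already provides the unique mild solution $y\in C^0([0,+\infty);X_\alpha)\cap W^{1,p}_\mathrm{loc}([0,+\infty);X_{\alpha-1})$; the entire content of the lemma is therefore to \emph{upgrade} the spatial regularity by one notch in the scale, using the extra time regularity of $f$. First I would record a preliminary remark that, in one time dimension, $W^{1,p}_\mathrm{loc}([0,+\infty);X_\alpha)\hookrightarrow C^0([0,+\infty);X_\alpha)$, so that $f$ has a continuous representative and $f(0)\in X_\alpha$ is meaningful; moreover $Ay_0+f(0)\in X_\alpha$ because $y_0\in X_{\alpha+1}=D(A_\alpha)$.

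The main step is to exhibit a candidate for $\dot y$ and to identify it. Define
\[
w(t)=S(t)\big(Ay_0+f(0)\big)+\int_0^t S(t-s)\dot f(s)\,ds .
\]
Since $Ay_0+f(0)\in X_\alpha$ and $\dot f\in L^p_\mathrm{loc}([0,+\infty);X_\alpha)$, Proposition \ref{propXalpha} (again at index $\alpha$) shows that $w$ is itself a mild solution and hence $w\in C^0([0,+\infty);X_\alpha)\cap W^{1,p}_\mathrm{loc}([0,+\infty);X_{\alpha-1})$. I would then prove the key identity $y(t)=y_0+\int_0^t w(s)\,ds$, which forces $y\in C^1$ with $\dot y=w\in C^0([0,+\infty);X_\alpha)$. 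The verification rests on the two elementary semigroup identities $\int_0^t S(s)Ax\,ds=S(t)x-x$ for $x\in D(A)$ (applied to $x=y_0$) and $A\int_0^tS(s)x\,ds=S(t)x-x$ for $x\in X_\alpha$, together with a Fubini interchange $\int_0^t\!\int_0^s S(s-\tau)\dot f(\tau)\,d\tau\,ds=\int_0^t\!\big(\int_0^{t-\tau}S(\sigma)\,d\sigma\big)\dot f(\tau)\,d\tau$ for the Bochner integral.

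With $\dot y=w$ in hand, the conclusion is immediate. Rewriting the equation as $Ay(t)=\dot y(t)-f(t)=w(t)-f(t)$ and noting that both $w$ and $f$ lie in $C^0([0,+\infty);X_\alpha)$, I deduce that $y(t)\in D(A_\alpha)=X_{\alpha+1}$ for every $t$ and that $t\mapsto Ay(t)$ is continuous into $X_\alpha$; the equivalence of $\Vert\cdot\Vert_{X_{\alpha+1}}$ with the graph norm then upgrades this to $y\in C^0([0,+\infty);X_{\alpha+1})$. Finally, $\dot y=w\in W^{1,p}_\mathrm{loc}([0,+\infty);X_{\alpha-1})$ is exactly the statement $y\in W^{2,p}_\mathrm{loc}([0,+\infty);X_{\alpha-1})$.

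The main obstacle I anticipate is the rigorous justification of the identity $y=y_0+\int_0^t w$: one must legitimately differentiate the convolution $\int_0^t S(t-s)f(s)\,ds$ under the mere assumption $f\in W^{1,p}$ (rather than $f\in C^1$), which precludes a naive pointwise integration by parts. The clean way around this is precisely the device above — defining $w$ through Proposition \ref{propXalpha} and checking the \emph{integrated} identity — so that all manipulations are carried out at the level of Bochner integrals of $L^p$-valued maps, where the Fubini interchange and the semigroup integral formulas are valid, and differentiability of $y$ is obtained a posteriori from the integral representation rather than assumed a priori.
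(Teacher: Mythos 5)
Your proof is correct. The paper itself does not prove Lemma \ref{lem_moreregular} (it simply cites \cite{Nagel,Pazy,TucsnakWeiss}), and your argument is exactly the classical one found in those references (e.g.\ the proof of Theorem 2.4 in Chapter 4 of \cite{Pazy}, adapted from $C^1$ to $W^{1,p}$ sources and transported along the scale $(X_\alpha)_{\alpha\in\R}$): introduce $w(t)=S(t)(Ay_0+f(0))+\int_0^t S(t-s)\dot f(s)\,ds$ as the candidate derivative, prove the integrated identity $y(t)=y_0+\int_0^t w(s)\,ds$ by Fubini together with the fundamental theorem of calculus for the absolutely continuous representative of $f$, and then recover $y(t)\in X_{\alpha+1}$ and the $C^0([0,+\infty);X_{\alpha+1})$ regularity from $Ay=\dot y-f$ via the equivalence of $\Vert\cdot\Vert_{X_{\alpha+1}}$ with the graph norm.
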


The assumption on $f$ can even be weakened if $X_{\alpha-1}$ is reflexive, and then it suffices to assume that $f$ is Lipschitz continuous with values in $X_{\alpha-1}$.
\end{remark}

\begin{remark}\label{rem_regular2}
Let $\Omega$ be a bounded open subset of $\R^n$ with $C^2$ boundary.
Let us consider the Cauchy problem
\begin{equation*}
\partial_t y=\triangle y+f\ \ \textrm{in}\ \Omega,\qquad
y_{\vert\partial\Omega}=0,\qquad
y(0)=y_0\in L^2(\Omega),
\end{equation*}
with $f\in L^2((0,+\infty)\times\Omega)$. The general theory implies that there exists a unique solution
$$
y\in C^0([0,+\infty),L^2(\Omega))\cap H^1([0,+\infty), (H^2(\Omega)\cap H^1_0(\Omega))') .
$$
Actually, by using a spectral expansion as in Remark \ref{rem_regular}, it is easy to prove that 
$$
y\in L^2([0,+\infty),H^1_0(\Omega))\cap H^1([0,+\infty),H^{-1}(\Omega)) ,
$$
which is more precise because this set is contained in $C^0([0,+\infty),L^2(\Omega))$. Moreover, if $y_0\in H^1_0(\Omega)$, then we have the improved regularity 
$$
y\in L^2([0,+\infty),H^2(\Omega)\cap H^1_0(\Omega))\cap H^1([0,+\infty),L^2(\Omega))\subset C^0([0,+\infty),H^1_0(\Omega))
$$
(see also \cite[Chapter 7.1]{Evans} where these regularity properties are established by using Galerkin approximations for more general elliptic operators).
%
%As in Remark \ref{rem_regular}, this remark shows again that the regularity properties obtained by the general semigroup theory may be much improved when using the specific features of the operator under consideration.
\end{remark}

\chapter{Linear control systems in Banach spaces}\label{chap_diminfinie}
Throughout the chapter, we consider the linear autonomous control system
\begin{equation}\label{eqE}
\begin{split}
\dot{y}(t)&=Ay(t)+Bu(t)\\
y(0)&=y_0
\end{split}
\end{equation}
where the state $y(t)$ belongs to a Banach space $X$, $y_0\in X$, the control $u(t)$ belongs to a Banach space $U$, $A:D(A)\rightarrow X$ is the generator of a $C_0$ semigroup $(S(t))_{t\geq 0}\in\mathcal{G}(M,\omega)$ on $X$, and $B\in L(U,X_{-1})$. The space $X_{-1}$ has been defined in the previous chapter.

The control operator $B$ is said to be \textit{bounded} if $B\in L(U,X)$, and is called \textit{unbounded} otherwise (this is the standard wording, although it is a bit ambiguous since $B$ is bounded as an operator from $U$ in $X_{-1}$). 
Unbounded operators appear naturally when dealing with boundary or pointwise control systems.
Other choices could be made for the control operator, and we can more generally assume that $B\in L(U,X_{-\alpha})$ for some $\alpha\geq 0$. We will comment on that further, with the concept of degree of unboundedness.

A priori if $u\in L^1_{\mathrm{loc}}(0,+\infty;U)$ then $Bu\in L^1_{\mathrm{loc}}(0,+\infty;X_{-1})$, and since $y_0\in X$, it follows from the results of Section \ref{chap4_sec4.2} that \eqref{eqE} has a unique solution $y\in C^0([0,+\infty);X_{-1})\cap W^{1,1}_\mathrm{loc}(0,+\infty;X_{-2})$, given by
\begin{equation}\label{defyfaible}
y(t;y_0,u)=S(t)y_0+L_tu
\end{equation}
where
\begin{equation}\label{def_Lt}
L_tu=\int_0^t S(t-s)Bu(s)\, ds.
\end{equation}
Moreover, the differential equation in \eqref{eqE} is written in $X_{-2}$. The integral \eqref{def_Lt} is done in $X_{-1}$.

Note that, of course, if $B\in L(U,X)$ is bounded, then the regularity moves up a rung: \eqref{eqE} has a unique solution $y\in C^0([0,+\infty);X)\cap W^{1,1}_\mathrm{loc}(0,+\infty;X_{-1})$ given as well by \eqref{defyfaible}, and the differential equation is written in $X_{-1}$.

\medskip

For a general (unbounded) control operator $B\in L(U,X_{-1})$, it is desirable to have conditions under which all solutions of \eqref{eqE} take their values in $X$, that is, under which the situation is as when the control operator is bounded.

Such control operators will be said to be \textit{admissible}. The admissibility property says that the control system is well posed in $X$ (note that it is always well posed in $X_{-1}$).

Of course, the notion of admissibility depends on the time-regularity of the inputs $u$. Since it will be characterized by duality, it is necessary, here, to fix once for all the class of controls.

In what follows, and in view of the Hilbert Uniqueness Method, we will actually deal with controls $u\in L^2([0,T],U)$ (for some $T>0$ arbitrary). Of course, we have $L^2([0,T],U)\subset L^1([0,T],U)$. Also, the duality will be easier to tackle in $L^2$ (although easy modifications can be done in what follows to deal with $L^p$, at least for $1< p\leq +\infty$, see \cite{Staffans} for exhaustive results).

Hence, from now on, the space of controls is $L^2([0,T],U)$.

\medskip

In this chapter, after having defined admissible operators, we will introduce different concepts of controllability, and show that they are equivalent, by duality, to some observability properties. Finally, we will explain the Hilbert Uniqueness Method (in short, HUM) introduced by J.-L. Lions in \cite{Lions_SIREV,Lions_HUM} in order to characterize the spaces where exact controllability holds true.

Most of this chapter is borrowed from \cite{TucsnakWeiss} (see also \cite{Lions_HUM,Staffans,Zabczyk}).

\section{Admissible control operators}\label{sec_admissible}
As said previously, we have a priori the inclusion $\mathrm{Ran}(L_T)\subset X_{-1}$, for every $T>0$, and the fact that $L_T\in L(L^2([0,T],U),X_{-1})$.

\subsection{Definition}
\begin{definition}
A control operator $B\in L(U,X_{-1})$ is said to be \textit{admissible} for the $C_0$ semigroup $(S(t))_{t\geq 0}$ if there exists $T>0$ such that $\mathrm{Ran}(L_T)\subset X$.
\end{definition}

\begin{lemma}\label{lemequivadmissible}
The following properties are equivalent:
\begin{itemize}[parsep=1mm,itemsep=1mm,topsep=1mm]%,leftmargin=*
\item There exists $T>0$ such that $\mathrm{Ran}(L_T)\subset X$.
\item For every $T>0$, one has $\mathrm{Ran}(L_T)\subset X$.
\item For every $T>0$, one has $L_T\in L(L^2([0,T],U),X)$.
\item All solutions \eqref{defyfaible} of \eqref{eqE}, with $y_0\in X$ and $u\in L^2([0,T],U)$, take their values in $X$.
\end{itemize}
\end{lemma}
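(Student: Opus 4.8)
The plan is to prove the four equivalences in a cyclic (or near-cyclic) fashion, exploiting the semigroup property $S(t+s)=S(t)S(s)$ together with the fact that $L_T$ is always a bounded operator into $X_{-1}$. The implications that require genuine work are the ones upgrading \emph{existence of one good time $T$} to \emph{goodness for all times}, and upgrading a set-theoretic range inclusion into a bounded-operator statement; the last property is essentially a restatement of the third via the Duhamel formula \eqref{defyfaible}, since $S(t)y_0\in X$ automatically for $y_0\in X$.

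First I would record the key algebraic identity relating $L_T$ at different times. For $0<t_1<t_2$ and $u\in L^2([0,t_2],U)$, splitting the integral in \eqref{def_Lt} at $t_1$ and using $S(t_2-s)=S(t_2-t_1)S(t_1-s)$ on $[0,t_1]$ gives
\begin{equation*}
L_{t_2}u = \int_{t_1}^{t_2} S(t_2-s)Bu(s)\,ds + S(t_2-t_1)\,L_{t_1}\bigl(u|_{[0,t_1]}\bigr),
\end{equation*}
where the first term is $L_{t_2-t_1}$ applied to the time-shifted restriction $u(t_1+\cdot)$. Thus, up to composing with the bounded operator $S(t_2-t_1)\in L(X)$ and time-translating controls, $L_{t_2}$ is built from $L_{t_1}$ and $L_{t_2-t_1}$. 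This concatenation identity is the engine behind the whole lemma.

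Using this I would argue as follows. The implication ``for every $T$'' $\Rightarrow$ ``there exists $T$'' is trivial. For the converse, suppose $\mathrm{Ran}(L_{T_0})\subset X$ for some $T_0>0$. For any $T\leq T_0$, restricting a control on $[0,T]$ and extending it by zero realizes $L_T$ as a piece of $L_{T_0}$, so $\mathrm{Ran}(L_T)\subset X$; for $T>T_0$ I would iterate the concatenation identity finitely many times, writing $[0,T]$ as a union of subintervals of length $\leq T_0$ and noting that each contribution lands in $X$ because $S(\cdot)$ maps $X$ into $X$ and each short-time operator has range in $X$. This yields ``there exists $T$'' $\Rightarrow$ ``for every $T$''. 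For the step ``$\mathrm{Ran}(L_T)\subset X$'' $\Rightarrow$ ``$L_T\in L(L^2,X)$'', the main obstacle --- and the only nontrivial analytic point --- is that a range inclusion does not \emph{a priori} give continuity into the finer-norm space $X$. Here I would invoke the closed graph theorem: $L_T:L^2([0,T],U)\to X$ has a closed graph, because if $u_n\to u$ in $L^2$ and $L_Tu_n\to z$ in $X$, then $L_Tu_n\to L_Tu$ in $X_{-1}$ (continuity of $L_T$ into $X_{-1}$), while the continuous injection $X\hookrightarrow X_{-1}$ forces $z=L_Tu$ in $X_{-1}$, hence in $X$. Since $L^2([0,T],U)$ and $X$ are Banach spaces, $L_T$ is bounded.

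Finally, the equivalence with the fourth property is immediate from \eqref{defyfaible}: for $y_0\in X$ one has $S(t)y_0\in X$ by the semigroup acting on $X$, so $y(t;y_0,u)\in X$ for all admissible inputs if and only if $L_tu\in X$, i.e.\ if and only if $\mathrm{Ran}(L_t)\subset X$. I expect the closed graph argument to be the conceptual crux, with the concatenation identity doing the bookkeeping needed to pass between different horizons $T$; everything else is routine manipulation of the Duhamel integral and the continuity of $S(\cdot)$.
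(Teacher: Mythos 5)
Your proposal is correct and follows essentially the same route as the paper's proof: the same zero-padding/time-translation and semigroup-splitting identities to pass between different horizons $T$, and the closed graph theorem (with the graph closed in $L^2\times X$ because $L_T$ is continuous into $X_{-1}$ and $X\hookrightarrow X_{-1}$) to upgrade the range inclusion into boundedness, the fourth item being immediate from Duhamel in both cases. One detail to make explicit in the step $T\leq T_0$: the zero-padding must be placed at the \emph{beginning} of $[0,T_0]$ with the control translated to the end, so that $L_{T_0}\tilde u = L_T u$ exactly (as in the paper); padding by zero at the end only yields $L_{T_0}\tilde u = S(T_0-T)L_T u\in X$, from which $L_T u\in X$ does not follow, but your own concatenation identity specialized to $u\vert_{[0,t_1]}=0$ gives precisely the correct version.
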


\begin{proof}
Assume that $\mathrm{Ran}(L_T)\subset X$. Let us prove that $\mathrm{Ran}(L_T)\subset X$ for every $t>0$.

Let $t\in(0,T)$ arbitrary. For every control $u\in L^2([0,t],U)$, we define the control $\tilde u\in L^2([0,T],U)$ by $\tilde u(s) = 0$ for $s\in[0,T-t]$ and $\tilde u(s) = u(s-T+t)$ for $s\in[T-t,T]$.
Then, we have
$L_T \tilde u %= \int_0^T S(T-s)B \tilde u(s) \, ds 
= \int_{T-t}^T S(T-s)B u(s-T+t) \, ds = \int_0^t S(t-\tau)B u(\tau)\, d\tau = L_t u$ (with $\tau = s-T+t$).
It follows that if $\mathrm{Ran}(L_T)\subset X$ then $\mathrm{Ran}(L_T)\subset X$, for every $t\in(0,T)$.

Before proving the statement for $t>T$, let us note that, for every $u\in L^2(0,2T;U)$, we have
$
L_{2T}u = \int_0^{2T} S(2T-t)Bu(t)\, dt = \int_0^{T} S(2T-t)Bu(t)\, dt +  \int_T^{2T} S(2T-t)Bu(t)\, dt
= S(T)\int_0^{T} S(T-t)Bu(t)\, dt +  \int_0^{T} S(T-s)Bu(s+T)\, ds = S(T) L_T u_1 + L_T u_2,
$
with the controls $u_1$ and $u_2$ defined by $u_1(t) = u(t)$ and $u_2(t)=u(t+T)$ for almost every $t\in[0,T]$.
It follows that if $\mathrm{Ran}(L_T)\subset X$ then $\mathrm{Ran}(L_{2T})\subset X$, and by immediate iteration, this implies as well that $\mathrm{Ran}(L_{kT})\subset X$ for every $k\in\N^*$.

Now, let $t>T$ arbitrary, and let $k\in\N^*$ be such that $kT>t$. Since $\mathrm{Ran}(L_{kT})\subset X$, it follows from the first part of the proof that $\mathrm{Ran}(L_T)\subset X$.

It remains to prove that, if $\mathrm{Ran}(L_T)\subset X$, then $L_T\in L(L^2([0,T],U),X)$. Note first that the operator $L_T$ is closed. Indeed, we have
$L_T u = (\beta\,\mathrm{id}_X-A)\int_0^T S(T-t) (\beta\,\mathrm{id}_X-A)^{-1} B u(t) \, dt$,
for every $u\in L^1([0,T],U)$, with $\beta\in\rho(A)$ arbitrary. By definition of $X_{-1}$, the operator $(\beta\,\mathrm{id}_X-A)^{-1} B$ is linear and continuous from $U$ to $X$. Since $A$ is closed (according to Proposition \ref{propclosed}), it follows that $L_T$ is closed.
A priori, the graph of $L_T$ is contained in $X_{-1}$. Under the assumption that $\mathrm{Ran}(L_T)\subset X$, this graph is contained in $X$. Moreover, this graph is closed because the operator $L_T$ is closed. Then the fact that $L_T\in L(L^2([0,T],U),X)$ follows from the closed graph theorem.
\end{proof}

Note that, obviously, every bounded control operator $B\in L(U,X)$ is admissible. The question is however nontrivial for an unbounded control operator.

Classical examples of bounded control operators are obtained when one considers a controlled PDE with an internal control, that is, a control system of the form $\dot y(t)=Ay(t) + \chi_\omega u$, with $A:D(A)\rightarrow X=L^2(\Omega)$, where $\Omega$ is a domain of $\R^n$ and $\omega$ is a measurable subset of $\Omega$.

Unbounded control operators appear for instance when one considers a control acting along the boundary of $\Omega$ (see further for examples).

\begin{remark}
Note that, if $B$ is admissible, then the solution $y$ of \eqref{eqE} takes its values in $X$, and the equation $\dot{y}(t)=Ay(t)+Bu(t)$ is written in the space $X_{-1}$, almost everywhere on $[0,T]$.
The solution $y$ has the regularity $y\in C^0([0,T];X)\cap H^1([0,T],X_{-1})$ whenever $u\in L^2([0,T],U)$.
Note also that, in the term $L_Tu$, the integration is done in $X_{-1}$, but the result is in $X$ whenever $B$ is admissible.
\end{remark}

\begin{remark}\label{rem_Lp}
As said in the introduction, we have assumed that the class of controls is $L^2([0,T],U)$. We can define as well the concept of admissibility within the class of controls $L^p([0,T],U)$, for some $p\geq 1$, but we obtain then a different concept, called $p$-admissibility (for instance if $X$ is reflexive and $p=1$ then every admissible operator is necessarily bounded, see \cite[Theorem 4.8]{Weiss_SICON1989}). Here, we restrict ourselves to $p=2$ (in particular in view of HUM), which is the most usually encountered case.
\end{remark}

\subsection{Dual characterization of the admissibility}
Let us compute the adjoint of the operator $L_T\in L(L^2([0,T],U),X_{-1})$, and then derive a dual characterization of the admissibility property.

\begin{lemma}\label{lem_LT*}
Assume that $X$ and $U$ are reflexive. The adjoint $L_T^*$ satisfies $L_T^*\in L(D(A^*),L^2([0,T],U'))$, and is given by
$
(L_T^*z)(t) = B^* S(T-t)^* z
$
for every $z\in D(A^*)$ and for almost every $t\in[0,T]$.
\end{lemma}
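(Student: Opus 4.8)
The plan is to compute the adjoint directly from its defining relation, after first pinning down the duality structure that makes the statement meaningful. Since $X$ and $U$ are reflexive, I would begin by recording the relevant identifications. The operator $(\beta\,\mathrm{id}_{X'}-A^*):D(A^*)\to X'$ is an isometric isomorphism by the very definition of the norm on $D(A^*)$, so $D(A^*)$ is isometrically isomorphic to the reflexive space $X'$ and is therefore itself reflexive. By Theorem \ref{thm4.5} we have $X_{-1}\simeq D(A^*)'$ (dual taken with respect to the pivot space $X$), whence, using reflexivity of $D(A^*)$, $X_{-1}'\simeq D(A^*)''\simeq D(A^*)$. On the control side, reflexivity of $U$ gives $L^2([0,T],U)'\simeq L^2([0,T],U')$. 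Thus $L_T^*$ is a priori a bounded operator from $X_{-1}'\simeq D(A^*)$ into $L^2([0,T],U')$, which already accounts for the claimed membership $L_T^*\in L(D(A^*),L^2([0,T],U'))$, as $L_T\in L(L^2([0,T],U),X_{-1})$.

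Next I would exploit the defining identity of the adjoint. For $z\in D(A^*)$ (viewed as an element of $X_{-1}'$) and $u\in L^2([0,T],U)$, we have $\langle L_T^*z,u\rangle_{L^2(U'),L^2(U)}=\langle z,L_Tu\rangle_{X_{-1}',X_{-1}}$. Since $z$ is a fixed continuous functional on $X_{-1}$ and $s\mapsto S(T-s)Bu(s)$ is Bochner integrable with values in $X_{-1}$, the pairing commutes with the integral (a standard property of the Bochner integral):
\begin{equation*}
\langle z,L_Tu\rangle_{X_{-1}',X_{-1}}=\int_0^T\langle z,S(T-s)Bu(s)\rangle_{X_{-1}',X_{-1}}\,ds.
\end{equation*}
It then remains to transfer the operators onto $z$ in the integrand. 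Using that the adjoint of the extended semigroup $S_{-1}(T-s)$ on $X_{-1}$, relative to the $X_{-1}'$–$X_{-1}$ duality, is the restriction of $S(T-s)^*$ to $D(A^*)=X_{-1}'$ (which follows by density of $X$ in $X_{-1}$ from the consistency of the extensions), and then the definition of $B^*\in L(X_{-1}',U')=L(D(A^*),U')$, I obtain
\begin{equation*}
\langle z,S(T-s)Bu(s)\rangle=\langle S(T-s)^*z,Bu(s)\rangle=\langle B^*S(T-s)^*z,u(s)\rangle_{U',U}.
\end{equation*}
Comparing with the definition of the $L^2$ pairing yields $(L_T^*z)(t)=B^*S(T-t)^*z$ for almost every $t\in[0,T]$.

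Finally I would verify that the right-hand side genuinely defines an element of $L^2([0,T],U')$ depending continuously on $z$. For $z\in D(A^*)$, Proposition \ref{propS*} (applicable since $X$ is reflexive) together with Theorem \ref{thm4.4} applied to $A^*$ shows that $t\mapsto S(t)^*z$ is continuous with values in $D(A^*)$; composing with the continuous operator $B^*:D(A^*)\to U'$ gives that $t\mapsto B^*S(T-t)^*z$ is continuous, hence in $L^2([0,T],U')$, with norm controlled by $\Vert z\Vert_{D(A^*)}$. This reconfirms $L_T^*\in L(D(A^*),L^2([0,T],U'))$ and completes the argument. I expect the main obstacle to be the bookkeeping around the dual spaces: justifying carefully the chain $X_{-1}'\simeq D(A^*)$ under the pivot-space convention of Theorem \ref{thm4.5}, and checking that the adjoint of the extension $S_{-1}(t)$ to $X_{-1}$, computed with respect to that duality, is exactly $S(t)^*$ acting on $D(A^*)$. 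Once these identifications are made consistent, the computation is routine, the only additional care being the exchange of the duality bracket with the Bochner integral, which is standard.
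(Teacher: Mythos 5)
Your proof is correct and follows essentially the same route as the paper's: identify $L^2([0,T],U)'\simeq L^2([0,T],U')$ and $X_{-1}'\simeq D(A^*)$ via reflexivity, then compute $L_T^*$ from the defining adjoint identity by passing the duality bracket through the Bochner integral and invoking the definition of $B^*\in L(D(A^*),U')$. The only cosmetic differences are that you obtain reflexivity of $D(A^*)$ directly from the isometric isomorphism $(\beta\,\mathrm{id}_{X'}-A^*):D(A^*)\rightarrow X'$, whereas the paper reaches the same conclusion via Kakutani's theorem applied to the unit ball $(\beta\,\mathrm{id}_{X'}-A^*)^{-1}B_{X'}$, and that you make explicit (with the density argument) the consistency step $\langle z,S_{-1}(\tau)w\rangle_{X_{-1}',X_{-1}}=\langle S(\tau)^*z,w\rangle$, which the paper uses implicitly when moving the semigroup onto $z$.
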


\begin{proof}
Since $X$ is reflexive, we have $X_{-1}=D(A^*)'$ (see Theorem \ref{thm4.5}).
Since $L_T$ is a linear continuous operator from $L^2([0,T],U)$ to $D(A^*)'$, the adjoint $L_T^*$ is a linear continuous operator from $D(A^*)''$ to $L^2([0,T],U)'$.

On the one part, note that $L^2([0,T],U)'=L^2([0,T],U')$ because $U$ is reflexive.
On the other part, let us prove that $D(A^*)$ is reflexive (and hence, that $D(A^*)''=D(A^*)$). According to the Kakutani theorem (see \cite{Brezis}), it suffices to prove that the closed unit ball of $D(A^*)$ is compact for the weak topology $\sigma(D(A^*),D(A^*)')$.
We have, for some $\beta\in\rho(A)$,
\begin{equation*}
\begin{split}
B_{D(A^*)} &= \{ z\in D(A^*)\ \mid\ \Vert z\Vert_{D(A^*)} = \Vert (\beta\mathrm{id}_{X'}-A^*) z\Vert_{X'} \leq 1 \} \\
&= \{ (\beta\mathrm{id}_{X'}-A^*)^{-1} f\ \mid\ f\in X',\ \Vert f\Vert_{X'} \leq 1 \} \\
&= (\beta\mathrm{id}_{X'}-A^*)^{-1} B_{X'}
\end{split}
\end{equation*}
and since $X'$ is reflexive, the closed unit ball $B_{X'}$ is compact for the weak topology $\sigma(X',X'')$. Hence $D(A^*)$ is reflexive.

Therefore, $L_T^*\in L(D(A^*),L^2([0,T],U'))$.

Let $u\in L^2([0,T],U)$ and $z\in D(A^*)$. We have, by definition, and using the duality brackets with respect to the pivot space $X$,
$$
\langle L_Tu,z\rangle_{D(A^*)',D(A^*)} = \langle L_T^*z,u\rangle_{L^2([0,T],U'),L^2([0,T],U)}.
$$
Note that, here, we have implicitly used the fact that $U$ is reflexive. Now, noticing that $B\in L(U,D(A^*)')$ and hence that $B^*\in L(D(A^*),U')$, we have
\begin{equation*}
\begin{split}
\langle L_Tu,z\rangle_{D(A^*)',D(A^*)} &= \Big\langle \int_0^T S(T-t)Bu(t)\, dt , z\Big\rangle_{D(A^*)',D(A^*)} \\
&= \int_0^T \langle S(T-t)Bu(t) , z\rangle_{D(A^*)',D(A^*)}  \, dt \\
&= \int_0^T \langle B^*S(T-t)^*z,u(t)\rangle_{U',U}  \, dt \\
&= \left\langle t\mapsto B^*S(T-t)^*z, t\mapsto u(t)\right\rangle_{L^2([0,T],U)',L^2([0,T],U)}  
\end{split}
\end{equation*}
and the conclusion follows.
\end{proof}

The following proposition, providing a dual characterization of admissibility, is an immediate consequence of Lemmas \ref{lemequivadmissible} and \ref{lem_LT*}. Indeed in the admissible case we have $L_T\in L(L^2([0,T],U),X)$ and equivalently $L_T^*\in L(X',L^2([0,T],U)')$.\footnote{Note that, in the case where the operator $B\in L(U,X)$ is bounded, we always have $L_T\in L(L^2([0,T],U),X)$ and hence $L_T^*\in L(X',L^2([0,T],U)')$. Moreover, if $U$ is reflexive then $L(X',L^2([0,T],U)')=L(X',L^2([0,T],U'))$.}

\begin{proposition}\label{charactdualadm}
Assume that $X$ and $U$ are reflexive. 
The control operator $B\in L(U,X_{-1})$ (with $X_{-1}\simeq D(A^*)'$) is admissible if and only if, for some $T>0$ (and equivalently, for every $T>0$) there exists $K_T>0$ such that
\begin{equation}\label{ineg_adm}
\int_0^T \Vert B^*S(T-t)^*z\Vert_{U'}^2\, dt \leq K_T \Vert z\Vert_{X'}^2\qquad \forall z\in D(A^*).
\end{equation}
\end{proposition}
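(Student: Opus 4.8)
The plan is to deduce Proposition \ref{charactdualadm} directly from the two preceding lemmas by unwinding the admissibility definition and the adjoint formula. The key observation is that admissibility of $B$ means, by Lemma \ref{lemequivadmissible}, that $L_T \in L(L^2([0,T],U),X)$ for every (equivalently, some) $T>0$; that is, $L_T$ is a bounded operator from $L^2([0,T],U)$ into $X$ (not merely into $X_{-1}$). A bounded linear operator between Banach spaces is bounded if and only if its adjoint is bounded, with equal norms. So the whole argument is a translation of ``$L_T$ bounded into $X$'' into a statement about $L_T^*$, which Lemma \ref{lem_LT*} computes explicitly.

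First I would state the equivalence of norms: since $X$ and $U$ are reflexive and (in the admissible case) $L_T\in L(L^2([0,T],U),X)$, the adjoint satisfies $L_T^*\in L(X',L^2([0,T],U)')$ with $\Vert L_T^*\Vert = \Vert L_T\Vert$. Because $U$ is reflexive, $L^2([0,T],U)'=L^2([0,T],U')$, so $L_T^* \in L(X',L^2([0,T],U'))$. By Lemma \ref{lem_LT*}, for $z\in D(A^*)$ this adjoint is given by $(L_T^*z)(t)=B^*S(T-t)^*z$. The boundedness of $L_T^*$ as an operator into $L^2([0,T],U')$ then reads precisely
\begin{equation*}
\int_0^T \Vert B^*S(T-t)^*z\Vert_{U'}^2\, dt = \Vert L_T^* z\Vert_{L^2([0,T],U')}^2 \leq \Vert L_T^*\Vert^2\, \Vert z\Vert_{X'}^2
\end{equation*}
for all $z\in D(A^*)$, which is \eqref{ineg_adm} with $K_T=\Vert L_T^*\Vert^2$. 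Conversely, if \eqref{ineg_adm} holds for some $T$, then the densely defined operator $z\mapsto L_T^*z$ is bounded on $D(A^*)$ (dense in $X'$ by reflexivity, via Proposition \ref{propS*} and the closedness of $A^*$), hence extends to a bounded operator on $X'$; its pre-adjoint is then a bounded extension of $L_T$ into $X$, giving $\mathrm{Ran}(L_T)\subset X$, i.e. admissibility by Lemma \ref{lemequivadmissible}.

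The equivalence ``for some $T>0$ $\Leftrightarrow$ for every $T>0$'' is inherited directly from the corresponding equivalence already established in Lemma \ref{lemequivadmissible}, so no separate argument is needed there. The one point requiring a little care — and the part I expect to be the main (though still modest) obstacle — is the converse direction: \eqref{ineg_adm} is only assumed on the dense subspace $D(A^*)$, so one must justify the continuous extension to all of $X'$ and then correctly identify the pre-adjoint of the extended $L_T^*$ with $L_T$ acting into $X$. This is a standard density-plus-closed-graph manipulation (the closedness of $L_T$ was already proved inside Lemma \ref{lemequivadmissible}), but it is where the reflexivity hypotheses on both $X$ and $U$ are genuinely used, to guarantee $D(A^*)$ dense in $X'$ and the identification $L^2([0,T],U)'=L^2([0,T],U')$. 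Once these functional-analytic identifications are in place, the proposition is an immediate corollary, exactly as the text anticipates.
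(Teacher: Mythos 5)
Your proof is correct and follows essentially the same route as the paper, which derives the proposition as an immediate consequence of Lemma \ref{lemequivadmissible} (admissibility $\Leftrightarrow$ $L_T\in L(L^2([0,T],U),X)$) and Lemma \ref{lem_LT*} (the explicit formula for $L_T^*$), the inequality \eqref{ineg_adm} being exactly the boundedness of $L_T^*$ from $X'$ to $L^2([0,T],U')$. Your added care about extending $L_T^*$ from the dense subspace $D(A^*)$ and identifying its pre-adjoint with $L_T$ is a legitimate filling-in of details the paper leaves implicit, not a different argument.
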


\begin{remark}
The inequality \eqref{ineg_adm} is called an \textit{admissibility inequality}. Establishing such an inequality is a way to prove that a control operator is admissible. Showing such energy-like inequalities is a classical issue in PDEs (Strichartz inequalities for instance).

Once again, we stress that the admissibility property means that the control system $\dot y(t)=Ay(t)+Bu(t)$ is \textit{well posed} in $X$, which means here that, for a control $u\in L^2([0,T],U)$ and an initial condition $y(0)\in X$, the corresponding solution $y(t)$ stays in $X$ indeed (and does not go in a wider space like $X_{-1}$). 

The concept of well-posedness of a PDE is in general a difficult issue. In finite dimension, this kind of difficulty does not exist, but in the infinite-dimensional setting, showing the admissibility of $B$ may already be a challenge (at least, for an unbounded control operator).
Examples are provided further.
\end{remark}

\begin{remark}
The inequality \eqref{ineg_adm} says that the operator $B^*\in L(D(A^*),U')$ is an admissible observation operator for the semigroup $S^*(t)$ (see \cite{TucsnakWeiss,Weiss_IJM1989}).
\end{remark}

\begin{remark}\label{rem_Lebext}
Note that the inequality \eqref{ineg_adm} is stated for every $z\in D(A^*)$. Of course, the norm $\Vert z\Vert_{X'}^2$ has a sense for $z$ belonging to the larger space $X'$, and it is natural to ask whether the inequality \eqref{ineg_adm} can be written for every $z\in X'$.

The question has been studied in \cite{Weiss_IJM1989}. If $z\in X'$ then $S(T-t)^*z\in X'$ and then we cannot apply $B^*$ to this element. Actually, we can replace $B^*$ with its \textit{$\Lambda$-extension}, defined by
$$
B^*_\Lambda z=\lim_{\lambda\rightarrow +\infty}B^*\lambda(\lambda \mathrm{id}_X-A^*)^{-1}z
$$
also called \textit{strong Yosida extension} in \cite[Definition 5.4.1]{Staffans} (note that $(\mathrm{id}_X-A^*)^{-1}z\in D(A^*)$ for every $z\in X'$) and defined on the domain $D(B^*_\Lambda)$ which is the set of $z$ for which the above limit exists.
Then Proposition \ref{charactdualadm} still holds true with $B^*$ replaced with $B^*_\Lambda$, with the inequality \eqref{ineg_adm} written for every $z\in X'$.

Actually, in the context of Lemma \ref{lem_LT*}, $L_T^*$ is given by $(L_T^*z)(t)=B_\Lambda^*S(T-t)^*z$, for every $z\in X'$ and for almost every $t\in[0,T]$.
\end{remark}

\subsection{Degree of unboundedness of the control operator}\label{sec_degree_unboundedness}
%As mentioned in the introduction, so far we have focused on control operators $B\in L(U,X_{-1})$. 
Recall that a control operator $B$ is said to be a \emph{bounded} if $B\in L(U,X)$.
When $X\subsetneq\mathrm{Ran}(B)$, it is said to be $\emph{unbounded}$ (although $B$ is bounded as an operator from $U$ to $X_{-1}$, if we have assumed that $B\in L(U,X_{-1})$).

Now more generally, using the scale of Banach spaces $(X_\alpha)_{\alpha\in\R}$ constructed in Section \ref{sec:scale}, we can define ``unbounded control" operators such that $B\in L(U,X_{-\alpha})$, for some $\alpha>0$. 
This leads to the notion of degree of unboundedness of a control operator (see \cite{RebarberWeiss,Staffans}, see also \cite[Chapter 4, page 138]{TucsnakWeiss}).

\begin{definition}\label{def_degree_unboundedness}
The \textit{degree of unboundedness} $\alpha(B)\geq 0$ of the control operator $B$ (with respect to the spaces $X$ and $U$) is the infimum of the (not necessarily closed!) set of $\alpha\geq 0$ such that $B\in L(U,X_{-\alpha})$, i.e., $(\beta\,\mathrm{id}_X-A)^{-\alpha}B\in L(U,X)$ for some arbitrary $\beta\in\rho(A)$ satisfying $\Real(\beta)>\omega$, where $(\beta\,\mathrm{id}_X-A)^{-\alpha}$ is defined by \eqref{def_poweralpha}.
In other words, $B\in L(U,X_{-\alpha(B)-\varepsilon})$ for every $\varepsilon>0$ (but not necessarily for $\varepsilon=0$).

Equivalently,\footnote{This second definition of $\alpha(B)$ is given in \cite{RebarberWeiss}. The equivalence with the first one follows from \cite[Chapter II, Section 5, Propositions 5.12, 5.14 and 5.33]{Nagel}.}
$\alpha(B)$ is equal to the infimum of the set of $\alpha\geq 0$ for which there exists $C_\alpha>0$ such that
$$
\Vert (\lambda\mathrm{id}_X-A)^{-1}B\Vert \leq \frac{C_\alpha}{\lambda^{1-\alpha}}\qquad \forall \lambda>\omega .
$$
\end{definition}

When $X$ is reflexive, $\alpha(B)$ is the infimum of the set of $\alpha\geq 0$ such that $B^*\in L(X_\alpha^* , U')$ (where $X_\alpha^*=D((\beta\,\mathrm{id}_X-A^*)^\alpha)$ thanks to \eqref{Xalphastar}), or equivalently, of the set of $\alpha\geq 0$ for which there exists $C_\alpha>0$ such that
\begin{equation}\label{inegalpha}
\Vert B^* z\Vert_{U'}\leq C_\alpha \Vert (\beta\,\mathrm{id}_X-A^*)^\alpha z\Vert_{X'} \qquad\forall z\in X_\alpha^* .
\end{equation}
Note that \eqref{inegalpha} may fail for $\alpha=\alpha(B)$.
%% Cf Rebarber Weiss SCL 2000

Throughout the chapter, as in most of the existing literature, we consider control operators such that $\alpha(B)\leq 1$. This covers the most usual applications. Internal controls are bounded control operators (thus, $\alpha(B)=0$). For boundary controls, in general we have $0<\alpha(B)\leq 1$ (see further, and see \cite{LasieckaTriggiani,TucsnakWeiss} for many examples).

%With this notion of degree of unboundedness, it is interesting to note the following result.

\begin{lemma}\label{lem_admissible}
\begin{enumerate}
\item Assume that $X$ and $U$ are reflexive Banach spaces. If $B$ is admissible then $B\in L(U,X_{-1/2})$ (and thus $\alpha(B)\leq 1/2$).
\item Assume that $X$ is a Hilbert space and that $A$ is self-adjoint. If $B\in L(U,X_{-1/2})$ then $B$ is admissible.
\end{enumerate}
\end{lemma}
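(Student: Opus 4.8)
For the first assertion, the plan is to start from the dual characterization of admissibility. Since $X$ and $U$ are reflexive, Proposition \ref{charactdualadm} tells us that admissibility of $B$ is equivalent to the existence, for some fixed $T_0>0$, of $K_{T_0}>0$ with
$$
\int_0^{T_0}\Vert B^*S(t)^*z\Vert_{U'}^2\,dt\le K_{T_0}\Vert z\Vert_{X'}^2\qquad\forall z\in D(A^*)
$$
(after the change of variable $t\mapsto T_0-t$). I would then feed this $L^2$-in-time bound into the Laplace transform representation \eqref{laplacetransform} of the resolvent applied to $A^*$: for real $\lambda$ large one has $R(\lambda,A^*)z=\int_0^{+\infty}e^{-\lambda t}S(t)^*z\,dt$, whence $B^*R(\lambda,A^*)z=\int_0^{+\infty}e^{-\lambda t}B^*S(t)^*z\,dt$. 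The interchange of $B^*$ with the Bochner integral is legitimate because $B^*\in L(D(A^*),U')$ is continuous for the graph norm and, for $z\in D(A^*)$, the integral converges in that norm.

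Next I would estimate the integral. Splitting $[0,+\infty)$ into the intervals $[kT_0,(k+1)T_0]$ and using the semigroup identity $S(t)^*=S(t-kT_0)^*S(kT_0)^*$ together with the admissibility inequality and $\Vert S(kT_0)^*\Vert\le Me^{\omega kT_0}$, I obtain
$$
\int_0^{+\infty}e^{-\lambda t}\Vert B^*S(t)^*z\Vert_{U'}^2\,dt\le K_{T_0}M^2\Vert z\Vert_{X'}^2\sum_{k\ge 0}e^{-(\lambda-2\omega)kT_0},
$$
where the geometric series is bounded uniformly once $\lambda\ge 2\omega+1/T_0$. A Cauchy--Schwarz inequality with the weight $e^{-\lambda t}$ (whose total mass is $1/\lambda$) then yields $\Vert B^*R(\lambda,A^*)z\Vert_{U'}\le C\lambda^{-1/2}\Vert z\Vert_{X'}$ for all large $\lambda$. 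Dualizing (for real $\lambda$, $R(\lambda,A^*)=R(\lambda,A)^*$), this reads $\Vert(\lambda\,\mathrm{id}_X-A)^{-1}B\Vert\le C\lambda^{-1/2}$, which is precisely the bound in Definition \ref{def_degree_unboundedness} with $\alpha=1/2$; hence $\alpha(B)\le 1/2$ and $B\in L(U,X_{-1/2})$.

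For the second assertion, with $X$ a Hilbert space (so $X'=X$, and $A^*=A$, $S(t)^*=S(t)$ by self-adjointness and Proposition \ref{propS*}), the goal is again the admissibility inequality \eqref{ineg_adm}, which here reads $\int_0^T\Vert B^*S(t)z\Vert_U^2\,dt\le K_T\Vert z\Vert_X^2$. The hypothesis $B\in L(U,X_{-1/2})$ is, by \eqref{inegalpha}, the estimate $\Vert B^*w\Vert_U\le C\Vert(\beta\,\mathrm{id}_X-A)^{1/2}w\Vert_X$; choosing a real $\beta>\omega$, so that $\beta\,\mathrm{id}_X-A$ is self-adjoint and positive, it suffices to prove
$$
\int_0^T\big\Vert(\beta\,\mathrm{id}_X-A)^{1/2}S(t)z\big\Vert_X^2\,dt\le K\Vert z\Vert_X^2 .
$$
I would establish this by the spectral theorem: writing $A=\int_{(-\infty,\omega]}\mu\,dE(\mu)$, the left-hand side equals $\int_{(-\infty,\omega]}m(\mu)\,d\langle E(\mu)z,z\rangle$ with multiplier $m(\mu)=(\beta-\mu)\int_0^Te^{2t\mu}\,dt$, and one checks that $m$ is bounded uniformly on $(-\infty,\omega]$ (it tends to $1/2$ as $\mu\to-\infty$, stays of order $\beta T$ near $\mu=0$, and is continuous in between). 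Taking $K=\sup_{\mu\le\omega}m(\mu)$ and combining with the boundedness of $B^*$ gives the inequality, hence admissibility by Proposition \ref{charactdualadm}.

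As for the main obstacle: in the first part the delicate point is justifying the interchange of the (densely defined) operator $B^*$ with the integral, i.e. checking that $S(t)^*z$ remains in $D(A^*)$ and that the resolvent integral converges in the graph norm; in the second part the crux is the uniform bound on the spectral multiplier $m(\mu)$, where the two natural estimates $\int_0^Te^{2t\mu}\,dt\le T$ and $\int_0^Te^{2t\mu}\,dt\le 1/(2\vert\mu\vert)$ each fail on part of the spectrum and must be combined. Both issues are technical rather than conceptual, so I expect no serious difficulty beyond careful bookkeeping.
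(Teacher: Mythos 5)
Your proof is correct in both parts. For part 1 you follow essentially the same route as the paper: the dual admissibility inequality of Proposition \ref{charactdualadm}, propagated to the whole half-line via the semigroup law $S(t)^*=S(t-kT_0)^*S(kT_0)^*$ to get an exponentially weighted $L^2(0,+\infty)$ bound on $t\mapsto B^*S(t)^*z$, then the Laplace-transform representation of $B^*R(\lambda,A^*)z$ and Cauchy--Schwarz against the weight $e^{-\lambda t}$, giving $\Vert B^*R(\lambda,A^*)z\Vert_{U'}\le C\lambda^{-1/2}\Vert z\Vert_{X'}$. The paper packages the growth-in-$T$ step as operator estimates on $\Vert L_T\Vert$ via the identity $L_{t+s}=S(s)L_t+L_s$ rather than splitting the weighted integral, but this is the same argument, and your justification for moving $B^*$ through the Bochner integral (closedness of $A^*$, so the resolvent integral converges in the graph norm) is sound. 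One remark: your final step from the $\lambda^{-1/2}$ resolvent bound to ``$B\in L(U,X_{-1/2})$'' is exactly the identification the paper itself makes through the second characterization in Definition \ref{def_degree_unboundedness}; strictly speaking, what both proofs establish is the resolvent estimate, so you are neither more nor less rigorous than the source here.

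For part 2 you take a genuinely different route to the same key inequality. Both arguments start from the dualized hypothesis $\Vert B^*z\Vert_{U'}^2\le C\,(z,(\beta\,\mathrm{id}_X-A)z)_X$ and must bound $\int_0^T\Vert(\beta\,\mathrm{id}_X-A)^{1/2}S(t)z\Vert_X^2\,dt$ by a multiple of $\Vert z\Vert_X^2$. You do this with the spectral theorem, bounding the multiplier $m(\mu)=(\beta-\mu)\int_0^Te^{2t\mu}\,dt$ uniformly on $\sigma(A)\subset(-\infty,\omega]$, which indeed forces the two-regime case analysis you identify as the crux. The paper instead applies the quadratic-form inequality along $z(t)=S(T-t)\psi$, multiplies by $e^{2\beta t}$, and observes that $e^{2\beta t}(z(t),(\beta\,\mathrm{id}_X-A)z(t))_X=\frac{1}{2}\frac{d}{dt}\bigl(e^{2\beta t}\Vert z(t)\Vert_X^2\bigr)$, so the integral telescopes to $\frac{1}{2}e^{2\beta T}\Vert\psi\Vert_X^2$ with no spectral decomposition and no case distinction: the exponential weight is precisely the device that replaces your multiplier bookkeeping. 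Your version buys transparency and an essentially sharp constant $\sup_{\mu\le\omega}m(\mu)$, at the price of invoking the full spectral theorem for unbounded self-adjoint operators; the paper's version is shorter, needs only the identity $\Vert(\beta\,\mathrm{id}_X-A)^{1/2}z\Vert_X^2=(z,(\beta\,\mathrm{id}_X-A)z)_X$, and is the form of the argument that extends beyond the self-adjoint diagonalizable setting in the reference it is adapted from.
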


%Actually, the second point of the lemma is true under the more general assumption that the semigroup generated by $A$ is either analytic and normal or invertible (see \cite{Weiss_1991}).

\begin{proof}
Let us prove the first point (adapted from \cite[Proposition 4.4.6]{TucsnakWeiss}).

First of all, by an obvious change of variable, we have $L_{t+s}=S(s)L_t+L_s$, for all $s,t\geq 0$. It follows that $L_n=(S(n-1)+\cdots+S(1)+\mathrm{id}_X)L_1$, for every $n\in\N^*$. Since $(S(t))_{t\geq 0}\in\mathcal{G}(M,\omega)$, we have $\Vert S(k)\Vert\leq Me^{k\omega}$ for every integer $k$, and therefore, we infer that $\Vert L_n\Vert\leq K_n\Vert L_1\Vert$, with $K_n=M\frac{e^{\omega n}-1}{e^{\omega}-1}$ if $\omega>0$, $K_n=Mn$ if $\omega=0$, and $K_n=M\frac{1}{1-e^{\omega}}$ if $\omega<0$. Here, the norm $\Vert\cdot\Vert$ stands for the norm of bounded operators from $L^2_\mathrm{loc}(0,+\infty;U')$ to $X$ (note that $B$ is assumed to be admissible).

Besides, for $0<t_1<t_2$ arbitrary, by taking controls that are equal to $0$ on $(t_1,t_2)$, we easily prove that $\Vert L_{t_1}\Vert\leq \Vert L_{t_2}\Vert$.

Now, for an arbitrary $T>0$, let $n\in\N$ be such that $n\leq T<n+1$. Writing $L_T=S(T-n)L_n+L_{T-n}$, we get that $\Vert L_T\Vert\leq Me^{\omega(T-n)}\Vert L_n\Vert+\Vert L_1\Vert$.

It finally follows that $\Vert L_T\Vert\leq Ke^{\omega T}$ if $\omega>0$, $\Vert L_T\Vert\leq KT$ if $\omega=0$, and $\Vert L_T\Vert\leq K$ if $\omega<0$, for some constant $K>0$ that does not depend on $T$.

By duality, we have the same estimates on the norm of $L_T^*$.

For every $z\in D(A^*)$, we define $(\Psi z)(t)=B^*S(t)^*z$. It follows from the above estimates (by letting $T$ tend to $+\infty$) that, for every $\alpha>\omega$, the function $t\mapsto e^{-\alpha t}(\Psi z)(t)$ belongs to $L^2(0,+\infty;U')$.

Let us consider the Laplace transform of $\Psi z$. 
On the one part, we have, by an easy computation as in \eqref{laplacetransform},
$
\mathcal{L}(\Psi z)(s) = \int_0^{+\infty} e^{-st} (\Psi z)(t)\, dt = B^*(s\,\mathrm{id}_X-A^*)z
$
for every $s\in\rho(A)$ such that $\Real(s)>\omega$.
On the other part, writing $\mathcal{L}(\Psi z)(s) = \int_0^{+\infty} e^{(\alpha-s)t} e^{-\alpha t}(\Psi z)(t)\, dt$ and applying the Cauchy-Schwarz inequality, we get
$$
\Vert \mathcal{L}(\Psi z)(s)\Vert_{U'} \leq \frac{1}{\sqrt{2(\Real(s)-\alpha)}} \Vert t\mapsto e^{-\alpha t}(\Psi z)(t)\Vert_{L^2(0,+\infty;U')}.
$$
%The conclusion follows.
The first point is proved.
% source: livre de Tucsnak Weiss, adaptation de la Proposition 4.4.6 page 128. Cf aussi page 138.

%\medskip

Let us prove the second point (adapted from \cite[Proposition 5.1.3]{TucsnakWeiss}).
By definition, there exists $C>0$ such that 
$$
\Vert B^*z\Vert_{U'}^2\leq C \Vert (\beta\,\mathrm{id}_X-A^*)^{1/2} z\Vert_{X'}^2 = C (z,(\beta\,\mathrm{id}_X-A)z)_X
$$
for every $z\in D(A)$ (we have used that $A=A^*$), where $(\cdot,\cdot)_X$ is the scalar product in $X$. Applying this inequality to $z(t)=S^*(T-t)\psi$, multiplying by $e^{2\beta t}$, and integrating over $[0,T]$, we get
$$ 
\int_0^T e^{2\beta t} \Vert B^*S^*(T-t)\psi\Vert_{U'}^2\, dt \leq C \int_0^T e^{2\beta t} (z(t),(\beta\,\mathrm{id}_X-A)z(t))_X \, dt .
$$
Since $\dot z(t)=-Az(t)$ and $z(T)=\psi$, we have
\begin{equation*}
\begin{split}
\frac{1}{2} \frac{d}{dt} \left( e^{2\beta t}\Vert z(t)\Vert_X^2 \right) &=  e^{2\beta t} \left( \beta\Vert z(t)\Vert_X^2 - (z(t),Az(t))_X \right) \\
&=  e^{2\beta t} ( z(t), (\beta\,\mathrm{id}_X-A)z(t) )_X
\end{split}
\end{equation*}
and therefore we get
$$ 
\int_0^T e^{2\beta t} \Vert B^*S^*(T-t)\psi\Vert_{U'}^2\, dt \leq \frac{C}{2} \int_0^T \frac{d}{dt} \left( e^{2\beta t}\Vert z(t)\Vert_X^2 \right) \, dt 
\leq \frac{C}{2} e^{2\beta T} \Vert\psi\Vert_X^2.
$$
The lemma follows.
% Source: livre de Tucsnak Weiss, adaptation de la Proposition 5.1.3 page 143
\end{proof}

\subsection{Examples}

\subsubsection{Dirichlet heat equation with internal control}\label{exsec}
Let $\Omega\subset\R^n$ be a bounded open set with $C^2$ boundary, and let $\omega\subset\Omega$ be an open subset. Consider the internally controlled Dirichlet heat equation
\begin{equation*}
\partial_ty=\triangle y+\chi_\omega u\ \ \textrm{in}\ \Omega,\qquad
y_{\vert\partial\Omega}=0,\qquad
y(0)=y_0\in L^2(\Omega).
\end{equation*}
We set $X=L^2(\Omega)$, and we consider the operator $A=\triangle_D:D(A)\rightarrow X$, where $D(A)=X_1=H^1_0(\Omega)\cap H^2(\Omega)$.
The operator $A$ is self-adjoint, and $X_{-1}=D(A^*)'=(H^1_0(\Omega)\cap H^2(\Omega))'$ with respect to the pivot space $L^2(\Omega)$.
The control operator $B$ is defined as follows: for every $u\in U=L^2(\omega)$, $Bu\in L^2(\Omega)$ is the extension of $u$ by $0$ to the whole $\Omega$. It is bounded and therefore admissible (by Lemma \ref{lem_admissible}), which means (by Proposition \ref{charactdualadm}) that, for every $T>0$, there exists $K_T>0$ such that
$$
\int_0^T \int_\omega \psi(t,x)^2\, dx\, dt\leq K_T\Vert\psi(0)\Vert^2_{L^2(\Omega)}
$$
for every solution of $\partial_t\psi=\triangle\psi\ \ \textrm{in}\ \Omega$, $\psi_{\vert\partial\Omega}=0$, with $\psi(0)\in H^2(\Omega)\cap H^1_0(\Omega)$.
The above inequality can also be established by using that $\psi(t)=S(t)\psi(0)$ with $S(t)\in L(X)$.

\subsubsection{Heat equation with Dirichlet boundary control}\label{sec_heatdirbound}
Let $\Omega\subset\R^n$ be a bounded open set with Lipschitz boundary. Consider the heat equation with Dirichlet boundary control
\begin{equation}\label{boundarycontrolheat}
\partial_ty=\triangle y\ \ \textrm{in}\ \Omega,\qquad
y_{\vert\partial\Omega}=u,\qquad
y(0)=y_0\in L^2(\Omega).
\end{equation}
We set $U=L^2(\partial\Omega)$, $X=H^{-1}(\Omega)$, and we consider the self-adjoint operator $A=\triangle_D:D(A)\rightarrow X$ where $D(A)=X_1=H^1_0(\Omega)$.
%The operator $A$ is self-adjoint, hence $D(A^*)=D(A)$. 
We have $X_{-1}=D(A^*)'=D(A)'$ with respect to the pivot space $H^{-1}(\Omega)$. Note that if $\partial\Omega$ is regular enough then $X_{-1}$ is the dual of $A^{-1}(H^1_0(\Omega)) = \{y\in H^3(\Omega)\ \mid\ y_{\vert\partial\Omega}=(\triangle y)_{\vert\partial\Omega}=0\}$ with respect to the pivot space $L^2(\Omega)$ (see Example \ref{example_chain_Dirichlet}).

Let us express the control operator $B\in L(U,D(A^*)')$. 
We preliminarily recall (see Example \ref{example_chain_Dirichlet}, with $\mathcal{A}=-\triangle_D=-A$) that, for every $f\in H^{-1}(\Omega)$, $\Vert f\Vert_{H^{-1}(\Omega)} = \Vert (-\triangle_D)^{-1/2} f\Vert_{L^2(\Omega)}$ and 
$$
(f,g)_{H^{-1}(\Omega)} = ( (-\triangle_D)^{-1/2}f , (-\triangle_D)^{-1/2} g)_{L^2(\Omega)} = -(A^{-1}f, g)_{L^2(\Omega)}
$$
for all $f\in H^{-1}(\Omega)$ and $g\in L^2(\Omega)$.
Taking a solution $y$ regular enough, associated with a control $u$ (for instance, of class $C^1$),
since the differential equation $\dot y=Ay+Bu$ is written in $X_{-1}$, by definition we have
$$
\langle \dot y,\phi\rangle_{X_{-1},X_1} = \langle Ay,\phi\rangle_{X_{-1},X_1}+ \langle Bu,\phi\rangle_{X_{-1},X_1}
\qquad\forall\phi\in X_1 .
$$
The duality bracket is considered with respect to the pivot space $X=H^{-1}(\Omega)$, hence $\langle \dot y,\phi\rangle_{X_{-1},X_1} = -(\dot y,A^{-1}\phi)_{L^2(\Omega)}$ and $\langle Ay,\phi\rangle_{X_{-1},X_1} = (Ay,\phi)_{H^{-1}(\Omega)} = -(y,\phi)_{L^2(\Omega)}$. 
Besides, using \eqref{boundarycontrolheat} and integrating by parts (Green formula), we have
$$
(\partial_ty,\psi)_{L^2(\Omega)} = (y,\triangle\psi)_{L^2(\Omega)} - \Big( u , \frac{\partial\psi}{\partial\nu} \Big)_{L^2(\partial\Omega)} \qquad \forall \psi\in H^1_0(\Omega)\cap H^2(\Omega) .
$$
Taking $\psi=-A^{-1}\phi$, we have $\triangle\psi=-\phi$ on $\Omega$ and, by identification,
$$
\langle Bu,\phi\rangle_{X_{-1},X_1}
= \Big( u  , \frac{\partial}{\partial\nu}(A^{-1}\phi) \Big)_{L^2(\partial\Omega)}
\qquad \forall u\in L^2(\partial\Omega)\quad \forall \phi\in H^1_0(\Omega) ,
$$
which defines $B$ by transposition: since $\langle Bu,\phi\rangle_{X_{-1},X_1} = (u, B^*\phi)_U$, we infer that
$B^*\phi = \frac{\partial}{\partial\nu}_{\vert\partial\Omega}(A^{-1}\phi)$ for every $\phi\in H^1_0(\Omega)=D(A^*)$.

When $\partial\Omega$ is $C^2$, we can express the operator $B$ by using the \emph{Dirichlet map} $D$ (see \cite{LionsMagenes} and \cite[Chapters 10.6 and 10.7]{TucsnakWeiss}), which is the linear operator defined as follows:
%as the adjoint of $D^*\in L(L^2(\Omega),L^2(\partial\Omega))$ given by $D^*g=\frac{\partial}{\partial\nu}_{\vert\partial\Omega}(A^{-1}g)$, for every $g\in L^2(\Omega)$. Equivalently, 
given any $v\in L^2(\partial\Omega)$, $Dv$ is the unique solution in the sense of distributions of the Laplace equation $\triangle(Dv)=0$ in $\Omega$ such that $(Dv)_{\vert\partial\Omega}=v$. 
Note that $Dv\in C^\infty(\Omega)$ by hypoellipticity.
Integrating by parts (Green formula), we have $( Dv , \triangle\varphi )_{L^2(\Omega)} = \big( v , \frac{\partial\varphi}{\partial\nu} \big)_{L^2(\partial\Omega)}$ for every $\varphi\in H^1_0(\Omega)\cap H^2(\Omega)$, and thus, setting $\phi=\triangle_D\varphi = A\varphi$, we have $( Dv , \phi )_{L^2(\Omega)} = \big( v , \frac{\partial}{\partial\nu}(A^{-1}\phi) \big)_{L^2(\partial\Omega)}$ for every $\phi\in L^2(\Omega)$. 
This implies that $D:L^2(\partial\Omega)\rightarrow L^2(\Omega)$ (but $D$ is not surjective).
%
%We identify $L^2(\partial\Omega)'=L^2(\partial\Omega)$, but the dual $L^2(\Omega)'$ is taken with respect to the pivot space $X=H^{-1}(\Omega)$ and is thus identified with $H^1_0(\Omega)\cap H^2(\Omega)$. Hence $D^*\in L(H^1_0(\Omega)\cap H^2(\Omega),L^2(\partial\Omega))$.
%Since $( Dv , g )_{L^2(\Omega)} = - ( Dv , Ag )_{H^{-1}(\Omega)} = - \langle Ag , Dv \rangle_{X_{-1},X_1}$
%
% and that its adjoint $D^*\in L(L^2(\Omega),L^2(\partial\Omega))$ is given by $D^*g=\frac{\partial}{\partial\nu}_{\vert\partial\Omega}(A^{-1}g)$ for every $g\in L^2(\Omega)$.
%
%
Then
$\langle Bu,\phi\rangle_{X_{-1},X_1} = ( Du , \phi )_{L^2(\Omega)} = - ( Du , A\phi )_{H^{-1}(\Omega)} = - \langle ADu,\phi\rangle_{X_{-1},X_1}$ and therefore 
$$
B=-AD
$$
where we consider the extension $A:L^2(\Omega)\rightarrow (H^2(\Omega)\cap H^1_0(\Omega))'$ (dual with respect to $L^2(\Omega)$). Note that $X_{1/2} = L^2(\Omega)$ and that $X_{-1/2} = (H^2(\Omega)\cap H^1_0(\Omega))'$.
In particular, we have $B\in L(U,X_{-1/2})$ (and $B^*\in L(X_{1/2},U)$), and thus $\alpha(B)\leq 1/2$.

Actually, by using the finer fact that $D:L^2(\partial\Omega)\rightarrow H^{1/2}(\Omega)$ (see \cite{LionsMagenes}, see also \cite[Chapter 3.1]{LasieckaTriggiani}, it can be proved that $\alpha(B)=1/4$. 
%% voir en particulier LT Remark 3.1.4 page 186 : espace de Lions-Magenes. 

It follows from Lemma \ref{lem_admissible} that $B$ is admissible; equivalently, by Proposition \ref{charactdualadm}, for every $T>0$, there exists $K_T>0$ such that
$$
\int_0^T\left\Vert\frac{\partial\psi}{\partial\nu}_{\vert\partial\Omega} (t)\right\Vert_{L^2(\partial\Omega)}^2dt \leq K_T\Vert\psi(0)\Vert^2_{H^1_0(\Omega)}
$$
for every solution of $\partial_t\psi=\triangle\psi\ \ \textrm{in}\ \Omega$, $\psi_{\vert\partial\Omega}=0$, 
with $\psi(0)\in %H^2(\Omega)\cap 
H^1_0(\Omega)$.
This result says that the heat equation with boundary control \eqref{boundarycontrolheat} is well posed in the state space $X=H^{-1}(\Omega)$, with $U=L^2(\partial\Omega)$. 

Note that \eqref{boundarycontrolheat} is not well posed in the state space $L^2(\Omega)$, meaning that, for $y^0\in L^2(\Omega)$ and $u\in L^2([0,T],\partial\Omega)$, the solution $y$ of \eqref{boundarycontrolheat} may fail to belong to $C^0([0,T];L^2(\Omega))$ (even in dimension one). Actually, for every $T>0$,
$$
\sup \bigg\{ \int_0^T\left\Vert\frac{\partial\psi}{\partial\nu}_{\vert\partial\Omega} (t)\right\Vert_{L^2(\partial\Omega)}^2dt \ \mid\ \psi(0)\in L^2(\Omega), \ \Vert\psi(0)\Vert_{L^2(\Omega)}=1 \bigg\} = +\infty
$$
(see \cite{Lions_HUM,LionsMagenes}).
If we take $X=L^2(\Omega)$ then $B^*\phi = -\frac{\partial\phi}{\partial\nu}_{\vert\partial\Omega}$ for every $\phi\in H^2(\Omega)\cap H^1_0(\Omega)$ and $B=-AD\in L(U,D(A^{3/4+\varepsilon})')$ for every $\varepsilon>0$ (see \cite[Chapter 3.1]{LasieckaTriggiani}), but the continuity property fails for $\varepsilon=0$ (even in dimension one). We have then $\alpha(B)=3/4$ and $B$ is not admissible by Lemma \ref{lem_admissible}.

% voir LT_book1991 p. 17 (section 2.3.1): equation de la chaleur avec controle Dirichlet : l'operateur de controle n'est pas admissible. Ils referent Lions p. 217. C'est redit page 51 (section 6.1). Voir aussi Neumann page 53.

\subsubsection{Heat equation with Neumann boundary control}
We replace in \eqref{boundarycontrolheat} the Dirichlet control with the Neumann control $\frac{\partial y}{\partial\nu}_{\vert\partial\Omega}=u$. In this case, we set $X=L^2(\Omega)$, $U=L^2(\partial\Omega)$, we consider the operator $A=\triangle_N$ defined on $D(A)=\{ y \in H^2(\Omega)\ \mid\ \frac{\partial y}{\partial\nu}_{\vert\partial\Omega}=0\}$, and we obtain $B^*\phi = \phi_{\vert\partial\Omega}$ and $B=-AN$, where $N$ is the Neumann map. We do not provide any details. Actually, we have $B\in L(U,(D(A^{1/4+\varepsilon}))')$ for every $\varepsilon>0$ (see \cite[Chapter 3.3]{LasieckaTriggiani}), thus $\alpha(B)=1/4$, and hence $B$ is admissible by Lemma \ref{lem_admissible}.

% Voir aussi LT_book1991 page 53 (section 6.2).

\subsubsection{Second-order equations}
Another typical example is provided by second-order equations. The framework is the following (see \cite{TucsnakWeiss}).
Let $H$ be a Hilbert space, and $A_0:D(A_0)\rightarrow H$ be
self-adjoint and positive. Recall that $D(A_0^{1/2})$ is the
completion of $D(A_0)$ with respect to the norm $\Vert y\Vert_{D(A_0^{1/2})}=\sqrt{\langle A_0y,y\rangle_H}$, and that $D(A_0)\subset D(A_0^{1/2})\subset H$, with continuous and dense embeddings (see also Remark \ref{rem41}). We set $X=D(A_0^{1/2})\times H$, and we define the skew-adjoint operator $A:D(A)\rightarrow X$ on $D(A)=D(A_0)\times D(A_0^{1/2})$ by
$$
A=\begin{pmatrix} 0 & I \\ -A_0 & 0 \end{pmatrix}.
$$

Let $U$ be a Hilbert space and let $B_0\in L(U,D(A_0^{1/2})')$, where $D(A_0^{1/2})'$ is the dual of $D(A_0^{1/2})$ with respect to the pivot space $H$.
The second-order control system
$$
\partial_{tt}y+A_0y=B_0u %,\qquad  y(0)=y_0,\quad \partial_ty(0)=y_1.
$$
can be written in the form
$$
\frac{\partial}{\partial t} 
\begin{pmatrix} y\\ \partial_ty\end{pmatrix}
= A \begin{pmatrix} y\\ \partial_ty\end{pmatrix} +Bu  
\qquad\textrm{with}\qquad
B=\begin{pmatrix} 0\\ B_0\end{pmatrix}.
$$
We have
$X_{-1}=D(A^*)'=H\times D(A_0^{1/2})'$ with respect to the pivot space $X$, where $D(A_0^{1/2})'$ is the dual
of $D(A_0^{1/2})$ with respect to the pivot space $H$.
Moreover we have $B\in L(U,H\times D(A_0^{1/2})')$ and
$$
B^*=\begin{pmatrix} 0\\ B_0^*\end{pmatrix} \in L(D(A_0)\times D(A_0^{1/2}),U) .
$$

\begin{proposition}\label{prop_waveadm}
The following statements are equivalent:
\begin{itemize}[parsep=1mm,itemsep=1mm,topsep=1mm]%,leftmargin=*
\item $B$ is admissible.
\item There exists $K_T>0$ such that every solution of
\begin{equation*}
\partial_{tt}\psi+A_0\psi=0,\qquad
\psi(0)\in D(A_0),\quad \partial_t\psi(0)\in D(A_0^{1/2})
\end{equation*}
satisfies
$\displaystyle
\int_0^T\Vert B_0^*\partial_t\psi(t)\Vert_{U'}^2\,dt\leq K_T\left(\Vert\psi(0)\Vert_{D(A_0^{1/2})}^2+\Vert\partial_t\psi(0)\Vert_H^2\right).
$
\item There exists $K_T>0$ (the same constant) such that every solution of
\begin{equation*}
\partial_{tt}\psi+A_0\psi=0,\qquad
\psi(0)\in H,\quad \partial_t\psi(0)\in D(A_0^{1/2})'
\end{equation*}
satisfies
$\displaystyle
\int_0^T\Vert B_0^*\psi(t)\Vert_{U'}^2\,dt \leq  K_T\left(\Vert\psi(0)\Vert_{H}^2+\Vert\partial_t\psi(0)\Vert_{D(A_0^{1/2})'}^2\right).
$
\end{itemize}
\end{proposition}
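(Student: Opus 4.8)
The plan is to derive all three equivalences from the dual characterization of admissibility (Proposition \ref{charactdualadm}), exploiting that $A$ is skew-adjoint, so that $A^*=-A$, $D(A^*)=D(A)=D(A_0)\times D(A_0^{1/2})$, and $(S(t)^*)_{t\geq 0}$ is the backward group generated by $-A$. Since $X$ and $U$ are Hilbert spaces they are reflexive, so Proposition \ref{charactdualadm} applies; throughout I assume $A_0$ is boundedly invertible (which is implicit in the use of $A_0^{-1}$ below and gives $0\in\rho(A)$), and I use the Riesz identification $\Vert z\Vert_{X'}=\Vert z\Vert_X$.

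First I would prove (1)$\Leftrightarrow$(2). Fix $z=(z^0,z^1)\in D(A^*)=D(A_0)\times D(A_0^{1/2})$ and set $w(t)=S(t)^*z=(w_1(t),w_2(t))$. From $\dot w=-Aw$ one reads off $\dot w_1=-w_2$ and $\dot w_2=A_0w_1$, so $\psi=w_1$ satisfies $\partial_{tt}\psi+A_0\psi=0$ with $\psi(0)=z^0\in D(A_0)$ and $\partial_t\psi(0)=-w_2(0)=-z^1\in D(A_0^{1/2})$, and $w_2=-\partial_t\psi$. Since $B^*=\bigl(\,0\ \ B_0^*\,\bigr)$, we get $B^*w(t)=B_0^*w_2(t)=-B_0^*\partial_t\psi(t)$, while $\Vert z\Vert_X^2=\Vert\psi(0)\Vert_{D(A_0^{1/2})}^2+\Vert\partial_t\psi(0)\Vert_H^2$. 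Substituting into $\int_0^T\Vert B^*S(T-t)^*z\Vert_{U'}^2\,dt\leq K_T\Vert z\Vert_{X'}^2$ and changing $t\mapsto T-t$ yields exactly inequality (2); as $z\mapsto(\psi(0),\partial_t\psi(0))=(z^0,-z^1)$ is a bijection onto $D(A_0)\times D(A_0^{1/2})$, the two inequalities are literally the same, giving (1)$\Leftrightarrow$(2).

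The heart of the matter is (2)$\Leftrightarrow$(3), which I would obtain from the lifting isometry $(A^*)^{-1}$. With $\beta=0\in\rho(A)$ one computes $(A^*)^{-1}=\begin{pmatrix}0 & A_0^{-1}\\ -I & 0\end{pmatrix}$, and using $\Vert A_0^{-1}b\Vert_{D(A_0^{1/2})}^2=\langle A_0^{-1}b,b\rangle=\Vert b\Vert_{D(A_0^{1/2})'}^2$ (the dual norm being taken with respect to the pivot space $H$) one checks that $(A^*)^{-1}$ is an isometric isomorphism from $X_{-1}=H\times D(A_0^{1/2})'$ onto $X=D(A_0^{1/2})\times H$; this isometry is exactly what forces the constant $K_T$ to be the same in (2) and (3). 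Given $\psi$ as in (3) with data in $X_{-1}$, set $w(t)=(\psi(t),-\partial_t\psi(t))$ and $v(t)=(A^*)^{-1}w(t)$. Since $(A^*)^{-1}$ commutes with $S(t)^*$, one has $v(t)=S(t)^*v(0)$ with $v(0)\in X$ and $\Vert v(0)\Vert_X=\Vert w(0)\Vert_{X_{-1}}=\Vert(\psi(0),\partial_t\psi(0))\Vert_{X_{-1}}$, and the same block computation as above gives $B^*v(t)=-B_0^*\psi(t)$. The admissibility inequality of Proposition \ref{charactdualadm}, which holds for $z\in D(A^*)$ and extends to all of $X$ by density together with the $\Lambda$-extension of $B^*$ (Remark \ref{rem_Lebext}), applied to the datum $v(0)\in X$ then produces (3) with the same $K_T$. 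As $v(0)$ runs over all of $X$ when $(\psi(0),\partial_t\psi(0))$ runs over $X_{-1}$, both (2) and (3) are equivalent to the single statement that the admissibility inequality holds for every $z\in X$, hence to each other.

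I expect the main obstacle to be the careful bookkeeping that makes the constant literally preserved rather than merely equivalent: one must verify that $(A^*)^{-1}$ is a true isometry $X_{-1}\to X$ with the pivot-space duality correctly set, and one must justify passing from regular data $z\in D(A^*)$ to general $v(0)\in X$ so that the low-regularity observation $B_0^*\psi$ in (3) is meaningful — precisely the role of the strong Yosida ($\Lambda$-)extension of $B^*$. By contrast, the algebraic identities (the block form of $(A^*)^{-1}$, the relations $\dot w_1=-w_2$ and $B^*v=-B_0^*\psi$, and the reduction of the second-order equation to the first-order group) are routine once the skew-adjoint structure is invoked.
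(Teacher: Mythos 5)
The paper states Proposition \ref{prop_waveadm} without proof (it is imported from the Tucsnak--Weiss framework), so there is no in-paper argument to compare against; your proof is correct and is precisely the argument the surrounding text sets up: item (2) is the dual admissibility inequality of Proposition \ref{charactdualadm} rewritten through the skew-adjoint block structure ($w=(\psi,-\partial_t\psi)$, $B^*w=-B_0^*\partial_t\psi$), and item (3) follows from it by the lifting $(A^*)^{-1}:X_{-1}\to X$, whose exact isometry for the natural norms (your computation $\Vert A_0^{-1}b\Vert_{D(A_0^{1/2})}=\Vert b\Vert_{D(A_0^{1/2})'}$) is what makes the constant $K_T$ literally the same, combined with the density/$\Lambda$-extension of Remark \ref{rem_Lebext}. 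The two caveats you flag — bounded invertibility of $A_0$ (needed for $0\in\rho(A)$ and the exact rather than merely equivalent isometry) and the interpretation of $B_0^*\psi(t)$ when $\psi(t)$ only lies in $H$ — are genuine ambiguities of the statement itself, not of your argument, and you resolve them in the standard way.
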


\begin{example}\label{examplewave}
Consider the wave equation with Dirichlet boundary control
\begin{equation*}
\partial_{tt}y=\triangle y\quad\textrm{in}\ \Omega,\qquad
y_{\vert\partial\Omega}=u,
\end{equation*}
where $\Omega$ is a bounded open subset of $\R^n$ with $C^2$ boundary. We set $H=H^{-1}(\Omega)$, and we take $A_0=-\triangle_D:D(A_0)=H^1_0(\Omega)\rightarrow H$ (isomorphism). We have $D(A_0^{1/2})=L^2(\Omega)$, and the dual space $D(A_0^{1/2})'$ (with respect to the pivot space $H=H^{-1}(\Omega)$) is equal to the dual space $(H^2(\Omega)\cap H^1_0(\Omega))'$ (with respect to the pivot space $L^2(\Omega)$).
The state space is 
$$
X=D(A_0^{1/2})\times H = L^2(\Omega)\times H^{-1}(\Omega)
$$
and we have 
$$
X_1=D(A) = H^1_0(\Omega)\times L^2(\Omega), \qquad
X_{-1} = H^{-1}(\Omega)\times (H^2(\Omega)\cap H^1_0(\Omega))' .
$$
The spaces $X_\alpha$ can be characterized easily.
Setting $U=L^2(\partial\Omega)$, the controlled wave equation is written as $y_{tt}=-A_0y+B_0u$ in $D(A_0^{1/2})'$, where 
$$
B_0^*\phi = \frac{\partial}{\partial\nu}_{\vert\partial\Omega}(A_0^{-1}\phi) \qquad \forall \phi\in L^2(\Omega) ,
$$
or, equivalently, $B_0=A_{-1}D \in L(U,D(A_0^{1/2})')$ where $D$ is the Dirichlet mapping.
Then $B\in L(U,X_{-1/2})$, but this is the limit case of Lemma \ref{lem_admissible}.
It is however true that $B$ is admissible, i.e., for every $T>0$ there exists $K_T>0$ such that
$$
\int_0^T\left\Vert\frac{\partial\psi}{\partial\nu}(t)\right\Vert^2_{L^2(\partial\Omega)}dt\leq
K_T\left(\Vert\psi(0)\Vert^2_{H_0^1(\Omega)}+\Vert\partial_t\psi(0)\Vert^2_{L^2(\Omega)}\right)
$$
for every solution of $\partial_{tt}\psi=\triangle\psi$ in $\Omega$, $\psi_{\vert\partial\Omega}=0$.
This is the \emph{hidden regularity property} for the Dirichlet wave equation, proved in \cite{Lions_SIREV, Lions_HUM, LionsMagenes} (see also \cite[Theorem 7.1.3]{TucsnakWeiss}) by using \emph{multipliers}. 
The multiplier method consists of multiplying the evolution equations by adequate functions and then using integrations by parts.
\end{example}

% Wave equation: LT_book1991 
% Remark 3.4 page 27,   section 7.1 p. 71

\begin{example}\label{examplewaveinternal}
Consider the Dirichlet wave equation with internal control 
$$
\partial_{tt}y=\triangle y+\chi_\omega u\quad\textrm{in}\ \Omega, \qquad y_{\vert\partial\Omega}=0
$$
on a bounded open subset $\Omega$ with Lipschitz boundary. In this case, we set $H=L^2(\Omega)$, we take $A_0=-\triangle_D:D(A_0)=H^1_0(\Omega)\cap H^2(\Omega)\rightarrow H$ (isomorphism). We have $D(A_0^{1/2})=H^1_0(\Omega)$, and the dual space $D(A_0^{1/2})'$ (with respect to the pivot space $H=L^2$) is equal to the dual space $H^{-1}(\Omega)$.
The state space is $X=D(A_0^{1/2})\times H = H^1_0(\Omega)\times L^2(\Omega)$, and we have $X_1=D(A) = H^1_0(\Omega)\cap H^2(\Omega)\times H^1_0(\Omega)$ and $X_{-1} = L^2(\Omega)\times H^{-1}(\Omega)$. 
Setting $U=L^2(\omega)$, the bounded control operator $B_0\in L(U,X)$ is such that, for every $u\in U$, $Bu$ is the extension of $u$ by $0$ to the whole $\Omega$. Its admissibility (which is obvious) means that, for every $T>0$, there exists $K_T>0$ such that
$$
\int_0^T \int_\omega \psi(t,x)^2\, dx\, dt\leq
K_T\left(\Vert\psi(0)\Vert^2_{L^2(\Omega)}+\Vert\partial_t\psi(0)\Vert^2_{H^{-1}(\Omega)}\right)
$$
for every solution of $\partial_{tt}\psi=\triangle\psi$ in $\Omega$, $\psi_{\vert\partial\Omega}=0$.
\end{example}

We refer to \cite{LasieckaTriggiani, TucsnakWeiss} (and references cited therein) for many other examples.

\section{Controllability}
We consider the linear control system \eqref{eqE}. % with the framework given in the introduction of the chapter. 
We do not assume that $B$ is admissible.

\subsection{Definitions}
Let us define the concept of controllability.
A priori, the most natural concept is to require that for a given time $T$, for all $y_0$ and $y_1$ in $X$, there exists a control $u\in L^2([0,T],U)$ and a solution of \eqref{eqE} such that $y(0)=y_0$ satisfies $y(T)=y_1$.
In finite dimension, a necessary and sufficient condition is the Kalman condition. In infinite dimension, new difficulties appear.
Indeed, let us consider a heat equation settled on a domain $\Omega$ of $\R^n$, with either an internal or a boundary control. Due to the smoothing effect (see Remark \ref{rem_regular}, see also \cite{CazenaveHaraux}), whatever the regularity of the initial condition and of the control may be, the solution $y(t,\cdot)$ is a smooth function (of $x$) as soon as $t>0$, outside of the control domain. It is therefore hopeless to try to reach a final target $y(T)=y_1\in L^2(\Omega)$ in general (unless $y_1$ is smooth enough, so as to belong to the range of the heat semigroup).
However, for such a parabolic equation, it makes sense to reach either $y(T)=0$, or to "almost reach" any $y_1\in L^2(\Omega)$.
This motivates the following definitions.

\begin{definition}\label{defnotionscont}
Let $T>0$ arbitrary. The control system \eqref{eqE} is said to be:
\begin{itemize}[parsep=1mm,itemsep=1mm,topsep=1mm]%,leftmargin=*
\item \textit{exactly controllable} in (the state space) $X$ in time $T$ if, for all $(y_0,y_1)\in X^2$, there exists $u\in L^2([0,T],U)$ such that the solution \eqref{defyfaible} of \eqref{eqE} satisfies $y(T;y_0,u)=y_1$;
\item \textit{approximately controllable} in $X$ in time $T$ if, for all $(y_0,y_1)\in X^2$, for every $\varepsilon>0$, there exists $u\in L^2([0,T],U)$ such that $\Vert y(T;y_0,u)-y_1\Vert_X\leq\varepsilon$;
\item \textit{exactly null controllable} in $X$ in time $T$ if, for every $y_0\in X$, there exists
$u(\cdot)\in L^2([0,T],U)$ such that $y(T;y_0,u)=0$.
\end{itemize}
\end{definition}

\begin{remark}\label{remprelimdualitycontobs}
Using the fact that $y(T)=S(T)y_0+L_Tu$ (see \eqref{defyfaible}), with $L_T$ defined by \eqref{def_Lt}, we make the following remarks:
\begin{itemize}[parsep=1mm,itemsep=1mm,topsep=1mm]%,leftmargin=*
\item The control system \eqref{eqE} is exactly controllable in $X$ in time $T$ if and only if $\mathrm{Ran}(L_T)=X$.
In particular, if this is true, then $B$ must be admissible and thus $\alpha(B)\leq 1/2$.
\item The control system \eqref{eqE} is approximately controllable in $X$ in time $T$ if and only if $\mathrm{Ran}(L_T)\cap X$ is dense in $X$.
\item The control system \eqref{eqE} is exactly null controllable in $X$ in time $T$ if and only if $\mathrm{Ran}(S(T))\subset\mathrm{Ran}(L_T)$.
\end{itemize}
\end{remark}

\begin{remark}
Note that, if $\mathrm{Ran}(L_T)=X$ for some $T>0$, then $\mathrm{Ran}(L_t)=X$ for every $t\geq T$. Indeed, taking (as in the proof of Lemma \ref{lemequivadmissible}) controls such that $u=0$ on $(0,t-T)$, we have 
$L_t u %= \int_0^t S(t-s) u(s) \, ds 
= \int_{t-T}^t S(t-s) u(s) \, ds = \int_0^T S(T-\tau)u(\tau+t-T)\, d\tau = L_Tu(\cdot+t-T)$. %The conclusion follows.
This shows that if the control system \eqref{eqE} is exactly controllable in time $T$ then it is exactly controllable in any time $t\geq T$.
\end{remark}

\begin{remark}
We speak of approximate null controllability in time $T$ when one takes the target $y_1=0$ in Definition \ref{defnotionscont}; equivalently, $\mathrm{Ran}(S(T))$ is contained in the closure of $\mathrm{Ran}(L_T)$. Approximate controllability and approximate null controllability (in time $T$) coincide when $S(T)^*$ is injective, i.e., when $\mathrm{Ran}(S(T))$ is dense in $X$ (see \cite{TrelatWangXu} for finer results).

There are other notions of controllability, depending on the context and on the needs, for instance: spectral controllability, controllability to finite-dimensional subspaces, controllability to trajectories (see, e.g., \cite{Coron, TucsnakWeiss} and references therein).
\end{remark}

\subsection{Duality controllability -- observability}\label{sec_duality}
As for the admissibility, we are going to provide a dual characterization of the controllability properties. To this aim, it suffices to combine Remark \ref{remprelimdualitycontobs} with the following general lemma of functional analysis (see \cite{Brezis} for the first part and \cite{LiYong,Zabczyk} for the last part).

\begin{lemma}\label{lemgenanafonc}
Let $X$ and $Y$ be Banach spaces, and let $F\in L(X,Y)$. Then:
\begin{itemize}[parsep=1mm,itemsep=1mm,topsep=1mm]%,leftmargin=*
\item $\mathrm{Ran}(F)$ is dense in $Y$ (that is, $F$ is "approximately surjective") if and only if $F^*\in L(Y',X')$ is one-to-one, that is: for every $z\in Y'$, if $F^*z=0$ then $z=0$.
\item $\mathrm{Ran}(F)=Y$ (that is, $F$ is surjective) if and only if $F^*\in L(Y',X')$ is bounded below, in the sense that there exists $C>0$ such that $\Vert F^*z\Vert_{X'}\geq C\Vert z\Vert_{Y'}$ for every $z\in Y'$.
\end{itemize}
Let $X$, $Y$ and $Z$ be Banach spaces, with $Y$ reflexive, and let $F\in L(X,Z)$ and $G\in L(Y,Z)$. Then $\mathrm{Ran}(F)\subset \mathrm{Ran}(G)$ if and only if there exists $C>0$ such that $\Vert F^*z\Vert_{X'}\leq C\Vert G^*z\Vert_{Y'}$ for every $z\in Z'$.
\end{lemma}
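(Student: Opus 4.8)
The plan is to prove the two implications separately, viewing each as a factorization statement: the inequality $\Vert F^*z\Vert_{X'}\leq C\Vert G^*z\Vert_{Y'}$ is exactly what allows one to factor $F$ through $G$ modulo $\ker G$. Throughout I would set $N=\ker G$, a closed subspace of $Y$, denote by $\pi\colon Y\to Y/N$ the canonical surjection and by $\hat G\colon Y/N\to Z$ the induced \emph{injective} bounded operator, so that $G=\hat G\circ\pi$ and $\mathrm{Ran}(\hat G)=\mathrm{Ran}(G)$. The elementary duality fact I would invoke repeatedly is that $\pi^*\colon (Y/N)'\to Y'$ is an isometry onto the annihilator $N^\perp$, whence $G^*=\pi^*\hat G^*$ and $\Vert G^*z\Vert_{Y'}=\Vert\hat G^*z\Vert_{(Y/N)'}$ for every $z\in Z'$.

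For the implication $\mathrm{Ran}(F)\subset\mathrm{Ran}(G)\ \Rightarrow\ $ the inequality (which needs no reflexivity), I would use injectivity of $\hat G$ to define $\Lambda\colon X\to Y/N$ by $\Lambda x=\hat G^{-1}(Fx)$; this makes sense since $Fx\in\mathrm{Ran}(F)\subset\mathrm{Ran}(G)=\mathrm{Ran}(\hat G)$. A closed graph argument, using continuity of $F$ and of $\hat G$ together with injectivity of $\hat G$, shows $\Lambda\in L(X,Y/N)$ and $F=\hat G\Lambda$. Dualizing gives $F^*=\Lambda^*\hat G^*$, and combining with the isometry above yields $\Vert F^*z\Vert_{X'}=\Vert\Lambda^*\hat G^*z\Vert_{X'}\leq\Vert\Lambda\Vert\,\Vert\hat G^*z\Vert_{(Y/N)'}=\Vert\Lambda\Vert\,\Vert G^*z\Vert_{Y'}$, so $C=\Vert\Lambda\Vert$ works.

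The converse is the main obstacle, and it is where reflexivity of $Y$ is essential. From the inequality I would first define a bounded operator $\Lambda_0$ on the closed subspace $M:=\overline{\mathrm{Ran}(G^*)}\subset Y'$ by $\Lambda_0(G^*z)=F^*z$; the inequality guarantees both well-definedness and $\Vert\Lambda_0\Vert\leq C$ on $\mathrm{Ran}(G^*)$, and one extends by continuity to $M$ (the target $X'$ being complete). The delicate point is to convert $\Lambda_0$ back into an operator with values in $Y/N$. Here I would use reflexivity of $Y$ twice: first, the norm-closed subspace $M$ is then closed for the weak-$*$ topology $\sigma(Y',Y)$, so that $M=({}^\perp M)^\perp=N^\perp\cong (Y/N)'$ (using ${}^\perp\mathrm{Ran}(G^*)=\ker G=N$); second, $Y/N$ is reflexive, so $(Y/N)''=Y/N$. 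Then taking the adjoint of $\Lambda_0$ regarded as an element of $L((Y/N)',X')$ and restricting it to $X$ via the canonical embedding $X\hookrightarrow X''$ produces $\Psi_0\in L(X,Y/N)$; a short duality computation (pairing with $\hat G^*z$ and unwinding the definition of the adjoint) gives $\hat G\Psi_0=F$, whence $\mathrm{Ran}(F)\subset\mathrm{Ran}(\hat G)=\mathrm{Ran}(G)$.

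The subtlety I would stress, and the reason the argument is routed through $Y/N$ rather than $Y$, is that two tempting shortcuts both fail in general Banach spaces: one cannot extend the operator-valued map $\Lambda_0$ from $M$ to all of $Y'$ (operator-valued Hahn--Banach extension requires an injective target space), and one cannot lift $\Psi_0$ from $Y/N$ back to $Y$ (the quotient map $\pi$ admits a bounded linear section only when $N$ is complemented). Passing to the reflexive bidual of $Y/N$ sidesteps both issues at once, and factoring through $\hat G$ suffices because range inclusion only requires a factorization through $Y/N$. As a sanity check, the special case $F=\mathrm{id}_Z$ recovers the surjectivity criterion of the first part of the lemma.
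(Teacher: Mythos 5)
Your argument is correct, but note that the paper itself contains no proof of this lemma: it defers the two bullet points to \cite{Brezis} and the range-inclusion statement to \cite{LiYong,Zabczyk}, so the relevant comparison is with the classical argument given in those references. That argument handles the direction you call the main obstacle in a pointwise, scalar way: fix $x\in X$ and define on $\mathrm{Ran}(G^*)$ the linear functional $G^*z\mapsto\langle F^*z,x\rangle_{X',X}$, which is well defined and of norm at most $C\Vert x\Vert$ by the inequality; extend it by continuity to $\overline{\mathrm{Ran}(G^*)}$, then by the \emph{scalar} Hahn--Banach theorem to all of $Y'$, and use reflexivity once to realize the extension as an element $y_x\in Y''=Y$, so that $\langle z,Gy_x\rangle_{Z',Z}=\langle z,Fx\rangle_{Z',Z}$ for every $z\in Z'$, i.e.\ $Fx=Gy_x\in\mathrm{Ran}(G)$. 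This sidesteps both of the obstructions you list (operator-valued extension, bounded section of $\pi$), because proving $\mathrm{Ran}(F)\subset\mathrm{Ran}(G)$ needs no linearity of $x\mapsto y_x$; your remark overlooks this third route. What your heavier construction buys in exchange is a genuinely stronger conclusion: by identifying $\overline{\mathrm{Ran}(G^*)}=(\ker G)^\perp\simeq(Y/\ker G)'$ via weak-$*$ closedness and the bipolar theorem, and then taking a double adjoint, you produce a bounded linear factorization $F=\hat G\Psi_0$ with $\Vert\Psi_0\Vert\leq C$ -- a Banach-space analogue of Douglas's factorization lemma -- rather than mere set-theoretic inclusion of ranges. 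I checked the individual steps (the closed-graph argument for $\Lambda$, the isometry $\Vert G^*z\Vert_{Y'}=\Vert\hat G^*z\Vert_{(Y/N)'}$, weak-$*$ closedness of $M$ from reflexivity of $Y$, reflexivity of the quotient $Y/N$, and the final duality computation giving $\hat G\Psi_0=F$) and they are all sound; you also correctly isolate that reflexivity is needed only for the converse implication.

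Two caveats. First, you prove only the third assertion: the two bullet points (density of $\mathrm{Ran}(F)$ iff $\ker F^*=\{0\}$, and surjectivity iff $F^*$ bounded below) are left untouched; they are standard open-mapping/annihilator facts and the paper likewise only cites \cite{Brezis} for them, but a complete proof of the lemma as stated would have to include them. Second, your sanity check $F=\mathrm{id}_Z$ recovers the surjectivity criterion only when the domain space of $G$ is reflexive, whereas that bullet point holds in arbitrary Banach spaces, so it is a consistency check rather than a derivation.
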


It is interesting to stress the difference with the finite-dimensional setting, in which a proper subset cannot be dense. The fact that, in infinite dimension, a proper subset may be dense explains the fact that the notion of approximate controllability is distinct from the notion of exact controllability.

Now, applying Lemma \ref{lemgenanafonc} to the operators $L_T$ and $S(T)$, we get, with Remark \ref{remprelimdualitycontobs}, the following result.

\begin{theorem}\label{dualityondes}
Assume that $X$ and $U$ are reflexive.
Let $T>0$ arbitrary. The control system \eqref{eqE} is:
\begin{itemize}[parsep=1mm,itemsep=1mm,topsep=1mm]%,leftmargin=*
\item exactly controllable in $X$ in time $T$ if and only if there exists $C_T>0$ such that 
\begin{equation}\label{inegobs}
\int_0^T \Vert B^*S^*(T-t)z\Vert_{U'}^2\,dt \geq C_T\Vert z\Vert_{X'}^2\qquad\forall z\in D(A^*);
\end{equation}

\item %The control system \eqref{eqE} is 
approximately controllable in $X$ in time $T$ if and only if %the following implication holds true:
\begin{equation}\label{prolongunique}
\forall z\in D(A^*)\quad \forall t\in[0,T]\quad B^*S^*(T-t)z=0\quad \Rightarrow\ z=0;
\end{equation}

\item %The control system \eqref{eqE} is 
exactly null controllable in $X$ in time $T$ if and only if there exists $C_T>0$ such that
\begin{equation}\label{obscont}
\int_0^T \Vert B^*S^*(T-t)z\Vert_{U'}^2\, dt \geq C_T\Vert S(T)^*z\Vert_{X'}^2\qquad\forall z\in D(A^*).
\end{equation}
\end{itemize}
\end{theorem}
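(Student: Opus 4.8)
The three equivalences all share the same architecture: translate each controllability notion into a statement about the ranges of the operators $L_T$ and $S(T)$ via Remark \ref{remprelimdualitycontobs}, then apply the corresponding item of the abstract duality Lemma \ref{lemgenanafonc}, and finally identify the resulting adjoint quantities using the explicit expression of $L_T^*$ from Lemma \ref{lem_LT*}. Before starting I would record two facts that make the machinery applicable: since $U$ is reflexive, the space $L^2([0,T],U)$ is reflexive and its dual is $L^2([0,T],U')$; and since $X$ is reflexive with $A$ closed and densely defined, $D(A^*)$ is dense in $X'$. By Lemma \ref{lem_LT*}, for $z\in D(A^*)$ one has $(L_T^*z)(t)=B^*S(T-t)^*z$, whence $\Vert L_T^*z\Vert_{L^2([0,T],U')}^2=\int_0^T\Vert B^*S^*(T-t)z\Vert_{U'}^2\,dt$, which is exactly the integral appearing in all three observability conditions.

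For the first item, Remark \ref{remprelimdualitycontobs} gives that exact controllability in time $T$ is equivalent to $\mathrm{Ran}(L_T)=X$; in particular it forces $B$ to be admissible, so that $L_T\in L(L^2([0,T],U),X)$. I would then apply the second item of Lemma \ref{lemgenanafonc} with $F=L_T$ and $Y=X$: surjectivity of $L_T$ is equivalent to $L_T^*$ being bounded below, i.e. to the existence of $C_T>0$ with $\Vert L_T^*z\Vert_{L^2([0,T],U')}^2\geq C_T\Vert z\Vert_{X'}^2$. Substituting the expression for $\Vert L_T^*z\Vert^2$ above yields \eqref{inegobs}, and by continuity the inequality over all of $X'$ is equivalent to its restriction to the dense subspace $D(A^*)$.

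For approximate controllability, Remark \ref{remprelimdualitycontobs} characterizes it by density of (the $X$-part of) $\mathrm{Ran}(L_T)$, and the first item of Lemma \ref{lemgenanafonc} converts this density into the injectivity of $L_T^*$: namely $L_T^*z=0\Rightarrow z=0$. Since $L_T^*z=0$ means $B^*S^*(T-t)z=0$ for almost every $t$, this is precisely the unique continuation property \eqref{prolongunique}. For exact null controllability, Remark \ref{remprelimdualitycontobs} gives the equivalent condition $\mathrm{Ran}(S(T))\subset\mathrm{Ran}(L_T)$; I would apply the last item of Lemma \ref{lemgenanafonc} with $F=S(T)$, $G=L_T$ and the reflexive intermediate space $Y=L^2([0,T],U)$, obtaining the comparison $\Vert S(T)^*z\Vert_{X'}\leq C\Vert L_T^*z\Vert_{L^2([0,T],U')}$ for all $z\in X'$, which upon squaring and inserting the formula for $\Vert L_T^*z\Vert^2$ is \eqref{obscont}.

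The delicate point throughout is the bookkeeping of the space in which $L_T^*$ lives. Lemma \ref{lem_LT*} identifies $L_T^*z$ cleanly only for $z\in D(A^*)$, whereas the duality lemma quantifies over all $z$ in the dual of the target space; I would bridge this gap either by density of $D(A^*)$ in $X'$ together with the continuity of $L_T^*$, or more robustly by replacing $B^*$ with its $\Lambda$-extension $B^*_\Lambda$ as in Remark \ref{rem_Lebext}, which extends the identity $(L_T^*z)(t)=B^*_\Lambda S(T-t)^*z$ to all $z\in X'$. The second subtlety is that applying Lemma \ref{lemgenanafonc} in its surjectivity and inclusion forms presupposes that $L_T$ is viewed as a bounded operator into $X$ (equivalently, that $B$ is admissible, Proposition \ref{charactdualadm}); for exact controllability and exact null controllability this is consistent, since the target condition already confines the relevant ranges to $X$, but one must take care not to conflate the $X_{-1}$-valued and $X$-valued pictures when checking the hypotheses of the lemma.
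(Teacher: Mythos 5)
Your proposal follows essentially the paper's own proof: the paper likewise obtains all three equivalences by combining the range characterizations of Remark \ref{remprelimdualitycontobs} with the duality Lemma \ref{lemgenanafonc} applied to $L_T$ (second part for exact, first part for approximate controllability) and to the pair $S(T)$, $L_T$ (third part for null controllability), identifying $L_T^*z$ as $t\mapsto B^*S(T-t)^*z$ via Lemma \ref{lem_LT*}. Your extra bookkeeping (density of $D(A^*)$ in $X'$, the $\Lambda$-extension of $B^*$, and the admissibility needed to view $L_T$ as an operator into $X$) only makes explicit points that the paper's terse proof leaves implicit.
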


\begin{remark}\label{rem51}
The inequalities \eqref{inegobs} and \eqref{obscont} are called \textit{observability inequalities}. As in Remark \ref{rem_Lebext}, they can be written for every $z\in X'$, provided that $B^*$ be replaced with its $\Lambda$-extension $B^*_\Lambda$. The largest constant $C_T>0$ such that \eqref{inegobs} (or \eqref{obscont}) holds true is called the \textit{observability constant}.

Like the admissibility inequality, an observability inequality is an energy-like inequality, but in the converse sense.
Proving observability inequalities is a challenging issue in general for PDEs. We will give some examples further.
\end{remark}

\begin{remark}
In the context of PDEs, \eqref{prolongunique} corresponds to a \textit{unique continuation} property, often established thanks to Holmgren's theorem (see \cite[Section 1.8]{Lions_HUM}).
\end{remark}

\begin{remark}
Setting $\varphi(t) = S^*(T-t)z$, we have
$$
\dot\varphi(t) = -A^*\varphi(t),\qquad \varphi(T)=z,
$$
and the properties above can be interpreted in terms of $\varphi(t)$ which is the adjoint vector in an infinite-dimensional version of the PMP (see Section \ref{sec_PMP_infinitedim}).
Usually, we rather consider $\psi(t)=\varphi(T-t)=S^*(t)z$, and hence we have
$$
\dot\psi(t) = A^*\psi(t),\qquad \psi(0)=z.
$$
This is the adjoint equation.

In terms of this adjoint equation, the approximate controllability property is equivalent to the unique continuation property: $B^*\psi(t)=0$ for every $t\in[0,T]$ implies $\psi(\cdot)=0$.
The exact controllability is equivalent to the observability inequality
$$
\int_0^T \Vert B^*\psi(t)\Vert_{U'}^2\, dt \geq C_T \Vert\psi(0)\Vert_{X'}^2
$$
for every solution of the adjoint equation, and the exact null controllability is equivalent to the observability inequality
$$
\int_0^T \Vert B^*\psi(t)\Vert_{U'}^2\, dt \geq C_T \Vert\psi(T)\Vert_{X'}^2.
$$
\end{remark}

\begin{remark}
As announced in Remarks \ref{remGT} and \ref{rem_auton} (in Chapter \ref{chap_cont}, Section \ref{sec_controllability_time-varying}), the observability inequality \eqref{inegobs} is the infinite-dimensional version of the observability inequality \eqref{inegobsdimfinie_autonomous} obtained in the finite-dimensional setting.

Note that, in finite dimension, the properties \eqref{inegobs} and \eqref{prolongunique} are equivalent, whereas, in the infinite-dimensional setting, there is a deep difference, due to the fact that a proper subset of an infinite-dimensional space may be dense.

\paragraph{Gramian operator.}
As in Remark \ref{remGT} and in Theorem \ref{controlabiliteinstationnaire}, we can similarly define the Gramian operator in the present context of Banach spaces.
Its general definition is the following. Assume that $X$ and $U$ are reflexive. Recall that, in general, we have $L_T\in L(L^2([0,T],U),D(A^*)')$ and $L_T^*\in L(D(A^*),L^2([0,T],U'))$ (see the proof of Lemma \ref{lem_LT*}). Identifying $U\simeq U'$ and $L^2([0,T],U)\simeq L^2([0,T],U')$, we define the Gramian operator
\begin{equation}\label{def_GT}
G_T = L_T L_T^* = \int_0^T S(T-t)BB^*S(T-t)^* \, dt \  \in L(D(A^*),D(A^*)') 
\end{equation}
where, in the formula above, $BB^*$ is to be understood as $BJB^*$ where $J:U'\rightarrow U$ is the canonical isomorphism.

If the control operator $B$ is admissible then $L_T\in L(L^2([0,T],U),X)$ and $L_T^*\in L(X',L^2([0,T],U'))$, and therefore $G_T \in L(X',X)$. The expression of $G_T$ is still given by \eqref{def_GT} when applied to some element $z\in D(A^*)$. Using Remark \ref{rem_Lebext}, it can be noted that the expression of $G_T$ on the whole space $X'$ is given by
\begin{equation*}
G_T = \int_0^T S(T-t)B_\Lambda B_\Lambda^*S(T-t)^* \, dt.
\end{equation*}

If the control system \eqref{eqE} is exactly controllable in time $T$, then, using \eqref{inegobs} and Remark \ref{rem51}, it follows that 
$\left\langle G_Tz,z\right\rangle_{D(A^*)',D(A^*)}\geq C_T\Vert z\Vert_{X'}^2$
for every $z\in D(A^*)$; the converse inequality is satisfied if $B$ is admissible.
In other words, we have the following lemma.

\begin{lemma}\label{lemGT}
The control operator $B$ is admissible and the control system \eqref{eqE} is exactly controllable in time $T$ if and only if $G_T:X'\rightarrow X$ is an isomorphism satisfying $C_T\Vert z\Vert_{X'}^2\leq \langle G_T z,z\rangle_{X',X}\leq K_T\Vert z\Vert_{X'}^2$ for every $z\in X'$.
\end{lemma}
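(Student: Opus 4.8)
The plan is to reduce the entire statement to a single quadratic-form identity and then read both inequalities off it. First I would record, using Lemma \ref{lem_LT*} (which gives $(L_T^*z)(t)=B^*S(T-t)^*z$ for $z\in D(A^*)$) together with the definition \eqref{def_GT} of $G_T=L_TL_T^*$, that for every $z\in D(A^*)$,
\begin{equation}
\langle G_Tz,z\rangle_{D(A^*)',D(A^*)}=\|L_T^*z\|_{L^2([0,T],U')}^2=\int_0^T\|B^*S(T-t)^*z\|_{U'}^2\,dt .
\tag{$\star$}
\end{equation}
With $(\star)$ in hand the two sides of the desired estimate acquire a direct meaning: the upper bound $\langle G_Tz,z\rangle\leq K_T\|z\|_{X'}^2$ is exactly the admissibility inequality \eqref{ineg_adm} of Proposition \ref{charactdualadm}, while the lower bound $\langle G_Tz,z\rangle\geq C_T\|z\|_{X'}^2$ is exactly the observability inequality \eqref{inegobs}, which by Theorem \ref{dualityondes} characterizes exact controllability in time $T$ (here I use that $X$ and $U$ are reflexive, as in the whole Gramian paragraph).

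For the direct implication I would argue as follows. Admissibility yields $L_T^*\in L(X',L^2([0,T],U'))$ and hence $G_T=L_TL_T^*\in L(X',X)$ is a genuine bounded operator; moreover $(\star)$ then extends to all $z\in X'$, with $B^*$ replaced by its $\Lambda$-extension $B^*_\Lambda$ as in Remark \ref{rem_Lebext}. The admissibility inequality gives the upper bound on $D(A^*)$, exact controllability gives the lower bound on $D(A^*)$, and both extend from $D(A^*)$ to all of $X'$ by continuity, since $D(A^*)$ is dense in $X'$ (reflexivity) and $z\mapsto\|L_T^*z\|^2$ is continuous. It then remains to upgrade $G_T$ to an isomorphism. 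From the lower bound and Cauchy--Schwarz, $\|G_Tz\|_X\,\|z\|_{X'}\geq\langle G_Tz,z\rangle\geq C_T\|z\|_{X'}^2$, so $\|G_Tz\|_X\geq C_T\|z\|_{X'}$, whence $G_T$ is injective with closed range. To get surjectivity I would use that, under reflexivity, $L_T^{**}=L_T$, so $G_T$ is self-adjoint ($G_T^*=G_T$); if $\phi\in X'$ annihilates $\mathrm{Ran}(G_T)\subset X$, then $\|L_T^*\phi\|^2=\langle G_T\phi,\phi\rangle=0$, and the lower bound forces $\phi=0$. Hence the range is dense as well as closed, i.e.\ equal to $X$, and a bijection bounded below has bounded inverse, so $G_T$ is an isomorphism.

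The converse is the easy half: the hypothesised two-sided bound for all $z\in X'$ restricts in particular to $z\in D(A^*)\subset X'$, and combined with $(\star)$ it produces precisely \eqref{ineg_adm} and \eqref{inegobs}; applying Proposition \ref{charactdualadm} gives admissibility of $B$, and applying Theorem \ref{dualityondes} gives exact controllability in time $T$. I expect the only genuinely delicate points to be, first, the surjectivity step (turning ``bounded below'' into ``isomorphism'', handled above by the self-adjointness/density argument), and second, the bookkeeping of duality brackets and of the $\Lambda$-extension when passing $(\star)$ from the dense domain $D(A^*)$ to the full space $X'$; everything else is a direct transcription of the admissibility and observability inequalities already available in the text.
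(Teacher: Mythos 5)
Your proof is correct and follows essentially the same route as the paper: the paper obtains the lemma directly from the identity $\langle G_Tz,z\rangle_{D(A^*)',D(A^*)}=\int_0^T\Vert B^*S(T-t)^*z\Vert_{U'}^2\,dt$ by reading the upper bound as the admissibility inequality \eqref{ineg_adm} (Proposition \ref{charactdualadm}) and the lower bound as the observability inequality \eqref{inegobs} (Theorem \ref{dualityondes}), exactly as you do. The only difference is that you spell out the isomorphism upgrade (injectivity and closed range from the lower bound, surjectivity from density of the range, which as you note needs no self-adjointness since testing against $\phi$ itself suffices), a step the paper leaves implicit; this is a welcome completion rather than a departure.
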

%This lemma will naturally lead to the Hilbert Uniqueness Method described in the next section.
\end{remark}

\subsection{Hilbert Uniqueness Method (HUM)}
The Hilbert Uniqueness Method (in short, HUM; see \cite{Lions_SIREV,Lions_HUM}) is based on Lemma \ref{lemGT} by noticing that, in the context of this lemma, the norm $\Vert\cdot\Vert_{X'}$ is equivalent to the norm given by $(\langle G_T z,z\rangle_{X',X})^{1/2}$. This gives a characterization of the state space $X$ in which we have exact controllability.

HUM can then be stated as follows. Let $Y$ be a reflexive Banach space, let $A:D(A)\rightarrow Y$ be an operator generating a $C_0$ semigroup and let $(Y_\alpha)_{\alpha\in\R}$ be the associated scale of Banach spaces. Let $U$ be a fixed reflexive Banach space and let $B\in L(U,Y_{-\alpha})$ be a control operator. Let $Z$ be the completion of $D(A^*)$ for the norm $(\langle G_T z,z\rangle_{D(A^*)',D(A^*)})^{1/2}$, and let $X$ be a Banach space such that $X'=Z$.
Then $X$ is the Banach space for which Lemma \ref{lemGT} is satisfied, i.e., for which $B$ is admissible and the control system \eqref{eqE} is exactly controllable in time $T$ in the state space $X$.

HUM may as well be restated in the other way round: the (reflexive Banach) state space $X$ is fixed and one wants to characterize the control Banach space $U$ for which admissibility and exact controllability are satisfied.

\paragraph{HUM functional.}
In the framework of Lemma \ref{lemGT}, the so-called HUM functional $J$, defined by
$$
J(z) = \frac{1}{2}\langle G_Tz,z\rangle_{X',X} + \langle z,S(T)y_0-y_1\rangle_{X',X}\qquad\forall z\in X'
$$
is smooth and coercive in $X'$, hence $J$ has a unique minimizer $\bar z$, satisfying 
$$
0 = \nabla J(\bar z) = G_T\bar z + S(T)y_0-y_1 .
$$
Defining the so-called HUM control by $\bar u(t) = B^*S(T-t)^*\bar z = (L_T\bar z)(t)$, the above equality says that $S(T)y_0+L_T\bar u=y_1$, i.e., $y(T;y_0,\bar u)=y_1$. In other words, the control $\bar u$ steers the control system \eqref{eqE} from $y_0$ to $y_1$ in time $T$. Actually, the control $\bar u$ is even the minimal $L^2$ norm control realizing this controllability property (see \cite{Lions_SIREV,Lions_HUM}): this can also be seen by observing that, when wanting to solve the overdetermined equation $L_Tu=y_1-S(T)y_0$, the control of minimal $L^2$ norm is given by $u = L_T^\#(y_1-S(T)y_0)$ where $L_T^\#$ is the pseudo-inverse of $L_T$ (this is indeed a well known property of the pseudo-inverse); since $L_T^\#=L_T^*(L_TL_T^*)^{-1} = L_T^*G_T^{-1}$, the claim follows.

\paragraph{HUM for exact null controllability.}
When wanting to realize an exact null controllability result for a control system that is not exactly controllable (like the heat equation), of course the conclusion of Lemma \ref{lemGT} does not hold. In terms of the Gramian operator, the observability inequality \eqref{obscont} is written as $\left\langle G_Tz,z\right\rangle_{D(A^*)',D(A^*)} \geq C_T \Vert S(T)^*z\Vert_{X'}^2$ for every $z\in D(A^*)$.

We can however still write the HUM functional as above (with an additional care) and determine the minimal $L^2$ norm control steering the control system \eqref{eqE} to $0$ in time $T$. The HUM functional $J$ is defined as above, for every $z\in D(A^*)$, with the duality bracket $\left\langle G_Tz,z\right\rangle_{D(A^*)',D(A^*)}$ for the first term. The functional $J$ is however not coercive in $X'$. To recover such a property, we define the Banach space $\mathcal{X}$ as the completion of $D(A^*)$ for the norm $(\left\langle G_Tz,z\right\rangle_{D(A^*)',D(A^*)})^{1/2}$. Note that the space $\mathcal{X}$ is in general much larger than $D(A^*)$ and may even fail to be a space of distributions (see \cite{Lions_HUM}). Anyway, there is a unique minimizer $\bar z\in\mathcal{X}$ of $J$, satisfying therefore $0 = \nabla J(\bar z) = G_T\bar z + S(T)y_0$, and then the HUM control $\bar u(t) = B^*S(T-t)^*\bar z = (L_T\bar z)(t)$ steers the control system \eqref{eqE} to $0$ in time $T$, and is the control of minimal $L^2$ norm doing so.

This approach provides a generalization of Theorem \ref{controlabiliteinstationnaire} (in Section \ref{sec_controllability_time-varying}) to  infinite-dimensional autonomous linear control systems.

\subsection{Example: the wave equation}
The typical (and historical) example of application of HUM is the wave equation, either with an internal control or with a (Dirichlet or Neumann) boundary control. In 1D, the analysis is easy thanks to Fourier series, as elaborated below. The multi-D case is much more complicated and can be treated thanks to microlocal analysis (see comments further).

\subsubsection{1D wave equation with Dirichlet boundary control}
Let $T>0$ and $L>0$ be fixed. We consider the 1D wave equation with Dirichlet boundary control at the right-boundary:
\begin{equation}\label{ondes}
\begin{split}
& \partial_{tt}y = \partial_{xx}y,\qquad\qquad\qquad\qquad\qquad\quad t\in(0,T),\, x\in (0,L),\\
& y(t,0)=0,\ y(t,L)=u(t),\qquad\quad\qquad t\in(0,T), \\
& y(0,x)=y_0(x),\ \partial_ty(0,x)=y_1(x),\quad\ x\in(0,L),
\end{split}
\end{equation}
where the state at time $t\in [0,T]$ is $(y(t,\cdot),\partial_t y(t,\cdot))$ and the control is $u(t)\in\R$.
Let us establish that this equation is exactly controllable in time $T$ in the space $L^2(0,L)\times H^{-1}(0,L)$ with controls $u\in L^2(0,T)$ if and only if $T\geq 2L$.

By Theorem \ref{dualityondes} (see also Example \ref{examplewave}), this is equivalent to establishing the following observability inequality: there exists $C_T>0$ such that any solution of
\begin{equation}\label{ondesadjoint}
\partial_{tt}\psi = \partial_{xx}\psi, \qquad \psi(t,0)=\psi(t,L)=0,
\end{equation}
such that $(\psi(0),\partial_t\psi(0))\in H^1_0(0,L)\times L^2(0,L)$ satisfies
\begin{equation}\label{inegobsici}
\int_0^T \vert\partial_x\psi(t,L)\vert^2\, dt \geq %C_T \int_0^L \left(  \vert\partial_x\psi(0,x)\vert^2 + \vert\partial_t\psi(0,x)\vert^2 \right) dx = 
C_T \left( \Vert\psi(0)\Vert_{H^1_0(0,L)}^2 + \Vert\partial_t\psi(0)\Vert_{L^2(0,L)}^2 \right)
\end{equation}
Given any $T\geq 2L$, let us establish \eqref{inegobsici} by using spectral expansions (Fourier series). 
We expand the solutions of \eqref{ondesadjoint} as
$$
\psi(t,x) = \sum_{k=1}^\infty \frac{L}{k\pi}\left( a_k\cos\frac{k\pi t}{L} + b_k\sin\frac{k\pi t}{L} \right)\sin\frac{k\pi x}{L} 
$$
with $(a_k)_{k\in\N^*}\in\ell^2(\R)$ and $(b_k)_{k\in\N^*}\in\ell^2(\R)$, so that
$\psi(0,x) = \sum_{k=1}^\infty \frac{L}{k\pi}a_k\sin\frac{k\pi x}{L}$ and $\partial_t\psi(0,x) = \sum_{k=1}^\infty b_k \sin\frac{k\pi x}{L}
$ and thus
$$
\Vert\psi(0)\Vert_{H^1_0}^2 + \Vert\partial_t\psi(0)\Vert_{L^2}^2
= \int_0^L \left(  \vert\partial_x\psi(0,x)\vert^2 + \vert\partial_t\psi(0,x)\vert^2 \right) dx
= \frac{L}{2}\sum_{k=1}^\infty (a_k^2+b_k^2) .
$$
Then
\begin{equation*}
\begin{split}
\int_0^T \vert\partial_x\psi(t,L)\vert^2\, dt &\geq \int_0^{2L} \vert\partial_x\psi(t,L)\vert^2\, dt \\
&= \int_0^{2L} \left\vert  \sum_{k=1}^\infty (-1)^k \left( a_k\cos\frac{k\pi t}{L} + b_k\sin\frac{k\pi t}{L} \right)  \right\vert^2 dt \\
&= \sum_{j,k=1}^\infty (-1)^{j+k} \int_0^{2L} \left(a_j\cos\left(\frac{j\pi t}{L}\right)+b_j\sin\left(\frac{j\pi t}{L}\right)\right) \times\\
& \qquad\qquad\qquad\qquad\qquad\left(a_k\cos\left(\frac{k\pi t}{L}\right)+b_k\sin\left(\frac{k\pi t}{L}\right)\right) \, dt \\
&= L\sum_{k=1}^\infty (a_k^2+b_k^2)
= 2 \left( \Vert\psi(0)\Vert_{H^1_0(0,L)}^2 + \Vert\partial_t\psi(0)\Vert_{L^2(0,L)}^2 \right) ,
\end{split}
\end{equation*}
and \eqref{inegobsici} is proved.

Using similar Fourier series expansions, we see that admissibility property (of the Dirichlet control operator) is satisfied for any $T>0$: by Proposition \ref{prop_waveadm} (see also Example \ref{examplewave}), equivalently, for any $T>0$ there exists $K_T>0$ such that 
\begin{equation*}%\label{inegadmici}
\int_0^T \vert\partial_x\psi(t,L)\vert^2\, dt \leq K_T \left( \Vert\psi(0)\Vert_{H^1_0(0,L)}^2 + \Vert\partial_t\psi(0)\Vert_{L^2(0,L)}^2 \right) .
\end{equation*}
for any solution of \eqref{ondesadjoint}.
Indeed, $\int_0^T \vert\partial_x\psi(t,L)\vert^2\, dt \leq \int_0^{2nL} \vert\partial_x\psi(t,L)\vert^2\, dt$ for some $n\in\N^*$ we perform the same expansion as above. 

We conclude that, for $T\geq 2L$, we have the double inequality
$$
C_T \Vert(\psi(0),\partial_t\psi(0))\Vert_{H^1_0\times L^2}^2 
\leq \int_0^T \vert\partial_x\psi(t,L)\vert^2\, dt 
\leq K_T \Vert(\psi(0),\partial_t\psi(0))\Vert_{H^1_0\times L^2}^2 
$$
for all solutions of \eqref{ondesadjoint}, saying that $\big(\int_0^T \vert\partial_x\psi(t,L)\vert^2\, dt \big)^{1/2}$ is a norm, equivalent to the norm of $H^1_0(0,L)\times L^2(0,L)$.
This illustrates Lemma \ref{lemGT}. The term $\int_0^T \vert\partial_x\psi(t,L)\vert^2\, dt$ stands for the Gramian.

Let us finally prove that controllability is lost if $T<2L$.
Let $\delta>0$ be such that $T\leq 2L-2\delta$. We consider a solution of \eqref{ondesadjoint} such that $\psi(T/2,\cdot)$ and $\partial_x\psi(T/2,\cdot)$ are supported in $(0,\delta)$. Then, using the fact that the support of any solution of the wave equation propagates at speed $1$, it follows that the observability inequality \eqref{inegobs} is not satisfied.

Note that the same argument shows that, for $T<2L$, the wave equation is not approximately controllable either.

\subsubsection{1D Dirichlet wave equation with internal control}
Instead of \eqref{ondes}, we consider
\begin{equation}\label{ondes_interne}
\begin{split}
& \partial_{tt}y = \partial_{xx}y + \chi_\omega u,\qquad\qquad\qquad\quad\ \ \, t\in(0,T),\, x\in (0,L),\\
& y(t,0)=0,\ y(t,L)=0,\qquad\quad\qquad\quad  t\in(0,T), \\
& y(0,x)=y_0(x),\ \partial_ty(0,x)=y_1(x),\quad\ x\in(0,L),
\end{split}
\end{equation}
where the control is $u(t,x)\in\R$ and $\omega\subset(0,L)$ is a measurable subset of positive Lebesgue measure.
Let us establish that \eqref{ondes_interne} is exactly controllable in time $T\geq 2L$ in the space $H^1_0(0,L)\times L^2(0,L)$ with controls $u\in L^2((0,T)\times\omega)$.

By Theorem \ref{dualityondes}, this is equivalent to establishing the following observability inequality:
for every $T\geq 2L$, for every $\omega\subset(0,L)$ measurable of positive measure, there exists $C_T(\omega)>0$ such that
\begin{equation}\label{obsintwave}
\int_0^T\int_\omega \phi(t,x)^2 \, dx\, dt \geq C_T(\omega)\left( \Vert \phi(0)\Vert_{L^2(0,L)}^2 + \Vert \partial_t\phi(0)\Vert_{H^{-1}(0,L)}^2\right)
\end{equation}
for every solution $\phi$ of the adjoint equation
\begin{equation}\label{adjwav}
\partial_{tt}\phi-\partial_{xx}\phi=0 , \qquad\phi(t,0)=\phi(t,\pi)=0 .
\end{equation}
We consider solutions $\phi$ of \eqref{adjwav} expanded as
$$
\phi(t,x) = \sum_{j=1}^\infty\left( a_j\cos\left(\frac{j\pi t}{L}\right) + b_j\sin\left(\frac{j\pi t}{L}\right) \right)\sin\left(\frac{j\pi x}{L}\right)
$$
with $(a_j)_{j\in\N^*}\in\ell^2(\R)$ and $(b_j)_{j\in\N^*}\in\ell^2(\R)$, so that 
$\phi(0,x) = \sum_{j=1}^\infty a_j\sin\left(\frac{j\pi x}{L}\right)$ and $\partial_t\phi(0,x) = \sum_{j=1}^\infty \frac{j\pi}{L} b_j\sin\left(\frac{j\pi x}{L}\right)$ and thus
$$
\Vert \phi(0)\Vert_{L^2(0,L)}^2 + \Vert \partial_t\phi(0)\Vert_{H^{-1}(0,L)}^2 = \frac{L}{2}\sum_{j=1}^\infty (a_j^2+b_j^2) .
$$
For every $T\geq 2L$, we have
$\int_0^T\int_\omega \phi(t,x)^2 \, dx\, dt \geq \int_0^{2L}\int_\omega \phi(t,x)^2 \, dx\, dt $
and
\begin{equation*}
\begin{split}
& \int_0^{2L}\int_\omega \phi(t,x)^2 \, dx\, dt \\
=&\ \frac{L}{\pi}\sum_{j,k=1}^\infty \int_0^{2\pi} (a_j\cos(js)+b_j\sin(js))(a_k\cos(ks)+b_k\sin(ks)) \, ds \\
& \qquad\qquad\qquad\qquad \times \int_\omega \sin\left(\frac{j\pi x}{L}\right) \sin\left(\frac{k\pi x}{L}\right) dx \\
=&\ L \sum_{j=1}^\infty (a_j^2+b_j^2) \int_\omega \sin^2\left(\frac{j\pi x}{L}\right) dx .
\end{split}
\end{equation*}
We now give two ways to infer the observability inequality \eqref{obsintwave}.

\medskip
\noindent\underline{First way.}
We observe that $\sin^2\left(\frac{j\pi x}{L}\right)\rightharpoonup \frac{1}{2}$ as $j\rightarrow+\infty$ (in weak $L^2$ topology). It follows that, for any measurable subset $\omega\subset(0,L)$ of positive measure, there exists $C(\omega)>0$ such that
$$
\int_\omega \sin^2\left(\frac{j\pi x}{L}\right) dx\geq C(\omega)\qquad\forall j\in\N^* ,
$$
and then \eqref{obsintwave} follows.

Note that we have used here an information on the highfrequency eigenfunctions $\phi_j(x)=\sqrt{\frac{2}{L}}\sin\left(\frac{j\pi x}{L}\right)$. %, more precisely we have used the crucial fact that all probability measures

\medskip
\noindent\underline{Second way.} We have the following lemma.

\begin{lemma}\label{lemperiago}
Given any measurable subset $\omega\subset(0,L)$ of positive Lebesgue measure $\vert\omega\vert>0$, we have
$$
\int_\omega \sin^2\left(\frac{j\pi x}{L}\right) dx \geq  \frac{1}{2} \left( \vert\omega\vert - \frac{L}{\pi} \sin\left( \frac{\pi}{L}\vert\omega\vert\right)\right)  \qquad \forall j\in\N^*.
$$
\end{lemma}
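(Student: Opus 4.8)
The plan is to reduce the claim to an upper bound on $\int_\omega \cos(2j\pi x/L)\,dx$ and then to obtain that bound by a rearrangement (bathtub) argument. Using the identity $\sin^2 = \tfrac{1}{2}(1-\cos(2\,\cdot\,))$, I would first write
$$
\int_\omega \sin^2\left(\frac{j\pi x}{L}\right)dx = \frac{|\omega|}{2} - \frac{1}{2}\int_\omega \cos\left(\frac{2j\pi x}{L}\right)dx,
$$
so that the asserted inequality is exactly equivalent to
$$
\int_\omega \cos\left(\frac{2j\pi x}{L}\right)dx \leq \frac{L}{\pi}\sin\left(\frac{\pi|\omega|}{L}\right).
$$

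To establish this last inequality, I fix $g(x)=\cos(2j\pi x/L)$ and compare $\omega$ with the superlevel set $\omega^*=\{x\in(0,L): g(x)>\cos\theta\}$, where $\theta=\pi|\omega|/L\in(0,\pi]$ is chosen so that $|\omega^*|=|\omega|$. The core step is the elementary rearrangement inequality: writing $s=\cos\theta$, and using $|\omega|=|\omega^*|$ (so that $\int_\omega s = \int_{\omega^*}s$), one has
$$
\int_{\omega^*}g\,dx - \int_\omega g\,dx = \int_{\omega^*}(g-s)\,dx - \int_\omega(g-s)\,dx .
$$
Since $\omega^*=\{g>s\}$ is precisely the positivity set of $g-s$, the first term on the right equals $\int_0^L (g-s)^+\,dx$, which dominates $\int_\omega(g-s)\,dx \leq \int_\omega(g-s)^+\,dx$; hence $\int_\omega g \leq \int_{\omega^*}g$, i.e.\ the superlevel set is the maximizer at fixed measure. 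Note that $\{g=s\}$ is finite, hence Lebesgue-null, so there is no ambiguity in the choice of $\omega^*$.

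It then remains to compute the two quantities attached to $\omega^*$ explicitly, exploiting that $g$ has period $L/j$ and that $(0,L)$ contains exactly $j$ such periods. After the substitution $u=2j\pi x/L$, within one period the set $\{g>\cos\theta\}$ corresponds to $u\in[0,\theta)\cup(2\pi-\theta,2\pi]$, of length $\theta L/(j\pi)$; summing over the $j$ periods gives $|\omega^*|=\theta L/\pi$, which indeed equals $|\omega|$ for $\theta=\pi|\omega|/L$. The same substitution yields $\int_{\omega^*}g\,dx = j\cdot\frac{L}{2j\pi}\bigl(\int_0^\theta\cos u\,du + \int_{2\pi-\theta}^{2\pi}\cos u\,du\bigr) = \frac{L}{\pi}\sin\theta = \frac{L}{\pi}\sin(\pi|\omega|/L)$, using $\sin(2\pi-\theta)=-\sin\theta$. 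Combining this with the rearrangement inequality gives the desired bound, and substituting back into the first display completes the proof.

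The only genuinely delicate point is the rearrangement step; once $\omega^*$ is identified as the maximizer, everything else is a routine periodic computation. Along the way I would record the harmless checks that $\theta\leq\pi$ (automatic since $|\omega|\leq L$), so that $\sin\theta\geq 0$ and the bound is meaningful, and that the estimate degenerates correctly at $|\omega|=L$, where both sides of the key inequality vanish.
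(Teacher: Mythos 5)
Your proof is correct and follows essentially the same route as the paper's: both rest on a bathtub-principle argument that identifies the extremal set of prescribed measure as a level set (your superlevel set of $\cos(2j\pi x/L)$ coincides, up to null sets, with the paper's minimizing sublevel set $\omega_j^{\mathrm{inf}}$ of $\sin^2(j\pi x/L)$, via the double-angle identity), followed by the same periodicity computation yielding $\frac{L}{\pi}\sin(\pi\vert\omega\vert/L)$. The only differences are presentational: you pass to $\cos(2j\pi x/L)$ and maximize rather than minimize $\sin^2$, and you spell out the elementary one-line proof of the bathtub inequality that the paper merely invokes as obvious.
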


\begin{proof}
For a \emph{fixed} integer $j$, consider the problem of minimizing the functional
$K_j(\omega')=\int_{\omega'} \sin^2\left(\frac{j\pi x}{L}\right) dx$
over all possible measurable subsets $\omega'\subset(0,\pi)$ s.t. $\vert\omega'\vert=\vert\omega\vert$. 
Identifying the minima (zeros) of $\sin^2\left(\frac{j\pi x}{L}\right)$, using a bathtub principle argument, it is quite obvious to see that there exists a unique (up to zero measure subsets) optimal set, characterized as a level set of the function $x\mapsto\sin^2\left(\frac{j\pi x}{L}\right)$, which is
$$
\omega_j^{\textnormal{inf}}=\left(0,\frac{\vert\omega\vert}{2j}\right)\ \bigcup\ \bigcup_{k=1}^{j-1}\ \left(\frac{kL}{j}-\frac{\vert\omega\vert}{2j},\frac{kL}{j}+\frac{\vert\omega\vert}{2j}\right)\ \bigcup\ \left(L-\frac{\vert\omega\vert}{2j},L\right)
$$
and we have
\begin{multline*}
\int_{\omega_j^{\textnormal{inf}}}\sin^2\left(\frac{j\pi x}{L}\right)\,dx  =  2j\int_0^{\vert\omega\vert/2j}\sin^2\left(\frac{j\pi x}{L}\right)\,dx \\
=  \frac{2L}{\pi} \int_0^{\frac{\pi}{2L}\vert\omega\vert}\sin^2u\, du
=  \frac{1}{2} \left( \vert\omega\vert - \frac{L}{\pi} \sin\left( \frac{\pi}{L}\vert\omega\vert\right)\right) 
\end{multline*}
for any $j$. Since this value does not depend on $j$, the lemma follows.
\end{proof}

We infer from that lemma that
$$
\int_0^T\int_\omega \phi(t,x)^2 \, dx\, dt \geq \left( \vert\omega\vert - \frac{L}{\pi} \sin\left( \frac{\pi}{L}\vert\omega\vert\right)\right) \left( \Vert \phi(0)\Vert_{L^2}^2 + \Vert \partial_t\phi(0)\Vert_{H^{-1}}^2 \right) 
$$
which gives the observability inequality \eqref{obsintwave}. It is interesting to note that, for $T=2L$, the inequality is sharp and thus the observability constant is
$$
C_{T=2L}(\omega) = \vert\omega\vert - \frac{L}{\pi} \sin\left( \frac{\pi}{L}\vert\omega\vert\right).
$$

Recalling the admissibility property in Example \ref{examplewaveinternal}, we have obtained that, for $T\geq 2\pi$, $\big(\int_0^T\int_\omega \phi(t,x)^2 \, dx\, dt\big)^{1/2}$ is a norm, equivalent to the norm of $L^2(0,L)\times H^{-1}(0,L)$, which illustrates Lemma \ref{lemGT}.

As in the boundary control case, controllability is lost if $T$ is too small: using the finite speed propagation of the wave equation, it suffices to consider a solution supported in $(0,L)\setminus\omega$ over a small enough time interval. 

For instance, if $\omega=(a,b)\subset(0,L)$ then the minimal controllability time is $T=2\max(a,L-b)$.

\subsubsection{Multi-D Dirichlet wave equation with internal control}
Consider the internally controlled Dirichlet wave equation
\begin{equation}\label{ondes_multiD}
\begin{split}
& \partial_{tt}y = \triangle y + \chi_\omega u,\qquad\qquad\qquad\qquad t\in(0,T),\, x\in\Omega,\\
& y(t,x)=0,\qquad\quad\qquad\qquad\qquad\qquad  t\in(0,T),\ x\in\partial\Omega \\
& y(0,x)=y_0(x),\ \partial_t y(0,x)=y_1(x),\quad\, x\in\Omega,
\end{split}
\end{equation}
on a bounded open domain $\Omega\subset\R^n$ having a $C^2$ boundary, and internal control on a measurable subset $\omega\subset\Omega$ of positive Lebesgue measure.

The admissibility (well-posedness) has been seen in Example \ref{examplewaveinternal}.

It is proved in \cite{BardosLebeauRauch} that, if $\omega$ is open and if the pair $(\omega,T)$ satisfies the \emph{Geometric Control Condition} (GCC), then \eqref{ondes_multiD} is exactly controllable in time $T$ in the space $H^1_0(\Omega)\times L^2(\Omega)$ with controls $u\in L^2((0,T)\times\omega)$;
equivalently, by Theorem \ref{dualityondes}, there exists $C_T(\omega)>0$ such that
\begin{equation}\label{obsintwave_multiD}
\int_0^T\int_\omega \phi(t,x)^2 \, dx\, dt \geq C_T(\omega)\left( \Vert \phi(0)\Vert_{L^2(\Omega)}^2 + \Vert \partial_t\phi(0)\Vert_{H^{-1}(\Omega)}^2\right)
\end{equation}
for every solution $\phi$ of the adjoint equation $\partial_{tt}\phi-\triangle\phi=0$ with $\phi=0$ along the boundary of $\Omega$. The GCC stipulates that any geodesic ray, propagating in $\Omega$ (seen as a billiard) at speed $1$ and reflecting at the boundary according to the laws of classical optics (see Figure \ref{fig_GCC}), meets the open set $\omega$ within time $T$. 
\begin{figure}[h]
\centerline{\includegraphics[width=9cm]{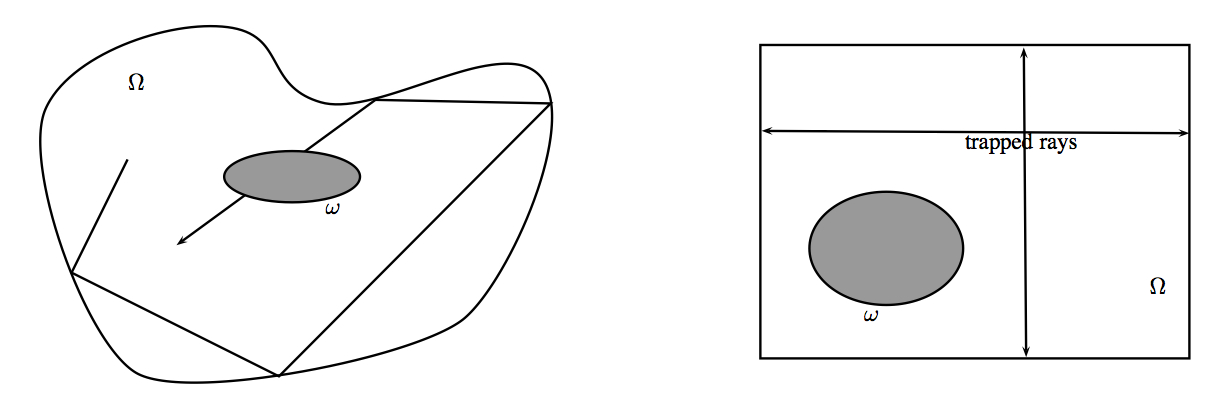}}
\caption{Geodesic rays propagating in $\Omega$.}\label{fig_GCC}
\end{figure}
On this figure, on the right, $\Omega$ is a square in the plane and $\omega$ is an internal disk: the GCC is never satisfied because of the existence of trapped rays (bouncing balls). 
The proof of \eqref{obsintwave_multiD} requires significantly more elaborate tools than in the 1D case: microlocal analysis, propagation of singularities, defect measures.
The ``almost-equivalence" between GCC and the observability inequality \eqref{obsintwave_multiD} is studied in \cite{HumbertPrivatTrelat_CPDE2019}.

We have obtained that, under GCC, $\big(\int_0^T\int_\omega \phi(t,x)^2 \, dx\, dt\big)^{1/2}$ is a norm, equivalent to the norm of $L^2(\Omega)\times H^{-1}(\Omega)$. This illustrates Lemma \ref{lemGT}. 

There are similar results for the boundary control case.
Note that, as initially developed in \cite{Lions_HUM}, under stronger geometric conditions on $\omega$, there are more elementary proofs based on the multiplier method (see \cite{Komornik} and \cite[(7.0.3) and Theorem 7.4.1]{TucsnakWeiss}). Carleman estimates can also be used, with the advantage of allowing to tackle lower-order and/or low regularity terms (see \cite{FursikovImanuvilov}). %; in some sense they are a more sophisticated version of the multiplier method.

\subsection{Example: the heat equation}
\subsubsection{Dirichlet heat equation with internal control}
Let $\Omega\subset\R^n$ be a bounded open set having a $C^2$ boundary, and let $\omega\subset\Omega$ be an open subset. Like in Section \ref{exsec}, we consider the heat equation with internal control and Dirichlet boundary conditions
\begin{equation*}
\partial_ty=\triangle y+\chi_\omega u\quad\textrm{in}\ \Omega,\qquad
y_{\vert\partial\Omega}=0,\qquad
y(0)=y_0\in L^2(\Omega).
\end{equation*}
The admissibility property of the control operator, as discussed in Section \ref{exsec}, is obvious since $B$ is bounded.

Due to the smoothing effect of the heat semigroup ($e^{t\triangle}y_0$ is smooth on $\Omega$ for every $t>0$, whatever the regularity of $y_0$ may be), the above heat equation cannot be exactly controllable in $X=L^2(\Omega)$. But it is approximately controllable in $X$ and exactly null controllable in $X$ in any time $T>0$.

Indeed, by Theorem \ref{dualityondes}, the approximate controllability property is equivalent to the following property: given any $T>0$, given any solution $\psi$ of 
\begin{equation}\label{adjheat}
\partial_t\psi=\triangle\psi\quad\textrm{in}\ \Omega,\qquad
\psi_{\vert\partial\Omega}=0 
\end{equation}
such that $\psi(0)\in H^2(\Omega)\cap H^1_0(\Omega)$, 
if $\psi(t,x)=0$ for all $t\in[0,T]$ and $x\in\omega$ then $\psi\equiv 0$. This unique continuation property is a consequence of Holmgren's theorem (see \cite[Section 1.8]{Lions_HUM}).

Exact null controllability is equivalent, by Theorem \ref{dualityondes}, to the following observability inequality: given any $T>0$, there exists $C_T(\omega)>0$ such that
\begin{equation}\label{inegobsheat}
\int_0^T \int_\omega \psi(t,x)^2\, dx\, dt \geq C_T(\omega) \Vert\psi(T)\Vert_X^2
\end{equation}
for any solution of \eqref{adjheat}.
The observability inequality \eqref{inegobsheat} has been established in \cite{FursikovImanuvilov, Imanuvilov, LebeauRobbiano}. Nowadays, it seems that the most powerful tool in order to establish such inequalities in the parabolic setting is the \emph{Carleman estimates} (see \cite[Chapter 9]{TucsnakWeiss} or \cite[Section 2.5]{Coron}). Even in 1D, the proof of \eqref{inegobsheat} by a Carleman estimate is quite technical.
It can however be noted that, in 1D, the exact null controllability property can also be proved thanks to harmonic analysis considerations, by applying the \emph{moment method} (see Section \ref{sec_moment}). 

\subsubsection{Heat equation with Dirichlet boundary control}
Like in Section \ref{sec_heatdirbound}, we consider the heat equation with Dirichlet boundary control
\begin{equation}\label{heatbounddir}
\partial_ty=\triangle y\ \ \textrm{in}\ \Omega,\qquad
y_{\vert\partial\Omega}=u,\qquad
y(0)=y_0\in L^2(\Omega),
\end{equation}
where $\Omega\subset\R^n$ is a bounded open subset having a $C^2$ boundary.
We have seen in that section that, setting $X=H^{-1}(\Omega)$ and $U=L^2(\partial\Omega)$, the control operator is admissible, but that admissibility is not true if one takes $X=L^2(\Omega)$.

According to \cite[Chapter 11.5]{TucsnakWeiss}, the heat equation \eqref{heatbounddir} is exactly null controllable in $X=H^{-1}(\Omega)$ in any time $T>0$; equivalently, for every $T>0$ there exists $C_T>0$ such that
\begin{equation}\label{obs_heat_internal}
\int_0^T\left\Vert\frac{\partial\psi}{\partial\nu}_{\vert\partial\Omega} (t)\right\Vert_{L^2(\partial\Omega)}^2dt 
\geq C_T\Vert\psi(T)\Vert^2_{H^1_0(\Omega)}
\end{equation}
for any solution of $\partial_t\psi=\triangle\psi\ \ \textrm{in}\ \Omega$, $\psi_{\vert\partial\Omega}=0$, 
with $\psi(0)\in % H^2(\Omega)\cap 
H^1_0(\Omega)$.
%As in the internal case, the above observability inequality has been established in \cite{Imanuvilov, LebeauRobbiano}. Nowadays, it seems that the most powerful in order to establish such inequalities in the parabolic setting is the \emph{Carleman estimates} . 

In most of the existing literature (see, e.g., \cite{Imanuvilov, LebeauRobbiano}) one can find the observability inequality \eqref{obs_heat_internal} with the $L^2$ norm at the right-hand side, thus saying by duality that the heat equation \eqref{heatbounddir} is exactly null controllable in $L^2(\Omega)$ in any time $T>0$. Actually, it is exactly controllable in any $H^s(\R)$, for any $s\in\R$ (and in particular for any $s<0$): indeed, taking any $y_0\in H^s(\Omega)$, considering an appropriate extension $S(t)$ of the Dirichlet heat semigroup, one has $S(t)y_0\in H^1_0(\Omega)\cap C^\infty(\Omega)$ for any $t>0$ and in particular for $t=T/2$; then apply the controllability property in time $T/2$ to steer $S(T/2)y_0$ to $0$. Then, by duality, the observability inequality \eqref{obs_heat_internal} remains true when replacing the $H^1_0$ norm at the right-hand side with the norm of $X_\alpha$, for any $\alpha\in\R$, where $(X_\alpha)_{\alpha\in\R}$ is the family of Dirichlet spaces constructed in Example \ref{example_chain_Dirichlet}.

%heat equation with Dirichlet control: Li Yong page 361. Le pb de regularite est explique page 366, ainsi que dans Lions (vieux bouquin) page 217.
%
%wave equation: voir Li Yong page 284
%
%Pour rappel:
%
%- Equation chaleur avec controle Dirichlet: bien pos\'ee dans $ H^{-1}$, mais pas dans $L^2$. Ce qui veut dire que, pour une donn\'ee initiale $L^2$ et un contr\^ole $L^2$, la solution vit dans l'espace $H^{-1}$, mais pas forc\'ement dans l'espace $L^2$.
% 
%- Pourtant, elle est exactement contr\^olable \`a z\'ero dans $L^2$

\subsection{Pontryagin maximum principle}\label{sec_PMP_infinitedim}
Formally, HUM is obtained by applying the PMP to the LQ optimal control problem consisting of steering the control system $\dot y=Ay+Bu$ from $y(0)=y_0$ to $y(T)=y_1$ in time $T$, by minimizing the cost functional $\int_0^T\Vert u(t)\Vert_U^2\, dt$. We have anyway to be careful there.
Indeed, as shortly discussed in Section \ref{sec_generalizations_PMP}, the generalization of the PMP to the infinite-dimensional setting, done for instance in \cite[Chapter 4]{LiYong} (and proved, in this book, by using the Ekeland variational principle), requires in general that the final state $y(T)$ is subject to a finite number only of scalar constraints. There are counterexamples to the statement of the PMP whenever $y(T)=y_1\in X$ when $\dim X=+\infty$, i.e., when there are an infinite number of final scalar constraints (such counterexamples are easy to design by considering systems enjoying approximate but not exact controllability: see Example \ref{contreex_PMP} hereafter). Nevertheless, under exact controllability properties, the PMP is valid for LQ optimal control problems.

More generally, as mentioned in Section \ref{sec_generalizations_PMP}, the PMP is generalized to infinite dimension, with the same statement as in Theorem \ref{PMP}, under the following assumption: 
\begin{quote}
\textit{There exists $z_1\in \overline{\mathrm{Conv}}(M_1)$ such that $\mathrm{Span}(M_1-z_1)$ is of finite codimension in $X$.}
\end{quote}
Roughly speaking this means that we can impose only a finite number of scalar constraints on $y(T)$.

We do not give more details since, with this additional finite codimensionality assumption, the statement is the same.

\begin{example}\label{contreex_PMP}
Following \cite[Chapter 4]{LiYong}, let us design an example of an optimal control problem in infinite dimension on which the expected PMP statement fails. Let $X$ be an infinite-dimensional separable Hilbert space. Consider the control system $\dot y(t)=Ay(t)+bu(t)$, with initial condition $y(0)=0$, where $A:D(A)\rightarrow X$ is an operator generating a $C_0$ semigroup $(S(t))_{t\geq 0}$ and where $b\in X$ is fixed; here, the control $u$ is a real-valued function. 

The idea is to make assumptions ensuring that we can find a point $y_1\in X$ that can be reached from $0$ in time $1$ with only one control.
The constant control $u=1$ steers in time $1$ the control system to $y_1=y(1)=\int_0^1 S(t)b\, dt = (S(1)-\mathrm{id})A^{-1}b$ (the latter formula is obtained by assuming that $A$ is invertible).
Let us assume that $A$ is self-adjoint negative, so that there exists a Hilbert basis $(\phi_j)_{j\in\N^*}$ of eigenfunctions, i.e., $A\phi_j=-\lambda_j\phi_j$ with $\lambda_j>0$ for every $j\in\N^*$.

For any control $\bar u$ steering the system from $y(0)=0$ to $y(1)=y_1$, we must have $\int_0^1 S(1-t)b(\bar u(t)-1)\, dt=0$, i.e., expanding $b=\sum_{j\in\N^*}b_j\phi_j$,
$$
\sum_{j\in\N^*} \int_0^1 e^{-(1-t)\lambda_j}(\bar u(t)-1)\, dt\ b_j\phi_j = 0 .
$$
Assuming that $b_j\neq 0$ for every $j\in\N^*$, we infer that $\int_0^1 e^{t\lambda_j}(\bar u(t)-1)\, dt=0$ for every $j\in\N^*$.
Let us now further assume that $\sum_{j\in\N^*}1/\lambda_j=+\infty$. Then, by the M\"untz-Sz\'asz theorem (see Remark \ref{rem_Muntz} in Section \ref{sec_moment} further), the family $(e^{t\lambda_j})_{j\in\N^*}$ is complete in $L^2(0,1)$. It then follows that $\bar u=1$. We have thus proved that, under the above assumptions, $\bar u=1$ is the unique solution steering the system from $y(0)=0$ to $y(1)=y_1$.

Therefore, the control $\bar u=1$ is optimal for any cost functional (and, by the way, it must be abnormal but we will not use this fact). 

Let us consider the cost functional 
$$
C(u) = \int_0^1 \left( \langle a,y(t)\rangle_X + cu(t)\right) dt ,
$$
where $a\in X$ and $c\in\R$ are fixed, and let us assume that the controls are subject, for instance, to the constraint $\vert u(t)\vert\leq 2$ for almost every $t\in[0,1]$. The Hamiltonian of the optimal control problem is
$$
H(y,p,p^0,u) = \langle p,Ay\rangle_X+\langle p,b\rangle_Xu+p^0\langle a,y\rangle_X+p^0cu.
$$
Let us prove, by contradiction, that the statement of the PMP is not satisfied for the optimal control $\bar u=1$. Otherwise, there would exist an adjoint $p(\cdot)$ satisfying $\dot p=-Ap-p^0a$ and thus, by integration, 
$p(t) %= S(1-t)^*p(1) + p^0 \int_t^1 S(1-s)^*a\, ds
= S(1-t)p(1) + p^0(S(1-t)-\mathrm{id})A^{-1}a$.
Besides, the condition $\frac{\partial H}{\partial u}=0$ gives $\langle p,b\rangle_X+p^0c=0$ on $[0,1]$, i.e., 
$$
\langle p(1)+p^0A^{-1}a, S(1-t)b\rangle_X + p^0(c-\langle A^{-1}a,b\rangle_X) = 0\qquad \forall t\in[0,1], 
$$
%for every $t\in[0,1]$, 
with
$S(1-t)b = \sum_{j\in\N^*} e^{-(1-t)\lambda_j}b_j\phi_j$. By linear independence of the exponential functions, we infer that all terms are equal to $0$. But then, assuming that $c\neq \langle A^{-1}a,b\rangle_X$, it follows that $p(1)=0$ and $p^0=0$, which is a contradiction. %This contradiction shows that the PMP does not hold for the above optimal control problem. 
\end{example}

\section{Further results}

\subsection{Kalman condition in infinite dimension}\label{sec_Kalman_infinitedimension}
It is interesting to mention that the unique continuation property implies an infinite-dimensional version of the Kalman condition. A simple sufficient condition is the following.

\begin{lemma}\label{Kalman_infinitedimension}
We assume that $X$ is reflexive and that $B\in L(U,X)$ is a bounded control operator.
We set
$$
U_\infty = \Big\{ u\in U\ \bigm|\ Bu\in \bigcap_{k=1}^{+\infty} D(A^k) \Big\}.
$$
If the set $\mathcal{K}_T=\mathrm{Span} \{ A^k Bu\ \vert\ u\in U_\infty,\ k\in\N  \}$ is dense in $X$ then the control system \eqref{eqE} is approximately controllable in any time $T>0$.
\end{lemma}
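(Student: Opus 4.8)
The plan is to prove the contrapositive via the duality characterization of approximate controllability established in Theorem \ref{dualityondes}. Since $X$ and $U$ are reflexive and $B\in L(U,X)$ is bounded (hence admissible by Lemma \ref{lem_admissible}), the control system \eqref{eqE} is approximately controllable in time $T$ if and only if the unique continuation property \eqref{prolongunique} holds, namely: for every $z\in D(A^*)$, if $B^*S^*(T-t)z=0$ for all $t\in[0,T]$, then $z=0$. So I would assume that this unique continuation fails and deduce that $\mathcal{K}_T$ is \emph{not} dense in $X$.

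First I would take a nonzero $z\in D(A^*)$ such that $B^*S^*(T-t)z=0$ for all $t\in[0,T]$; equivalently, writing $\psi(t)=S^*(t)z$ (the solution of the adjoint equation $\dot\psi=A^*\psi$, $\psi(0)=z$), we have $B^*\psi(t)=0$ for all $t\in[0,T]$. The key step is to differentiate this identity repeatedly in $t$ and evaluate at $t=0$. For $z\in D((A^*)^\infty)$ the map $t\mapsto S^*(t)z$ is smooth into $X'$ with $\frac{d^k}{dt^k}S^*(t)z=(A^*)^kS^*(t)z$, so differentiating $\langle B^*S^*(t)z,u\rangle_{U',U}=\langle S^*(t)z,Bu\rangle_{X',X}=0$ and setting $t=0$ yields $\langle (A^*)^kz,Bu\rangle=\langle z,A^kBu\rangle=0$ for every $k\in\N$ and every relevant $u$. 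To make this rigorous for a general $z\in D(A^*)$, I would instead pair against the restricted class $u\in U_\infty$: for such $u$, $Bu\in\bigcap_k D(A^k)$, so the function $t\mapsto\langle\psi(t),A^kBu\rangle_{X',X}$ is well defined and, using $\frac{d}{dt}\langle\psi(t),w\rangle=\langle\psi(t),Aw\rangle$ valid for $w\in D(A)$, one gets $\frac{d}{dt}\langle\psi(t),Bu\rangle=\langle\psi(t),ABu\rangle$, and by induction $\frac{d^k}{dt^k}\langle\psi(t),Bu\rangle=\langle\psi(t),A^kBu\rangle$. Since $\langle\psi(t),Bu\rangle=\langle B^*\psi(t),u\rangle=0$ on $[0,T]$, all its time-derivatives vanish on $[0,T]$; evaluating at $t=0$ gives $\langle z,A^kBu\rangle_{X',X}=0$ for all $k\in\N$ and all $u\in U_\infty$.

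This means precisely that the nonzero functional $z\in X'$ annihilates the spanning set $\{A^kBu\mid u\in U_\infty,\ k\in\N\}$, hence annihilates $\mathcal{K}_T$, and by continuity annihilates $\overline{\mathcal{K}_T}$. By the Hahn--Banach theorem a closed subspace annihilated by a nonzero continuous functional cannot be all of $X$, so $\mathcal{K}_T$ is not dense in $X$. This is the contrapositive of the claim, which completes the proof. I expect the main obstacle to be the regularity bookkeeping in the differentiation step: one must justify moving the time-derivative inside the duality bracket and the induction $\frac{d^k}{dt^k}\langle\psi(t),Bu\rangle=\langle\psi(t),A^kBu\rangle$, which is exactly why the definition restricts to $u\in U_\infty$ with $Bu\in\bigcap_k D(A^k)$ — this guarantees each $A^kBu\in D(A)$ so the differentiation identity $\frac{d}{dt}\langle\psi(t),w\rangle=\langle\psi(t),Aw\rangle$ applies at every order. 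One should also note that the result does not claim exact controllability: the argument only produces density of $\mathcal{K}_T$ as the right algebraic analogue of the finite-dimensional Kalman rank condition, and the separation/Hahn--Banach step is precisely where, as emphasized in the earlier remark, the infinite-dimensional subtlety (a proper subspace may be dense) is handled correctly.
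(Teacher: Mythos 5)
Your proof is correct, and it follows the same overall strategy as the paper's own proof: both reduce the statement to the unique continuation property \eqref{prolongunique} via Theorem \ref{dualityondes}, then differentiate the vanishing observation repeatedly in time to conclude that $z$ annihilates $\mathcal{K}_T$, whence $z=0$ by density. The genuine difference is where the regularity needed for the differentiation is placed, and it matters. The paper puts it on $z$: it restricts to $z\in\bigcap_{k}D((A^*)^k)$, differentiates $t\mapsto B^*S(T-t)^*z$ strongly and evaluates at $t=T$, and for this to suffice it must invoke the claim that \eqref{prolongunique} need only be checked on a dense subspace of $D(A^*)$ --- a claim stated without justification, and which is not a general topological fact (a dense subspace can meet a nontrivial closed subspace only at $\{0\}$, so unique continuation tested on a dense set does not formally imply unique continuation everywhere; repairing this requires an extra mollification argument exploiting that the hypothesis holds for every $T>0$). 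You instead put the regularity on the test vectors: for $u\in U_\infty$ one has $A^kBu\in D(A)$ for every $k$, so the scalar function $t\mapsto\langle\psi(t),Bu\rangle_{X',X}=\langle z,S(t)Bu\rangle_{X',X}$ is differentiable at every order for an \emph{arbitrary} $z\in D(A^*)$ (in fact any $z\in X'$), using only $\frac{d}{dt}\langle z,S(t)w\rangle=\langle z,S(t)Aw\rangle$ for $w\in D(A)$. This verifies \eqref{prolongunique} exactly as stated, with no density-transfer step, so your version is the more airtight of the two; your closing observation that the restriction to $u\in U_\infty$ is precisely what makes the induction legitimate identifies the right mechanism. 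Two cosmetic points: the appeal to Hahn--Banach is superfluous in the direction you use it (if $\overline{\mathcal{K}_T}=X$ and the continuous functional $z$ annihilates $\mathcal{K}_T$, then $z=0$ by continuity alone --- Hahn--Banach is needed only for the converse separation direction), and admissibility of $B$ plays no role in Theorem \ref{dualityondes}, so the remark invoking Lemma \ref{lem_admissible} can be dropped.
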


%To prove this statement, we first prove that the control system \eqref{eqE} is approximately controllable in time $T$ if and only if \eqref{prolongunique} holds.

Note that the set $\mathcal{K}_T$ is the infinite-dimensional version of the image of the Kalman matrix in finite dimension.

\begin{proof}
We use the equivalence between approximate controllability and \eqref{prolongunique}. Note that, in \eqref{prolongunique}, it suffices to take $z$ in any dense subspace of $D(A^*)$.
Let $z\in \cap_{k=1}^{+\infty} D((A^*)^n)$ (which is dense in $D(A^*)$) be such that $B^*S(T-t)^*z=0$ for every $t\in[0,T]$. Then, by successive derivations with respect to $t$, and taking $t=T$, we obtain $B^*(A^k)^*z=0$, hence $\langle z,A^kBu\rangle_{X',X}=0$, and therefore $z=0$ because $\mathcal{K}_T$ is dense in $X$.
\end{proof}

We refer the reader to \cite{Triggiani_SICON1976} for a more precise result (and an almost necessary and sufficient condition).

\subsection{Necessary conditions for exact controllability}
Let us assume that $X$ is of infinite dimension, and let us provide general conditions under which the control system \eqref{eqE} is never exactly controllable in finite time $T$, with controls in $L^2([0,T],U)$. We have already seen that exact controllability implies that the control operator $B$ is admissible (and thus $\alpha(B)\leq 1/2$).

\begin{lemma}
Under any of the following assumptions:
\begin{itemize}[parsep=1mm,itemsep=1mm,topsep=1mm]%,leftmargin=*
\item $B\in L(U,X)$ is compact; % (i.e., the image under $B$ of the unit ball of $U$ is relatively compact);
\item $B\in L(U,X)$ is bounded and $S(t)$ is compact for every $t>0$;
\item $X\simeq X'$ and $U\simeq U'$ are Hilbert spaces, $-A$ is a self-adjoint positive operator with compact inverse, and $B\in L(U,X_{-1/2})$ (and thus $B$ is admissible and $\alpha(B)\leq 1/2$, see Section \ref{sec_degree_unboundedness}); 
\end{itemize}
the control system \eqref{eqE} is not exactly controllable in any time $T>0$ (with controls in $L^2([0,T],U)$).
\end{lemma}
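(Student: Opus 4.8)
The plan is to reduce everything to the characterization of exact controllability in Remark~\ref{remprelimdualitycontobs}: the system \eqref{eqE} is exactly controllable in time $T$ if and only if $L_T\colon L^2([0,T],U)\to X$ is surjective, where $L_Tu=\int_0^T S(T-t)Bu(t)\,dt$ as in \eqref{def_Lt}. The backbone is the elementary principle that \emph{a compact operator with values in an infinite-dimensional Banach space is never surjective}: a bounded surjection between Banach spaces is open by the open mapping theorem, so the image of the unit ball contains a ball of $X$; if the operator were also compact, that ball would be relatively compact, forcing $\dim X<\infty$ by Riesz's theorem. Hence in each case it suffices to prove that $L_T$ is compact, and then $\dim X=+\infty$ contradicts surjectivity, ruling out exact controllability for every $T>0$.

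For the first case ($B\in L(U,X)$ compact) the key point is that $t\mapsto S(T-t)B$ is continuous in the \emph{uniform} operator topology on $[0,T]$: since $B$ sends the unit ball of $U$ into a relatively compact set and $(S(t))$ is strongly continuous, that strong continuity is uniform on this compact set. I would then approximate $L_T$ in operator norm by Riemann sums $\sum_i S(T-t_i)B\bigl(\int_{I_i}u\bigr)$; each summand is compact (a bounded operator composed with the compact $B$ composed with the bounded averaging map $u\mapsto\int_{I_i}u$), so the partial sums are compact, and the norm continuity just established yields norm convergence to $L_T$. As a norm limit of compact operators, $L_T$ is compact.

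For the second case ($B\in L(U,X)$ bounded and $S(t)$ compact for $t>0$) I would split $L_T=\int_0^{T-\delta}+\int_{T-\delta}^T$. On $[0,T-\delta]$ the operator $S(T-t)B$ is compact and $t\mapsto S(T-t)B$ is norm continuous, since a compact $C_0$-semigroup is immediately norm continuous for positive times; so the first part is compact by the same Riemann-sum argument. The second part has operator norm $\le\bigl(\sup_{[0,\delta]}\Vert S\Vert\bigr)\Vert B\Vert\,\delta^{1/2}$ by Cauchy--Schwarz, hence tends to $0$ as $\delta\to0$. Thus $L_T$ is again a norm limit of compact operators, and compact.

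The third case is the delicate one, and it is where the main obstacle lies, because now $B$ is \emph{unbounded} ($B\in L(U,X_{-1/2})$) and $L_T$ need not be compact. First, the diagonalizability of $\mathcal{A}=-A$ with $\lambda_j\to+\infty$ makes $S(t)=\sum_j e^{-\lambda_j t}(\cdot,e_j)_Xe_j$ compact for every $t>0$ (a norm limit of its finite-rank truncations), so morally we are in the second case but with an unbounded control operator; by Lemma~\ref{lem_admissible} the operator $B$ is still admissible, so $L_T$ is bounded. Writing admissibility spectrally through $B^*\in L(X_{1/2},U')$, i.e. $\Vert B^*z\Vert_{U'}\le c\,\Vert\mathcal{A}^{\alpha}z\Vert_X$ for some $\alpha$ close to $\alpha(B)$, the substitution $s=T-t$ together with the elementary bound $\sup_{x>0}x^{2\alpha-1}(1-e^{-2x\varepsilon})\le(2\varepsilon)^{1-2\alpha}$ gives
\begin{equation*}
\int_0^{\varepsilon}\Vert B^*S(s)z\Vert_{U'}^2\,ds\le \tfrac{c^2}{2}(2\varepsilon)^{1-2\alpha}\Vert z\Vert_X^2,
\end{equation*}
which is small uniformly in $z$ \emph{whenever} $\alpha(B)<1/2$ strictly; since $z\mapsto B^*S(s)z$ is compact for each fixed $s>0$ (so that $\int_\varepsilon^T\Vert B^*S(s)z_n\Vert^2\,ds\to0$ whenever $z_n\rightharpoonup0$), this proves $L_T^*$, hence $L_T$, compact and the principle above applies. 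The genuinely hard step is the endpoint $\alpha(B)=1/2$: there the diagonal terms $\langle G_Te_j,e_j\rangle=\Vert B^*e_j\Vert_{U'}^2\frac{1-e^{-2\lambda_jT}}{2\lambda_j}$ may stay bounded below, so neither $L_T$ nor the Gramian $G_T$ of \eqref{def_GT} is compact and Lemma~\ref{lemGT} yields no contradiction through compactness. The obstruction is exactly the mechanism behind Example~\ref{contreex_PMP}: the exponentials $(e^{-\lambda_j s})_j$ fail to form a Riesz sequence in $L^2(0,T)$, so one must instead construct normalized high-frequency wave packets $z_n$ with $\langle G_Tz_n,z_n\rangle\to0$ using nonharmonic Fourier series and Müntz--Szász-type estimates, which I expect to be the technically hardest point to make fully rigorous in the general vector-valued, non-diagonal setting.
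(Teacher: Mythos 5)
Your first two cases are correct and rest on exactly the principle the paper invokes: $L_T$ is compact, and a compact operator onto an infinite-dimensional Banach space cannot be surjective (open mapping theorem plus the Riesz lemma). The only difference is technical, in the second case: you prove compactness of the truncation via immediate norm continuity of compact semigroups and Riemann sums, whereas the paper factorizes $L_{T,\varepsilon}=S(\varepsilon)L_{T-\varepsilon}$, so that compactness of $L_{T,\varepsilon}$ follows from compactness of $S(\varepsilon)$ with no continuity argument at all; both are valid, and both kill the tail by the same Cauchy--Schwarz estimate.

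In the third case your route differs from the paper's. The paper never shows that $L_T$ is compact; it tests the observability inequality \eqref{inegobs} directly on the orthonormal eigenvectors $\phi_j$ of $-A$, computing
\begin{equation*}
\int_0^T\Vert B^*S(T-t)^*\phi_j\Vert_{U}^2\,dt
=\frac{1-e^{-2\lambda_jT}}{2\lambda_j}\,\Vert B^*\phi_j\Vert_{U}^2
=\frac{1-e^{-2\lambda_jT}}{2}\,\Vert B^*(-A)^{-1/2}\phi_j\Vert_{U}^2 ,
\end{equation*}
and then arguing that this tends to $0$ because $\phi_j\rightharpoonup 0$ and ``$B^*(-A)^{-1/2}$ is compact since $(-A)^{-1/2}$ is''; as the right-hand side of \eqref{inegobs} equals $C_T$ along this sequence, observability fails. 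Your argument (complete continuity of $L_T^*$ via the splitting $\int_0^\varepsilon+\int_\varepsilon^T$) is correct, but, as you yourself flag, only under the strict inequality $\alpha(B)<1/2$, i.e.\ when $\Vert B^*z\Vert_{U}\leq c\,\Vert(-A)^{\alpha}z\Vert_X$ for some $\alpha<1/2$. Relative to the statement, which assumes only $B\in L(U,X_{-1/2})$, this is a genuine gap.

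However, do not try to close that gap with the wave-packet/M\"untz--Sz\'asz program you sketch: the statement itself is false at the endpoint, and the paper's own proof is flawed exactly there. Take $U=X$ and $B=(-A)^{1/2}$, extended to an isometric isomorphism from $X$ onto $X_{-1/2}$. Then $B\in L(U,X_{-1/2})$, $B$ is admissible (by Lemma~\ref{lem_admissible}, or directly), $B^*=(-A)^{1/2}$ on $X_{1/2}$, and for every $z\in D(A)$,
\begin{equation*}
\langle G_Tz,z\rangle=\int_0^T\Vert(-A)^{1/2}e^{sA}z\Vert_X^2\,ds
=\sum_{j=1}^{\infty}\frac{1-e^{-2\lambda_jT}}{2}\,(z,\phi_j)_X^2
\geq\frac{1-e^{-2\lambda_1T}}{2}\,\Vert z\Vert_X^2 ,
\end{equation*}
where $\lambda_1>0$ is the smallest eigenvalue of $-A$. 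The Gramian is therefore coercive, and by Lemma~\ref{lemGT} this system is exactly controllable in every time $T>0$. The flaw in the paper is precisely the step ``$(-A)^{-1/2}$ compact $\Rightarrow B^*(-A)^{-1/2}$ compact'': that inference would require $B^*$ to be bounded on $X$, whereas it is only bounded on $X_{1/2}$; in the example above $B^*(-A)^{-1/2}=\mathrm{id}_X$, which is not compact. In the same example no normalized sequence $z_n$ with $\langle G_Tz_n,z_n\rangle\to 0$ can exist, so the construction you propose is impossible. The correct scope of the third case is exactly the regime your proof covers, $\alpha(B)<1/2$ (equivalently, for the paper's argument, $(-A)^{-1/2}B$ compact), and this suffices for the paper's intended applications such as Dirichlet or Neumann boundary control of the heat equation, where $\alpha(B)=1/4$.
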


For instance, $B$ is compact if $U$ is finite dimensional. Then, the first point implies that it is impossible to control exactly an infinite-dimensional system with a finite number of controls (see \cite[Theorem 4.1.5]{CurtainZwart}). % page 145.

The second point applies for instance to the heat equation with internal control.

The third point applies for instance to the heat equation with Neumann boundary control.

\begin{proof}
%Having in mind the definition \eqref{def_Lt} of the operator $L_T$, for an arbitrary $N\in\N^*$ we set $t_i=iT/N$, $i=0\ldots,N$, and we define the operator
%$$ L_{T,N}u=\sum_{i=0}^{N-1} S(T-t_i)B\int_{t_i}^{t_{i+1}}u(t)\, dt.$$
%
%From the inequality
%\begin{equation*}
%\Vert L_{T,N}u-L_Tu\Vert_X \leq 
%\sum_{i=0}^{N-1} \int_{t_i}^{t_{i+1}} \Vert S(T-t_i)-S(T-t)\Vert\Vert B\Vert \Vert u(t)\Vert_U\, dt ,
%\end{equation*}
%and from the continuity of the semigroup, it is clear that the sequence $(L_{T,N})_{N\in\N^*}$ converges strongly to $L_T$. 
%
%Besides, since the range of $L_{T,N}$ is clearly finite-dimensional, it follows that the operator $L_{T,N}:L^2([0,T],U)\rightarrow X$ is compact.
%
In the first case where $B$ is compact, it is easy to see that the operator $L_T$ is compact. The conclusion follows since $X$ is infinite dimensional, by the Riesz compactness lemma. %, the control system \eqref{eqE} cannot be exactly controllable in any time $T>0$ (with controls in $L^2([0,T],U)$).

\medskip

In the second case (compact semigroup), for every $\varepsilon>0$, we define the operator $L_{T,\varepsilon}:L^2([0,T],U)\rightarrow X$ by $L_{T,\varepsilon}u=\int_0^{T-\varepsilon}S(T-t)Bu(t)\, dt$.
Clearly, $L_{T,\varepsilon}$ converges strongly $L_T$ as $\varepsilon$ tends to $0$.
Besides, using $S(T-t)=S(\varepsilon)S(T-\varepsilon-t)$, we get that $ L_{T,\varepsilon} = S(\varepsilon) L_{T-\varepsilon}$, and hence we infer that $L_{T,\varepsilon}$ is a compact operator, for every $\varepsilon>0$.
Therefore $L_T$ is compact and the conclusion follows as previously.

\medskip

In the third case,
recall that $X_{1/2}=D((-A)^{1/2})$ is the completion of $D(A)$ for the norm $\sqrt{-(Ax,x)_X}$, and $X_{-1/2}=X_{1/2}'$ with respect to the pivot space $X$ (see also Remark \ref{rem41}).
Let $(\phi_j)_{j\in\N^*}$ an orthonormal basis of (unit) eigenvectors of $-A$ associated with eigenvalues $\lambda_j>0$, with $\lambda_j\rightarrow +\infty$ as $j\rightarrow +\infty$. Firstly, we have
\begin{equation}\label{ineq21:00}
\begin{split}
\int_0^T \Vert B^*S(T-t)^*\phi_j\Vert_U^2 \, dt 
& = \int_0^T e^{-\lambda_j(T-t)}\Vert B^*\phi_j\Vert_U^2 \, dt  \\
&\sim \frac{1}{\lambda_j}\Vert B^*\phi_j\Vert_U^2 = \Vert B^*(-A)^{-1/2}\phi_j\Vert_U^2
\end{split}
\end{equation}
as $j\rightarrow+\infty$.
Secondly, since the operator $(-A)^{-1/2}$ is compact, it follows that the operator $B^*(-A)^{-1/2}\in L(X,U)$ is compact as well.
Thirdly, we claim that the sequence $(\phi_j)_{j\in\N^*}$ converges to $0$ for the weak topology of $X$. Indeed, since $\sum_{j=1}^{+\infty}  (x,\phi_j)_X^2 <+\infty$ for every $x\in X$ (by Parseval for instance), it follows that $(x,\phi_j)_X\rightarrow 0$ as $j\rightarrow+\infty$, for every $x\in X$, whence the claim. %. This is exactly the desired weak convergence property.

Since $B^*(-A)^{-1/2}\in L(X,U)$ is compact and since $\phi_j\rightharpoonup 0$, it follows that $B^*(-A)^{-1/2}\phi_j$ converges strongly to $0$. Then, from \eqref{ineq21:00}, we infer that the observability inequality \eqref{inegobs} does not hold true.
%We conclude that the control system \eqref{eqE} cannot by exactly controllable in any time $T>0$ (with controls in $L^2([0,T],U)$).
We refer to \cite[Proposition 9.1.1]{TucsnakWeiss} for such arguments.
%% cf Tucsnak Weiss page 296.
\end{proof}

\subsection{Moment method}\label{sec_moment}
The moment method, relying on harmonic analysis results, has been used in the 70s to establish the first exact controllability results, essentially in 1D (see \cite[Sections 4 and 7]{Russell}). 
To explain the moment method, let us consider the 1D heat equation on $(0,\pi)$ with internal control and with Dirichlet boundary conditions
$$
\partial_t y = \partial_{xx} y + \chi_\omega u, \qquad y(t,0)=y(t,\pi)=0 , \qquad y(0)=y_0\in L^2(0,\pi).
$$
The eigenfunctions $\sqrt{\frac{2}{\pi}}\sin(jx)$ of the Dirichlet-Laplacian, associated with the eigenvalues $\lambda_j=-j^2$, make an orthonormal basis of $L^2(0,\pi)$.
Expanding in series, we have
$y_0(x) = \sum_{j=1}^{+\infty} a_j\sin(jx)$ with $(a_j)_{j\in\N^*}\in\ell^2(\R)$, and writing $y(t,x)= \sum_{j=1}^{+\infty} y_j(t)\sin(jx)$, we get $\dot y_j(t) = -j^2 y_j(t) + \int_\omega u(t,x)\sin(jx)\, dx$ and thus
$$
y_j(T) = e^{-j^2T}a_j+\int_0^T e^{-j^2(T-t)}\int_\omega u(t,x)\sin(jx) \, dx\, dt.
$$
In order to realize the exact null controllability in time $T$, we wish to find some controls $u$ such that
\begin{equation}\label{eq1}
\forall j\in\N^*\quad \int_0^T\int_\omega u(t,x) e^{-j^2(T-t)}\sin(jx)\, dx\, dt = -a_j e^{-j^2T}.
\end{equation}
By the M\"untz-Sz\'asz theorem, there exists a sequence $(\theta_T^j)_{j\in\N^*}$ in $L^2(0,T)$, spanning a proper subspace of $L^2(0,T)$, that is biorthogonal to the sequence of functions $t\mapsto e^{-j^2t}$, $j\in\N^*$, i.e.,
$\int_0^T e^{-j^2t}\theta_T^k(t)\, dt = \delta_{jk}$ for all $j,k\in\N^*$ (see \cite{MicuZuazua} for fine properties). 
We search controls $u$ satisfying \eqref{eq1}, of the form
%\begin{equation}\label{eq2}
%u(t,x) = \sum_{k,\ell=1}^{+\infty} b_{k\ell}\theta_T^k(T-t)\sin(\ell x).
%\end{equation}
%Plugging into \eqref{eq1}, there must hold
%$$
%\forall j\in\N^*\qquad \sum_{\ell=1}^{+\infty} b_{j\ell}\int_\omega\sin(jx)\sin(\ell x)\, dx=-a_je^{-j^2T}.
%$$
%We make the \textit{particular choice}: $b_{j\ell}=0$ whenever $j\neq\ell$. This amounts to searching $u$ of the less general form
$$
u(t,x) = \sum_{k=1}^{+\infty} b_k\theta_T^k(T-t)\sin(kx) ,
$$
which gives $b_j\int_\omega\sin^2(jx)\, dx=-a_je^{-j^2T}$ for every $j\in\N^*$.
We conclude that
$$
u(t,x) = -\sum_{k=1}^{+\infty} a_k e^{-k^2T} \theta_T^k(T-t)\frac{\sin(kx)}{\int_\omega\sin^2(ky)\, dy}.
$$
Thanks to Lemma \ref{lemperiago}, this function is well defined and is in $L^2((0,T)\times(0,\pi))$.

\paragraph{Abstract generalization.}
Let $X$ and $U$ be Hilbert spaces. Let $A:D(A)\rightarrow X$ be a densely defined operator, assumed to be self-adjoint and of compact inverse. Let $(\phi_j)_{j\in\N^*}$ be a Hilbert basis of $X$ consisting of eigenvectors of $A$. Note that $\phi_j\in X_1=D(A)=D(A^*)$ for every $j\in\N^*$.
Let $B\in L(U,D(A^*)')$ be an admissible control operator. We consider the control system \eqref{eqE} with $y(0)=y_0=\sum_{j\in\N^*} a_j\phi_j$.
Expanding $y(t)= \sum_{j=1}^{+\infty} y_j(t)\phi_j$, we have $\dot y_j=\lambda_j y_j+\langle Bu,\phi_j\rangle_{X_{-1},X_1}$ and thus
\begin{equation}\label{momentabstract}
\forall j\in\N^*\quad y_j(T) = e^{\lambda_jT}a_j+\int_0^T e^{\lambda_j(T-t)} (u(t),B^*\phi_j)_U \, dt,
\end{equation}
and we want to solve $y_j(T)=0$ for every $j\in\N^*$.

We assume that there exists a sequence $(\theta_T^j)_{j\in\N^*}$ in $L^2(0,T)$ that is biorthogonal to the family $\Lambda$ of functions $t\mapsto e^{\lambda_jt}$, $j\in\N^*$ (see Remark \ref{rem_Muntz} below). 
We search $u$ in the particular form
$$
u(t)=\sum_{k\in\N^*} b_k\theta_T^k(T-t) B^*\phi_k .
$$
Then, solving $y_j(T)=0$ for every $j\in\N^*$ is equivalent to requiring that
$b_j\Vert B^*\phi_j\Vert^2_U=-a_je^{\lambda_jT}$, and thus,
$$
u(t)=-\sum_{k\in\N^*} a_k e^{\lambda_kT}\theta_T^k(T-t) \frac{B^*\phi_k}{\Vert B^*\phi_k\Vert^2_U} .
$$
Showing that such a series gives a well-defined function requires to establish lower estimates of $\Vert B^*\phi_k\Vert^2_U$.

\begin{remark}\label{rem_Muntz}
Such a biorthogonal sequence $(\theta_T^j)_{j\in\N^*}$ exists if and only if the family $\Lambda$ is \textit{minimal}, that is, every element $t\mapsto e^{-\lambda_jt}$ lies outside of the closure in $L^2(0,T)$ of the vector space spanned by all other elements $t\mapsto e^{-\lambda_kt}$, with $k\neq j$.
If this condition is fulfilled, then the biorthogonal sequence $(\theta_T^j)_{j\in\N^*}$ is uniquely determined if and only if the family $\Lambda$ is complete in $L^2(0,T)$.
Note anyway that the biorthogonal sequence is difficult to construct in practice.

It is well known, by the M\"untz-Sz\'asz theorem, that the family $\Lambda$ is complete in $L^2(0,T)$ (but not independent) if and only if 
$\sum_{j\in\N^*}\frac{1}{\Real(\lambda_j)+\lambda}=+\infty$
for some real number $\lambda$ such that $\Real(\lambda_j)+\lambda>0$ for every $j\in\N^*$ (for instance, $\lambda=-\Real (\lambda_1)+1$).
At the opposite, if this series is convergent then the closure of the span of $\Lambda$ is a proper subspace of $L^2(0,T)$, moreover $\Lambda$ is minimal and thus a biorthogonal sequence exists. 

Then, here, we are led to assume that the series is convergent, which is a quite strong restriction on the parabolic system under consideration.
\end{remark}

\paragraph{Relationship with HUM.}
Within the previous abstract general framework, let us solve the moment equations $y_j(T)=0$ for every $j\in\N^*$, in another way: 
using \eqref{momentabstract}, it suffices to search a control $u$ such that
$$
\big( u , (t,x)\mapsto e^{\lambda_j(T-t)}(B^*\phi_j)(x) \big)_{L^2([0,T],U)} = -e^{\lambda_jT}a_j\qquad \forall j\in\N^*
$$
and, generalizing the previous approach, we can solve this moment problem by using, if it exists, a biorthogonal sequence $(u_j)_{j\in\N^*}$ to the sequence of (time-space) functions $(t,x)\mapsto e^{\lambda_j(T-t)}(B^*\phi_j)(x)$, i.e., noting that $S(T-t)^*\phi_k = e^{\lambda_k(T-t)}\phi_k$, a sequence satisfying
\begin{equation}\label{biorthhum}
\left( u_j , B^*S(T-t)^*\phi_k \right)_{L^2([0,T],U)} = \delta_{jk} \qquad \forall j,k\in\N^* .
\end{equation}
Note that, when such a family exists, $u_j$ is a control steering the control system \eqref{eqE} from $-e^{-\lambda_jT}\phi_j$ to $0$ in time $T$ (this is related to the notion of \emph{spectral controllability}).

There are plenty of ways for designing such controls $u_j$, when this is possible. 
For every $j\in\N^*$, let $u_j\in L^2([0,T],U)$ be the (unique) HUM control steering the initial condition $-e^{-\lambda_jT}\phi_j$ to $0$ (we assume that this is possible): we have $u_j(t)=B^*S(T-t)^*\psi_j$ where $G_T\psi_j=\phi_j$ (note that $u_j$ is the control of minimal $L^2$ norm, and $\Vert u_j\Vert^2_{L^2([0,T],U)}=(G_T\psi_j,\psi_j)_U=(\phi_j,\psi_j)_U$).
Then, obviously, \eqref{biorthhum} holds.

Now, the (unique) HUM control $u$ such that $y(T)=0$ is given by $u(t)=B^*S(T-t)^*\psi$ with $S(T)y_0+G_T\psi=0$.
Since $y_0=\sum_{j\in\N^*} a_j\phi_j$, it easily follows by linearity that, formally,
$$
u = -\sum_{j=1}^{+\infty} a_j e^{\lambda_jT}u_j.
$$
%Hence, the HUM control can be interpreted as resulting of a moment method where one considers the time-space biorthogonal sequence $(u_j)_{j\in\N^*}$ as above with $u_j$ of minimal $L^2$ norm.
Of course, all above computations are formal, and it may be difficult to establish the convergence of the series in practical examples.

\subsection{Equivalence between observability and exponential stability}
The following result is a generalization of the main result of \cite{Haraux}.

\begin{theorem}
Let $X$ be a Hilbert space, let $A:D(A)\rightarrow X$ be a densely defined skew-adjoint operator, let $B$ be a bounded self-adjoint nonnegative operator on $X$.
We have equivalence of:
\begin{enumerate}
\item There exist $T>0$ and $C>0$ such that every solution of the conservative\footnote{It is said to be conservative because, since $A$ is skew-adjoint, we have $\Vert\phi(t)\Vert_X=\mathrm{Cst}=\Vert\phi(0\Vert_X$ for every $t\in\R$.} equation 
$\dot \phi(t)+A\phi(t)=0$
satisfies the observability inequality
$$\Vert \phi(0)\Vert_X^2 \leq C\int_0^{T}\Vert B^{1/2}\phi(t)\Vert_X^2 \, dt .$$
\item There exist $C_1>0$ and $\delta>0$ such that every solution of the damped equation
$\dot y(t)+Ay(t)+By(t)=0$
satisfies
$$
E_y(t) \leq C_1 E_y(0)\mathrm{e}^{-\delta t} \qquad \forall t\geq 0 ,
$$
where 
$E_y(t) = \frac{1}{2} \Vert y(t)\Vert_X^2 $.
\end{enumerate}
\end{theorem}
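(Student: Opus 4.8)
The plan is to introduce the two semigroups attached to the conservative and damped equations, record the energy identity, and then run a single comparison estimate in both directions. \textbf{Preliminaries.} Since $A$ is skew-adjoint, both $A$ and $-A$ are m-dissipative (Remark following Proposition \ref{prop4.3}), so $-A$ generates a $C_0$ group of isometries $(S(t))_{t\in\R}$; the conservative solution is $\phi(t)=S(t)\phi(0)$ with $\|\phi(t)\|_X=\|\phi(0)\|_X$. For the damped equation set $\mathcal{A}=-(A+B)$ on $D(\mathcal{A})=D(A)$ (legitimate since $B$ is bounded). Using $\Re(Ax,x)_X=0$ and $B=B^*\geq 0$, one checks that $\mathcal{A}$ and $\mathcal{A}^*=A-B$ are both dissipative, because $\Re(\mathcal{A}x,x)_X=\Re(\mathcal{A}^*x,x)_X=-(Bx,x)_X\le 0$; hence by Proposition \ref{prop4.3} $\mathcal{A}$ generates a contraction semigroup $(T(t))_{t\ge 0}$ and $y(t)=T(t)y(0)$. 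Writing $B^{1/2}$ for the bounded nonnegative square root ($\|B^{1/2}x\|_X^2=(Bx,x)_X$, $\|B^{1/2}\|^2=\|B\|$), the identity $\frac{d}{dt}E_y(t)=\Re(\dot y,y)_X=-(By,y)_X=-\|B^{1/2}y(t)\|_X^2$ holds (for $y(0)\in D(A)$, then by density), so $E_y$ is nonincreasing and $\int_0^T\|B^{1/2}y\|_X^2\,dt=E_y(0)-E_y(T)$.

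\textbf{Comparison estimate.} For common initial data $\phi(0)=y(0)=\phi_0$, Duhamel (Proposition \ref{thm32}, applied to the bounded perturbation $B$) gives the two representations $y=\phi+w$ with $w(t)=-\int_0^tS(t-s)By(s)\,ds$, and $\phi=y+v$ with $v(t)=\int_0^tT(t-s)B\phi(s)\,ds$. Using that $S$ is isometric, $T$ contractive, and $\|Bz\|_X\le\|B\|^{1/2}\|B^{1/2}z\|_X$, together with Cauchy--Schwarz, I would obtain
\[
\int_0^T\|B^{1/2}w\|_X^2\,dt\le\|B\|^2T^2\int_0^T\|B^{1/2}y\|_X^2\,dt,
\]
and the symmetric bound with $(w,y)$ replaced by $(v,\phi)$. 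The step I expect to be the main obstacle is to use the \emph{correct} representation in each direction, so that the Duhamel error is controlled by the right-hand side of the inequality being proved and no absorption of a large constant is needed. The naive idea of bounding $w=y-\phi$ over a long time fails: since $y(t)\to 0$ while $\phi$ is conservative, $w(t)\to-\phi(t)$ does not become small, and the factor $\|B\|^2T^2$ cannot be made $<1$ while simultaneously taking $T$ large. The resolution is that in each direction the error term multiplies the quantity already appearing on the target right-hand side, so only an upper bound is required.

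\textbf{Direction (1)$\Rightarrow$(2).} With $T,C$ from the observability inequality, write $y=\phi+w$ and estimate $\int_0^T\|B^{1/2}\phi\|_X^2\le 2\int_0^T\|B^{1/2}y\|_X^2+2\int_0^T\|B^{1/2}w\|_X^2\le 2(1+\|B\|^2T^2)\int_0^T\|B^{1/2}y\|_X^2$. Combining with (1) and $\|\phi_0\|_X^2=2E_y(0)$ yields $\int_0^T\|B^{1/2}y\|_X^2\ge\frac{1}{C(1+\|B\|^2T^2)}E_y(0)$, hence $E_y(T)\le\kappa E_y(0)$ with $\kappa=1-\frac{1}{C(1+\|B\|^2T^2)}\in[0,1)$. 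By autonomy of the damped equation the same estimate applies on each interval $[nT,(n+1)T]$, so $E_y(nT)\le\kappa^nE_y(0)$, and monotonicity of $E_y$ gives $E_y(t)\le\kappa^{-1}e^{-\delta t}E_y(0)$ with $\delta=-\frac{\ln\kappa}{T}>0$ and $C_1=\kappa^{-1}$.

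\textbf{Direction (2)$\Rightarrow$(1).} Choose $T$ with $C_1e^{-\delta T}\le\tfrac12$; then $E_y(T)\le\tfrac12E_y(0)$ gives the damped observability $\int_0^T\|B^{1/2}y\|_X^2\,dt=E_y(0)-E_y(T)\ge\tfrac12E_y(0)=\tfrac14\|\phi_0\|_X^2$. Now use the other representation $\phi=y+v$, so that $\int_0^T\|B^{1/2}y\|_X^2\le 2(1+\|B\|^2T^2)\int_0^T\|B^{1/2}\phi\|_X^2$ (here the error $v$ is driven by $\phi$, exactly the quantity on the desired right-hand side). Chaining the two inequalities gives $\|\phi(0)\|_X^2\le 8(1+\|B\|^2T^2)\int_0^T\|B^{1/2}\phi\|_X^2\,dt$, which is precisely the observability inequality (1) with this $T$ and $C=8(1+\|B\|^2T^2)$. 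This completes the equivalence.
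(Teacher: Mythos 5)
Your proof is correct, and its architecture is the same as the paper's: in each direction you compare the damped and conservative solutions issued from the same initial datum, you arrange for the error to be driven by exactly the quantity appearing on the right-hand side of the target inequality ($By$ in the direction (1)$\Rightarrow$(2), $B\phi$ in the direction (2)$\Rightarrow$(1)), you convert the damped observability into an energy drop via $\int_0^T\Vert B^{1/2}y\Vert_X^2\,dt=E_y(0)-E_y(T)$, and you conclude exponential decay by iterating $E_y((n+1)T)\leq\kappa E_y(nT)$ on successive intervals. The only genuine difference is how the comparison estimate is derived. The paper works with the difference $\theta$ through an energy argument: taking the inner product of the error equation with $\theta$, integrating twice in time to get
\begin{equation*}
\int_0^T E_\theta(t)\,dt=-\int_0^T(T-t)\bigl(B^{1/2}y(t),B^{1/2}\theta(t)\bigr)_X\,dt,
\end{equation*}
and absorbing by Young's inequality, which yields $\int_0^T\Vert\theta\Vert_X^2\,dt\leq 4T^2\Vert B^{1/2}\Vert^2\int_0^T\Vert B^{1/2}y\Vert_X^2\,dt$. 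You instead write the error by Duhamel's formula and use the isometry of the conservative group (resp.\ the contractivity of the damped semigroup) together with Cauchy--Schwarz. The two routes prove the same lemma with essentially the same constant (your $2(1+\Vert B\Vert^2T^2)$ versus the paper's $2+8T^2\Vert B^{1/2}\Vert^4$, and $\Vert B\Vert^2=\Vert B^{1/2}\Vert^4$); yours is slightly slicker but leans on the semigroup representation you set up in the preliminaries, whereas the paper's energy method needs only the skew-adjointness of $A$ and elementary integration. Neither direction of your argument has a gap.
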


\begin{proof}
Let us first prove that the first property implies the second one.
%: we want to prove that every solution of $\dot y+Ay+B y=0$ satisfies
%$$
%E_y(t) = \frac{1}{2} \Vert y(t)\Vert_X^2 \leq E_y(0)\mathrm{e}^{-\delta t} = \frac{1}{2} \Vert y(0)\Vert_X^2  \mathrm{e}^{-\delta t}.
%$$
Let $y$ be a solution of the damped equation.
Let $\phi$ be the solution of $\dot\phi+A\phi=0$, $\phi(0)=y(0)$.
Setting $\theta=y-\phi$, we have $\dot\theta+A\theta+By=0$, $\theta(0)=0$. Then, taking the scalar product with $\theta$, since $A$ is skew-adjoint, we get 
$(\dot\theta+By,\theta)_X=0$.
But, setting $E_\theta(t)=\frac{1}{2} \Vert \theta(t)\Vert_X^2 $, we have
$ \dot E_\theta = -(By,\theta)_X$.
Then, integrating a first time over $[0,t]$, and then a second time over $[0,T]$, since $E_\theta(0)=0$, we get
\begin{multline*}
\int_0^T E_\theta(t)\,dt = - \int_0^T\int_0^t ( By(s),\theta(s) )_X \,ds\, dt \\
= -\int_0^T (T-t) ( B^{1/2}y(t),B^{1/2}\theta(t) )_X\, dt ,
\end{multline*}
where we have used the Fubini theorem.
Hence, using the Cauchy-Schwarz inequality and then the Young inequality $ab\leq\frac{\alpha}{2}a^2+\frac{1}{2\alpha}b^2$ with $\alpha=2T \Vert B^{1/2}\Vert$, we infer that
\begin{equation*}
\begin{split}
\frac{1}{2}\int_0^T  \Vert \theta(t)\Vert_X^2 \, dt
&\leq T \Vert B^{1/2}\Vert \int_0^T \Vert  B^{1/2}y(t)\Vert_X\Vert\theta(t)\Vert_X \, dt \\
&\leq T^2 \Vert B^{1/2}\Vert^2 \int_0^T \Vert  B^{1/2}y(t)\Vert_X^2 \, dt +  \frac{1}{4}\int_0^T \Vert\theta(t)\Vert_X^2 \, dt,
\end{split}
\end{equation*}
and therefore
$$
\int_0^T \Vert\theta(t)\Vert_X^2 \, dt \leq 4 T^2 \Vert B^{1/2}\Vert^2 \int_0^T \Vert  B^{1/2}y(t)\Vert_X^2 \, dt.
$$
Now, since $\phi=y-\theta$, it follows that
\begin{equation*}
\begin{split}
\int_0^T \Vert B^{1/2}\phi(t)\Vert_X^2 \, dt 
& \leq 2 \int_0^T \Vert B^{1/2}y(t)\Vert_X^2 \, dt + 2\int_0^T \Vert B^{1/2}\theta(t)\Vert_X^2 \, dt \\
& \leq (2+8 T^2 \Vert B^{1/2}\Vert^4) \int_0^T \Vert B^{1/2} y(t)\Vert_X^2 \, dt.
\end{split}
\end{equation*}
Finally, since
$$
E_y(0)=E_\phi(0)=\frac{1}{2} \Vert \phi(0)\Vert_X^2 \leq \frac{C}{2}\int_0^{T}\Vert B^{1/2}\phi(t)\Vert_X^2 \, dt
$$
it follows that
$
E_y(0) \leq C (1+4 T^2 \Vert B^{1/2}\Vert^4) \int_0^T \Vert B^{1/2} y(t)\Vert_X^2 \, dt.
$
Besides, one has $E_y'(t)=-\Vert B^{1/2} y(t)\Vert_X^2$, and then
$\int_0^T \Vert B^{1/2} y(t)\Vert_X^2 dt=E_y(0)-E_y(T)$.
Therefore
$$E_y(0) \leq C (1+4 T^2 \Vert B^{1/2}\Vert^4) (E_y(0)-E_y(T))=C_1(E_y(0)-E_y(T))$$
and hence
$$
E_y(T)\leq \frac{C_1-1}{C_1} E_y(0) = C_2 E_y(0),
$$
with $C_2<1$.

Actually this can be done on every interval $[kT,(k+1)T]$, and it yields
$E_y((k+1)T)\leq C_2 E_y(kT)$ for every $k\in\N$, and hence $E_y(kT)\leq E_y(0)C_2^k$.

For every $t\in [kT,(k+1)T)$, noting that $k=\left[\frac{t}{T}\right]> \frac{t}{T}-1$, and that $\ln\frac{1}{C_2}>0$, it follows that
$$
C_2^k=\exp(k\ln C_2)=\exp\Big(-k\ln\frac{1}{C_2}\Big)\leq
%\exp(\frac{-t}{T}\ln\frac{1}{C_2}+\ln\frac{1}{C_2})=
\frac{1}{C_2}\exp\Big(\frac{-\ln\frac{1}{C_2}}{T} t\Big)
$$
and hence $E_y(t)\leq E_y(kT)\leq \delta E_y(0) \exp(-\delta t)$ for some $\delta>0$.

\medskip
Let us now prove the converse: assume the exponential decrease, and let us prove the observability property.
Let $\phi$ be a solution of the conservative equation.
Let $y$ be a solution of the damped equation such that $y(0)=\phi(0)$.

From the exponential decrease inequality, one has
\begin{equation}\label{orl1841}
\int_0^T \Vert B^{1/2} y(t)\Vert_X^2 \, dt = E_y(0)-E_y(T) \geq (1-C_1\mathrm{e}^{-\delta T})E_y(0) = C_2 E_y(0),
\end{equation}
and for $T>0$ large enough there holds $C_2=1-C_1\mathrm{e}^{-\delta T}>0$.

Then we make the same proof as before, starting from
$\dot\phi+A\phi=0$, that we write in the form $\dot\phi+A\phi +B\phi=B\phi$,
and considering the solution of $\dot y + Ay + By=0$, $y(0)=\phi(0)$.
Setting $\theta=\phi-y$, we have $\dot\theta+A\theta+B\theta=B\phi$, $\theta(0)=0$.
Taking the scalar product with $\theta$, since $A$ is skew-adjoint, we get
$(\dot\theta+B\theta,\theta)_X=(B\phi,\theta)_X$, 
and therefore
$\dot E_\theta +( B\theta,\theta)_X = ( B\phi,\theta)_X$.
Since $( B\theta,\theta)_X=\Vert B^{1/2}\theta\Vert_X\geq 0$, it follows that
$ \dot E_\theta \leq ( B\phi,\theta)_X$.
As before we apply $\int_0^T\int_0^t$ and hence, since $E_\theta(0)=0$,
$$
\int_0^T E_\theta(t) \, dt \leq \int_0^T \! \int_0^t ( B\phi(s),\theta(s))_X \, ds\, dt = \int_0^T (T-t) ( B^{1/2}\phi(t),B^{1/2}\theta(t))_X \, dt.
$$
Thanks to the Young inequality, we get, exactly as before,
\begin{equation*}
\begin{split}
\frac{1}{2}\int_0^T  \Vert \theta(t)\Vert_X^2 \, dt
&\leq T \Vert B^{1/2}\Vert \int_0^T \Vert  B^{1/2}\phi(t)\Vert_X \Vert\theta(t)\Vert_X \, dt \\
&\leq T^2 \Vert B^{1/2}\Vert^2 \int_0^T \Vert  B^{1/2}\phi(t)\Vert_X^2 \,dt +  \frac{1}{4}\int_0^T \Vert\theta(t)\Vert_X^2 \, dt ,
\end{split}
\end{equation*}
and finally,
$$
\int_0^T \Vert\theta(t)\Vert_X^2 \, dt \leq 4 T^2 \Vert B^{1/2}\Vert^2 \int_0^T \Vert  B^{1/2}\phi(t)\Vert_X^2 \,dt.
$$
Now, since $y=\phi-\theta$, it follows that
\begin{equation*}
\begin{split}
\int_0^T \Vert B^{1/2} y(t)\Vert_X^2 \, dt 
& \leq 2 \int_0^T \Vert B^{1/2} \phi(t)\Vert_X^2 \, dt + 2\int_0^T \Vert B^{1/2}\theta(t)\Vert_X^2 \, dt \\
& \leq (2+8 T^2 \Vert B^{1/2}\Vert^4) \int_0^T \Vert B^{1/2} \phi(t)\Vert_X^2 \, dt .
\end{split}
\end{equation*}
Now, using \eqref{orl1841} and noting that $E_y(0)=E_\phi(0)$,
we infer that
$$ C_2 E_\phi(0) \leq (2+8 T^2 \Vert B^{1/2}\Vert^4) \int_0^T \Vert B^{1/2} \phi(t)\Vert_X^2 dt.$$
This is the desired observability inequality.
\end{proof}

\begin{remark}
This result says that the observability property for a linear conservative equation is equivalent to the exponential stability property for the same equation in which a linear damping has been added. This result has been written in \cite{Haraux} for second-order equations, but the proof works exactly in the same way for more general first-order systems, as shown here. The above proof uses in a crucial way the fact that the operator $B$ is bounded. We refer to \cite{AmmariTucsnak} for a generalization for unbounded operators with degree of unboundedness $\leq 1/2$, and only for second-order equations, with a proof using Laplace transforms, under a condition on the unboundedness of $B$ that is 
%not easy to check (related to ``hidden regularity" results). 
related to ``hidden regularity" results. 
For instance this works for waves with a nonlocal operator $B$ corresponding to a Dirichlet condition, in the state space $L^2\times H^{-1}$, but not for the usual Neumann one, in the state space $H^1\times L^2$ (except in 1D).
\end{remark}

\subsection{1D semilinear heat equation}
Let $L>0$ be fixed and let $f:\R\rightarrow\R$ be a function of class $C^2$ such that $f(0)=0$. We consider the 1D semilinear heat equation
\begin{equation} \label{eqcont0}
\partial_t y = \partial_{xx} y+f(y),  \qquad  y(t,0)=0,\ y(t,L)=u(t),
\end{equation}
where the state is $y(t,\cdot):[0,L]\rightarrow\R$ and the control is $u(t)\in \R$.

We want to design a feedback control locally stabilizing \eqref{eqcont0} asymptotically to $0$.
Note that this cannot be global, because we can have other steady-states (a steady-state is a function $y\in C^2(0,L)$ such that $y''(x)+f(y(x))=0$ on $(0,L)$ and $y(0)=0$).
By the way, here, without loss of generality we consider the steady-state $0$.

Let us first note that, for every $T>0$, \eqref{eqcont0} is well posed in the Banach space $Y_T = L^2([0,T],H^2(0,L))\cap H^1([0,T],L^2(0,L))$, which is continuously embedded in $L^\infty((0,T)\times(0,L))$.\footnote{Indeed, considering $v\in L^2([0,T],H^2(0,L))$ with $v_t\in H^1([0,T],L^2(0,L))$, writing $v=\sum_{j,k}c_{jk}e^{ijt}e^{ikx}$, we have
$$
\sum_{j,k}\vert c_{jk}\vert\leq \bigg( \sum_{j,k} \frac{1}{1+j^2+k^4} \bigg)^{1/2} \bigg( \sum_{j,k} (1+j^2+k^4)\vert c_{jk}\vert^2 \bigg)^{1/2} 
$$
and these series converge, whence the embedding, allowing to give a sense to $f(y)$.

Now, if $y_1$ and $y_2$ are solutions of \eqref{eqcont0} on $[0,T]$, then $y_1=y_2$. Indeed,  $v=y_1-y_2$ is solution of $v_t=v_{xx}+a\, v$, $v(t,0)=v(t,L)=0$, $v(0,x)=0$, with $a(t,x)=g(y_1(t,x),y_2(t,x))$ where $g$ is a function of class $C^1$. We infer that $v=0$.}

First of all, in order to end up with a Dirichlet problem, we set $z(t,x)=y(t,x)-\frac{x}{L}u(t)$. Assuming (for the moment) that $u$ is differentiable, we set $v(t)=u'(t)$, and we consider in the sequel $v$ as a control. We also assume that $u(0)=0$. Then we have
\begin{equation}\label{reducedproblem2}
\partial_t z=\partial_{xx} z+f'(0)z+\frac{x}{L}f'(0)u-\frac{x}{L}v+r(t,x),  \qquad
z(t,0)=z(t,L)=0,  
\end{equation}
with $z(0,x)=y(0,x)$ and (by performing a second-order Taylor expansion of $f$ with integral remainder)
\begin{equation*}%\label{reste}
r(t,x)= \left(z(t,x)+\frac{x}{L}u(t)\right)^2\int_0^1 (1-s)f''\left(sz(s,x)+s\frac{x}{L}u(s)\right)ds.
\end{equation*}
Note that, given $B>0$ arbitrary, there exist positive constants $C_1$ and $C_2$ such that, if $\vert u(t)\vert\leq B$ and $\Vert z(t,\cdot)\Vert_{L^\infty(0,L)}\leq B$, then
\begin{equation*}%\label{estimreste}
\Vert r(t,\cdot)\Vert_{L^\infty(0,L)}\leq C_1(u(t)^2 +\Vert z(t,\cdot)\Vert_{L^\infty(0,L)}^2) \leq C_2(u(t)^2 +\Vert z(t,\cdot)\Vert_{H^1_0(0,L)}^2) .
\end{equation*}
In the sequel, $r(t,x)$ will be considered as a remainder.

We define the operator $A=\triangle+f'(0)\mathrm{id}$ on $D(A) = H^2(0,L)\cap H_0^1(0,L)$, so that \eqref{reducedproblem2} is written as
\begin{equation}\label{reducedproblem3}
\dot u=v,\qquad \partial_t z=Az+au+bv+r,  \qquad z(t,0)=z(t,L)=0,  
\end{equation}
with $a(x) = \frac{x}{L}f'(0)$ and $b(x)=-\frac{x}{L}$.

Since $A$ is self-adjoint and has a compact resolvent, there exists a Hilbert basis $(e_j)_{j\geq 1}$ of $L^2(0,L)$, consisting of eigenfunctions $e_j\in H^1_0(0,L)\cap C^2([0,L])$ of $A$, associated with eigenvalues $(\lambda_j)_{j\geq 1}$ such that $-\infty<\cdots<\lambda_n<\cdots<\lambda_1$ and $\lambda_n\rightarrow-\infty$ as $n\rightarrow+\infty$.

Any solution $z(t,\cdot)\in H^2(0,L)\cap H^1_0(0,L)$ of \eqref{reducedproblem2}, as long as it is well defined, can be expanded as a series $z(t,\cdot)=\sum_{j=1}^{\infty}z_j(t)e_j(\cdot)$ (converging in $H_0^1(0,L)$), and then we have, for every $j\geq 1$,
\begin{equation}\label{sysdiag}
\dot z_j(t) = \lambda_j z_j(t) + a_j u(t) + b_j v(t) + r_j(t),
\end{equation}
with
$$
a_j= \frac{f'(0)}{L}\int_0^L xe_j(x)\, dx, \quad
b_j= -\frac{1}{L}\int_0^L xe_j(x)\, dx, \quad
r_j(t)=\int_0^L r(t,x)e_j(x)\, dx.
$$
Setting, for every $n\in\N^*$,
\begin{equation*}
X_n(t)=\begin{pmatrix} u(t) \\ z_1(t) \\ \vdots \\ z_n(t)
\end{pmatrix}, \
A_n=\begin{pmatrix}
0         &       0         & \cdots &    0           \\
a_1 & \lambda_1 & \cdots &    0           \\
\vdots    &  \vdots         & \ddots &   \vdots       \\
a_n &  0              & \cdots & \lambda_n
\end{pmatrix} , \
B_n=\begin{pmatrix} 1 \\ b_1 \\ \vdots \\ b_n \end{pmatrix} , \ 
R_n(t)=\begin{pmatrix} 0 \\ r_1(t) \\ \vdots \\ r_n(t) \end{pmatrix},
\end{equation*}
we have then
\begin{equation*}%\label{sysn}
\dot X_n(t) = A_nX_n(t) + B_n v(t) + R_n(t).
\end{equation*}

\begin{lemma}
For every $n\in\N^*$, the pair $(A_n,B_n)$ satisfies the Kalman condition.
\end{lemma}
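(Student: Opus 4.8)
The plan is to verify the Kalman condition for $(A_n,B_n)$ through the Hautus test recalled earlier (criterion (4)): it suffices to check that no eigenvector of $A_n^\top$ lies in the kernel of $B_n^\top$. The advantage is that $A_n$ is lower triangular, so its spectrum and the eigenvectors of its transpose can be read off directly, which avoids computing the full Kalman matrix $K(A_n,B_n)$. First I would record the structure: since $A_n$ is lower triangular its eigenvalues are the diagonal entries $0,\lambda_1,\dots,\lambda_n$, and the $\lambda_j$ are pairwise distinct because $\lambda_1>\cdots>\lambda_n$. Writing a candidate eigenvector $z=(z_0,z_1,\dots,z_n)^\top$ of $A_n^\top$ for an eigenvalue $\mu$, the equations $A_n^\top z=\mu z$ read $(\lambda_j-\mu)z_j=0$ for $j=1,\dots,n$ together with $\sum_{j=1}^n a_jz_j=\mu z_0$. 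Hence $z_j=0$ whenever $\mu\neq\lambda_j$, and the analysis splits into the case $\mu\notin\{\lambda_1,\dots,\lambda_n\}$ (which forces $\mu=0$ and $z=(z_0,0,\dots,0)$, whence $B_n^\top z=z_0\neq0$) and the case $\mu=\lambda_k$ for a unique index $k$.

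Next I would exploit two algebraic facts about the coefficients. Setting $\mu_j=\lambda_j-f'(0)$, each eigenfunction $e_j$ solves $e_j''=\mu_j e_j$ on $(0,L)$ with $e_j(0)=e_j(L)=0$; since $\mu_j<0$ is a Dirichlet eigenvalue it is nonzero, and an integration by parts gives $\int_0^L x e_j(x)\,dx = L\,e_j'(L)/\mu_j$, where $e_j'(L)\neq0$ by Cauchy uniqueness for the underlying ODE (otherwise $e_j\equiv0$). Therefore $b_j=-\tfrac1L\int_0^L x e_j(x)\,dx\neq0$ for every $j$, while the definitions of $a_j$ and $b_j$ yield the identity $a_j=-f'(0)b_j$. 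In the case $\mu=\lambda_k$, the eigenvector has $z_k$ free and, if $\lambda_k\neq0$, $z_0=a_kz_k/\lambda_k$; then $B_n^\top z=(a_k/\lambda_k+b_k)z_k=\mu_k b_k z_k/\lambda_k\neq0$. If instead $\lambda_k=0$, then $f'(0)=-\mu_k>0$, so $a_k=-f'(0)b_k\neq0$, and the first-component relation $a_kz_k=0$ forces $z_k=0$, leaving the single eigenvector $(z_0,0,\dots,0)$ with $B_n^\top z=z_0\neq0$.

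The main obstacle to watch for is precisely this resonance $\lambda_k=0$, where $0$ becomes a double eigenvalue of $A_n$. The point is that the Hautus criterion only tests genuine eigenvectors (not generalized ones), and the explicit description above shows that even in this degenerate case the zero eigenspace of $A_n^\top$ stays one-dimensional and transverse to $\ker B_n^\top$. Collecting the three cases shows that $B_n^\top z\neq0$ for every eigenvector $z$ of $A_n^\top$, which is exactly criterion (4); hence $(A_n,B_n)$ satisfies the Kalman condition for every $n\in\N^*$.
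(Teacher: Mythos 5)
Your proof is correct, and it takes a genuinely different route from the paper. The paper computes the Kalman determinant directly, obtaining
$\mathrm{det}(B_n,A_nB_n,\ldots,A_n^nB_n)=\prod_{j=1}^n(a_j+\lambda_jb_j)\,\mathrm{VdM}(\lambda_1,\ldots,\lambda_n)$,
and then shows each factor $a_j+\lambda_jb_j$ equals $-e_j'(L)\neq 0$ by the same eigenfunction/integration-by-parts argument you use. You instead invoke the Hautus criterion (4) and exploit the triangular structure of $A_n$ to enumerate the eigenvectors of $A_n^\top$. The two arguments are tightly linked: since $a_j=-f'(0)b_j$, the paper's factor is $a_j+\lambda_jb_j=b_j(\lambda_j-f'(0))=b_j\mu_j=-e_j'(L)$, so your two nonvanishing facts ($b_j\neq 0$ and $\mu_j\neq 0$) are exactly equivalent to the paper's factor being nonzero, and both proofs use the pairwise distinctness of the $\lambda_j$ (you for uniqueness of the resonant index $k$, the paper through the Vandermonde determinant). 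What each approach buys: the paper's determinant formula is compact and uniform, needing no case distinction on whether some $\lambda_k$ vanishes; your Hautus route avoids any determinant computation, makes visible exactly which eigendirections could obstruct controllability, and your explicit treatment of the degenerate case $\lambda_k=0$ (where $0$ is a double eigenvalue of $A_n$ but the eigenspace of $A_n^\top$ stays one-dimensional) is a worthwhile observation that the determinant argument silently absorbs.
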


\begin{proof}
We compute
\begin{equation}\label{deter}
\mathrm{det} \left( B_n, A_nB_n, \ldots, A_n^{n}B_n \right)
= \prod_{j=1}^{n}(a_j+\lambda_jb_j)\
\mathrm{VdM}(\lambda_1,\ldots,\lambda_n) 
\end{equation}
where $\mathrm{VdM}(\lambda_1,\ldots,\lambda_n)$ is a Van der Monde determinant, and thus is never equal to zero since the $\lambda_i$, $i=1\ldots n$, are pairwise distinct.
On the other part, using the fact that each $e_j$ is an eigenfunction of $A$ and belongs to $H^1_0(0,L)$, we compute
\begin{equation*}
a_j+\lambda_jb_j
= \frac{1}{L} \int_0^L x ( f'(0)-\lambda_j)e_j(x) \, dx
= -\frac{1}{L} \int_0^L x e_j''(x) \, dx
= - e_j'(L),
\end{equation*}
and this quantity is never equal to zero since $e_j(L)=0$ and $e_j$ is a nontrivial solution of
a linear second-order scalar differential equation.
Therefore the determinant \eqref{deter} is never equal to zero.
\end{proof}

By the pole-shifting theorem (Theorem \ref{thmplacementpoles}), there exists $K_n=\left( k_0,\ldots,k_n \right)$ such that the matrix $A_n+B_nK_n$ has $-1$ as an eigenvalue of multiplicity $n+1$. Moreover, by the Lyapunov lemma (see Example \ref{Lyapunov_lemma}), there exists a symmetric positive definite matrix $P_n$ of size $n+1$ such that
\begin{equation*}%\label{poleshifting}
P_n\left(A_n+B_nK_n\right) + \left(A_n+B_nK_n\right)^\top P_n = -I_{n+1} .
\end{equation*}
Therefore, as shown in Example \ref{Lyapunov_lemma}, the function defined by $V_n(X) = X^\top P_n X$ for any $X\in\R^{n+1}$ is a Lyapunov function for the closed-loop system $\dot X_n(t) = (A_n+B_nK_n)X_n(t)$: along this system we have $\frac{d}{dt} V_n(X_n(t)) = -\Vert X_n(t)\Vert_2^2$.
Here, $\Vert\ \Vert_2$ stands for the Euclidean norm of $\R^{n+1}$.

\medskip

Let $\gamma>0$ and $n\in\N^*$ to be chosen later. For every $u\in\R$ and every $z\in H^2(0,L)\cap H^1_0(0,L)$, we set
\begin{equation}\label{defLyapV1216}
V(u,z)=\gamma\, X_n^\top P_n X_n - \frac{1}{2}\langle z,Az\rangle_{L^2(0,L)}
= \gamma\, X_n^\top P_n X_n - \frac{1}{2}\sum_{j=1}^\infty \lambda_jz_j^2
\end{equation}
where $X_n=(u,z_1,\ldots,z_n)^\top\in\R^{n+1}$ and $z_j=\langle z,e_i\rangle_{L^2(0,L)}$ for every $j$.

Using that $\lambda_n\rightarrow-\infty$ as $n\rightarrow+\infty$, it is clear that, choosing $\gamma>0$ and $n\in\N^*$ large enough, we have $V(u,z)>0$ for all $(u,z)\in\R\times (H^2(0,L)\cap H^1_0(0,L))\setminus\{(0,0)\}$. More precisely, there exist positive constants $C_3$, $C_4$, $C_5$ and $C_6$ such that
\begin{multline*}%\label{equiv2}
C_3\left( u^2+\Vert z\Vert_{H^1_0(0,L)}^2\right)\ \leq\ V(u,z)\ \leq\ C_4\left( u^2+\Vert z\Vert_{H^1_0(0,L)}^2\right),\\
V(u,z)\ \leq\ C_5\left( \Vert X_n\Vert_2^2 + \Vert Az\Vert_{L^2(0,L)}^2 \right) ,\qquad\qquad
\gamma C_6\Vert X_n\Vert_2^2\ \leq\ V(u,z) ,
\end{multline*}
for all $(u,z)\in\R\times (H^2(0,L)\cap H^1_0(0,L))$. 

\medskip

Our objective is now to prove that $V$ is a Lyapunov function for the system \eqref{reducedproblem3} in closed-loop with the feedback control $v=K_nX_n$ and $u$ defined by $\dot u=v$ and $u(0)=0$.
We compute
\begin{multline}\label{dVuzdt}
\frac{d}{dt} V(u(t),z(t))=-\gamma\,\Vert X_n(t)\Vert_2^2-\Vert Az(t,\cdot)\Vert_{L^2}^2-\langle
Az(t,\cdot),a(\cdot)\rangle_{L^2}u(t) \\ 
\qquad\qquad\qquad\qquad -\langle Az(t,\cdot),b(\cdot)\rangle_{L^2}K_nX_n(t) -\langle Az(t,\cdot),r(t,\cdot)\rangle_{L^2} \\
+\gamma\left(R_n(t)^\top P_nX_n(t)+X_n(t)^\top P_nR_n(t)\right) .
\end{multline}
Let us estimate the terms at the right-hand side of \eqref{dVuzdt}. Under the a priori estimates $\vert u(t)\vert\leq B$ and $\Vert z(t,\cdot)\Vert_{L^\infty(0,L)}\leq B$, there exist positive constants $C_7$, $C_8$ and $C_9$ such that (dropping the dependence in $t$)
\begin{equation*}
\begin{split}
& \vert \langle Az,a\rangle_{L^2}u \vert+\vert \langle Az,b\rangle_{L^2}K_nX_n \vert \leq \frac{1}{4} \Vert Az\Vert_{L^2}^2+C_7\Vert X_n\Vert_2^2  , \\
& \vert\langle Az,r\rangle_{L^2}\vert \leq \frac{1}{4} \Vert Az\Vert_{L^2}^2+C_8V^2 ,\qquad\qquad
\Vert R_n\Vert_\infty \leq \frac{C_2}{C_3} V ,  \\
& \vert\gamma\left(R_n^\top P_nX_n+X_n^\top P_nR_n\right)\vert \leq \frac{C_2}{C_3\sqrt{C_6}}\sqrt{\gamma}\,V^{3/2} .
\end{split}
\end{equation*}
We infer that, if $\gamma>0$ is large enough, then there exist $C_{10},C_{11}>0$ such that $\frac{d}{dt} V \leq -C_{10}V+C_{11}V^{3/2}$.
We easily conclude the local asymptotic stability of the system \eqref{reducedproblem3} in closed-loop with the control $v=K_nX_n$.

\begin{remark}
The above local asymptotic stability can be achieved with other procedures, for instance, by using the Riccati theory (see \cite{Zabczyk} for Riccati operators in the parabolic case). However, the procedure developed above is much more efficient because it consists of stabilizing a finite-dimensional part of the system, namely, the part that is not naturally stable. We refer to \cite{CoronTrelat} for examples and for more details. Actually, it is proved in that reference that, thanks to such a strategy, one can pass from any steady-state to any other one, provided that the two steady-states belong to a same connected component of the set of steady-states: this is a partially global exact controllability result.
\end{remark}

The main idea used above\footnote{This idea has been used as well to treat other parabolic problems, and even hyperbolic: it has been as well used in \cite{CoronTrelat_CCM2006} for the 1D semilinear equation
$$
\partial_{tt}y = \partial_{xx} y+f(y),  \qquad  y(t,0)=0,\ y_x(t,L)=u(t).
$$
%with the same assumptions on $f$ as before. 
We first note that, if $f(y)=cy$ is linear (with $c\in L^\infty(0,L)$), then, taking $u(t) = -\alpha \partial_t y(t,L)$ with $\alpha>0$ yields an exponentially decrease of the energy $\int_0^L ( \partial_t y(t,x)^2+\partial_xy(t,x)^2 )\, dt$, and moreover, the eigenvalues of the corresponding operator have a real part tending to $-\infty$ as $\alpha\rightarrow 1$. Therefore, in the general case, if $\alpha$ is sufficiently close to $1$ then at most a finite number of eigenvalues may have a nonnegative real part. Using a Riesz spectral expansion, the above spectral method yields a feedback based on a finite number of modes, which stabilizes locally the semilinear wave equation, asymptotically to equilibrium.}
is the following fact, already used in \cite{Russell}. Considering the linearized system with no control, we have an infinite-dimensional linear system that can be split, through a spectral decomposition, in two parts: the first part is finite-dimensional, and consists of all spectral modes that are unstable (meaning that the corresponding eigenvalues have nonnegative real part); the second part is infinite-dimensional, and consists of all spectral modes that are asymptotically stable (meaning that the corresponding eigenvalues have negative real part).
The idea used here then consists of focusing on the finite-dimensional unstable part of the system, and to design a feedback control in order to stabilize that part. Then, we plug this control in the infinite-dimensional system, and we have to check that this feedback indeed stabilizes the whole system (in the sense that it does not destabilize the other infinite-dimensional part). This is the role of the Lyapunov function $V$ defined by \eqref{defLyapV1216}.

%The extension to general systems \eqref{contsys} is quite immediate, at least in the parabolic setting under appropriate spectral assumptions (see \cite{SchmidtTrelat} for Couette flows and \cite{CoronTrelatVazquez} for Navier-Stokes equations).

\end{document}